\newtheorem{theorem}{Theorem}[section]
\newtheorem{corollary}[theorem]{Corollary}
\newtheorem{assumption}[theorem]{Assumption}%
\newtheorem{lemma}[theorem]{Lemma}
\newtheorem{remark}[theorem]{Remark}%
\numberwithin{equation}{section}
\newcommand{\SVRSQP}{\texttt{SVR-SQP}}
\newcommand{\SVRSQPCONST}{\texttt{SVR-SQP-C}}
\newcommand{\SVRSQPADAPT}{\texttt{SVR-SQP-A}}
\newcommand{\StoSQP}{\texttt{Sto-SQP}}
\newcommand{\StoSubVR}{\texttt{Sto-Subgrad-VR}}
\DeclareMathOperator*{\argmin}{arg\,min}
\newcommand{\xbest}{x_{\texttt{best}}}
\newcommand{\papertitle}{Accelerating Stochastic Sequential Quadratic Programming for Equality Constrained  Optimization using Predictive Variance Reduction}
\newcommand{\paperauthora}{Albert S. Berahas}
\newcommand{\paperauthoraaffiliation}{Department of Industrial and Operations, University of Michigan}
\newcommand{\paperauthorb}{Jiahao Shi}
\newcommand{\paperauthorc}{Zihong Yi}
\newcommand{\paperauthorcaffiliation}{Department of Computer Science and Engineering, University of Michigan}
\newcommand{\paperauthord}{Baoyu Zhou}
\newcommand{\paperauthordaffiliation}{Booth School of Business, University of Chicago}
\begin{document}
\title{\papertitle}

\author{\paperauthora\footnotemark[1]\ \footnotemark[2]
   \and \paperauthorb\footnotemark[2]
   \and \paperauthorc\footnotemark[3]
   \and \paperauthord\footnotemark[4]}

\maketitle

\renewcommand{\thefootnote}{\fnsymbol{footnote}}
\footnotetext[1]{Corresponding author.}
\footnotetext[2]{\paperauthoraaffiliation. (\url{\paperauthoraemail},\url{\paperauthorbemail})}
\footnotetext[3]{\paperauthorcaffiliation. (\url{\paperauthorcemail})}
\footnotetext[4]{\paperauthordaffiliation. (\url{\paperauthordemail})}
\renewcommand{\thefootnote}{\arabic{footnote}}

\begin{abstract}{
In this paper, we propose a stochastic method for solving equality constrained optimization problems that utilizes predictive variance reduction. Specifically, we develop a method based on the sequential quadratic programming paradigm that employs variance reduction in the gradient approximations. Under reasonable assumptions, we prove that a measure of first-order stationarity evaluated at the iterates generated by our proposed algorithm converges to zero in expectation from arbitrary starting points, for both constant and adaptive step size strategies. Finally, we demonstrate the practical performance of our proposed algorithm on constrained binary classification problems that arise in machine learning.

}
\end{abstract}


\section{Introduction}\label{sec.intro}

We consider the design of algorithms for solving equality constrained finite-sum problems of the form 
\begin{align}\label{prob.f_finitesum}
  \min_{x\in\mathbb{R}^n}\ f(x)\ \text{ s.t. }\ c(x) = 0,\ \ \text{with}\ \ f(x) = \frac{1}{N} \sum_{i=1}^N f_i(x),
\end{align}
where $f: \mathbb{R}^n \rightarrow \mathbb{R}$, $f_i: \mathbb{R}^n \rightarrow \mathbb{R}$ for all $i \in \{1,\dots,N\}$, and $c: \mathbb{R}^n \rightarrow \mathbb{R}^m$ are smooth nonlinear (possibly nonconvex) functions. Such problems arise in a plethora of areas such as machine/deep learning \cite{zhu2019physics,NandPathAbhiSing19,RaviDinhLokhSing19,achiam2017constrained,marquez2017imposing,roy2018geometry}, statistics \cite{chatterjee2016constrained,geyer1991constrained}, and stochastic optimal control \cite{malikopoulos2013stochastic,lioutikov2014sample}, as well as other science and engineering applications such as optimal power flow \cite{vrakopoulou2014stochastic,wood2013power,summers2015stochastic}, multi-stage modeling \cite{shapiro2021lectures}, and porfolio optimization \cite{uryasev2013stochastic,ziemba2014stochastic}.

Numerous algorithms have been developed over the last half century for solving deterministic equality constrained optimization problems, such as that in \eqref{prob.f_finitesum}. A few classical examples are penalty methods, projection methods and sequential quadratic programming (SQP), each of which have their associated merits and limitations \cite{NoceWrig06}. Penalty methods are intuitive and simple to implement, however, their performance critically relies on the choice of the penalty function and penalty parameter, and, in practice, often suffers from ill-conditioning issues and subproblems' nonsmoothness. On the other hand, projection methods are powerful \emph{feasible} methods, however, they assume that projections can be efficiently computed at every iteration, something that is often not the case with general nonlinear constraints. SQP methods attempt to alleviate these issues by solving a sequence of subproblems that minimize a quadratic model of the objective function subject to a linearization of the constraints, and, as such can handle general nonlinear constraints, however, this comes at the cost of more expensive iterations (SQP methods require solving a linear system at every iteration). That being said, all aforementioned deterministic methods require the computation of the true gradient of the objective function (and constraints) at every iteration, which can be prohibitively expensive in settings in which $n$ and/or $N$ are large.

Rather than minimizing the finite-sum optimization problem \eqref{prob.f_finitesum} 
with a  deterministic method, one can employ stochastic methods that utilize a stochastic approximation of the gradient in lieu of the true gradient in order to reduce the per iteration computational cost. In this direction, several stochastic penalty and projection methods have been proposed \cite{chen2018constraint,NandPathAbhiSing19,lan2020first,ghadimi2016mini,lan2012optimal,reddi2016stochastic2,negiar2020stochastic,shi2021sqp}.
Another line of research considers stochastic alternating direction method of multipliers (ADMM) algorithms \cite{ouyang2013stochastic,zhong2014fast}, and variants of ADMM that utilize variance reduction~\cite{bian2021stochastic,bai2022inexact}. Following the SQP paradigm, recent work \cite{berahas2021sequential} proposed a stochastic SQP method with an adaptive step size selection rule for solving equality constrained stochastic optimization problems endowed with theoretical guarantees (convergence in expectation) analogous of those of the stochastic gradient (SG) method for unconstrained problems, and empirical performance superior to that of the stochastic subgradient method. Several extensions of this work have been developed; namely, in \cite{berahas2021stochastic} the authors relax requirements on the constraints (relax constraint qualifications), in \cite{curtis2021inexact} the authors develop an inexact stochastic SQP method (linear system solved inexactly at every iteration), and in \cite{curtis2021worst} the authors analyze the complexity of the stochastic SQP algorithm proposed in \cite{berahas2021sequential}. Along a slightly different direction, under the assumption that the error in the stochastic gradient approximations employed can be diminished as needed, the authors in \cite{na2021inequality,na2021adaptive} proposed stochastic line search SQP methods for equality and inequality constrained stochastic optimization problems, respectively, that utilize a differentiable exact augmented Lagrangian function as its merit function. 

In the last decade, several stochastic first-order algorithms have been proposed for solving unconstrained finite-sum optimization problems. One such class of algorithms are \emph{variance-reduction} methods, that attempt to reduce the the variance in the stochastic gradient approximation employed as the optimization progresses. Examples of popular variance reduction methods include, but are not limited to, the Stochastic Average Gradient (SAG/SAGA) method \cite{schmidt2017minimizing,defazio2014saga}, the Stochastic Variance Reduced Gradient (SVRG) method \cite{johnson2013accelerating}, the Stochastic Recursive Gradient Algorithm (SARAH) method \cite{nguyen2017sarah}, and the Stochastic Dual Coordinate Ascent (SDCA) method \cite{shalev2013stochastic}. As a result of the variance reduction, these methods enjoy improved convergence results as compared to their classical counter-parts (e.g., SG method \cite{robbins1951stochastic,bottou2018optimization}), and these benefits are very often also observed in practice. Motivated by this fact, we design and analyze a stochastic SQP method that employs variance reduced gradients for solving \eqref{prob.f_finitesum}.

\subsection{Contributions}

The contributions of our work can be categorized as follows:
\begin{itemize}
    \item \textbf{Algorithmic.} We present a stochastic sequential quadratic optimization algorithm that uses variance reduced gradients. Specifically, inspired by the theoretical and empirical advantages of \emph{variance reduced methods} (unconstrained finite-sum problems) and  SQP methods (deterministic equality constrained problems), we propose a stochastic SQP method that uses variance reduced gradients (SVRG-type, \cite{johnson2013accelerating}) in lieu of the true gradient (\SVRSQP). We propose one algorithm with two possible step size selection strategies; a constant step size scheme (similar to that in SVRG \cite{johnson2013accelerating,reddi2016stochastic}), and an adaptive step size scheme (similar to that in \cite{berahas2021sequential}). 
    
    Our proposed algorithm is based on a stochastic  SQP framework, similar to the stochastic algorithm proposed in \cite{berahas2021sequential}, but with several distinguishing algorithmic and theoretical differences.
    In particular, our proposed algorithm is of nested form, due to the nature of the construction of the SVRG gradients, and operates with two types of iterations (inner and outer). As a direct consequence of the use of variance reduced gradients, the proposed step size selection strategy only requires minimal safe-guarding, as compared to the safe-guards imposed in  \cite{berahas2021sequential}, e.g., safe-guarding parameters or sequences and projections. 

    We should note that while in this work we chose to employ SVRG-type gradient approximations, others, e.g., \cite{shalev2013stochastic,nguyen2017sarah,schmidt2017minimizing,defazio2014saga}, may also be employed. We chose SVRG because it is based on an intuitive idea (control variates \cite{ross2013simulation}), has no additional storage requirements, has proven robust and efficient in practice, and, perhaps most importantly because the unbiasedness of the SVRG gradients allows for simple convergence analysis. 
    \item \textbf{Theoretical.} We provide convergence guarantees for the \SVRSQP{} method with the two different step size strategies (constant and adaptive). 
    For both strategies, we present strong theoretical guarantees in the sense that a measure of first-order stationarity evaluated at the iterates generated by \SVRSQP{} vanishes in expectation with \emph{non-diminishing} step size sequences. 
    This is in contrast with the results in \cite{berahas2021sequential} where a {\sl diminishing} step size sequence is required to ensure exact convergence in expectation. Our result (equality constrained finite sum setting) for the \SVRSQP{} method with a constant step size can be viewed as analogues of the results that can be proven for the SVRG method \cite{johnson2013accelerating,reddi2016stochastic} in the unconstrained finite sum setting. Similar convergence guarantees are established for the more flexible variant with adaptive step sizes. Table \ref{tab.summary} summarizes our results. 
    \item \textbf{Empirical.} 
    We illustrate the performance of our proposed method on  constrained binary classification problems, and we compare our proposed algorithm against other popular methods, such as the adaptive stochastic SQP method proposed in \cite{berahas2021sequential} and a stochastic subgradient method that utilizes variance reduction. We provide evidence illustrating the benefits of using variance reduced gradients within the stochastic SQP framework for solving equality constrained finite sum optimization problems.
\end{itemize}
\begin{table}[]
\renewcommand\arraystretch{1.1}
    \centering
    \caption{Summary of asymptotic results for different problem settings and different methods (and step size choices) for nonconvex functions. For unconstrained finite sum problems the stationarity measure 
    is $\| \nabla f(x)\|_2^2$, whereas for equality constrained finite sum problems the stationarity measure
    is $\| \nabla f(x) + \nabla c(x)y\|_2^2 + \| c(x) \|_2 $ (where $y$ are least-squares Lagrange
multipliers). In the table, ``exact'' and ``neighborhood'' denote 
 convergence to the stationarity 
 measure in expectation and convergence to a neighborhood of the stationarity 
 measure in expectation, respectively, and ``-'' denotes that no result exists. For brevity and ease of exposition, we do not state the exact constants in the results below.}
\label{tab.summary}
    \begin{tabular}{
        ccccc}
        \toprule
        \multirow{2}{*}{\textbf{Setting}} & \multirow{2}{*}{\textbf{Method}} &  \multicolumn{3}{c}{\textbf{Step size}}\\
        \cmidrule(lr){3-5}
       \multicolumn{1}{c}{}& \multicolumn{1}{c}{} & \multicolumn{1}{c}{\textbf{Diminishing}} & \multicolumn{1}{c}{\textbf{Constant}} & \multicolumn{1}{c}{\textbf{Adaptive}}
       \\
       \cmidrule(lr){1-1} 
       \cmidrule(lr){2-2}
       \cmidrule(lr){3-3}
       \cmidrule(lr){4-4}
       \cmidrule(lr){5-5}
       \multirow{2}{*}{\begin{tabular}[c]{@{}c@{}}Unconstrained\\ 
       Finite Sum\end{tabular}}          & SG~\cite{bottou2018optimization}                                    & \multicolumn{1}{c}{exact}       & \multicolumn{1}{c}{neighborhood} & -            \\ \cdashline{2-5} 
        & SVRG~\cite{reddi2016stochastic}                                   & \multicolumn{1}{c}{-}           & \multicolumn{1}{c}{exact}        & -            \\ \hdashline
        \multirow{3}{*}{\begin{tabular}[c]{@{}c@{}}Equality \\ Constrained\\
        Finite Sum\end{tabular}}          & Stoch. SQP \cite{berahas2021sequential}                                      & \multicolumn{1}{c}{exact}       & \multicolumn{1}{c}{neighborhood} & \multicolumn{1}{c}{neighborhood}            \\ \cdashline{2-5} 
        & \multirow{2}{*}{\begin{tabular}[c]{@{}c@{}}\SVRSQP{}\\ (this paper)  \end{tabular}}                             & \multirow{2}{*}{\begin{tabular}[c]{@{}c@{}} -  \end{tabular}} 
        & \multirow{2}{*}{\begin{tabular}[c]{@{}c@{}} exact  \end{tabular}}        & \multirow{2}{*}{\begin{tabular}[c]{@{}c@{}} exact  \end{tabular}} \\ \\ \bottomrule
    \end{tabular}
\end{table}

\subsection{Organization}

The paper is organized as follows. We conclude this section by setting the notation that will be used throughout the paper. In Section \ref{sec.sec2} we introduce the assumptions and main algorithmic components of our proposed method. The stochastic variance reduced sequential quadratic optimization method is presented in Section \ref{sec.svrsqp}, and its associated convergence guarantees are presented in Section \ref{sec.convergence}. In Section \ref{sec:experiments}, we demonstrate the empirical performance of the proposed algorithm. Finally, in Section \ref{sec:final_remarks} we make some concluding remarks.

\subsection{Notation}

Let $\mathbb{N}$ denote the set of natural numbers, $\mathbb{R}$ denote the set of real numbers and  $\mathbb{R}_{>0}$ denote the set of positive real numbers. For any $m \in \mathbb{N}$, let $[m]$ denote the set of integers $\{1,\dots,m\}$, and $[\bar{m}]$ denote the set of integers $\{0,1,\dots,m-1\}$.  Let $\mathbb{R}^n$
denote the set of $n$-dimensional real vectors, $\mathbb{R}^{m \times n}$ denote the set of $m$-by-$n$-dimensional real matrices, and $\mathbb{S}^{n}$ denote the set of $n$-by-$n$-dimensional
symmetric matrices.

The algorithms described in this paper will either operate with a single type of iteration and produce sequences of iterates $\{x_{k}\}$ where $k \in \mathbb{N}$ is the index of iterations, or will operate with two types of iterations (e.g., inner and outer) and produce sequences of iterates $\{x_{k,s}\}$, where $k \in \mathbb{N}$ is the index of outer iterations and $s \in \left[\bar{S}\right]$ is the index of inner iterations. The index of iteration number is also appended as a subscript to other quantities corresponding to each iteration; e.g., $f_{k,s} = f(x_{k,s})$, respectively for the single iteration algorithms. Throughout the paper, we use the overline to denote stochastic quantities; e,g., $\bar{g}_{k,s}$ is an estimate of  ${g}_{k,s}:= \nabla f(x_{k,s})$.

\section{Assumptions and Algorithm Preliminaries}\label{sec.sec2}

Throughout the paper, we assume that the constraint function and its associated first-order derivatives can be computed exactly. With regards to the objective function and its associated derivatives, we assume that those quantities are prohibitively expensive to compute at every iteration, however, exact evaluations can be accessed as required by the algorithm. We formalize our assumptions  with regards to \eqref{prob.f_finitesum} and the iterates generated by our algorithm $\{x_{k,s}\}$ below. 
\begin{assumption}
   Let $\mathcal{X} \subseteq \mathbb{R}^{n}$ be an open convex set containing the iterates $\{x_{k,s} \}$ generated by any run of the algorithm. The objective function $f : \mathbb{R}^{n} \to \mathbb{R}$ and its gradient $g:=\nabla f : \mathbb{R}^{n} \to \mathbb{R}^{n}$ are bounded over $\mathcal{X}$. For each $i\in \left[N\right]$, the component objective function $f_i: \mathbb{R}^n \to \mathbb{R}$ is continuously differentiable, and each component gradient $ \nabla f_i: \mathbb{R}^n \to \mathbb{R}^n$ is Lipschitz continuous with constant $L$. For each $i\in \left[m\right]$, the constraint function $c_i:\mathbb{R}^n\to\mathbb{R}$ is continuously differentiable and bounded over $\mathcal{X}$, and its gradient $\nabla c_i:\mathbb{R}^n\to\mathbb{R}^n$ is Lipschitz continuous with constant $\gamma_i\in\mathbb{R}_{>0}$. We define $\Gamma := \sum_{i=1}^m \gamma_i$. The Jacobian function $J := \nabla c^T : \mathbb{R}^{n} \to \mathbb{R}^{m \times n}$ is bounded over $\mathcal{X}$, 
   and has singular values bounded away from zero over $\mathcal{X}$.
\label{ass.function}
\end{assumption}

\begin{remark} Assumption~\ref{ass.function} implies that the objective function $f : \mathbb{R}^{n} \to \mathbb{R}$ is continuously differentiable, and that its gradient $g:=\nabla f : \mathbb{R}^{n} \to \mathbb{R}^{n}$ is Lipschitz continuous with constant $L$. Under Assumption~\ref{ass.function}, it follows that, for all $(k,s) \in \mathbb{N} \times \left[\bar{S}\right]$, there exist positive real numbers $(f_{\inf},f_{\sup},\kappa_{g},\kappa_c,\kappa_J,\kappa_\sigma) \in \mathbb{R} \times \mathbb{R} \times \mathbb{R}_{>0} \times \mathbb{R}_{>0} \times \mathbb{R}_{>0}\times \mathbb{R}_{>0}$ such that
\begin{equation}\label{eq.bounds}
\begin{aligned}
  &f_{\inf} \leq f_{k,s} \leq f_{\sup},\ \ \  \|g_{k,s}\|_2 \leq \kappa_{g},&\\
  \text{and} \ \ \ & \|c_{k,s}\|_1 \leq \kappa_c, \ \ \  \|J_{k,s}\|_2 \leq \kappa_J, \ \ \  \|(J_{k,s}J_{k,s}^T)^{-1}\|_2\leq \kappa_{\sigma}^{-2}.&
\end{aligned}
\end{equation}
Assumption \ref{ass.function} ensures the smoothness of the objective function and constraint functions. Unlike many projection methods aimed to solve stochastic optimization problems \cite{nemirovski2009robust,shi2021sqp}, we do not assume that $\mathcal{X}$ is bounded. We remark that the boundedness assumption of the singular values of $\nabla c^T$ guarantees the linear independence constraint qualification (LICQ). Note that it is generally not ideal to assume that the objective and constraint function and derivative values are bounded over $\mathcal{X}$ containing stochastic iterates $\{x_{k,s} \}$. However, this assumption is reasonable in our problem setting if we assume the component functions $ \{ f_i\} $ have bounded derivatives  over $\mathcal{X}$.  In addition,  this assumption can be loosen if one chooses to use constant step sizes. This assumption is similar to those in \cite{berahas2021sequential,na2021adaptive}. 
\end{remark} 

We define the Lagrangian, $\mathcal{L}: \mathbb{R}^n \times \mathbb{R}^m \rightarrow \mathbb{R}$, of \eqref{prob.f_finitesum} as $\mathcal{L}(x, y) := f(x) + y^T c(x)$, where
$y \in \mathbb{R}^m$ represents a vector of Lagrange multipliers.  Under Assumption \ref{ass.function} (and as a result of LICQ),
necessary conditions for first-order stationarity with respect to \eqref{prob.f_finitesum} are given by
\begin{align*}
    0 = \begin{bmatrix}
        \nabla_x \mathcal{L}(x,y) \\
        \nabla_y \mathcal{L}(x,y)
    \end{bmatrix} = \begin{bmatrix}
        \nabla f(x) + \nabla c(x)y  \\
         c(x)
    \end{bmatrix}.
\end{align*}

Next, we formalize our assumption on the gradient approximation employed by the \SVRSQP{} method. Given an iterate $x_{k,s} \in \mathbb{R}^n$ (where $(k,s) \in \mathbb{N} \times \left[\bar{S}\right]$), let $\tilde{g}_{k,s} \in \mathbb{R}^n$ be defined as
\begin{align}\label{eq.sg}
    \tilde{g}_{k,s} := \frac{1}{b} \sum_{i \in {I}_{k,s}} \nabla f_i (x_{k,s}),
\end{align}
where ${I}_{k,s} \subset [N]$ of size $b$ is a mini-batch (subset) of all the data. Throughout the paper, we refer to the gradient approximation in \eqref{eq.sg} as the \emph{stochastic gradient}.
\begin{assumption}\label{ass.unbias}
The gradient approximation \eqref{eq.sg} is an unbiased estimator of the true gradient of the objective function, i.e., we have that $\mathbb{E}_{k,s} [\tilde{g}_{k,s}] = g_{k,s}$, 
where $\mathbb{E}_{k,s}$ denotes the expectation taken conditioned on the event that the algorithm has reached $x_{k,s} \in \mathbb{R}^n$ in iteration $(k,s) \in \mathbb{N} \times \left[\bar{S}\right]$. $($We impose an additional condition on this expectation in subsequent sections of the paper; see Lemma~\ref{lemma_variance}.$)$  
The unbiasedness assumption of $\tilde{g}_{k,s}$ can be easily satisfied, e.g., when each sample in the mini-batch  ${I}_{k,s}  \subset [N]$ is selected uniformly at random. 
\end{assumption}


 Finally, the variance reduced gradient approximation employed by the \SVRSQP{} method $\bar{g}_{k,s} \in \mathbb{R}^n$ is defined as
\begin{equation}
\begin{aligned}\label{eq.svrg_grad}
    \bar{g}_{k,s}  :&= \frac{1}{b} \sum_{i \in I_{k,s}} \left(\nabla f_{i}(x_{k,s} ) - \left(\nabla f_{i}(x_{k,0}) - \nabla f(x_{k,0})\right)\right)\\
    &= \tilde{g}_{k,s} - \tilde{g}_{k,0} + g_{k,0},
\end{aligned}
\end{equation}
where $I_{k,s} \subset [N]$ of size $b$, and $x_{k,0}$ is known as the reference point (the initial point for the inner iterations of the  $k$th outer iteration). Throughout the paper, we refer to the gradient approximation in \eqref{eq.svrg_grad} as the \emph{SVRG gradient}. Under Assumption~\ref{ass.unbias}, it follows that the SVRG gradient is an unbiased estimate of the true gradient, i.e.,  $\mathbb{E}_{k,s} [\bar{g}_{k,s}] = g_{k,s}$. 

\section{Stochastic Variance Reduced Sequential Quadratic Programming}\label{sec.svrsqp}

Our proposed algorithm (\SVRSQP{}) is based on the Sequential Quadratic Programming (SQP) paradigm. A high level description of the \SVRSQP{} method is as follows: \SVRSQP{} operates with two types of iterations (inner and outer), employs variance reduced approximations of the gradient of the objective function following \eqref{eq.svrg_grad} in lieu of the true gradient, and updates the iterates in SQP fashion. 

Given an iterate $x_{k,s} $ for all $(k,s) \in \mathbb{N} \times \left[\bar{S}\right]$, the \SVRSQP{} methods proceeds to compute a search direction $\bar{d}_{k,s} \in \mathbb{R}^n$ by solving the following subproblem 
\begin{align}
\min_{\bar{d} \in \mathbb{R}^n}  \bar{g}_{k,s}^T \bar{d} + \tfrac{1}{2} \bar{d}^T H_{k,s} \bar{d} \quad \text{s.t}\quad c_{k,s}+ J_{k,s} \bar{d} = 0,
\label{subproblem:SQP}  
\end{align}
where $\bar{g}_{k,s}$ is defined in \eqref{eq.svrg_grad} and  
$H_{k,s} \in \mathbb{S}^n$ 
satisfies Assumption \ref{ass.H} below.
\begin{assumption}\label{ass.H}
    The sequence $\{H_{k,s} \}$ is independent of $\{\bar{g}_{k,s}\}$ and is bounded in norm by $\kappa_H \in \mathbb{R}_{>0}$.  In addition, there exists a constant $\zeta \in \mathbb{R}_{>0}$ such that, for all $(k,s) \in \mathbb{N} \times \left[\bar{S}\right]$, the matrix $H_{k,s} $ has the property that $u^TH_{k,s} u \geq \zeta \|u\|_2^2$ for all $u \in \mathbb{R}^{n}$ such that $J_{k,s} u = 0$.
\end{assumption}

Under Assumptions~\ref{ass.function} and \ref{ass.H}, the solution of \eqref{subproblem:SQP}, denoted by $\bar{d}_{k,s}  \in \mathbb{R}^n$, can be equivalently computed by solving the following linear system of equations
\begin{align}
\begin{bmatrix}
      H_{k,s}  & J_{k,s}^T \\ J_{k,s}  & 0
\end{bmatrix}
\begin{bmatrix}
      \bar{d}_{k,s}  \\ \bar{y}_{k,s} 
\end{bmatrix} = - \begin{bmatrix}
      \bar{g}_{k,s}  \\ c_{k,s} 
\end{bmatrix},
\label{eq.system_stochastic}
\end{align}
where $\bar{y}_{k,s} \in \mathbb{R}^m$ is the vector of associated Langrange multipliers of \eqref{subproblem:SQP}. The linear system in  \eqref{eq.system_stochastic} has a unique solution under Assumptions~\ref{ass.function} and \ref{ass.H}; see \cite{NoceWrig06}.

With a search direction $\bar{d}_{k,s}  \in \mathbb{R}^n$ in hand, \SVRSQP{} proceeds to utilize a merit function, $\phi : \mathbb{R}^n \times \mathbb{R}_{>0} \to \mathbb{R}$, to judge the quality of the computed  step (in terms of stationarity 
and feasibility), and then compute a positive step size $\bar\alpha_{k,s} \in \mathbb{R}_{>0}$ in order to update the current iterate $x_{k,s} \in \mathbb{R}^{n}$ via 
\begin{align}
    x_{k,s+1} = x_{k,s} + \bar\alpha_{k,s}\bar{d}_{k,s}.
\end{align}
Similar to \cite{berahas2021sequential}, our algorithm makes use of, possibly the most common merit (penalty) function, the $l_1$-norm merit function, defined as 
\begin{equation}
    \phi(x,\tau) := \tau f(x) + \|c(x)\|_1, \label{def.merit}
\end{equation}
where $\tau \in \mathbb{R}_{>0}$ is known as the merit (penalty) parameter and whose value is set adaptively as the optimization progresses. Before we proceed, we introduce two quantities that are used in our proposed algorithm, and that are vital to the analysis. First, a local linear model of the merit function $l : \mathbb{R}^{n} \times \mathbb{R}_{>0} \times \mathbb{R}^{n} \times \mathbb{R}^{n} \to \mathbb{R}$ is defined by 
\begin{equation}
  l(x,\tau,g,d) := \tau (f(x) + g^T d) + \|c(x) + \nabla c(x)^Td\|_1.
\label{def.localmodel}
\end{equation}
Second, the reduction function  of the local linear model of the merit function $\Delta l : \mathbb{R}^{n} \times \mathbb{R}_{>0} \times \mathbb{R}^{n} \times \mathbb{R}^{n} \to \mathbb{R}$, given $d \in \mathbb{R}^{n}$ with $c(x) + \nabla c(x)^Td = 0$, is defined by 
\begin{equation}
\begin{aligned}
        \Delta l(x,\tau,g,d) := l(x,\tau,g,0) - l(x,\tau,g,d) = -\tau g^Td + \|c(x)\|_1.
\label{def.merit_model_reduction}
\end{aligned}
\end{equation}

Given a search direction $\bar{d}_{k,s}  \in \mathbb{R}^n$, the merit parameter update strategy goes as follows. To begin with, a trial merit parameter is defined as
\begin{align}
\label{eq.merittrial}
  \bar\tau_{k,s}^{trial} \gets \begin{cases} \infty & \text{if } \bar{g}_{k,s}^T\bar{d}_{k,s} + \max\{ \bar{d}_{k,s}^TH_{k,s}\bar{d}_{k,s},0\} \leq 0; \\ \tfrac{(1 - \sigma)\|c_{k,s} \|_1}{\bar{g}_{k,s}^T\bar{d}_{k,s} + \max\{ \bar{d}_{k,s}^TH_{k,s}\bar{d}_{k,s},0\}} & \text{otherwise},
\end{cases}
\end{align}
where the parameter $\sigma \in (0,1)$ is user-defined. It follows that $\bar\tau_{k,s}^{trial} > 0$ since if $\|c_{k,s} \|_1 = 0$, by Assumption \ref{ass.H} and \eqref{eq.system_stochastic}, $\bar{g}_{k,s}^T\bar{d}_{k,s} + \max\{ \bar{d}_{k,s}^TH_{k,s}\bar{d}_{k,s},0\} = \bar{g}_{k,s}^T\bar{d}_{k,s} + \bar{d}_{k,s}^TH_{k,s}\bar{d}_{k,s} = -\bar{d}_{k,s}^TJ_{k,s}^T\bar{y}_{k,s} = c_{k,s}^T\bar{y}_{k,s} = 0$. Next, for some user-defined parameter 
$\epsilon_{\tau} \in (0,1)$, $\bar\tau_{k,s}$ is computed via 
\begin{align}
\label{eq.meritupdate}
\bar\tau_{k,s} \gets 
\begin{cases} \bar\tau_{k,s-1} & \text{if $\bar\tau_{k,s-1} \leq \bar\tau_{k,s}^{trial}$} \\ (1-\epsilon_\tau) \bar\tau_{k,s}^{trial} & \text{otherwise.}
\end{cases}
\end{align}
Note, the above rule ensures that $\bar\tau_{k,s} \leq \bar\tau_{k,s}^{trial}$. Moreover, and more importantly, the updates \eqref{eq.merittrial}--\eqref{eq.meritupdate} ensure that
\begin{equation}
  \Delta l(x_{k,s} ,\bar\tau_{k,s},\bar{g}_{k,s}  ,\bar{d}_{k,s} ) \geq  \bar\tau_{k,s} \max\{\bar{d}_{k,s}^TH_{k,s} \bar{d}_{k,s} ,0\} + \sigma \|c_{k,s} \|_1.
\label{eq.reduction_lb}    
\end{equation}
The above inequality plays a critical role in our algorithm and analysis.

Finally, the \SVRSQP{} method computes a positive step size. We propose two different step size selection strategies; a constant step size strategy and an adaptive step size strategy. The constant step size strategy, similar to that in \cite{reddi2016stochastic}, specifies an upper bound on acceptable step sizes (see Theorem~\ref{theorem:constant_step size_prescribed} for the exact specification). 

The adaptive step size strategy, inspired by \cite{berahas2021sequential}, is  motivated by the desire to select a step size that minimizes 
an upper bound on the change in the merit function. By the definition of the merit function \eqref{def.merit} and under Assumption~\ref{ass.function}, the upper bound on the change in the merit function is a convex (strongly-convex when $\|\bar{d}_{k,s}\|\neq 0$), piece-wise quadratic function in $\bar{\alpha}_{k,s} \in \mathbb{R}_{> 0}$, 
\begin{equation}\label{eq.merit_reduction_ub1}
\begin{aligned}
     &\ \phi(x_{k,s+1},\bar{\tau}_{k,s}) - \phi(x_{k,s},\bar{\tau}_{k,s})    \\
     \leq&\    \bar{\alpha}_{k,s} \bar{\tau}_{k,s}   g_{k,s}^T   \bar{d}_{k,s}  + (\lvert1-\bar{\alpha}_{k,s}\rvert - 1) \| c_{k,s} \|_1 + \tfrac12 (\bar{\tau}_{k,s} L +\Gamma )\bar{\alpha}_{k,s}^2  \| \bar{d}_{k,s} \|_2^2.
\end{aligned}
\end{equation}
(See \cite[Lemma 3.1]{byrd2008inexact} for derivation of above inequality.) Our adaptive stategy attempts to select a step size that approximately minimizes this upper bound. To this end, at iteration $(k,s) \in \mathbb{N} \times \left[\bar{S}\right]$, two trial step sizes are computed, specifically,  
\begin{align}
    \bar{\widehat\alpha}_{k,s} &\gets \min\left\{ \tfrac{ \Delta l(x_{k,s} ,\bar\tau_{k,s},\bar g_{k,s} ,\bar d_{k,s} ) }{(\bar\tau_{k,s} L_{k,s} + \Gamma_{k,s} ) \|\bar{d}_{k,s} \|_2^2}, \alpha_u \right\} \beta \label{eq:step size1}\\
    \text{and}\quad \bar{\widetilde\alpha}_{k,s} &\gets \bar{\widehat\alpha}_{k,s} - \tfrac{4\|c_{k,s} \|_1}{(\bar\tau_{k,s} L_{k,s} +  \Gamma_{k,s})\|\bar{d}_{k,s} \|_2^2}, \label{eq:step size2}
\end{align}
where $\alpha_u \in \mathbb{R}_{>0}$ is a user-defined parameter that is introduced here to avoid the step size being arbitrarily large (the precise specification of $\alpha_u$ is given in Section~\ref{sec.adaptive}).  Due to the nonsmoothness of the upper bound (notice, nonsmooth point at $\bar{\alpha}_{k,s} = 1$), the approximate minimizer, and the step size used by the \SVRSQP{}, is set as
\begin{equation}\label{eq.step size}
          \bar{\alpha}_{k,s}\gets
		    \begin{cases}
		      \bar{\widehat\alpha}_{k,s} & \text{if $\bar{\widehat\alpha}_{k,s} < 1$} \\
		      1 & \text{if $\bar{\widetilde\alpha}_{k,s} \leq 1 \leq \bar{\widehat\alpha}_{k,s}$} \\
		      \bar{\widetilde\alpha}_{k,s} & \text{if $\bar{\widetilde\alpha}_{k,s} > 1$}
		    \end{cases}
\end{equation}

Our proposed algorithm \SVRSQP{} is fully described in Algorithm~\ref{alg.sqp_svrg}. Similar to the SVRG method \cite{johnson2013accelerating,reddi2016stochastic}, \SVRSQP{} operates with inner and outer iterations. Each outer iteration commences with the computation of the full gradient of the objective function at the \emph{reference point} $x_{k,0}$, i.e., $g_{k,0}$. Given the $g_{k,0}$ at every inner iteration, a stochastic variance reduced gradient is computed via \eqref{eq.svrg_grad}, and then paralleling the SQP paradigm, the search direction is computed by solving the linear system given in \eqref{eq.system_stochastic}. Finally, similar to the stochastic SQP algorithm proposed in \cite{berahas2021sequential}, the merit parameter is updated following \eqref{eq.merittrial}--\eqref{eq.meritupdate}, a step size is computed, and the current iterate is updated. The algorithm allows for two different step size choices: constant step size (\textbf{Option I}) and adaptive step size (\textbf{Option II}) via the equations \eqref{eq:step size1}--\eqref{eq.step size}.

\begin{algorithm}[ht]
  \caption{Stochastic Variance Reduced SQP (\SVRSQP{})}
  \label{alg.sqp_svrg}
  \begin{algorithmic}[1]
    \Require $x_{-1,S} \in \mathbb{R}^{n}$ (initial iterate); $ \bar\tau_{-1,S-1}  \in \mathbb{R}_{>0}$ (initial merit parameter value); $\epsilon_\tau \in (0,1)$ (merit decrease parameter); $\sigma \in (0,1)$ (model reduction parameter), $b \in [N-1] 
    $ (batch size) 
    
    \textbf{Require (Option I: Constant step size algorithm):} $\alpha \in (0,1]$ (constant step size parameter)
    
    \textbf{Require (Option II: Adaptive step size algorithm):} $\beta \in (0,1]$ (adaptive step size parameter), $\alpha_u \in \mathbb{R}_{>0}$ (adaptive step size bound)
    \vspace{0.25cm}
    \For{$k=0,1,\dots,$}
    \State  Set $x_{k,0} = x_{k-1,S}$; $\bar\tau_{k,-1} = \bar\tau_{k-1,S-1}$
    \State Compute gradient $g_{k,0} = \nabla f(x_{k,0})$
    \For{$s=0, 1,\dots,S-1$} 
      \State Choose a mini-batch $I_{k,s} \subset [N]$ of size $b$, and compute $\bar{g}_{k,s}$ via (\ref{eq.svrg_grad}) 
	  \State Compute $(\bar{d}_{k,s} ,\bar{y}_{k,s} )$ as the solution of \eqref{eq.system_stochastic} 
	  \If{$\bar{d}_{k,s}  = 0$} \textbf{set} $x_{k,s+1} \gets x_{k,s} $, $\bar\tau_{k,s} \gets \bar\tau_{k,s-1}$; go to Line 5 
      \EndIf 
    \State Set $\bar\tau_{k,s}^{trial}$ via (\ref{eq.merittrial}) and $\bar\tau_{k,s}$ via (\ref{eq.meritupdate})
    \State Set step size parameter $\bar{\alpha}_{k,s}$
    
    \hspace{0.75cm}\textbf{Option I:} Set $\bar{\alpha}_{k,s} = \alpha$
    
    \hspace{0.75cm}\textbf{Option II:} Compute $\bar{\widehat\alpha}_{k,s}$ and $\bar{\widetilde\alpha}_{k,s}$ via \eqref{eq:step size1}--\eqref{eq:step size2}
    
    \hspace{2.8cm} Set  $\bar{\alpha}_{k,s}$ via \eqref{eq.step size}
      \State Set $x_{k,s+1} \gets x_{k,s}  + \bar{\alpha}_{k,s} \bar{d}_{k,s} $
    \EndFor
    \EndFor
  \end{algorithmic}
\end{algorithm}

\begin{remark} 
Due to the nature of the SVRG gradient estimate, our proposed algorithm is of nested nature (inner and outer iterations), and the full batch gradient is computed once every outer iteration in order to reduce the variance of the gradient estimate. Our proposed algorithm has two options for selecting the step size.  Algorithm \ref{alg.sqp_svrg} with \textbf{Option I} (constant step size) can be considered a natural extension of \cite{reddi2016stochastic} to the equality constrained setting. Algorithm \ref{alg.sqp_svrg} with \textbf{Option II} (adaptive step size) can be considered a natural extension of \cite{berahas2021sequential} where the stochastic gradient estimate is replaced by the SVRG gradient estimate.
\end{remark}

\section{Convergence Analysis}\label{sec.convergence}
In this section, we present convergence guarantees for \SVRSQP{}  (Algorithm~\ref{alg.sqp_svrg}) under the two step size regimes. We begin with some general technical lemmas (Section~\ref{sec.general_results}), then discuss the behavior of the merit parameter (Section~\ref{sec.merit_parameter_behaviors}), and finally present our main theoretical results for constant and adaptive step size choices (Sections~\ref{sec.constant} and \ref{sec.adaptive}, respectively). Throughout this section we assume that Assumptions~\ref{ass.function}, \ref{ass.unbias} and \ref{ass.H} hold; for brevity, we do not remind the reader of this fact within the statement of each result.


For the purposes of the analysis, we define several deterministic quantities that are never explicitly computed in Algorithm~\ref{alg.sqp_svrg}. First, $d_{k,s} \in \mathbb{R}^n$ and $y_{k,s} \in \mathbb{R}^m$ for all $(k,s) \in \mathbb{N} \times \left[\bar{S}\right]$ are defined as
\begin{align}
\begin{bmatrix}
      H_{k,s}  & J_{k,s}^T \\ J_{k,s}  & 0
\end{bmatrix}
\begin{bmatrix}
      {d}_{k,s}  \\ {y}_{k,s} 
\end{bmatrix} = - \begin{bmatrix}
      {g}_{k,s}  \\ c_{k,s} 
\end{bmatrix}.
\label{eq.system_det}
\end{align}
We note that the only difference between~\eqref{eq.system_stochastic} and \eqref{eq.system_det} is the right-hand-side, where the gradient approximation $\bar{g}_{k,s}$ is replaced by the true gradient ${g}_{k,s}$. Moreover, for all $(k,s) \in \mathbb{N} \times \left[\bar{S}\right]$,  $\tau_{k,s}^{trial}$ and $\tau_{k,s}$ are the deterministic analogues of stochastic merit parameters values $\bar{\tau}_{k,s}^{trial}$ and $\bar{\tau}_{k,s}$, respectively, where $\bar{g}_{k,s}$ and $\bar{d}_{k,s}$ are replaced by $g_{k,s}$ and $d_{k,s}$ in \eqref{eq.merittrial} and \eqref{eq.meritupdate}.


\subsection{General results}
\label{sec.general_results}

The first lemma of this section consists of several technical conditions that are used for the convergence analysis of the \SVRSQP{} method. 
These conditions are analogues of those in \cite[Lemma 3.4]{berahas2021sequential}\footnote{For lemmas with proofs equivalent to those in \cite{berahas2021sequential}, we refer interested reader to the appropriate sections.}. 


\begin{lemma} \label{lemma:21'}
There exists a constant $\kappa_l \in \mathbb{R}_{>0}$ such that the following statements hold true for all $(k,s) \in \mathbb{N} \times \left[\bar{S}\right]$:
\begin{enumerate}
    \item[(a)] $\Delta  l(x_{k,s} , \bar \tau_{k,s},g_{k,s},d_{k,s} ) \geq \kappa_l  \bar \tau_{k,s} \|{d}_{k,s} \|_2^2$;
    \item[(b)] $\Delta  l(x_{k,s} , \bar\tau_{k,s},\bar{g}_{k,s},\bar{d}_{k,s} ) \geq \kappa_l  \bar\tau_{k,s} \|\bar{d}_{k,s} \|_2^2$; 
    \item[(c)] and, 
\begin{equation}
\begin{aligned}\label{eq.merit_reduction_ub}
    &\phi(x_{k,s+1},\bar{\tau}_{k,s})  - \phi(x_{k,s},\bar{\tau}_{k,s})\\
    \le &\ \bar{\alpha}_{k,s} \bar{\tau}_{k,s}   g_{k,s}^T   \bar{d}_{k,s}  + \lvert1-\bar{\alpha}_{k,s}\rvert  \| c_{k,s} \|_1 -  \| c_{k,s} \|_1 + \tfrac12 (\bar{\tau}_{k,s} L +\Gamma )\bar{\alpha}_{k,s}^2  \| \bar{d}_{k,s} \|_2^2.
\end{aligned}
\end{equation}
\end{enumerate}
\end{lemma}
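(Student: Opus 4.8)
The plan is to establish the three statements largely by adapting the corresponding arguments from \cite[Lemma 3.4]{berahas2021sequential}, taking care that the stochastic merit parameter $\bar\tau_{k,s}$ appears in all three bounds (it is common to both the deterministic and stochastic directions because it is already computed by the algorithm before the step is taken).

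For part (a), I would start from the definition \eqref{def.merit_model_reduction}, so that $\Delta l(x_{k,s},\bar\tau_{k,s},g_{k,s},d_{k,s}) = -\bar\tau_{k,s}\, g_{k,s}^T d_{k,s} + \|c_{k,s}\|_1$. Using the deterministic system \eqref{eq.system_det}, I would substitute $g_{k,s} = -H_{k,s}d_{k,s} - J_{k,s}^T y_{k,s}$ and $J_{k,s}d_{k,s} = -c_{k,s}$ to rewrite $-g_{k,s}^T d_{k,s} = d_{k,s}^T H_{k,s} d_{k,s} + y_{k,s}^T J_{k,s} d_{k,s} = d_{k,s}^T H_{k,s} d_{k,s} - y_{k,s}^T c_{k,s}$. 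The key structural fact is that $d_{k,s}$ splits into its range-space (normal) and null-space (tangential) components relative to $J_{k,s}$; the normal component is controlled by $\|c_{k,s}\|_1$ and the bounded singular values of $J_{k,s}$ (Assumption~\ref{ass.function}), while on the null-space component the curvature bound from Assumption~\ref{ass.H} gives $\geq \zeta\|\cdot\|_2^2$. Combining the curvature lower bound with the feasibility term $\|c_{k,s}\|_1$ and using $\kappa_H$, $\kappa_\sigma$, $\kappa_J$ to absorb cross terms, one obtains a uniform constant $\kappa_l$ with $\Delta l \geq \kappa_l\bar\tau_{k,s}\|d_{k,s}\|_2^2$; crucially $\kappa_l$ can be taken independent of the randomness because none of $J_{k,s}, H_{k,s}, c_{k,s}$ depends on $\bar g_{k,s}$. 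Part (b) is then identical verbatim with $(g_{k,s},d_{k,s})$ replaced by $(\bar g_{k,s},\bar d_{k,s})$, using \eqref{eq.system_stochastic} instead of \eqref{eq.system_det}; the same $\kappa_l$ works since the estimates used are deterministic bounds valid at $x_{k,s}$.

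For part (c), I would invoke the bound \eqref{eq.merit_reduction_ub1}, which is already stated in the excerpt (with a pointer to \cite[Lemma 3.1]{byrd2008inexact}), and simply rearrange $(|1-\bar\alpha_{k,s}|-1)\|c_{k,s}\|_1 = |1-\bar\alpha_{k,s}|\,\|c_{k,s}\|_1 - \|c_{k,s}\|_1$ to match the stated form \eqref{eq.merit_reduction_ub}; this part is essentially definitional. Note the subtle point that the objective term uses the \emph{true} gradient $g_{k,s}$ (from the exact Taylor/Lipschitz expansion of $f$) while the quadratic term uses the \emph{stochastic} direction $\bar d_{k,s}$ — this mismatch is exactly why the statement is phrased this way and must be preserved, not "simplified."

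The main obstacle I anticipate is part (a)/(b): carefully carrying out the normal/tangential decomposition of the step and showing that the constant $\kappa_l$ can be chosen uniformly over all $(k,s)$ using only the bounds in \eqref{eq.bounds} and Assumption~\ref{ass.H}, in particular handling the indefinite cross-term $-y_{k,s}^T c_{k,s}$ (and its stochastic analogue $-\bar y_{k,s}^T c_{k,s}$) by bounding $\|y_{k,s}\|_2$ and $\|\bar y_{k,s}\|_2$ in terms of $\|g_{k,s}\|_2$ (resp.\ $\|\bar g_{k,s}\|_2$) and $\|c_{k,s}\|_1$ via the KKT system — and then absorbing these into the positive curvature and feasibility terms. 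Since this computation is word-for-word the one in \cite[Lemma 3.4]{berahas2021sequential}, I would present the short derivation for (c) explicitly and refer the reader to that reference for the details of (a) and (b), consistent with the footnote convention adopted in this section.
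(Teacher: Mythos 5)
Your proposal matches the paper's treatment: the paper establishes (a) and (b) purely by citation to \cite[Lemmas 2.11, 2.12 \& 3.4(c),(d)]{berahas2021sequential} and notes that (c) is just the double-indexed version of the corresponding inequality there (i.e., the rearrangement of \eqref{eq.merit_reduction_ub1} you describe). Your reconstruction of the tangential/normal decomposition and cross-term control underlying (a)--(b) is consistent with the cited argument, and deferring those details to the reference, as you propose, is exactly what the paper does.
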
 
\begin{proof}First note that condition $(b)$ is the stochastic analogue of condition $(a)$. Conditions $(a)$ and $(b)$ can be derived directly from \cite[Lemmas 2.11, 2.12 \& 3.4(c),(d)]{berahas2021sequential}. 
Inequality $(c)$ is identical to that in \cite[Lemma 3.4]{berahas2021sequential}, but accounts for the double indices of the \SVRSQP{} algorithm. 
\end{proof}

In the next lemma, we bound the error in the gradient approximation $\bar{g}_{k,s}$ employed by the \SVRSQP{} algorithm.

\begin{lemma}\label{lemma_variance} 
Let  $\bar{g}_{k,s} \in \mathbb{R}^n$ 
be the gradient approximation computed by Algorithm~\ref{alg.sqp_svrg} via \eqref{eq.svrg_grad}. Then, for all $(k,s) \in \mathbb{N} \times \left[\bar{S}\right]$,
\begin{align}\label{eq.variance}
  \mathbb{E}_{k,s}\left[\|\bar{g}_{k,s}  - \nabla f(x_{k,s} ) \|_2^2\right]\leq  M_{k,s},
\end{align}
where $M_{k,s} = \tfrac{L^2}{b}\|x_{k,s}  - x_{k,0}\|_2^2$, and $\mathbb{E}_{k,s}$ denotes the expectation taken conditioned on the event that the algorithm has reached $x_{k,0} \in \mathbb{R}^n$ in (outer) iteration $k \in \mathbb{N}$ and $x_{k,s}$ in (outer-inner) iteration $(k,s) \in \mathbb{N} \times \left[\bar{S}\right]$.
\end{lemma}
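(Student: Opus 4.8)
The plan is to exploit the standard SVRG variance decomposition together with the per-component Lipschitz continuity of $\nabla f_i$ granted by Assumption~\ref{ass.function}. First I would rewrite the error using \eqref{eq.svrg_grad} and the identity $\nabla f(x_{k,0}) = \frac1N\sum_{i=1}^N \nabla f_i(x_{k,0})$ as
\[
  \bar g_{k,s} - \nabla f(x_{k,s}) \;=\; \frac1b \sum_{i\in I_{k,s}} v_i,
  \qquad
  v_i := \bigl(\nabla f_i(x_{k,s}) - \nabla f_i(x_{k,0})\bigr) - \bigl(\nabla f(x_{k,s}) - \nabla f(x_{k,0})\bigr).
\]
Since each index $i\in I_{k,s}$ is drawn uniformly at random from $[N]$ (Assumption~\ref{ass.unbias}), the conditional mean of $\nabla f_i(x_{k,s}) - \nabla f_i(x_{k,0})$ is exactly $\nabla f(x_{k,s}) - \nabla f(x_{k,0})$, so $\mathbb{E}_{k,s}[v_i] = 0$; i.e., $v_i$ is the centered version of $\nabla f_i(x_{k,s}) - \nabla f_i(x_{k,0})$.

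Second, I would compute the conditional second moment, treating $x_{k,0}$ and $x_{k,s}$ (hence $\nabla f(x_{k,s})$, $\nabla f(x_{k,0})$ and $x_{k,s}-x_{k,0}$) as fixed under $\mathbb{E}_{k,s}$. For mini-batches drawn with replacement the $v_i$ are i.i.d. and mean zero, so $\mathbb{E}_{k,s}\bigl[\|\tfrac1b\sum_{i\in I_{k,s}} v_i\|_2^2\bigr] = \tfrac1b\,\mathbb{E}_{k,s}[\|v_i\|_2^2]$ for a single uniform draw. Applying the elementary inequality $\mathbb{E}\|Z - \mathbb{E}Z\|_2^2 \le \mathbb{E}\|Z\|_2^2$ with $Z = \nabla f_i(x_{k,s}) - \nabla f_i(x_{k,0})$ gives $\mathbb{E}_{k,s}[\|v_i\|_2^2] \le \tfrac1N\sum_{i=1}^N \|\nabla f_i(x_{k,s}) - \nabla f_i(x_{k,0})\|_2^2$, and the $L$-Lipschitz continuity of each $\nabla f_i$ (Assumption~\ref{ass.function}) bounds each summand by $L^2\|x_{k,s}-x_{k,0}\|_2^2$. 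Combining yields $\mathbb{E}_{k,s}[\|\bar g_{k,s} - \nabla f(x_{k,s})\|_2^2] \le \tfrac{L^2}{b}\|x_{k,s}-x_{k,0}\|_2^2 = M_{k,s}$, as claimed.

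If one instead samples the mini-batch without replacement, the cross terms $\mathbb{E}_{k,s}[v_i^\top v_j]$ for $i\ne j$ are nonpositive (the usual property of sampling from a finite population whose elements sum to zero), so $\mathbb{E}_{k,s}\bigl[\|\tfrac1b\sum v_i\|_2^2\bigr] \le \tfrac1{b^2}\sum_i \mathbb{E}_{k,s}[\|v_i\|_2^2] = \tfrac1b\,\mathbb{E}_{k,s}[\|v_1\|_2^2]$ and the same bound follows (with an extra discarded factor $\tfrac{N-b}{N-1}\le 1$). The argument is short, and the only genuinely delicate point is bookkeeping: one must carry out the centering/variance step with respect to the correct conditional expectation $\mathbb{E}_{k,s}$ (so that $x_{k,0}$ and $x_{k,s}$ are fixed and only $I_{k,s}$ is random) and invoke the \emph{per-component} Lipschitz constant $L$ rather than that of the full gradient; everything else is a routine application of the bias/variance identity.
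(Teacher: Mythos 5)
Your proof is correct and follows essentially the same route as the paper: the same decomposition into centered summands $v_i$ (the paper's $\zeta_{k,s}-\mathbb{E}_{k,s}[\zeta_{k,s}]$), the same use of the bias/variance inequality $\mathbb{E}\|Z-\mathbb{E}Z\|_2^2\le\mathbb{E}\|Z\|_2^2$ and of the independence of mean-zero summands, and the same per-component Lipschitz bound. Your additional remark on without-replacement sampling is a small refinement the paper does not spell out, but it does not change the argument.
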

\begin{proof}
For the ease of exposition, we introduce the following notation,
\begin{align}\label{eq.zeta}
    \zeta_{k,s} =\tfrac{1}{b} \sum_{i \in I_{k,s}} \left(\nabla f_{i}\left(x_{k,s}\right)-\nabla f_{i}\left(x_{k,0}\right)\right).
\end{align}
It follows by Assumption~\ref{ass.unbias} and \eqref{eq.zeta} that $\mathbb{E}_{k,s}\left[\zeta_{k,s}\right]=\nabla f(x_{k,s})-\nabla f(x_{k,0})$. By the definition of $\bar{g}_{k,s}$ \eqref{eq.svrg_grad} and Assumption~\ref{ass.function}, and the facts that $ \mathbb{E}[\|z-\mathbb{E}[z]\|^{2}] \leq \mathbb{E}[\|z\|^{2}]$ for random variable $z$ and $\mathbb{E}[\|z_1+\cdots+z_r\|^2] = \mathbb{E}[\|z_1\|^2 + \cdots + \|z_r\|^2]$ for independent mean zero random variables $z_1,\dots,z_r$ \cite{reddi2016stochastic},  
it follows that
\begin{align*}
        \mathbb{E}_{k,s}\left[\|\bar{g}_{k,s} -\nabla f(x_{k,s} )\|_2^2 \right] &= \mathbb{E}_{k,s}\left[\|\zeta_{k,s} + \nabla f(x_{k,0}) - \nabla f(x_{k,s} ) \|_2^2\right]\\
        &= \mathbb{E}_{k,s}\left[\|\zeta_{k,s} - \mathbb{E}_{k,s}[\zeta_{k,s}] \|_2^2\right]\\
        & = \tfrac{1}{b^2}  \mathbb{E}_{k,s} \left[ \left\|\sum_{i \in I_{k,s}} \left(\nabla f_{i}\left(x_{k,s}\right)-\nabla f_{i}\left(x_{k,0}\right)- \mathbb{E}_{k,s}[\zeta_{k,s}]\right) \right\|_2^2  \right] \\
        & = \tfrac{1}{b^2}  \mathbb{E}_{k,s} \left[ \sum_{i \in I_{k,s}} \left\| \nabla f_{i}\left(x_{k,s}\right)-\nabla f_{i}\left(x_{k,0}\right)- \mathbb{E}_{k,s}[\zeta_{k,s}] \right\|_2^2  \right] \\
        & \le \tfrac{1}{b^2}  \mathbb{E}_{k,s} \left[ \sum_{i \in I_{k,s}} \left\| \nabla f_{i}\left(x_{k,s}\right)-\nabla f_{i}\left(x_{k,0}\right) \right\|_2^2  \right] \\
        & \leq \tfrac{L^2}{b}\mathbb{E}_{k,s}\left[\|x_{k,s}  - x_{k,0}\|_2^2\right] = \tfrac{L^2}{b}\|x_{k,s}  - x_{k,0}\|_2^2.
    \end{align*}
\end{proof}

Lemma~\ref{lemma_variance} is one of the major differences between this work and that in \cite{berahas2021sequential}, and in \cite{berahas2021stochastic,curtis2021inexact,curtis2021worst}. Specifically, in \cite{berahas2021sequential} (and in \cite{berahas2021stochastic,curtis2021inexact,curtis2021worst}) it is assumed that the variance in the stochastic gradients employed is bounded uniformly by a constant $M \in \mathbb{R}_{>0}$ (i.e., this would be equivalent to having $M_{k,s} = M$ for all $(k,s) \in \mathbb{N} \times \left[\bar{S}\right]$). This is a classical assumption for the convergence analysis of the SG method \cite{bottou2018optimization,robbins1951stochastic}, which leads to the fact that the algorithm can only converge to a neighborhood  depending on $M$ in expectation when a constant step size is employed. By employing variance reduced gradients, this allows us to control the variance, and diminish it as needed, in order to prove exact convergence of first-order stationary measure in expectation.

In the next two lemmas, we present some useful bounds pertaining to the solutions of the linear system \eqref{eq.system_stochastic}.

\begin{lemma}\label{lemma:25'}
For all $(k,s) \in \mathbb{N} \times \left[\bar{S}\right]$, we always have $\mathbb{E}_{k,s}[\bar{d}_{k,s} ] = d_{k,s} $  
and $\mathbb{E}_{k,s}[\bar{y}_{k,s} ] = y_{k,s}$. In addition, 
there exists some constant $\kappa_d \in \mathbb{R}_{>0}$, independent of $(k, s)$ and any run of the algorithm, with  $\mathbb{E}_{k,s}[\|\bar{d}_{k,s}  - d_{k,s} \|_2] \leq \kappa_d \sqrt{M_{k,s}}$. 
\end{lemma}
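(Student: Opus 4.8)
The plan is to exploit the linearity of the solution map of the saddle-point system in the right-hand side. Subtracting \eqref{eq.system_det} from \eqref{eq.system_stochastic}, we obtain
\begin{align*}
\begin{bmatrix} H_{k,s} & J_{k,s}^T \\ J_{k,s} & 0 \end{bmatrix}
\begin{bmatrix} \bar{d}_{k,s} - d_{k,s} \\ \bar{y}_{k,s} - y_{k,s} \end{bmatrix}
= - \begin{bmatrix} \bar{g}_{k,s} - g_{k,s} \\ 0 \end{bmatrix}.
\end{align*}
Since the coefficient matrix is invertible under Assumptions~\ref{ass.function} and \ref{ass.H}, and is $\mathcal{F}_{k,s}$-measurable (it depends only on $x_{k,s}$ and $H_{k,s}$, the latter independent of $\bar{g}_{k,s}$ by Assumption~\ref{ass.H}), taking conditional expectation $\mathbb{E}_{k,s}[\cdot]$ and using $\mathbb{E}_{k,s}[\bar{g}_{k,s}] = g_{k,s}$ (the unbiasedness of the SVRG gradient noted after \eqref{eq.svrg_grad}) immediately gives $\mathbb{E}_{k,s}[\bar{d}_{k,s}] = d_{k,s}$ and $\mathbb{E}_{k,s}[\bar{y}_{k,s}] = y_{k,s}$.

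For the second claim, I would first produce a deterministic, uniform bound on the operator norm of the block of the inverse saddle-point matrix that maps the top block of the right-hand side to $\bar{d}_{k,s} - d_{k,s}$. Concretely, the difference $\bar{d}_{k,s} - d_{k,s}$ solves a system whose only nonzero data is $-(\bar{g}_{k,s} - g_{k,s})$ in the first block and $0$ in the second; by standard null-space / range-space decomposition of equality-constrained QP solutions (or directly via the explicit formula for the inverse of the KKT matrix), one has $\|\bar{d}_{k,s} - d_{k,s}\|_2 \le \kappa_d \|\bar{g}_{k,s} - g_{k,s}\|_2$ for a constant $\kappa_d$ depending only on $\kappa_H$, $\zeta$, $\kappa_J$, and $\kappa_\sigma$ from \eqref{eq.bounds} and Assumption~\ref{ass.H} — all uniform in $(k,s)$ and in the run. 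Taking conditional expectations and applying Jensen's inequality ($\mathbb{E}_{k,s}[\|\cdot\|_2] \le (\mathbb{E}_{k,s}[\|\cdot\|_2^2])^{1/2}$) together with the variance bound \eqref{eq.variance} of Lemma~\ref{lemma_variance} yields
\begin{align*}
\mathbb{E}_{k,s}[\|\bar{d}_{k,s} - d_{k,s}\|_2] \le \kappa_d\, \mathbb{E}_{k,s}[\|\bar{g}_{k,s} - g_{k,s}\|_2] \le \kappa_d \sqrt{\mathbb{E}_{k,s}[\|\bar{g}_{k,s} - g_{k,s}\|_2^2]} \le \kappa_d \sqrt{M_{k,s}},
\end{align*}
which is exactly the desired inequality (after renaming the constant).

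The main obstacle — really the only non-mechanical point — is establishing the uniform constant in the bound $\|\bar{d}_{k,s} - d_{k,s}\|_2 \le \kappa_d \|\bar{g}_{k,s} - g_{k,s}\|_2$; i.e., showing that the relevant block of the KKT-matrix inverse has operator norm bounded independently of $(k,s)$. This follows from Assumption~\ref{ass.H} (uniform bound $\kappa_H$ on $\|H_{k,s}\|$ and uniform coercivity constant $\zeta$ on the null space of $J_{k,s}$) together with the uniform bounds $\|J_{k,s}\|_2 \le \kappa_J$ and $\|(J_{k,s}J_{k,s}^T)^{-1}\|_2 \le \kappa_\sigma^{-2}$ from \eqref{eq.bounds}; these are precisely the ingredients used in \cite[Lemma 2.11 \& 3.4]{berahas2021sequential} to bound the primal step, so I would cite that result and adapt it to the double-index notation rather than rederiving the explicit inverse. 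Everything else is linearity, measurability, and Jensen.
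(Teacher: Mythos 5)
Your proposal is correct and follows essentially the same route as the paper's proof: linearity and measurability of the KKT solution map for the unbiasedness claims, then the subtracted system \eqref{eq.imply2}, a uniform bound $\kappa_d$ on the norm of the inverse KKT matrix, Jensen's inequality, and the variance bound of Lemma~\ref{lemma_variance}. The only difference is that you spell out why the inverse is uniformly bounded (via $\kappa_H$, $\zeta$, $\kappa_J$, $\kappa_\sigma$), which the paper leaves implicit.
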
 
\begin{proof}
The proof of this lemma is similar to that in \cite[Lemma 3.8]{berahas2021sequential}. The first statement follows from the facts that $(i)$ conditioned on $x_{k,s} $, the matrix on the left-hand-side of \eqref{eq.system_stochastic} is deterministic; $(ii)$ under Assumption \ref{ass.function}, the matrix is invertible; $(iii)$ under Assumption \ref{ass.unbias}, $\mathbb{E}_{k,s}[\bar{g}_{k,s} ] = \nabla f(x_{k,s} )$; and $(iv)$ expectation is a linear operator. By \eqref{eq.system_stochastic} for any realization $\bar{g}_{k,s} $, it follows that
\begin{align}\label{eq.imply2}
\begin{bmatrix}
        \bar{d}_{k,s}  - d_{k,s}  \\ \bar{y}_{k,s}  - y_{k,s}
\end{bmatrix} &= - \begin{bmatrix}
        H_{k,s}  & J_{k,s}^T \\ J_{k,s}  & 0
\end{bmatrix}^{-1} \begin{bmatrix}
        \bar{g}_{k,s}  - \nabla f(x_{k,s} ) \\ 0
\end{bmatrix}.
\end{align}
The second result follows by Jensen’s inequality, the concavity of the
square root, and Lemma \ref{lemma_variance}, and where $\kappa_d \in \mathbb{R}_{>0}$ is an upper bound on the norm of the matrix in \eqref{eq.imply2}.
\end{proof}


\begin{lemma} 
For all $(k,s) \in \mathbb{N} \times \left[\bar{S}\right]$, it follows that 
\begin{align}
    g_{k,s}^T d_{k,s}  \geq \mathbb{E}_{k,s}[{\bar{g}_{k,s} }^T\bar{d}_{k,s} ] \geq g_{k,s}^T d_{k,s}  - \zeta^{-1} M_{k,s}, 
\end{align}
\label{lemma:26'}
\end{lemma}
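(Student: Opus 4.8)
The plan is to work with the difference variables $\delta g := \bar g_{k,s} - g_{k,s}$, $\delta d := \bar d_{k,s} - d_{k,s}$, and $\delta y := \bar y_{k,s} - y_{k,s}$, and to exploit the linear-algebraic identity already recorded in the proof of Lemma~\ref{lemma:25'}. Subtracting the deterministic system \eqref{eq.system_det} from the stochastic system \eqref{eq.system_stochastic} gives $H_{k,s}\,\delta d + J_{k,s}^T\,\delta y = -\,\delta g$ and $J_{k,s}\,\delta d = 0$; in particular $\delta d$ lies in the null space of $J_{k,s}$, so Assumption~\ref{ass.H} applies to it. First I would expand
\[
\bar g_{k,s}^T \bar d_{k,s} = (g_{k,s}+\delta g)^T(d_{k,s}+\delta d) = g_{k,s}^T d_{k,s} + g_{k,s}^T \delta d + \delta g^T d_{k,s} + \delta g^T \delta d,
\]
and take $\mathbb{E}_{k,s}[\cdot]$. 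Since $g_{k,s}$ and $d_{k,s}$ are deterministic given $x_{k,s}$, while $\mathbb{E}_{k,s}[\delta g]=0$ (Assumption~\ref{ass.unbias}) and $\mathbb{E}_{k,s}[\delta d]=0$ (Lemma~\ref{lemma:25'}), the two middle terms vanish, leaving $\mathbb{E}_{k,s}[\bar g_{k,s}^T \bar d_{k,s}] = g_{k,s}^T d_{k,s} + \mathbb{E}_{k,s}[\delta g^T \delta d]$.

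The crux is to control $\delta g^T \delta d$; one must resist the temptation to split it further, since $\delta d$ is itself a linear image of $\delta g$. Instead, substitute $\delta g = -H_{k,s}\,\delta d - J_{k,s}^T\,\delta y$ to obtain
\[
\delta g^T \delta d = -\,\delta d^T H_{k,s}\, \delta d - \delta y^T J_{k,s}\, \delta d = -\,\delta d^T H_{k,s}\, \delta d,
\]
using $J_{k,s}\,\delta d = 0$. By Assumption~\ref{ass.H} (null-space positive definiteness of $H_{k,s}$), $\delta d^T H_{k,s}\,\delta d \ge \zeta\|\delta d\|_2^2 \ge 0$, hence $\delta g^T \delta d \le 0$ deterministically; taking expectation yields the upper bound $\mathbb{E}_{k,s}[\bar g_{k,s}^T \bar d_{k,s}] \le g_{k,s}^T d_{k,s}$.

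For the lower bound, I would chain the same identity with Cauchy--Schwarz: $\zeta\|\delta d\|_2^2 \le \delta d^T H_{k,s}\,\delta d = -\delta g^T \delta d \le \|\delta g\|_2\|\delta d\|_2$, so $\|\delta d\|_2 \le \zeta^{-1}\|\delta g\|_2$, and therefore $\delta d^T H_{k,s}\,\delta d = -\delta g^T\delta d \le \|\delta g\|_2\|\delta d\|_2 \le \zeta^{-1}\|\delta g\|_2^2$. Combining with $\delta g^T\delta d = -\delta d^T H_{k,s}\,\delta d$ gives $\delta g^T\delta d \ge -\zeta^{-1}\|\delta g\|_2^2$ pointwise; taking $\mathbb{E}_{k,s}[\cdot]$ and invoking Lemma~\ref{lemma_variance} ($\mathbb{E}_{k,s}[\|\delta g\|_2^2] \le M_{k,s}$) yields $\mathbb{E}_{k,s}[\delta g^T\delta d] \ge -\zeta^{-1}M_{k,s}$, hence $\mathbb{E}_{k,s}[\bar g_{k,s}^T \bar d_{k,s}] \ge g_{k,s}^T d_{k,s} - \zeta^{-1}M_{k,s}$. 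I do not anticipate a serious obstacle here; the only subtlety is the correlation between $\delta d$ and $\delta g$, which is sidestepped by rewriting the cross term as the quadratic form $-\delta d^T H_{k,s}\,\delta d$ before bounding and before taking expectations.
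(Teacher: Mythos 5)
Your proposal is correct and follows essentially the same route as the paper, which simply invokes the proof of \cite[Lemma 3.9]{berahas2021sequential} with $M$ replaced by $M_{k,s}$: that argument is exactly your decomposition into $\delta g$, $\delta d$, the vanishing of the cross terms by unbiasedness, the identity $\delta g^T\delta d = -\delta d^T H_{k,s}\delta d$ from the subtracted linear systems, and the bound $\|\delta d\|_2 \le \zeta^{-1}\|\delta g\|_2$ combined with Lemma~\ref{lemma_variance}. No gaps.
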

\begin{proof}
The proof is identical to  \cite[Lemma 3.9 (proof)]{berahas2021sequential} with $M$ replaced by $M_{k,s}$ ($M_{k,s}$ defined in Lemma~\ref{lemma_variance}).
%
\end{proof}


We conclude this subsection by defining a Lyapunov function $R : \mathbb{R}^{n} \times \mathbb{R}^{n} \times \mathbb{R}_{>0} \times \mathbb{R}_{>0} \to \mathbb{R}$ that will be used in the analysis. Specifically, 
\begin{align}\label{eq.lyapunov_1}
    R_{k,s}:=R(x_{k,s},x_{k,0},\bar{\tau}_{k,s},\lambda_s) = \mathbb{E}_{k,s}\left[\phi(x_{k,s},\bar{\tau}_{k,s}) + \lambda_s \|x_{k,s}  - x_{k,0}\|_2^2\right],
\end{align}
where $x_{k,s} \in \mathbb{R}^n$ and $\bar{\tau}_{k,s} \in \mathbb{R}_{>0}$ are the iterate and merit parameter at outer-inner iteration $(k,s) \in \mathbb{N} \times \left[\bar{S}\right] $, respectively, $x_{k,0} \in \mathbb{R}^n$ is the reference point at the $k$th outer iteration and $\lambda_s \in \mathbb{R}_{>0}$ is a parameter (defined explicitly later in the analysis). The Lyapunov function is defined as the expected value of the merit function plus the distance squared between any inner iterate and the reference iterate parameterized by a constant. When $s=0$ the Lyapunov function only involves the merit function. Moreover, the last term in the Lyapunov function is similar to that of the upper bound in the variance of the SVRG gradient (see Lemma~\ref{lemma_variance}), and, if the iterates converge, the Lyapunov function reduces to the expected value of the merit function. This is by construction, and will allow us to prove strong theoretical guarantees for \SVRSQP{}.

\subsection{Merit Parameter behavior}\label{sec.merit_parameter_behaviors}

The behavior of the merit parameter $\bar\tau_{k,s}$ requires careful considerations as it is a crucial component of the \SVRSQP{} method and the analysis. Specifically, what is important is the behavior of $\bar\tau_{k,s}$ for large $k \in \mathbb{N}$. As described in \cite{berahas2021sequential}, there are three possible outcomes for $\bar\tau_{k,s}$: ($\romannumeral1$)  converges to zero (vanishes); 
($\romannumeral2$) remains constant at a large positive value; ($\romannumeral3$) remains constant at a sufficiently small positive value. We argue that in the finite-sum setting \eqref{prob.f_finitesum} and under reasonable assumptions, outcome ($\romannumeral1$) is not possible, and outcome ($\romannumeral2$) occurs with probability zero. To show the former, i.e., outcome ($\romannumeral1$) is not possible, we make the following assumption.
\begin{assumption}
  Each component $\{ f_i \}$ of the objective function $f$ in \eqref{prob.f_finitesum} has bounded derivatives over $\mathcal{X}$ (defined in Assumption \ref{ass.function}). 
\label{ass.bounded_gradient}
\end{assumption}
Under 
Assumption~\ref{ass.bounded_gradient}, 
the merit parameter cannot vanish.
\begin{lemma}
Suppose Assumption \ref{ass.bounded_gradient} holds, then there exists $\bar{k}_{\tau}\in \mathbb{N} $ and $\bar\tau_{const} \in \mathbb{R}_{>0}$ such that  $\bar\tau_{k,s} =\bar\tau_{const}$ for all $k \ge \bar{k}_{\tau}$ and $s \in \left[\bar{S}\right]$. 
\label{lemma:bounded_grad_novanish}
\end{lemma}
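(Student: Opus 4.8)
The plan is to show that the sequence $\{\bar\tau_{k,s}\}$, which is nonincreasing by construction, can only decrease finitely often, and hence is eventually constant at a positive value; the entire burden is to bound $\bar\tau_{k,s}$ away from zero, and this is where Assumption~\ref{ass.bounded_gradient} enters. I would first record the key consequence of Assumption~\ref{ass.bounded_gradient}: the SVRG gradient is uniformly bounded. Writing $\bar g_{k,s}$ as in \eqref{eq.svrg_grad} as a combination of mini-batch averages of $\nabla f_i$ evaluated at $x_{k,s}$ and at $x_{k,0}$ together with $\nabla f(x_{k,0})$, and using that each $\nabla f_i$ is bounded over $\mathcal{X}$ — this is exactly the step that needs Assumption~\ref{ass.bounded_gradient}, since $\|x_{k,s}-x_{k,0}\|_2$ is not assumed bounded — the triangle inequality gives $\|\bar g_{k,s}\|_2\le\kappa_{\bar g}$ for a constant independent of $(k,s)$. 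Substituting this (and $\|c_{k,s}\|_1\le\kappa_c$ from \eqref{eq.bounds}) into the linear system \eqref{eq.system_stochastic}, whose coefficient matrix has a uniformly bounded inverse under Assumptions~\ref{ass.function} and~\ref{ass.H}, yields uniform bounds $\|\bar d_{k,s}\|_2\le\kappa_{\bar d}$ and $\|\bar y_{k,s}\|_2\le\kappa_{\bar y}$.

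Next I would prove the trial merit parameter is bounded below: there is $\bar\tau_{\min}>0$ with $\bar\tau_{k,s}^{trial}\ge\bar\tau_{\min}$ whenever it is finite. By \eqref{eq.merittrial} it suffices to show that, in the ``otherwise'' branch, $\|c_{k,s}\|_1>0$ and the denominator $\bar g_{k,s}^T\bar d_{k,s}+\max\{\bar d_{k,s}^TH_{k,s}\bar d_{k,s},0\}$ is at most a fixed multiple of $\|c_{k,s}\|_1$. The first point follows because $\|c_{k,s}\|_1=0$ forces $J_{k,s}\bar d_{k,s}=0$, hence $\bar d_{k,s}^TH_{k,s}\bar d_{k,s}\ge0$ by Assumption~\ref{ass.H}, and then the identity $\bar g_{k,s}^T\bar d_{k,s}+\bar d_{k,s}^TH_{k,s}\bar d_{k,s}=c_{k,s}^T\bar y_{k,s}$ (from the first block of \eqref{eq.system_stochastic}, using $J_{k,s}\bar d_{k,s}=-c_{k,s}$) makes the denominator zero. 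For the second point, on the event $\bar d_{k,s}^TH_{k,s}\bar d_{k,s}\ge0$ the same identity gives denominator $=c_{k,s}^T\bar y_{k,s}\le\kappa_{\bar y}\|c_{k,s}\|_1$. On the event $\bar d_{k,s}^TH_{k,s}\bar d_{k,s}<0$ the denominator equals $\bar g_{k,s}^T\bar d_{k,s}$; here I decompose $\bar d_{k,s}=\bar u_{k,s}+\bar v_{k,s}$ into its null-space and range-space components, use $\|\bar v_{k,s}\|_2\le\kappa_v\|c_{k,s}\|_1$ (singular values of $J$ bounded away from zero), and use $u^TH_{k,s}u\ge\zeta\|u\|_2^2$ on the null space to show that $\bar d_{k,s}^TH_{k,s}\bar d_{k,s}<0$ can only happen when $\|\bar u_{k,s}\|_2\le C''\|\bar v_{k,s}\|_2$; then $\|\bar d_{k,s}\|_2^2\le C'\|c_{k,s}\|_1^2\le C'\kappa_c\|c_{k,s}\|_1$ and $\bar g_{k,s}^T\bar d_{k,s}=c_{k,s}^T\bar y_{k,s}-\bar d_{k,s}^TH_{k,s}\bar d_{k,s}\le\kappa_{\bar y}\|c_{k,s}\|_1+\kappa_H\|\bar d_{k,s}\|_2^2\le C\|c_{k,s}\|_1$. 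In all cases the denominator is at most $C\|c_{k,s}\|_1$, so $\bar\tau_{k,s}^{trial}\ge(1-\sigma)/C=:\bar\tau_{\min}$.

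Finally I would conclude. The rule \eqref{eq.meritupdate}, together with $\bar\tau_{k,-1}=\bar\tau_{k-1,S-1}$ and the $\bar d_{k,s}=0$ branch, makes $\{\bar\tau_{k,s}\}$ nonincreasing in lexicographic order, and every value it takes is either $\bar\tau_{-1,S-1}$ or $(1-\epsilon_\tau)\bar\tau_{k,s}^{trial}\ge(1-\epsilon_\tau)\bar\tau_{\min}$, so the sequence never drops below $\bar\tau_{\mathrm{lb}}:=\min\{\bar\tau_{-1,S-1},(1-\epsilon_\tau)\bar\tau_{\min}\}>0$. On the other hand each actual decrease sets $\bar\tau_{k,s}=(1-\epsilon_\tau)\bar\tau_{k,s}^{trial}$ with $\bar\tau_{k,s}^{trial}<\bar\tau_{k,s-1}$, hence multiplies the current value by a factor strictly below $1-\epsilon_\tau$; after $j$ decreases the value is below $(1-\epsilon_\tau)^j\bar\tau_{-1,S-1}$. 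Compatibility of these two facts forces finitely many decreases, so for all sufficiently large $k$ (say $k\ge\bar k_\tau$) and all $s\in[\bar S]$ the value is a fixed $\bar\tau_{const}>0$.

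The main obstacle is the middle stage, and within it the case $\bar d_{k,s}^TH_{k,s}\bar d_{k,s}<0$: the naive bound on the denominator there contains a term proportional to $\|\bar d_{k,s}\|_2^2$ rather than to $\|c_{k,s}\|_1$, so one must invoke the curvature condition of Assumption~\ref{ass.H} to argue that negative model curvature can occur only when the step is dominated by its (small) normal component. This part parallels the corresponding analysis in \cite{berahas2021sequential}; the genuinely new ingredient here is the uniform bound on the SVRG gradient $\bar g_{k,s}$ established in the first stage, which is precisely what Assumption~\ref{ass.bounded_gradient} buys us in the variance-reduced setting.
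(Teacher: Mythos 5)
Your argument is correct and follows essentially the same route as the paper: both hinge on the observation that Assumption~\ref{ass.bounded_gradient} yields a uniform bound on the SVRG gradient (equivalently, on $\|\bar g_{k,s}-g_{k,s}\|$), after which the trial merit parameter \eqref{eq.merittrial} is bounded below by a positive constant and the update rule \eqref{eq.meritupdate} forces each decrease to be by a factor of at least $1-\epsilon_\tau$, so only finitely many decreases can occur. The paper simply defers these last steps to \cite[Proposition 3.18]{berahas2021sequential}, whereas you have written them out in full (including the negative-curvature case handled via the null-space/range-space decomposition), which matches the cited argument.
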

\begin{proof}
Under Assumption \ref{ass.bounded_gradient}, there exists $g_{\max}\in\mathbb{R}_{>0}$ such that $\|\bar{g}_{k,s} - {g}_{k,s}  \| \le g_{\max}$ for all  $k \ge \bar{k}_{\tau}$ and $s \in \left[\bar{S}\right]$. The desired conclusion follows using similar arguments as in \cite[Proposition 3.18]{berahas2021sequential}. 
\end{proof}

Following a similar argument as that in \cite{berahas2021sequential}, we argue the latter, i.e., ($\romannumeral2$) occurs with probability zero. 

\begin{lemma} 
  Suppose event $E_{\tau\uparrow}$ occurs in the sense that there exists infinite $ (\bar{\mathcal{K}}_{\tau},\bar{\mathcal{S}}_{\tau}) \subseteq \mathbb{N}\times \left[\bar{S}\right]  $ and $\bar\tau_{big} \in \mathbb{R}_{>0}$ such that 
 \begin{align}
   \bar\tau_{k,s} =\bar\tau_{big}   > \tau_{k,s}^{trial}    \text{ for all }   (k,s) \in  (\bar{\mathcal{K}}_{\tau},\bar{\mathcal{S}}_{\tau}).
 \end{align} 
 Moreover, suppose that $ \bar{d}_{k,s}^TH_{k,s}\bar{d}_{k,s} \ge 0 $  for all   $(k,s) \in \mathbb{N} \times \left[\bar{S}\right]$.
Then, $E_{\tau\uparrow}$ occurs with probability zero. 
\end{lemma}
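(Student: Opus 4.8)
\emph{Proof plan.} The plan is to follow the line of \cite[Proposition 3.16]{berahas2021sequential}, the point being that in the finite-sum setting \eqref{prob.f_finitesum} no extra assumption on the gradient oracle is needed: conditioned on the history $\mathcal{F}_{k,s}$ (the event that the algorithm has reached $x_{k,s}$, as in Assumption~\ref{ass.unbias}), the mini-batch $I_{k,s}$ is uniform over the $\binom{N}{b}$ size-$b$ subsets of $[N]$, so $\bar g_{k,s}$ --- and hence $\bar d_{k,s},\bar y_{k,s},\bar\tau_{k,s}^{trial}$ through \eqref{eq.system_stochastic} and \eqref{eq.merittrial} --- takes at most $\binom{N}{b}$ values, each with conditional probability at least $p_0 := \binom{N}{b}^{-1} > 0$. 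First I would left-multiply \eqref{eq.system_stochastic} by $\bar d_{k,s}^T$ and \eqref{eq.system_det} by $d_{k,s}^T$, using $J_{k,s}\bar d_{k,s} = J_{k,s} d_{k,s} = -c_{k,s}$, to get $\bar g_{k,s}^T\bar d_{k,s} + \bar d_{k,s}^T H_{k,s}\bar d_{k,s} = c_{k,s}^T\bar y_{k,s}$ and $g_{k,s}^T d_{k,s} + d_{k,s}^T H_{k,s} d_{k,s} = c_{k,s}^T y_{k,s}$; taking $\mathbb{E}_{k,s}$ of the first, and using $\mathbb{E}_{k,s}[\bar y_{k,s}] = y_{k,s}$ from Lemma~\ref{lemma:25'} together with $\mathcal{F}_{k,s}$-measurability of $c_{k,s}$, yields $\mathbb{E}_{k,s}[\bar g_{k,s}^T\bar d_{k,s} + \bar d_{k,s}^T H_{k,s}\bar d_{k,s}] = g_{k,s}^T d_{k,s} + d_{k,s}^T H_{k,s} d_{k,s}$.

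The next step is a single-iteration estimate. Consider an index $(k,s)$ at which (as forced by $E_{\tau\uparrow}$ once the merit parameter has stabilized, using that \eqref{eq.meritupdate} makes $\{\bar\tau_{k,s}\}$ nonincreasing) we have $\bar\tau_{k,s-1} = \bar\tau_{big}$ and $\tau_{k,s}^{trial} < \bar\tau_{big} < \infty$; these are $\mathcal{F}_{k,s}$-measurable conditions. By \eqref{eq.merittrial} applied to the deterministic quantities, $\tau_{k,s}^{trial} < \bar\tau_{big}$ forces $g_{k,s}^T d_{k,s} + \max\{d_{k,s}^T H_{k,s} d_{k,s},0\} > (1-\sigma)\|c_{k,s}\|_1/\bar\tau_{big} > 0$, in particular $\|c_{k,s}\|_1 > 0$; using the curvature hypothesis $\bar d_{k,s}^T H_{k,s}\bar d_{k,s} \ge 0$ (which we also use in the form $d_{k,s}^T H_{k,s} d_{k,s}\ge 0$), the $\max$ may be dropped and the expectation identity above gives $\mathbb{E}_{k,s}[\bar g_{k,s}^T\bar d_{k,s} + \bar d_{k,s}^T H_{k,s}\bar d_{k,s}] > (1-\sigma)\|c_{k,s}\|_1/\bar\tau_{big}$. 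Since a finitely-supported random variable attains some value at least as large as its mean, there is a realization of $(\bar g_{k,s},\bar d_{k,s},\bar y_{k,s})$ with $\bar g_{k,s}^T\bar d_{k,s} + \bar d_{k,s}^T H_{k,s}\bar d_{k,s} > (1-\sigma)\|c_{k,s}\|_1/\bar\tau_{big} > 0$, i.e.\ (dropping the $\max$ once more in \eqref{eq.merittrial}) with $\bar\tau_{k,s}^{trial} < \bar\tau_{big}$; positivity of the right-hand side also gives $\bar d_{k,s}\neq 0$ on this realization, so the update \eqref{eq.meritupdate} is executed and returns $\bar\tau_{k,s} = (1-\epsilon_\tau)\bar\tau_{k,s}^{trial} < \bar\tau_{big}$. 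As this realization has conditional probability at least $p_0$, we conclude $\mathbb{P}[\bar\tau_{k,s} < \bar\tau_{big}\mid\mathcal{F}_{k,s}] \ge p_0$ at every such index.

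Finally I would stratify on the number of merit-parameter decreases. Since $E_{\tau\uparrow}$ entails only finitely many decreases, $E_{\tau\uparrow}\subseteq\bigcup_{\ell\ge0}\mathcal{E}_\ell$, where $\mathcal{E}_\ell$ is the event that \eqref{eq.meritupdate} takes its ``otherwise'' branch exactly $\ell$ times, so it suffices to prove $\mathbb{P}[E_{\tau\uparrow}\cap\mathcal{E}_\ell]=0$ for each $\ell$. On $E_{\tau\uparrow}\cap\mathcal{E}_\ell$ the merit parameter is frozen at its terminal value $\bar\tau_{big}$ from the $\ell$-th decrease onward and never decreases again, yet by the definition of $E_{\tau\uparrow}$ there are infinitely many later indices $(k,s)$ with $\tau_{k,s}^{trial}<\bar\tau_{big}$, at each of which the previous step gives conditional probability at least $p_0$ of a decrease. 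Enumerating the first $T$ of these indices and applying the tower property once per index bounds the probability that \emph{none} of them triggers a decrease by $(1-p_0)^T$; letting $T\to\infty$ gives $\mathbb{P}[E_{\tau\uparrow}\cap\mathcal{E}_\ell]=0$, hence $\mathbb{P}[E_{\tau\uparrow}]=0$. I expect the main obstacle to be exactly this last paragraph: making the enumeration rigorous requires handling the random terminal value $\bar\tau_{big}$ and the random infinite index set $(\bar{\mathcal{K}}_\tau,\bar{\mathcal{S}}_\tau)$ through a conditional Borel--Cantelli argument over a sequence of stopping times, rather than a naive union bound; the purely analytic content is light once Lemma~\ref{lemma:25'} and the two linear-system identities are in hand.
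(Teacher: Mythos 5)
Your proposal is correct and follows essentially the same route as the paper: the key identities $\bar g_{k,s}^T\bar d_{k,s}+\bar d_{k,s}^TH_{k,s}\bar d_{k,s}=c_{k,s}^T\bar y_{k,s}$ and its deterministic analogue, the unbiasedness $\mathbb{E}_{k,s}[\bar y_{k,s}]=y_{k,s}$ from Lemma~\ref{lemma:25'}, and the observation that among the $\binom{N}{b}$ equally likely realizations at least one attains a value no smaller than the mean, giving a per-iteration decrease probability of at least $\binom{N}{b}^{-1}$. The only difference is that you spell out the concluding Borel--Cantelli/stratification step that the paper delegates to \cite[Proposition 3.16]{berahas2021sequential}, and you explicitly flag the implicit use of $d_{k,s}^TH_{k,s}d_{k,s}\ge 0$ that the paper's own proof also relies on.
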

\begin{proof} 
By \eqref{eq.system_stochastic} and $ \bar{d}_{k,s}^TH_{k,s}\bar{d}_{k,s} \ge 0 $ it follows that $\bar{g}_{k,s}^T\bar{d}_{k,s} + \max\{ \bar{d}_{k,s}^TH_{k,s}\bar{d}_{k,s},0\} = \bar{g}_{k,s}^T\bar{d}_{k,s} + \bar{d}_{k,s}^TH_{k,s}\bar{d}_{k,s}  = c_{k,s}^T \bar y_{k,s}$. Similarly, by \eqref{eq.system_det} it follows that ${g}_{k,s}^T {d}_{k,s} + \max\{ {d}_{k,s}^TH_{k,s}{d}_{k,s},0\} = c_{k,s}^T  y_{k,s}$. Since we are considering an objective function composed of a finite number of components, there are a finite number of realizations for $\bar{g}_{k,s}$. Among ${N \choose b}$ possible  realizations of $\bar{g}_{k,s}$, there should at least be one  realization such that $c_{k,s}^T \bar y_{k,s}$ is no smaller than $c_{k,s}^T  y_{k,s}$, since $\mathbb{E}_{k,s}[c_{k,s}^T \bar y_{k,s} ] = c_{k,s}^T  y_{k,s} $ by Lemma \ref{lemma:25'}. Hence, it follows that 
\begin{align*}
     &\mathbb{P}[\bar{g}_{k,s}^T\bar{d}_{k,s} + \max\{ \bar{d}_{k,s}^TH_{k,s}\bar{d}_{k,s},0\} 
     \ge {g}_{k,s}^T {d}_{k,s} + \max\{ {d}_{k,s}^TH_{k,s} {d}_{k,s},0\}  ] \\
     = &\mathbb{P}[  c_{k,s}^T \bar y_{k,s}
     \ge c_{k,s}^T  y_{k,s}   ] \\   \ge& \tfrac{1}{{N \choose b}} 
\end{align*}
The desired conclusion then follows from  \cite[Proposition 3.16]{berahas2021sequential}. 
\end{proof}

If Assumption~\ref{ass.bounded_gradient} holds and $\bar{d}_{k,s}^TH_{k,s}\bar{d}_{k,s} \geq 0$ for all $(k,s) \in \mathbb{N} \times \left[\bar{S}\right]$, then $ \bar\tau_{k,s}$ is guaranteed to remain constant at a sufficiently small positive value eventually with probability 1. While one could prove such a corollary, we instead assume that the merit parameter remains constant at a sufficiently small positive value because the above only provides sufficient conditions, and this merit parameter behavior can potentially be exhibited on a wider class of problems. For the remainder of the paper, we will assume that the merit parameter remains constant at a sufficiently small positive value, and formalize this assumption below. 
\begin{assumption}\label{ass.small_tau}
 Suppose event $E_{\tau_{\min}}$ occurs in the sense that there exists an iteration number $ \bar{k}_{\tau} \in \mathbb{N}$ and a merit parameter value $\bar\tau_{\min} \in \mathbb{R}_{>0}$ such that,
 \begin{align}
   \bar\tau_{k,s} =\bar\tau_{\min}   \le \tau_{k,s}^{trial} \text{ for all }   k \ge \bar{k}_{\tau} \text{ and } s \in \left[\bar{S}\right].
 \end{align} 
  In addition, we further assume that the stochastic gradient sequence $\{\bar g_{k,s} \}_{k \geq \bar{k}_{\tau}, s \in \left[\bar{S}\right]}$ satisfies $\mathbb{E}_{k,s,\tau_{\min}} [\tilde{g}_{k,s}] = g_{k,s}$, where $\mathbb{E}_{k,s,\tau_{\min}}$ denotes the expectation taken conditioned on the event that $E_{\tau_{\min}}$ occurs and that the algorithm has reached $x_{k,0} \in \mathbb{R}^n$ in (outer) iteration $k \in \mathbb{N}$ and $x_{k,s}$ in (outer-inner) iteration $(k,s) \in \mathbb{N} \times \left[\bar{S}\right]$.
\end{assumption}

Assumption \ref{ass.small_tau} is a critical assumption in proving the convergence of the \SVRSQP{} method, and will be assumed to hold throughout the remainder of this section. For ease of exposition, we use $\mathbb{E}_{k,s}$ to denote $\mathbb{E}_{k,s,\tau_{\min}}$, and we define the following quantity
\begin{align*}
    \mathbb{E}_{\tau_{\min}}[\cdot] := \mathbb{E} [\cdot \vert \text{Assumption \ref{ass.small_tau}} ],
\end{align*}
i.e., the total expectation conditioned on the event  $E_{\tau_{\min}}$. Moreover, we define a constant $\phi_{\inf} > -\infty$ as
\begin{equation*}
    \phi_{\inf} := \inf_{x\in\mathcal{X}}\phi(x,\bar\tau_{\min}),
\end{equation*}
and whose existence is guaranteed under Assumptions~\ref{ass.function} and~\ref{ass.small_tau}.


Before we proceed, we state and prove one more technical lemma that will be used in the analysis in Sections~\ref{sec.constant} and \ref{sec.adaptive}.
\begin{lemma} 
Suppose that  Assumption \ref{ass.small_tau} holds. For all $k \ge \bar{k}_{\tau}$ and $s \in \left[\bar{S}\right]$, it follows that 
\begin{align*}
    \mathbb{E}_{k,s}[\Delta l(x_{k,s} ,\bar\tau_{k,s},\bar{g}_{k,s} ,\bar{d}_{k,s} )] \leq \Delta l(x_{k,s} ,\bar\tau_{\min},{g}_{k,s},d_{k,s} ) + \bar\tau_{\min} \zeta^{-1}M_{k,s}.
\end{align*}
\label{lemma:29'}
\end{lemma}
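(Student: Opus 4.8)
The plan is to expand $\Delta l(x_{k,s},\bar\tau_{k,s},\bar g_{k,s},\bar d_{k,s})$ using its definition \eqref{def.merit_model_reduction}, namely $\Delta l(x_{k,s},\bar\tau_{k,s},\bar g_{k,s},\bar d_{k,s}) = -\bar\tau_{k,s}\,\bar g_{k,s}^T\bar d_{k,s} + \|c_{k,s}\|_1$, and similarly $\Delta l(x_{k,s},\bar\tau_{\min},g_{k,s},d_{k,s}) = -\bar\tau_{\min}\,g_{k,s}^T d_{k,s} + \|c_{k,s}\|_1$. Since Assumption~\ref{ass.small_tau} is in force, for $k \ge \bar k_\tau$ we have $\bar\tau_{k,s} = \bar\tau_{\min}$ deterministically, so the two $\|c_{k,s}\|_1$ terms cancel and the inequality to be proven reduces to
\begin{equation*}
    -\bar\tau_{\min}\,\mathbb{E}_{k,s}[\bar g_{k,s}^T\bar d_{k,s}] \le -\bar\tau_{\min}\, g_{k,s}^T d_{k,s} + \bar\tau_{\min}\,\zeta^{-1} M_{k,s},
\end{equation*}
which after dividing by $\bar\tau_{\min} > 0$ is exactly $g_{k,s}^T d_{k,s} - \mathbb{E}_{k,s}[\bar g_{k,s}^T\bar d_{k,s}] \le \zeta^{-1} M_{k,s}$. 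This is precisely the lower bound on $\mathbb{E}_{k,s}[\bar g_{k,s}^T\bar d_{k,s}]$ established in Lemma~\ref{lemma:26'}.

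So the proof is essentially three lines: (1) substitute the definition of $\Delta l$ on both sides; (2) invoke $\bar\tau_{k,s}=\bar\tau_{\min}$ from Assumption~\ref{ass.small_tau} to cancel the feasibility terms and factor out $\bar\tau_{\min}$; (3) apply Lemma~\ref{lemma:26'}. One should also note that $c_{k,s}$, $g_{k,s}$, $d_{k,s}$ are all deterministic given $x_{k,s}$, so they pass through $\mathbb{E}_{k,s}$ unchanged, and that the linearized-feasibility conditions $c_{k,s} + J_{k,s}\bar d_{k,s} = 0$ and $c_{k,s} + J_{k,s} d_{k,s} = 0$ hold (from \eqref{eq.system_stochastic} and \eqref{eq.system_det}), which is what legitimizes using the $\Delta l$ formula \eqref{def.merit_model_reduction} for both the stochastic and deterministic directions.

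There is no real obstacle here — the lemma is a bookkeeping step that repackages Lemma~\ref{lemma:26'} in terms of the model reduction $\Delta l$ under the merit-parameter-stabilization assumption. The only mild subtlety worth stating explicitly is that the conditional expectation $\mathbb{E}_{k,s}$ here denotes $\mathbb{E}_{k,s,\tau_{\min}}$ (per the convention adopted right after Assumption~\ref{ass.small_tau}), and that under this conditioning the unbiasedness $\mathbb{E}_{k,s}[\tilde g_{k,s}] = g_{k,s}$ (hence $\mathbb{E}_{k,s}[\bar g_{k,s}] = g_{k,s}$ and $\mathbb{E}_{k,s}[\bar d_{k,s}] = d_{k,s}$) and the variance bound of Lemma~\ref{lemma_variance} continue to hold, so that Lemma~\ref{lemma:26'} remains applicable verbatim with $M$ replaced by $M_{k,s}$. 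I would simply remark on this and then cite Lemma~\ref{lemma:26'} to close the argument.
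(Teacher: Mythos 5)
Your proposal is correct and follows essentially the same route as the paper: both expand $\Delta l$ via \eqref{def.merit_model_reduction}, use $\bar\tau_{k,s}=\bar\tau_{\min}$ from Assumption~\ref{ass.small_tau} to reduce everything to a comparison of $\mathbb{E}_{k,s}[\bar g_{k,s}^T\bar d_{k,s}]$ with $g_{k,s}^Td_{k,s}$, and close with the lower bound of Lemma~\ref{lemma:26'}. Your explicit remarks about the conditioning $\mathbb{E}_{k,s,\tau_{\min}}$ and the linearized-feasibility conditions are accurate but do not change the argument.
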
 
\begin{proof}
For $k \ge \bar{k}_{\tau}$ and $s \in \left[\bar{S}\right]$, it follows by \eqref{def.merit_model_reduction} and Lemma~\ref{lemma:26'} that 
\begin{align*}
    \mathbb{E}_{k,s}[\Delta l(x_{k,s} ,\bar\tau_{k,s},\bar{g}_{k,s} ,\bar{d}_{k,s} )] &=  \mathbb{E}_{k,s}\left[-\bar\tau_{\min} \bar{g}_{k,s}^T \bar{d}_{k,s} + \| c_{k,s}\|_1 \right] \\
    & =  \mathbb{E}_{k,s}\left[-\bar\tau_{\min} \bar{g}_{k,s}^T \bar{d}_{k,s} + \bar\tau_{\min} {g}_{k,s}^T {d}_{k,s} \right.  \\
    & \left.\qquad \qquad - \bar\tau_{\min} {g}_{k,s}^T {d}_{k,s}  + \| c_{k,s}\|_1 \right] \\
     & \leq \Delta l(x_{k,s} ,\bar\tau_{\min}, {g}_{k,s} ,{d}_{k,s} ) + \bar\tau_{\min} \zeta^{-1}M_{k,s}.
\end{align*}
\end{proof}

\subsection{Constant step size analysis}\label{sec.constant}

In this subsection, we present convergence results for Algorithm \ref{alg.sqp_svrg} with
the constant step size strategy (\textbf{Option I}) under Assumption \ref{ass.small_tau}. The first lemma provides a useful upper bound for the difference in merit function after a step.
\begin{lemma} 
Suppose that  Assumption~\ref{ass.small_tau} holds and $\alpha \in (0,1]$.  For all $k \ge \bar{k}_{\tau}$ and $s \in \left[\bar{S}\right]$, it follows that 
\begin{align*}
     &\ \phi(x_{k,s+1},\bar{\tau}_{k,s}) - \phi(x_{k,s},\bar{\tau}_{k,s})    \\
     \leq&\    - \alpha   \Delta  l(x_{k,s} ,\bar\tau_{\min},g_{k,s} ,d_{k,s} )  + \tfrac{\alpha^2(\bar{\tau}_{\min} L +\Gamma)}{ 2 \bar{\tau}_{\min}\kappa_l}  \Delta l(x_{k,s} ,\bar\tau_{\min},\bar{g}_{k,s} ,\bar{d}_{k,s} ) \\
     & \ + 
    \alpha \bar{\tau}_{\min}  g_{k,s}^T (\bar{d}_{k,s} -{d}_{k,s} ) .
\end{align*}
\label{lemma:24'}
\end{lemma}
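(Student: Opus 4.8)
The plan is to start from the generic merit-function decrease bound in Lemma~\ref{lemma:21'}(c), specialized to the constant step size $\bar\alpha_{k,s} = \alpha$, and then massage the right-hand side into the claimed form using the model-reduction quantities $\Delta l$ and the relation between the stochastic and deterministic directions. Since $\alpha \in (0,1]$, we have $|1-\alpha| - 1 = -\alpha$, so Lemma~\ref{lemma:21'}(c) gives
\[
\phi(x_{k,s+1},\bar\tau_{k,s}) - \phi(x_{k,s},\bar\tau_{k,s}) \le \alpha \bar\tau_{\min} g_{k,s}^T \bar d_{k,s} - \alpha \|c_{k,s}\|_1 + \tfrac12(\bar\tau_{\min} L + \Gamma)\alpha^2 \|\bar d_{k,s}\|_2^2,
\]
where we have used Assumption~\ref{ass.small_tau} to replace $\bar\tau_{k,s}$ by $\bar\tau_{\min}$ (valid for $k \ge \bar k_\tau$).

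Next I would handle the linear term. Write $g_{k,s}^T \bar d_{k,s} = g_{k,s}^T d_{k,s} + g_{k,s}^T(\bar d_{k,s} - d_{k,s})$. The first piece combines with $-\alpha\|c_{k,s}\|_1$ to produce $-\alpha(-\bar\tau_{\min} g_{k,s}^T d_{k,s} + \|c_{k,s}\|_1)/1$... more precisely, $\alpha\bar\tau_{\min} g_{k,s}^T d_{k,s} - \alpha\|c_{k,s}\|_1 = -\alpha\,\Delta l(x_{k,s},\bar\tau_{\min},g_{k,s},d_{k,s})$ by the definition \eqref{def.merit_model_reduction}. The second piece is exactly the last term $\alpha\bar\tau_{\min} g_{k,s}^T(\bar d_{k,s} - d_{k,s})$ in the claim. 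For the quadratic term, I would bound $\|\bar d_{k,s}\|_2^2$ using Lemma~\ref{lemma:21'}(b): since $\Delta l(x_{k,s},\bar\tau_{\min},\bar g_{k,s},\bar d_{k,s}) \ge \kappa_l \bar\tau_{\min}\|\bar d_{k,s}\|_2^2$, we get $\|\bar d_{k,s}\|_2^2 \le \Delta l(x_{k,s},\bar\tau_{\min},\bar g_{k,s},\bar d_{k,s})/(\kappa_l \bar\tau_{\min})$, so $\tfrac12(\bar\tau_{\min}L+\Gamma)\alpha^2\|\bar d_{k,s}\|_2^2 \le \tfrac{\alpha^2(\bar\tau_{\min}L+\Gamma)}{2\bar\tau_{\min}\kappa_l}\Delta l(x_{k,s},\bar\tau_{\min},\bar g_{k,s},\bar d_{k,s})$. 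Assembling the three contributions yields precisely the stated inequality.

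I expect no serious obstacle here; the main thing to be careful about is the bookkeeping with the merit parameter index (using Assumption~\ref{ass.small_tau} to freeze $\bar\tau_{k,s} = \bar\tau_{\min}$ for $k \ge \bar k_\tau$, and noting $\bar\tau_{k,s}$ appears both as the argument of $\phi$ on the left and inside the $\Delta l$ terms on the right) and making sure Lemma~\ref{lemma:21'}(b) is applied with $\bar\tau_{\min}$ in place of $\bar\tau_{k,s}$, which is legitimate under Assumption~\ref{ass.small_tau}. One small point worth checking: Lemma~\ref{lemma:21'}(c) is stated with $\bar\alpha_{k,s}$ generic, so the substitution $\bar\alpha_{k,s} = \alpha$ together with $\alpha \le 1$ (to drop the absolute value) is the only structural step, and everything else is algebraic rearrangement using the definition of $\Delta l$ and the lower bound from Lemma~\ref{lemma:21'}(b). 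This lemma is essentially the constant-step analogue of the corresponding bound in \cite{berahas2021sequential}, so I would also reference that for the reader.
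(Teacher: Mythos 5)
Your proposal is correct and follows essentially the same route as the paper's proof: apply Lemma~\ref{lemma:21'}(c) with $\bar\alpha_{k,s}=\alpha\in(0,1]$ so that $|1-\alpha|-1=-\alpha$, split $g_{k,s}^T\bar d_{k,s}$ into $g_{k,s}^Td_{k,s}+g_{k,s}^T(\bar d_{k,s}-d_{k,s})$ to recognize $-\alpha\,\Delta l(x_{k,s},\bar\tau_{\min},g_{k,s},d_{k,s})$ via \eqref{def.merit_model_reduction}, and bound the quadratic term using Lemma~\ref{lemma:21'}(b). The bookkeeping points you flag (freezing $\bar\tau_{k,s}=\bar\tau_{\min}$ under Assumption~\ref{ass.small_tau}) are exactly what the paper does implicitly.
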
 
\begin{proof}
For $\alpha \in (0,1]$, $k \ge \bar{k}_{\tau}$ and $s \in \left[\bar{S}\right]$,  by  \eqref{def.merit_model_reduction} and Lemma \ref{lemma:21'}$(b)$ it follows that
\begin{align*}
     &\ \phi(x_{k,s+1},\bar{\tau}_{k,s}) - \phi(x_{k,s},\bar{\tau}_{k,s})    \\
     \leq&\    \alpha (\bar{\tau}_{\min}   g_{k,s}^T   \bar{d}_{k,s}  - \| c_{k,s} \|_1) + \tfrac12 (\bar{\tau}_{\min} L +\Gamma )\alpha^2  \| \bar{d}_{k,s} \|_2^2 \\
     =&\   \alpha (\bar{\tau}_{\min}   g_{k,s}^T   {d}_{k,s}  - \| c_{k,s} \|_1) + \alpha \bar{\tau}_{\min}   g_{k,s}^T (\bar{d}_{k,s} -{d}_{k,s} ) + \tfrac12 (\bar{\tau}_{\min} L +\Gamma )\alpha^2  \| \bar{d}_{k,s} \|_2^2 \\ 
      \le &\    - \alpha   \Delta  l(x_{k,s} ,\bar\tau_{\min},g_{k,s},d_{k,s} )  + 
    \alpha \bar{\tau}_{\min}   g_{k,s}^T (\bar{d}_{k,s} -{d}_{k,s} )  \\  
    & \quad + \tfrac{\alpha^2(\bar{\tau}_{\min} L +\Gamma)}{ 2 \bar{\tau}_{\min}\kappa_l}  \Delta l(x_{k,s} ,\bar\tau_{\min},\bar{g}_{k,s}  ,\bar{d}_{k,s} ). 
\end{align*}
\end{proof}


The next lemma is the central lemma of this subsection; it provides a useful upper bound on the expected value of the sum (over all inner and outer iterations) of the model reduction function of the merit function. 
\begin{lemma}
Suppose that  Assumption \ref{ass.small_tau} holds. Let $\lambda_S = 0$, and 
\begin{equation}
\begin{aligned}
    \lambda_s &= \lambda_{s+1} (\tfrac{  \alpha^2 L^2 }{\kappa_l b \zeta} + \alpha z +1)  +   \tfrac{\alpha^2 (\bar{\tau}_{\min} L +\Gamma )  L^2}{2\kappa_l  b \zeta}, \label{lambda_update} \\
   \Lambda_s  &= \alpha -  \tfrac{ \alpha^2 (\bar{\tau}_{\min} L +\Gamma) }{2 \bar{\tau}_{\min}\kappa_l} - \lambda_{s+1} \tfrac{\alpha }{ \bar{\tau}_{\min}\kappa_l}(\alpha+\tfrac{1}{z}), 
   \end{aligned}
   \end{equation}
   for $s \in \left[\bar{S}\right]$, where 
   $\alpha \in (0,1]$, $z \in \mathbb{R}_{>0}$, $\lambda_s \in \mathbb{R}_{>0}$ are chosen such that $\Lambda_s \in  \mathbb{R}_{>0}$, and $\Lambda_{min} = \min_{s \in \left[\bar{S}\right]} \Lambda_s$. Then, for all $k \ge \bar{k}_{\tau}$ and $s \in \left[\bar{S}\right]$, the sequence of iterates $\{x_{k,s}\}$ generated by Algorithm \ref{alg.sqp_svrg} (\textbf{Option I}) satisfy
\begin{align}
    \mathbb{E}_{\tau_{\min}} \left[\tfrac{1}{(k-\bar{k}_{\tau}+1)S} \sum_{j =  \bar{k}_{\tau}}^{k} \sum_{s=0}^{S-1}  \Delta l(x_{j,s} ,\bar\tau_{\min},g_{j,s},d_{j,s})\right] \le  \tfrac{\mathbb{E}_{\tau_{\min}}[ \phi(x_{\bar{k}_{\tau},0},\bar\tau_{\min}) ] -  \phi_{\inf}}{(k-\bar{k}_{\tau}+1)S\Lambda_{\min}}.
\label{thm1:bound}
\end{align}
\label{lemma:1'}
\end{lemma}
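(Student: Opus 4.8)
The plan is to telescope the one-step merit decrease bound (Lemma~\ref{lemma:24'}) over the inner iterations, but because that bound contains the error term $\alpha\bar\tau_{\min} g_{k,s}^T(\bar d_{k,s}-d_{k,s})$ and the "bad" term $\Delta l(x_{k,s},\bar\tau_{\min},\bar g_{k,s},\bar d_{k,s})$ with the stochastic gradient, I will work with the Lyapunov function $R_{k,s}$ from \eqref{eq.lyapunov_1} rather than the merit function alone. First I would take conditional expectation $\mathbb{E}_{k,s}[\cdot]$ of the Lemma~\ref{lemma:24'} inequality. The cross term is handled by $\mathbb{E}_{k,s}[\bar d_{k,s}]=d_{k,s}$ (Lemma~\ref{lemma:25'}), which kills it entirely; the stochastic model reduction term is bounded above by $\Delta l(x_{k,s},\bar\tau_{\min},g_{k,s},d_{k,s})+\bar\tau_{\min}\zeta^{-1}M_{k,s}$ using Lemma~\ref{lemma:29'}. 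This converts the bound into one purely in terms of the deterministic reduction $\Delta l(x_{k,s},\bar\tau_{\min},g_{k,s},d_{k,s})$ plus a multiple of $M_{k,s}=\tfrac{L^2}{b}\|x_{k,s}-x_{k,0}\|_2^2$.

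Next I would control the growth of $\|x_{k,s+1}-x_{k,0}\|_2^2$. Since $x_{k,s+1}-x_{k,0}=(x_{k,s}-x_{k,0})+\bar\alpha_{k,s}\bar d_{k,s}$ with $\bar\alpha_{k,s}=\alpha$, expanding the square and using Young's inequality with parameter $z$ gives $\|x_{k,s+1}-x_{k,0}\|_2^2 \le (1+\alpha z)\|x_{k,s}-x_{k,0}\|_2^2 + \alpha(\alpha+\tfrac1z)\|\bar d_{k,s}\|_2^2$; then I would take $\mathbb{E}_{k,s}$ and bound $\mathbb{E}_{k,s}[\|\bar d_{k,s}\|_2^2]$. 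The latter splits as $\|d_{k,s}\|_2^2 + \mathbb{E}_{k,s}[\|\bar d_{k,s}-d_{k,s}\|_2^2]$; the first piece is bounded via Lemma~\ref{lemma:21'}(a) by $(\bar\tau_{\min}\kappa_l)^{-1}\Delta l(x_{k,s},\bar\tau_{\min},g_{k,s},d_{k,s})$, and the variance of the step is $\le \kappa_d^2 M_{k,s}$-type (or $\zeta^{-2}$ times $M_{k,s}$) by \eqref{eq.imply2} and Lemma~\ref{lemma_variance}. Combining, the expected increment of $\lambda_{s+1}\|x_{k,s+1}-x_{k,0}\|_2^2$ is bounded by $\lambda_{s+1}$ times a multiple of $\|x_{k,s}-x_{k,0}\|_2^2$ plus a multiple of $\Delta l$ plus a multiple of $M_{k,s}$, and since $M_{k,s}$ itself is $\tfrac{L^2}{b}\|x_{k,s}-x_{k,0}\|_2^2$ everything can be re-collected into the coefficient in front of $\|x_{k,s}-x_{k,0}\|_2^2$. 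This is exactly the source of the recursion \eqref{lambda_update}: choosing $\lambda_s$ via that recursion makes $R_{k,s+1}\le R_{k,s}-\Lambda_s\,\mathbb{E}_{k,s}[\Delta l(x_{k,s},\bar\tau_{\min},g_{k,s},d_{k,s})]$, where $\Lambda_s$ is precisely the residual coefficient on the deterministic reduction after all the $M_{k,s}$ and $\|\bar d_{k,s}\|_2^2$ contributions have been absorbed.

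Finally I would telescope $R_{k,s+1}\le R_{k,s}-\Lambda_s\mathbb{E}_{k,s}[\Delta l_{k,s}]$ over $s=0,\dots,S-1$ within outer iteration $k$, using $\lambda_S=0$ so that $R_{k,S}$ is just $\mathbb{E}[\phi(x_{k,S},\bar\tau_{\min})]=\mathbb{E}[\phi(x_{k+1,0},\bar\tau_{\min})]$ (the reference point resets and $\|x_{k+1,0}-x_{k+1,0}\|_2^2=0$), and $R_{k,0}=\mathbb{E}[\phi(x_{k,0},\bar\tau_{\min})]$ likewise. This gives $\mathbb{E}[\phi(x_{k+1,0},\bar\tau_{\min})]\le\mathbb{E}[\phi(x_{k,0},\bar\tau_{\min})]-\sum_{s=0}^{S-1}\Lambda_s\mathbb{E}[\Delta l_{k,s}]\le \mathbb{E}[\phi(x_{k,0},\bar\tau_{\min})]-\Lambda_{\min}\sum_{s=0}^{S-1}\mathbb{E}[\Delta l_{k,s}]$; then I would take total expectation $\mathbb{E}_{\tau_{\min}}$, sum over $j=\bar k_\tau,\dots,k$, telescope the outer sum, lower-bound $\mathbb{E}_{\tau_{\min}}[\phi(x_{k+1,0},\bar\tau_{\min})]\ge\phi_{\inf}$, and divide by $(k-\bar k_\tau+1)S\Lambda_{\min}$ to obtain \eqref{thm1:bound}. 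The main obstacle is the bookkeeping in the middle step: correctly tracking how the variance term $M_{k,s}\propto\|x_{k,s}-x_{k,0}\|_2^2$ feeds back into the $\|x_{k,s}-x_{k,0}\|_2^2$ coefficient so that the recursion \eqref{lambda_update} closes and $\Lambda_s$ stays positive under the stated choices of $\alpha$ and $z$; getting the constants in \eqref{lambda_update}--\eqref{lambda_update} to match requires care, but no genuinely new idea beyond Young's inequality and the lemmas already established.
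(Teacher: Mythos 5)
Your proposal follows essentially the same route as the paper: take conditional expectation of the bound in Lemma~\ref{lemma:24'} (the cross term vanishes by Lemma~\ref{lemma:25'}, and the stochastic model reduction is converted to the deterministic one plus a multiple of $M_{k,s}$ via Lemma~\ref{lemma:29'}), control $\|x_{k,s+1}-x_{k,0}\|_2^2$ with Young's inequality, fold everything into the Lyapunov recursion $\mathbb{E}_{\tau_{\min}}[R_{k,s+1}]\le\mathbb{E}_{\tau_{\min}}[R_{k,s}]-\Lambda_s\,\mathbb{E}_{\tau_{\min}}[\Delta l(x_{k,s},\bar\tau_{\min},g_{k,s},d_{k,s})]$, and telescope using $\lambda_S=0$ and $x_{k,S}=x_{k+1,0}$. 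The one place you deviate is in handling the squared direction norm: the paper takes expectation first, so Young's inequality is applied to the deterministic cross term $2\alpha\, d_{k,s}^T(x_{k,s}-x_{k,0})$ and only $\mathbb{E}_{k,s}[\alpha^2\|\bar d_{k,s}\|_2^2]$ is bounded stochastically, via Lemma~\ref{lemma:21'}(b) followed by Lemma~\ref{lemma:29'}, which produces exactly the coefficient $\tfrac{\alpha^2 L^2}{\kappa_l b\zeta}$ appearing in \eqref{lambda_update}; your bias--variance split $\mathbb{E}_{k,s}[\|\bar d_{k,s}\|_2^2]=\|d_{k,s}\|_2^2+\mathbb{E}_{k,s}[\|\bar d_{k,s}-d_{k,s}\|_2^2]\le(\bar\tau_{\min}\kappa_l)^{-1}\Delta l(x_{k,s},\bar\tau_{\min},g_{k,s},d_{k,s})+\kappa_d^2 M_{k,s}$ is equally valid (the second-moment bound does follow from \eqref{eq.imply2} and Lemma~\ref{lemma_variance}, even though Lemma~\ref{lemma:25'} only states the first-moment version), but combined with applying Young's inequality to the stochastic $\bar d_{k,s}$ it yields a coefficient of the form $\alpha(\alpha+\tfrac1z)\kappa_d^2\tfrac{L^2}{b}$ in place of $\tfrac{\alpha^2 L^2}{\kappa_l b\zeta}$, so your recursion for $\lambda_s$ would not literally match \eqref{lambda_update}; to reproduce the stated constants you should follow the paper's bookkeeping at that single step.
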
 

\begin{proof}
Consider arbitrary $k \ge \bar{k}_{\tau}$ and $s \in \left[\bar{S}\right]$. By Lemmas \ref{lemma:25'}, \ref{lemma:29'} and \ref{lemma:24'}, we have
\begin{align*}
 \mathbb{E}_{k,s}[ \phi(x_{k,s+1},\bar{\tau}_{k,s}) ] & \le  \mathbb{E}_{k,s}[ \phi(x_{k,s},\bar{\tau}_{k,s}) ]  - \alpha    \Delta l(x_{k,s} ,\bar\tau_{\min},g_{k,s},d_{k,s})  \\ &\quad+   \tfrac{(\bar{\tau}_{\min} L +\Gamma )\alpha^2}{2\bar{\tau}_{\min}\kappa_l } \left(  \Delta l(x_{k,s} ,\bar\tau_{\min},g_{k,s},d_{k,s}) + \bar\tau_{\min} \zeta^{-1}M_{k,s}\right) \\
  & = \mathbb{E}_{k,s}[ \phi(x_{k,s},\bar{\tau}_{k,s}) ] - \left( \alpha - \tfrac{\alpha^2(\bar{\tau}_{\min} L +\Gamma )}{2\bar{\tau}_{\min}\kappa_l} \right)  \Delta l(x_{k,s} ,\bar\tau_{\min},g_{k,s},d_{k,s}) \\ 
  & \quad  +    \tfrac{ \alpha^2(\bar{\tau}_{\min} L +\Gamma )   L^2}{2 \kappa_l  b \zeta} \|x_{k,s}  - x_{k,0}\|_2^2.
\end{align*} 
Moreover, by Lemmas \ref{lemma:21'}, \ref{lemma:25'} and \ref{lemma:29'}, and the fact that $2XY = 2 (\sqrt{z} X) (Y/\sqrt{z}) \le z X^2 + Y^2/z $ for $\{X,Y\}\subset \mathbb{R}$ and $z \in \mathbb{R}_{>0}$,  it follows that 
\begin{align*}
    \mathbb{E}_{k,s}\left[\|x_{k,s+1} - x_{k,0}\|_2^2 \right] & =   \mathbb{E}_{k,s}\left[\|x_{k,s+1} -  x_{k,s} + x_{k,s} - x_{k,0}\|_2^2 \right] \\ 
    & = \mathbb{E}_{k,s}\left[ \alpha^2 \| \bar{d}_{k,s}  \|_2^2 \right] +  \|x_{k,s} - x_{k,0}\|_2^2 + 2\alpha  {d}_{k,s}^T (x_{k,s} - x_{k,0})  \\
    & \le \mathbb{E}_{k,s}\left[ \alpha^2 \| \bar{d}_{k,s}  \|_2^2 \right] +  \|x_{k,s} - x_{k,0}\|_2^2   \\ & \quad + 2\alpha \left(\tfrac{1}{2z} \|{d}_{k,s} \|_2^2 + \tfrac{z}{2}   \|x_{k,s} - x_{k,0}\|_2^2\right)  \\
    & \le \mathbb{E}_{k,s}\left[  \tfrac{\alpha^2 }{\bar{\tau}_{\min}\kappa_l}  \Delta l(x_{k,s} ,\bar\tau_{\min},\bar{g}_{k,s} ,\bar{d}_{k,s} )\right] + \|x_{k,s} - x_{k,0}\|_2^2 \\ 
    & \quad +   \tfrac{\alpha  }{z \bar{\tau}_{\min}\kappa_l}  \Delta l(x_{k,s} ,\bar\tau_{\min},g_{k,s},d_{k,s}) + \alpha z  \|x_{k,s} - x_{k,0}\|_2^2 \\
     & \le \tfrac{\alpha }{\bar{\tau}_{\min}\kappa_l}\left(\alpha+\tfrac{1}{z}\right)  \Delta l(x_{k,s} ,\bar\tau_{\min},g_{k,s},d_{k,s}) \\ & \quad + \left(\tfrac{\alpha^2   L^2}{\kappa_l b \zeta} + \alpha z +1 \right) \|x_{k,s} - x_{k,0}\|_2^2.
\end{align*}

Taking total expectation conditioned on the event  $E_{\tau_{\min}}$, for all $k \ge \bar{k}_{\tau}$ and $s \in \left[\bar{S}\right]$, combining the results above and by the definitions of $\lambda_s$, $R_{k,s}$ and  $ \Lambda_s$ and the fact that $\bar{\tau}_{k,s+1}=\bar{\tau}_{k,s}= \bar\tau_{\min} $, it follows that 
\begin{align*}
    \mathbb{E}_{\tau_{\min}}\left[R_{k,s+1}\right] &= \mathbb{E}_{\tau_{\min}} \left[\phi(x_{k,s+1},\bar{\tau}_{k,s+1}) + \lambda_{s+1} \|x_{k,s+1} - x_{k,0}\|_2^2\right] \\
    & \le \mathbb{E}_{\tau_{\min}}\left[R_{k,s}\right] - \Lambda_s \mathbb{E}_{\tau_{\min}}\left[ \Delta l(x_{k,s} ,\bar\tau_{\min},g_{k,s},d_{k,s})\right] \\
    & \le \mathbb{E}_{\tau_{\min}}\left[R_{k,s}\right] - \Lambda_{\min} \mathbb{E}_{\tau_{\min}}\left[ \Delta l(x_{k,s} ,\bar\tau_{\min},g_{k,s},d_{k,s}) \right]. 
\end{align*}
Summing over all inner iterations  ($s \in \left[\bar{S}\right]$), we have 
\begin{align*}
    \sum_{s=0}^{S-1} \mathbb{E}_{\tau_{\min}}\left[ \Delta l(x_{k,s} ,\bar\tau_{\min},g_{k,s},d_{k,s}) \right] &\le \tfrac{\mathbb{E}_{\tau_{\min}}\left[R_{k,0}-R_{k,S} \right] }{\Lambda_{\min}}  \\
    &= \tfrac{\mathbb{E}_{\tau_{\min}}\left[\phi(x_{k,0},\bar\tau_{\min})-\phi(x_{k+1,0},\bar\tau_{\min}) \right] }{\Lambda_{\min}}.
\end{align*} 
The equality follows from the fact that $\lambda_S = 0$ and $x_{k,S} = x_{k+1,0}$.
Summing this inequality for $j \in \{ \bar{k}_{\tau}, \bar{k}_{\tau}+1,\dots, k \}$, we have
\begin{align*}
     \sum_{j = \bar{k}_{\tau}}^{k} \sum_{s=0}^{S-1} \mathbb{E}_{\tau_{\min}}[ \Delta l(x_{j,s} ,\bar\tau_{\min},g_{j,s},d_{j,s})] 
      & \le \tfrac{\mathbb{E}_{\tau_{\min}}[ \phi(x_{\bar{k}_{\tau},0},\bar\tau_{\min})] -  \phi_{\inf}}{\Lambda_{\min}},
\end{align*}
for which the desired conclusion \eqref{thm1:bound} follows.
\end{proof}

As a consequence of Lemma~\ref{lemma:1'}, in Theorem~\ref{theorem:constant_step size_prescribed} we present the main convergence result of this subsection, along with a specification of the controlled parameters (e.g., step size, inner iteration length, etc).

\begin{theorem}\label{theorem:constant_step size_prescribed} Suppose   Assumption \ref{ass.small_tau} holds.
Let $\lambda_s$, $\Lambda_s$ and $\Lambda_{\min}$ be defined as in Lemma~\ref{lemma:1'}. 
Suppose 
$\alpha = \tfrac{\mu_0 b}{(\bar\tau_{\min}L + \Gamma)N^{\gamma}} \in (0,1] $ with $\mu_0 \in (0,1]$,  $z = \tfrac{\bar\tau_{\min}L + \Gamma}{N^{\gamma/2}}$, $\gamma \in (0,1]$, $b < N^{\gamma}$, and $S \le \left\lfloor \tfrac{N^{3\gamma/2}}{\mu_0\left(b+\tfrac{ L^2}{(\bar{\tau}_{\min} L +\Gamma )^2 \kappa_l \zeta} \right)} \right\rfloor$. Then, for all $k \ge \bar{k}_{\tau}$ and $s \in \left[\bar{S}\right]$, there exist universal constants $\mu_0$ and  $\nu_0 \in (0,1)$ such that  $\Lambda_{\min} \ge \tfrac{\nu_0 b}{ (\bar{\tau}_{\min} L +\Gamma ) N^{\gamma}}$ and, 
\begin{align*}
    \mathbb{E}_{\tau_{\min}}  &\left[\tfrac{1}{(k - \bar{k}_{\tau} + 1)S} \sum_{j =  \bar{k}_{\tau}}^{  k} \sum_{s=0}^{S-1}  \Delta l(x_{j,s}, \bar\tau_{\min}, g_{j,s}, d_{j,s})\right] 
    \\ &\qquad \le  \tfrac{(\bar{\tau}_{\min} L +\Gamma ) N^{\gamma}(\mathbb{E}_{\tau_{\min}}[ \phi(x_{\bar{k}_{\tau},0},\bar\tau_{\min}) ] -  \phi_{\inf})}{(k - \bar{k}_{\tau} + 1)S\nu_0 b }.
\end{align*}
\end{theorem}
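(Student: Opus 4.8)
The plan is to derive the theorem directly from Lemma~\ref{lemma:1'}: it suffices to verify that the prescribed parameter values satisfy the hypotheses of that lemma (namely $\lambda_s\in\mathbb{R}_{>0}$ and $\Lambda_s\in\mathbb{R}_{>0}$ for all $s\in\left[\bar{S}\right]$, with $\alpha\in(0,1]$ and $z>0$), and to produce the explicit lower bound $\Lambda_{\min}\ge \nu_0 b/((\bar\tau_{\min}L+\Gamma)N^{\gamma})$. Once that lower bound is in hand, substituting it into the right-hand side of \eqref{thm1:bound} immediately yields the claimed inequality, so essentially all of the work is an analysis of the backward recursion \eqref{lambda_update}.

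First I would solve that recursion in closed form. Setting $a:=1+\alpha z+\tfrac{\alpha^2L^2}{\kappa_l b\zeta}>1$ and $c:=\tfrac{\alpha^2(\bar\tau_{\min}L+\Gamma)L^2}{2\kappa_l b\zeta}>0$, the relation $\lambda_s=a\lambda_{s+1}+c$ with $\lambda_S=0$ gives $\lambda_s=c\,(a^{S-s}-1)/(a-1)$, which is strictly positive for $s<S$ and nonincreasing in $s$; hence $\lambda_{s+1}\le\lambda_0=c\,(a^{S}-1)/(a-1)$ serves as a uniform bound in the formula for $\Lambda_s$, and the key quantity to control is $a^{S}$.

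Next I would substitute the parameter values. With $\alpha=\tfrac{\mu_0b}{(\bar\tau_{\min}L+\Gamma)N^{\gamma}}$, $z=\tfrac{\bar\tau_{\min}L+\Gamma}{N^{\gamma/2}}$, and $\rho:=\tfrac{L^2}{(\bar\tau_{\min}L+\Gamma)^2\kappa_l\zeta}$, one computes $\alpha z=\tfrac{\mu_0b}{N^{3\gamma/2}}$, $\tfrac{\alpha^2L^2}{\kappa_l b\zeta}=\tfrac{\mu_0^2 b\rho}{N^{2\gamma}}$, $c=\tfrac{\mu_0^2 b(\bar\tau_{\min}L+\Gamma)\rho}{2N^{2\gamma}}$, and $a-1=\tfrac{\mu_0b}{N^{3\gamma/2}}\bigl(1+\tfrac{\mu_0\rho}{N^{\gamma/2}}\bigr)$. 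The ceiling $S\le\lfloor N^{3\gamma/2}/(\mu_0(b+\rho))\rfloor$ was chosen precisely so that $(a-1)S\le\tfrac{b}{b+\rho}\bigl(1+\tfrac{\mu_0\rho}{N^{\gamma/2}}\bigr)\le 1+\rho$, a bound independent of $k,s,N,b,\gamma$; hence $a^{S}\le e^{(a-1)S}\le e^{1+\rho}$. Using $(a^{S}-1)/(a-1)\le Sa^{S}$ and $cS\le\tfrac{\mu_0(\bar\tau_{\min}L+\Gamma)\rho}{2N^{\gamma/2}}$ then gives $\lambda_0\le\tfrac{\mu_0(\bar\tau_{\min}L+\Gamma)\rho e^{1+\rho}}{2N^{\gamma/2}}$, i.e. $\lambda_0$ is $\mu_0$ times a problem-dependent constant times $N^{-\gamma/2}$ (in particular every $\lambda_s>0$).

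Finally I would lower-bound $\Lambda_s$. Since $b<N^{\gamma}$ forces $\alpha+\tfrac1z=\tfrac{1}{\bar\tau_{\min}L+\Gamma}\bigl(\tfrac{\mu_0b}{N^{\gamma}}+N^{\gamma/2}\bigr)\le\tfrac{2N^{\gamma/2}}{\bar\tau_{\min}L+\Gamma}$, dividing the three terms in $\Lambda_s$ by $\alpha$ shows the second term contributes at most $\tfrac{\mu_0 b}{2\bar\tau_{\min}\kappa_l N^{\gamma}}\le\tfrac{\mu_0}{2\bar\tau_{\min}\kappa_l}$ and the third at most $\lambda_0\cdot\tfrac{2N^{\gamma/2}}{\bar\tau_{\min}\kappa_l(\bar\tau_{\min}L+\Gamma)}\le\tfrac{\mu_0\rho e^{1+\rho}}{\bar\tau_{\min}\kappa_l}$, both of the form $\mu_0\cdot(\text{problem constant})$. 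Thus $\Lambda_s\ge\alpha\bigl(1-\mu_0 C\bigr)$ for a constant $C$ depending only on the problem data, and taking $\mu_0$ small enough that $\mu_0 C\le\tfrac12$ gives $\Lambda_s\ge\tfrac12\alpha=\tfrac{\mu_0 b}{2(\bar\tau_{\min}L+\Gamma)N^{\gamma}}$ for all $s$, hence $\Lambda_{\min}\ge\tfrac{\nu_0 b}{(\bar\tau_{\min}L+\Gamma)N^{\gamma}}$ with $\nu_0:=\mu_0/2\in(0,1)$; this simultaneously verifies $\Lambda_s>0$, so Lemma~\ref{lemma:1'} applies, and plugging the lower bound on $\Lambda_{\min}$ into \eqref{thm1:bound} produces the stated estimate. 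The main obstacle is the bookkeeping in the last two steps: bounding $a^{S}$ uniformly through the chosen ceiling on $S$, and then checking that a single sufficiently small $\mu_0$ keeps both negative terms in $\Lambda_s$ below $\tfrac12\alpha$ with all constants consistent; everything else is routine substitution.
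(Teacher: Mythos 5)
Your proposal is correct and follows essentially the same route as the paper's proof: solve the backward recursion for $\lambda_s$ in closed form (the paper's \eqref{eq.svrg_eta0}), use the cap on $S$ to bound $(1+\rho)^S$ by an $N$-independent constant and hence $\lambda_0 = O(\mu_0 N^{-\gamma/2})$, then absorb the two negative terms of $\Lambda_s$ into a $\mu_0\cdot(\text{problem constant})$ deficit and shrink $\mu_0$. Your uniform bound $a^S\le e^{1+\rho}$ is slightly more conservative than the paper's claimed $(1+\rho)^S\le e$ (and arguably more carefully justified), but this only changes the admissible $\mu_0$ and the value of $\nu_0$, not the structure or validity of the argument.
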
 
\begin{proof}
    By the recursive definition of  $\lambda_s$ \eqref{lambda_update} and the fact that $\lambda_S= 0$, we have that
    \begin{align}\label{eq.svrg_eta0}
    \lambda_0 = \tfrac12 (\bar{\tau}_{\min} L +\Gamma ) \tfrac{\alpha^2   L^2}{ \kappa_{l}b \zeta}\tfrac{((1+\rho)^S - 1)}{\rho},
    \end{align}
    with 
    \begin{align*}
        \rho &= \alpha z + \tfrac{\alpha^2     L^2}{ \kappa_{l}b \zeta}  \\ &= \tfrac{\mu_0 b}{N^{3\gamma/2}}+\tfrac{\mu_0^2 b    L^2}{ (\bar{\tau}_{\min} L +\Gamma )^2 N^{2\gamma} \kappa_{l} \zeta} \\ &\le 
        \mu_0 N^{-3\gamma/2}\left(b+ \tfrac{ b   L^2}{ (\bar{\tau}_{\min} L +\Gamma )^2 \kappa_{l} \zeta} \right),
    \end{align*} 
    where $\mu_0 \in (0,1]$ and $ N \ge 1$.  (Note, without loss of generality, we assume that the user defined constants are chosen such that $\rho \in (0,1)$.) Plugging $\alpha$ and $\rho$ into equation \eqref{eq.svrg_eta0}, it follows that
    \begin{align*}
        \lambda_0 &= \tfrac12 \tfrac{ L^2 \mu_0^2 b}{ \kappa_{l} (\bar\tau_{\min}L + \Gamma)N^{2\gamma} \zeta}  \tfrac{(1+\rho)^S - 1}{\tfrac{\mu_0 b}{N^{3\gamma/2}}+\tfrac{\mu_0^2 b    L^2}{ (\bar{\tau}_{\min} L +\Gamma )^2 N^{2\gamma}\zeta}} \\
        &\leq \tfrac{ L^2 \mu_0 (e - 1) }{2 \kappa_{l} (\bar\tau_{\min}L + \Gamma) \zeta} N^{-\gamma/2},
    \end{align*}
    where the inequality is obtained by noticing that for $l > 0$,  $(1 + \tfrac{1}{l})^l$ is an increasing function and $(1 + \tfrac{1}{l})^l \to e$ as $l \to \infty$. Hence, $(1+\rho)^S \le e$ by the definition of $S$. 
    Now, with the upper bound of $\lambda_0$,  the fact that $\lambda_s$ is decreasing as $s$ increases from 0 to $S$, and $\mu_0 \in (0,1]$, $b<N$ and $N \ge 1$, it follows that $\Lambda_{\min}$ can be lower bounded by
    \begin{align*}
        \Lambda_{\min} &= \min_{0\leq s \leq S-1}\left\{-  \tfrac{ (\bar{\tau}_{\min} L +\Gamma)\alpha^2 }{2 \bar{\tau}_{\min}\kappa_l} + \alpha - \lambda_{s+1} \tfrac{\alpha }{ \bar{\tau}_{\min}\kappa_l}(\alpha+\tfrac{1}{z})\right\}\\
        &> -\tfrac{ (\bar{\tau}_{\min} L +\Gamma)\alpha^2 }{2 \bar{\tau}_{\min}\kappa_l}  + \alpha - \tfrac{\lambda_0\alpha }{\bar{\tau}_{\min}\kappa_l}(\alpha+\tfrac{1}{z})\\
        &\ge - \tfrac{ \mu_0 b}{2\bar{\tau}_{\min}\kappa_l N^{\gamma} } \alpha + \alpha  -\tfrac{ L^2 \mu_0^2  (e - 1)b }{2\kappa_l^2 (\bar\tau_{\min}L + \Gamma)^2 \bar{\tau}_{\min}\zeta } N^{-3\gamma/2} \alpha - \tfrac{ L^2 \mu_0  (e - 1) }{2\kappa_l^2 (\bar\tau_{\min}L + \Gamma)^2 \bar{\tau}_{\min}\zeta }  \alpha \\
        &\geq \alpha \left[1- \tfrac{ \mu_0 b}{\bar{\tau}_{\min}\kappa_l}- \tfrac{ L^2 \mu_0  (e - 1)}{2\kappa_l^2 (\bar\tau_{\min}L + \Gamma)^2 \bar{\tau}_{\min}\zeta }  \right].
    \end{align*}
    Let $\nu_0 = 1- \tfrac{ \mu_0 b}{2\bar{\tau}_{\min}\kappa_l}- \tfrac{ L^2 \mu_0 (e - 1) }{\kappa_l^2 (\bar\tau_{\min}L + \Gamma)^2 \bar{\tau}_{\min} \zeta }$. By  choosing $\mu_0$ (independent of $N$) such that  $\nu_0>0$, it follows that $\Lambda_{\min} \geq \tfrac{b\nu_0}{(\bar\tau_{\min}L + \Gamma) N^\gamma}$. Combining this lower bound with Lemma \ref{lemma:1'} yields the desired result.
\end{proof}

Finally, we conclude this section by presenting a corollary to Theorem~\ref{theorem:constant_step size_prescribed}; this result shows that \SVRSQP{} generates a sequence of iterates whose first order stationary measure (corresponding to \eqref{prob.f_finitesum}) converges to zero.

\begin{corollary}\label{cor.constant}
Under the conditions of Theorem  \ref{theorem:constant_step size_prescribed}, there exists universal constants $\mu_0$, $\nu_0$ such that  $\Lambda_{\min} \ge \tfrac{\nu_0 b}{ (\bar{\tau}_{\min} L +\Gamma ) N^{\gamma}}$ and
\begin{align*}
    \mathbb{E}_{\tau_{\min}} &\left[ \tfrac{1}{(k - \bar{k}_{\tau} + 1)S}   \sum_{j =  \bar{k}_{\tau}}^{k} \sum_{s=0}^{S-1}   \tfrac{\|g_{j,s}  + J_{j,s}  y_{j,s}\|_2^2 }{\kappa_H^2} + \|c_{j,s}  \|_2  \right] \\ 
    &\qquad \le \tfrac{(\bar{\tau}_{\min} L +\Gamma ) N^{\gamma}(\mathbb{E}_{\tau_{\min}}[ \phi(x_{\bar{k}_{\tau},0},\bar\tau_{\min}) ] -  \phi_{\inf})}{(k - \bar{k}_{\tau} + 1)S\nu_0 b}.
\label{cor:complexity_adaptive_max}
\end{align*}

Moreover, if for some $(k,s) \in \mathbb{N} \times \left[\bar{S}\right]$, $\| x_{k,s} - x^* \|_2 \leq \delta_x$, $\| x_{k,0} - x^* \|_2 \leq \delta_{x,0}$ and $\| c_{k,s} \|_2 \leq \delta_c$, for $(\delta_x,\delta_{x,0},\delta_c) \in  \mathbb{R}_{>0} \times \mathbb{R}_{>0} \times \mathbb{R}_{>0}$,  and some stationary point $(x^*,y^*) \in \mathbb{R}^n \times \mathbb{R}^m$ of \eqref{prob.f_finitesum}, then, there exists $\kappa_{g_*} \in \mathbb{R}_{>0}$, such that
\begin{equation*}
    \mathbb{E}_{k,s} \left[
    \left\| \begin{bmatrix}
        \bar{g}_{k,s} - \nabla f(x_*) \\    c_{k,s}       
    \end{bmatrix}\right\|_2 \right]
    \leq \delta_y\;\; \text{ and } \;\; \mathbb{E}_{k,s} \left[
    \left\| \bar{y}_{k,s} - y_{k,s} \right\|_2 \right] \leq \kappa_d \delta_{y} + 2 \kappa_d^2 \Gamma \delta_{x} \|g_*\|
\end{equation*}               
where $\delta_y = \delta_c + L(\delta_{x,0} +\delta_x)/\sqrt{b} +  L\delta_x \in \mathbb{R}_{>0}$ and $\kappa_{d} \in \mathbb{R}_{>0}$ is an upper bound for $
\left\|\left[\begin{array}{cc}
H_{k,s} & J_{k,s}^{T} \\
J_{k,s} & 0
\end{array}\right]\right\|^{-1}
$.
\end{corollary}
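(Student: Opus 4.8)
The plan is to derive the first display from Theorem~\ref{theorem:constant_step size_prescribed} by showing the pointwise bound $\Delta l(x_{j,s},\bar\tau_{\min},g_{j,s},d_{j,s}) \ge \tfrac{\|g_{j,s}+J_{j,s}y_{j,s}\|_2^2}{\kappa_H^2} + \|c_{j,s}\|_2$ for every $(j,s)$, after which the left-hand side of the corollary is dominated by the left-hand side of the theorem (with the same constant $\Lambda_{\min}$), so the bound transfers verbatim. For the lower bound on $\Delta l$, I would start from Lemma~\ref{lemma:21'}(a), which gives $\Delta l \ge \kappa_l\bar\tau_{\min}\|d_{j,s}\|_2^2$; I would also use the feasibility residual directly, since $\Delta l(x_{j,s},\bar\tau_{\min},g_{j,s},d_{j,s}) = -\bar\tau_{\min}g_{j,s}^Td_{j,s} + \|c_{j,s}\|_1 \ge \|c_{j,s}\|_2$ is immediate from \eqref{def.merit_model_reduction}, the fact $g_{j,s}^Td_{j,s}\le 0$ (from the KKT system \eqref{eq.system_det} together with Assumption~\ref{ass.H}: $g^Td = -d^THd - d^TJ^Ty = -d^THd + c^Ty$, and one uses the merit-parameter update to control the sign), and $\|\cdot\|_1\ge\|\cdot\|_2$. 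Then I would relate $\|d_{j,s}\|_2$ to the stationarity residual: from the top block of \eqref{eq.system_det}, $g_{j,s}+J_{j,s}^Ty_{j,s} = -H_{j,s}d_{j,s}$, so $\|g_{j,s}+J_{j,s}y_{j,s}\|_2 \le \kappa_H\|d_{j,s}\|_2$, i.e. $\|d_{j,s}\|_2^2 \ge \|g_{j,s}+J_{j,s}y_{j,s}\|_2^2/\kappa_H^2$. Combining and absorbing constants (choosing, if needed, to rescale so that $\kappa_l\bar\tau_{\min}\ge 1$, or simply keeping the constants explicit) yields the claimed pointwise inequality, hence the first display.

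For the second part, fix $(k,s)$ with $\|x_{k,s}-x^*\|_2\le\delta_x$, $\|x_{k,0}-x^*\|_2\le\delta_{x,0}$, $\|c_{k,s}\|_2\le\delta_c$. For the first bound, write
\[
\bar g_{k,s}-\nabla f(x^*) = \big(\bar g_{k,s}-\nabla f(x_{k,s})\big) + \big(\nabla f(x_{k,s})-\nabla f(x^*)\big),
\]
take norms, take $\mathbb{E}_{k,s}$, apply Jensen and Lemma~\ref{lemma_variance} to bound $\mathbb{E}_{k,s}\|\bar g_{k,s}-\nabla f(x_{k,s})\|_2 \le \sqrt{M_{k,s}} = \tfrac{L}{\sqrt b}\|x_{k,s}-x_{k,0}\|_2 \le \tfrac{L}{\sqrt b}(\delta_x+\delta_{x,0})$ (triangle inequality through $x^*$), and use $L$-Lipschitzness of $\nabla f$ for the second term, $\|\nabla f(x_{k,s})-\nabla f(x^*)\|_2\le L\delta_x$. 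Stacking with $c_{k,s}$ and using $\|(a,b)\|_2\le\|a\|_2+\|b\|_2$ gives the stated $\delta_y$.

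For the multiplier bound, use \eqref{eq.imply2}, which gives $\|\bar y_{k,s}-y_{k,s}\|_2 \le \kappa_d\|\bar g_{k,s}-\nabla f(x_{k,s})\|_2$, so $\mathbb{E}_{k,s}\|\bar y_{k,s}-y_{k,s}\|_2\le \kappa_d \tfrac{L}{\sqrt b}(\delta_x+\delta_{x,0})$; however, the stated right-hand side is $\kappa_d\delta_y + 2\kappa_d^2\Gamma\delta_x\|g_*\|$, which is larger, so the task is really to show this (weaker) bound in a form that also accounts for how close $y_{k,s}$ is to the true multiplier $y^*$ — the extra term $2\kappa_d^2\Gamma\delta_x\|g_*\|$ presumably arises from bounding $\|y_{k,s}-y^*\|$ or from a comparison of the KKT matrices at $x_{k,s}$ and $x^*$ (the Jacobian Lipschitz constant $\Gamma$ and the objective gradient magnitude $\|g_*\|$ both enter there), using a resolvent/perturbation identity of the form $A_{k,s}^{-1}-A_*^{-1} = A_{k,s}^{-1}(A_*-A_{k,s})A_*^{-1}$ together with $\|J_{k,s}-J_*\|\le\Gamma\delta_x$. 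I would make that comparison explicit, bound each factor by $\kappa_d$ and $\Gamma\delta_x$ respectively, and collect terms. The main obstacle is getting the constants in the multiplier bound to line up exactly as written — in particular correctly identifying which quantity the $2\kappa_d^2\Gamma\delta_x\|g_*\|$ term controls (perturbation of the saddle-point system) and verifying the factor of $2$ and the appearance of $\|g_*\|$ rather than $\kappa_g$; the rest is routine triangle-inequality and Lipschitz bookkeeping.
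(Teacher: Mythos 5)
Your proposal follows essentially the same route as the paper: part one combines Lemma~\ref{lemma:21'}(a), the first block row of \eqref{eq.system_det} (giving $\|g_{j,s}+J_{j,s}y_{j,s}\|_2\le\kappa_H\|d_{j,s}\|_2$), and the feasibility term of $\Delta l$, then invokes Theorem~\ref{theorem:constant_step size_prescribed}; part two is the same triangle-inequality/Jensen/Lipschitz decomposition for $\bar g_{k,s}-\nabla f(x_*)$, and the same resolvent-perturbation argument (with $\|J_{k,s}-J_*\|\le\Gamma\delta_x$ and the factor $\|g_*\|$) for the multipliers. Two small remarks: your claim that $g_{j,s}^Td_{j,s}\le 0$ is not true in general (it equals $-d^THd+c^Ty$, and $c^Ty$ has no sign); the clean way to extract the feasibility term is the deterministic analogue of \eqref{eq.reduction_lb}, which gives $\Delta l\ge\sigma\|c_{j,s}\|_1\ge\sigma\|c_{j,s}\|_2$ directly --- you gesture at this, and the resulting constant $\sigma$ (like $\kappa_l\bar\tau_{\min}$) is silently absorbed, as in the paper. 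Your diagnosis of the multiplier bound is also accurate: \eqref{eq.imply2} already gives $\mathbb{E}_{k,s}\|\bar y_{k,s}-y_{k,s}\|_2\le\kappa_d L(\delta_x+\delta_{x,0})/\sqrt{b}\le\kappa_d\delta_y$, which proves the statement as literally written, while the extra term $2\kappa_d^2\Gamma\delta_x\|g_*\|$ in the paper comes from its proof actually bounding $\|\bar y_{k,s}-y_*\|_2$ via the KKT-matrix perturbation you describe.
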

\begin{proof} The first part follows by Lemma~\ref{lemma:21'} and Theorem~\ref{theorem:constant_step size_prescribed}, and the fact that by the deterministic variant of \eqref{eq.system_stochastic} for all $(k,s) \in \mathbb{N} \times \left[\bar{S}\right]$
\begin{align*}
     \|g_{k,s}  + J_{k,s}  y_{k,s}\|_2  \leq \| H_{k,s} d_{k,s}\|_2 \leq \| H_{k,s}\|_2 \| d_{k,s}\|_2 \leq \kappa_H\|d_{k,s}\|_2.
\end{align*}

The second part follows  by Assumptions  \ref{ass.function} and \ref{ass.unbias}, the definitions of $(\delta_x,\delta_{x,0},\delta_c)$, and the triangle inequality that $\|  x_{k,s} -x_{k,0}  \|_2 \le \| x_{k,0} - x^* \|_2 + \| x_{k,s} - x^* \|_2 \le \delta_{x,0} + \delta_x$
\begin{align*}
     \mathbb{E}_{k,s} \left[
     \left\| \begin{bmatrix}
        \bar{g}_{k,s} - \nabla f(x_*) \\    c_{k,s}       
    \end{bmatrix}\right\|_2 \right] &\leq \| c_{k,s} \|_2 +\mathbb{E}_{k,s} \left[ \| \bar g_{k,s} - \nabla f(x_{k,s}) \|_2 \right]+ \|  \nabla f(x_{k,s}) - \nabla f(x_*) \|_2 \\ 
    &\leq  \| c_{k,s} \|_2 + \sqrt{\mathbb{E}_{k,s} \left[ \| \bar g_{k,s} - \nabla f(x_{k,s}) \|_2^2 \right]}+  \|  \nabla f(x_{k,s}) - \nabla f(x_*) \|_2  \\
    &\leq \delta_c + L(\delta_{x,0} +\delta_x)/\sqrt{b} + L \delta_x = \delta_y.
\end{align*}
Let $g_{*}:=\nabla f\left(x_{*}\right)$ and $c_{*}:=c\left(x_{*}\right)=0$, for $x_{k,s}$ sufficiently close to $x_{*}$ there exists $\kappa_{g_*} \in \mathbb{R}_{>0}$ such that 
\begin{align*}
&\mathbb{E}_{k,s}\left[\left\|\bar{y}_{k,s}-y_{*}\right\|_{2} \right]\\
 \leq &\mathbb{E}_{k,s}\left[ \left\|\left[\begin{array}{cc}
H_{k,s} & J_{k,s}^{T} \\
J_{k,s} & 0
\end{array}\right]^{-1}\left[\begin{array}{c}
\bar{g}_{k,s} \\
c_{k,s}
\end{array}\right]-\left[\begin{array}{cc}
H_{k,s} & J_{*}^{T} \\
J_{*} & 0
\end{array}\right]^{-1}\left[\begin{array}{c}
g_{*} \\
c_{*}
\end{array}\right]\right\|_{2} \right]\\
=&\mathbb{E}_{k,s}\left[\left\|\left[\begin{array}{cc}
H_{k,s} & J_{k,s}^{T} \\
J_{k,s} & 0
\end{array}\right]^{-1}\left[\begin{array}{cc}
\bar{g}_{k,s}-g_{*} \\
c_{k,s}
\end{array}\right]+\left(\left[\begin{array}{cc}
H_{k,s} & J_{k,s}^{T} \\
J_{k,s} & 0
\end{array}\right]^{-1}-\left[\begin{array}{cc}
H_{k,s} & J_{*}^{T} \\
J_{*} & 0
\end{array}\right]^{-1}\right)\left[\begin{array}{c}
g_{*} \\
0
\end{array}\right]\right\|_{2}\right] \\
\leq & \left\|\left[\begin{array}{cc}
H_{k,s} & J_{k,s}^{T} \\
J_{k,s} & 0
\end{array}\right]^{-1} \right\|_2  \mathbb{E}_{k,s}\left[\left\|\left[\begin{array}{cc}
\bar{g}_{k,s}-g_{*} \\
c_{k,s}
\end{array}\right] \right\|_2 \right] \\  & \quad+ 2 \left\|\left[\begin{array}{cc}
H_{k,s} & J_{k,s}^{T} \\
J_{k,s} & 0
\end{array}\right]^{-1} \right\|_2  \left\|\left[\begin{array}{cc}
0 & (J_* - J_{k,s})^{T} \\
J_* - J_{k,s} & 0
\end{array}\right]^{-1} \right\|_2 \left\|\left[\begin{array}{cc}
H_{k,s} & J_{k,s}^{T} \\
J_{k,s} & 0
\end{array}\right]^{-1} \right\|_2 \left\| \left[\begin{array}{c}
g_{*} \\
0
\end{array}\right] \right\|_2 
\\
\leq & 
\kappa_d \delta_{y} + 2 \kappa_d^2 \Gamma \delta_{x} \|g_*\|
\end{align*}
where the last inequality is satisfied since $(A+\Delta)^{-1} = A^{-1} -  A^{-1} \Delta A^{-1} + \mathcal{O}(\|\Delta\|^2)$, and we assume that   $\|\Delta\|_2 = \|J_* - J_{k,s}\|_2 \le \Gamma \delta_{x}$ is small enough such that $\mathcal{O}(\|\Delta\|^2) \le \|A^{-1} \Delta A^{-1}\|_2$. 
This completes the proof.
\end{proof}

Corollary~\ref{cor.constant} characterizes the behavior of optimality measure $\|g_{k,s}  + J_{k,s}  y_{k,s}\|_2^2$ and feasibility measure $\|c_{k,s}  \|_2$ for all $k \ge \bar{k}_{\tau}$ and $s \in \left[\bar{S}\right]$. The result of Corollary~\ref{cor.constant} reveals that, under the assumption that merit parameter $\bar{\tau}_k$ has stabilized at a sufficiently small value, both measures converge to zero in expectation, 
which justifies our summary in Table~\ref{tab.summary}. It is important to note the difference in nature of the results of Corollary~\ref{cor.constant} and the analogues proven in the unconstrained setting for the SVRG method \cite{reddi2016stochastic}. The first result in Corollary~\ref{cor.constant} is with respect to the expectation of the averaged optimality/feasibility measure across iterations, whereas in \cite{reddi2016stochastic} the results are with respect the the minimal optimality measure (\cite{reddi2016stochastic} considers the unconstrained setting, and so the optimality measure is the norm of the gradient) over the iterations. One can easily derive similar convergence results for \SVRSQP{}. Moreover, if the output of Algorithm \ref{alg.sqp_svrg} is  uniformly chosen from $(k,s)$, where $k \ge \bar{k}_{\tau}$, one can derive a bound for $\mathbb{E}_{\tau_{\min}}\left[\tfrac{\|g_{k,s}  + J_{k,s}  y_{k,s}\|_2^2 }{\kappa_H^2}  + \|c_{k,s}  \|_2\right]$. Finally, we provide an upper bound for the error of the Lagrange multiplier estimates which is dependent on a feasibility measure and  the distances from $x_{k,s}$ and  $x_{k,0}$ to the optimal solution. As a result, if the primal iterates converge to a feasible point and in expectation the SVRG gradient approximation converges to the true gradient of the objective function at the optimal solution, then the Lagrange multipliers also converge. 

We conclude this section with a remark about the iteration complexity of \SVRSQP{}. In the unconstrained setting, by employing variance reduced gradients SVRG is able to improve upon the iteration complexity of the stochastic gradient (SG) method in term of the dependence on $\epsilon$ (the termination tolerance). Specially, in the nonconvex setting the iteration complexity for SVRG is $\mathcal{O}(n + \tfrac{n^{2/3}}{\epsilon})$ \cite{reddi2016stochastic} whereas the iteration complexity for SG is $\mathcal{O}(\tfrac{1}{\epsilon^2})$. In the constrained setting, deriving such results is significantly more difficult due to the fact that one needs to consider two measures of optimality (feasibility and stationarity) and the fact that the merit function  (measure of progress) is changing over the course of the optimization (the merit parameter changes adaptively). Under the assumption that the merit parameter has stabilized at a sufficient small positive value, as a result of Corollary~\ref{cor.constant} one can show that the number of iteration to achieve $\epsilon$-optimality (where the optimality measure is a combination of stationarity and feasibility, i.e.,  $\max\{\| g_{k,s} + J_{k,s}y_{k,s} \|_2^2,\|c_{k,s}\|_2\} \leq \epsilon$) is $\mathcal{O}(n + \tfrac{n^{2/3}}{\epsilon^2})$. To contrast this result, the algorithm in~\cite{berahas2021sequential} (the analogue of the SG method in the equality constrained setting), after the merit parameter has stabilized,  requires $\mathcal{O}(\tfrac{1}{\epsilon^4})$. As a result, it is clear that variance reduction does have an effect, albeit the limited setting under which the result has been derived and the fact that this result does not say anything about the iterations before the merit parameter stabilizes. To the best of our knowledge, the only work that has analyzed the iteration complexity with regards to the whole sequence of merit parameter is~\cite{curtis2021worst}. One can certainly extend that analysis for our algorithm. We defer such analysis to a different study since it would require extending the paper significantly.

\subsection{Adaptive step size} \label{sec.adaptive}

In this subsection, we present convergence results for Algorithm \ref{alg.sqp_svrg} with
the adaptive step size strategy (\textbf{Option II} in the Algorithm \ref{alg.sqp_svrg}) under Assumption \ref{ass.small_tau}. The analysis in this section is significantly more involved than the analysis in Section~\ref{sec.constant} primarily due to the stochastic nature of the step size rule (\eqref{eq:step size1}--\eqref{eq.step size}). Paralleling the analysis of the constant step size strategy, we first provide an upper bound for the difference in merit function after a step.




\begin{lemma} 
Suppose that  Assumption \ref{ass.small_tau} holds. 
For all $k \ge \bar{k}_{\tau}$ and $s \in \left[\bar{S}\right]$, it follows that 
\begin{align*}
     &\ \phi(x_{k,s+1},\bar{\tau}_{k,s}) - \phi(x_{k,s},\bar{\tau}_{k,s})    \\
     \leq&\    - \bar\alpha_{k,s}  \Delta  l(x_{k,s} ,\bar\tau_{\min},g_{k,s},d_{k,s})  + \tfrac12 \bar\alpha_{k,s} \beta  \Delta  l(x_{k,s} ,\bar\tau_{\min},\bar g_{k,s},\bar d_{k,s}) \\
     & \ + 
    \bar\alpha_{k,s} \bar\tau_{\min} g_{k,s}^T (\bar{d}_{k,s} -{d}_{k,s} )  
\end{align*}
\label{lemma:24'_adaptive}
\end{lemma}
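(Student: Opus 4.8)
The plan is to mirror the constant step size estimate (Lemma~\ref{lemma:24'}), replacing every place where that argument used $\bar\alpha_{k,s}=\alpha\le 1$ by a case distinction dictated by the three branches of the step size rule \eqref{eq.step size}. First I would invoke Lemma~\ref{lemma:21'}$(c)$; since $k\ge\bar{k}_\tau$, Assumption~\ref{ass.small_tau} gives $\bar\tau_{k,s}=\bar\tau_{\min}$, so
\[
\phi(x_{k,s+1},\bar\tau_{\min})-\phi(x_{k,s},\bar\tau_{\min})\le \bar\alpha_{k,s}\bar\tau_{\min}g_{k,s}^T\bar d_{k,s}+(|1-\bar\alpha_{k,s}|-1)\|c_{k,s}\|_1+\tfrac12(\bar\tau_{\min}L+\Gamma)\bar\alpha_{k,s}^2\|\bar d_{k,s}\|_2^2 .
\]
Splitting $g_{k,s}^T\bar d_{k,s}=g_{k,s}^Td_{k,s}+g_{k,s}^T(\bar d_{k,s}-d_{k,s})$ produces the term $\bar\alpha_{k,s}\bar\tau_{\min}g_{k,s}^T(\bar d_{k,s}-d_{k,s})$ that appears verbatim in the target bound, and using $\Delta l(x_{k,s},\bar\tau_{\min},g_{k,s},d_{k,s})=-\bar\tau_{\min}g_{k,s}^Td_{k,s}+\|c_{k,s}\|_1$ from \eqref{def.merit_model_reduction}, the whole claim reduces to the deterministic inequality
\[
(|1-\bar\alpha_{k,s}|-1+\bar\alpha_{k,s})\|c_{k,s}\|_1+\tfrac12(\bar\tau_{\min}L+\Gamma)\bar\alpha_{k,s}^2\|\bar d_{k,s}\|_2^2\le \tfrac12\bar\alpha_{k,s}\beta\,\Delta l(x_{k,s},\bar\tau_{\min},\bar g_{k,s},\bar d_{k,s}),
\]
which I will refer to as $(\star)$. (One may assume $\bar d_{k,s}\neq 0$; otherwise the step is rejected, $x_{k,s+1}=x_{k,s}$, and $(\star)$ is trivial.)

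Next I would split according to \eqref{eq.step size}. In the first two branches $\bar\alpha_{k,s}\le 1$, hence $|1-\bar\alpha_{k,s}|-1+\bar\alpha_{k,s}=0$ and $(\star)$ collapses to $(\bar\tau_{\min}L+\Gamma)\bar\alpha_{k,s}\|\bar d_{k,s}\|_2^2\le\beta\,\Delta l(x_{k,s},\bar\tau_{\min},\bar g_{k,s},\bar d_{k,s})$; this holds because $\bar\alpha_{k,s}\le\bar{\widehat\alpha}_{k,s}$ and $\bar{\widehat\alpha}_{k,s}$ is defined in \eqref{eq:step size1} precisely so that $\bar{\widehat\alpha}_{k,s}(\bar\tau_{\min}L+\Gamma)\|\bar d_{k,s}\|_2^2\le\beta\,\Delta l(x_{k,s},\bar\tau_{\min},\bar g_{k,s},\bar d_{k,s})$ (using $\bar\tau_{k,s}=\bar\tau_{\min}$ and $\bar\tau_{\min}L_{k,s}+\Gamma_{k,s}\ge\bar\tau_{\min}L+\Gamma$). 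In the third branch $\bar\alpha_{k,s}=\bar{\widetilde\alpha}_{k,s}>1$, so the coefficient of $\|c_{k,s}\|_1$ in $(\star)$ is $2(\bar\alpha_{k,s}-1)>0$ and cannot be discarded; here I would use the correction built into \eqref{eq:step size2}, namely $\bar{\widehat\alpha}_{k,s}=\bar\alpha_{k,s}+\tfrac{4\|c_{k,s}\|_1}{(\bar\tau_{\min}L_{k,s}+\Gamma_{k,s})\|\bar d_{k,s}\|_2^2}$. Substituting this into $\bar{\widehat\alpha}_{k,s}(\bar\tau_{\min}L_{k,s}+\Gamma_{k,s})\|\bar d_{k,s}\|_2^2\le\beta\,\Delta l(x_{k,s},\bar\tau_{\min},\bar g_{k,s},\bar d_{k,s})$ yields $\beta\,\Delta l(x_{k,s},\bar\tau_{\min},\bar g_{k,s},\bar d_{k,s})\ge\bar\alpha_{k,s}(\bar\tau_{\min}L_{k,s}+\Gamma_{k,s})\|\bar d_{k,s}\|_2^2+4\|c_{k,s}\|_1$; multiplying by $\tfrac12\bar\alpha_{k,s}>0$ and bounding $\bar\tau_{\min}L+\Gamma\le\bar\tau_{\min}L_{k,s}+\Gamma_{k,s}$ on the left-hand side of $(\star)$, it remains only to check $2(\bar\alpha_{k,s}-1)\|c_{k,s}\|_1\le 2\bar\alpha_{k,s}\|c_{k,s}\|_1$, which is immediate since $\|c_{k,s}\|_1\ge 0$.

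The bulk of the work is routine rearrangement of Lemma~\ref{lemma:21'}$(c)$; the step I expect to require the most care is the third branch, where the exact factor $4$ in \eqref{eq:step size2} must be matched against the coefficient $2(\bar\alpha_{k,s}-1)$ of the feasibility term generated by the nonsmooth $|1-\bar\alpha_{k,s}|$ — which is precisely why that correction is present in the step size rule — and where one must keep $L_{k,s},\Gamma_{k,s}$ distinct from the global constants $L,\Gamma$. In contrast to the constant step size case, Lemma~\ref{lemma:21'}$(b)$ is not needed, since \eqref{eq:step size1} already controls $\bar\alpha_{k,s}\|\bar d_{k,s}\|_2^2$ directly in terms of $\Delta l(x_{k,s},\bar\tau_{\min},\bar g_{k,s},\bar d_{k,s})$.
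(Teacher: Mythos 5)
Your proposal is correct and follows essentially the same route as the paper: start from Lemma~\ref{lemma:21'}$(c)$, split $g_{k,s}^T\bar d_{k,s}$ to expose the $g_{k,s}^T(\bar d_{k,s}-d_{k,s})$ term, and then handle the three branches of \eqref{eq.step size}, using the definition of $\bar{\widehat\alpha}_{k,s}$ to absorb the quadratic term into $\tfrac12\bar\alpha_{k,s}\beta\,\Delta l(x_{k,s},\bar\tau_{\min},\bar g_{k,s},\bar d_{k,s})$ and the $4\|c_{k,s}\|_1$ correction in $\bar{\widetilde\alpha}_{k,s}$ to cover the extra $2(\bar\alpha_{k,s}-1)\|c_{k,s}\|_1$ when $\bar\alpha_{k,s}>1$. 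Your consolidation of the common algebra into the single inequality $(\star)$ before the case split is only an organizational difference, and your explicit handling of $L_{k,s},\Gamma_{k,s}$ versus $L,\Gamma$ is slightly more careful than the paper, which silently identifies them.
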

\begin{proof}
For $k \ge \bar{k}_{\tau}$ and $s \in \left[\bar{S}\right]$, we consider three cases depending on how the step size is set in Algorithm \ref{alg.sqp_svrg} (Option II). 

\textbf{Case 1:} Suppose in Algorithm \ref{alg.sqp_svrg} (Option II) that $\bar{\widehat\alpha}_{k,s}< 1$, meaning that $\bar\alpha_{k,s}\gets \bar{\widehat\alpha}_{k,s} \le \tfrac{\beta \Delta  l(x_{k,s} ,\bar\tau_{\min},\bar{g}_{k,s},\bar{d}_{k,s}) }{(\bar\tau_{\min} L + \Gamma ) \|\bar{d}_{k,s} \|_2^2}$. 
It then follows from (\ref{eq.reduction_lb}) and Lemma~\ref{lemma:21'} that 
\begin{align*}
     &\ \phi(x_{k,s} + \bar\alpha_{k,s}\bar{d}_{k,s} , \bar\tau_{k,s}) - \phi(x_{k,s} , \bar\tau_{k,s}) \\
      \leq&\ \bar\alpha_{k,s}(\bar{\tau}_{\min} g_{k,s} ^T \bar{d}_{k,s}  - \|c_{k,s} \|_1) + \tfrac12 (\bar{\tau}_{\min} L + \Gamma) \bar\alpha_{k,s}^2 \|\bar{d}_{k,s} \|_2^2 \\
      =&\ \bar\alpha_{k,s}(\bar{\tau}_{\min} g_{k,s} ^T d_{k,s}  - \|c_{k,s} \|_1) + \tfrac12 (\bar{\tau}_{\min} L + \Gamma) \bar\alpha_{k,s}^2 \|\bar{d}_{k,s}\|_2^2 + \bar\alpha_{k,s}\bar{\tau}_{\min} g_{k,s}^T (\bar{d}_{k,s}  - d_{k,s} ) \\
      =&\ -\bar\alpha_{k,s}\Delta  l(x_{k,s} ,\bar\tau_{\min},g_{k,s},d_{k,s}) + \tfrac12 (\bar{\tau}_{\min} L + \Gamma) \bar\alpha_{k,s}^2 \|\bar{d}_{k,s}\|_2^2 \\
      & + \bar\alpha_{k,s}\bar{\tau}_{\min} g_{k,s}^T (\bar{d}_{k,s}  - d_{k,s}) \\
      \le &\  -\bar\alpha_{k,s}\Delta  l(x_{k,s} ,\bar\tau_{\min},g_{k,s},d_{k,s}) + \tfrac12 \bar\alpha_{k,s}\beta \Delta  l(x_{k,s} ,\bar\tau_{\min},\bar g_{k,s},\bar d_{k,s}) \\
      & + \bar\alpha_{k,s}\bar{\tau}_{\min} g_{k,s}^T (\bar{d}_{k,s}  - d_{k,s}) 
\end{align*}

\textbf{Case 2:} Suppose in Algorithm \ref{alg.sqp_svrg} (Option II) that
$\bar{\widetilde\alpha}_{k,s}\leq 1 \leq \bar{\widehat\alpha}_{k,s}$, meaning that $\bar\alpha_{k,s}\gets 1 \leq \tfrac{\beta \Delta  l(x_{k,s} ,\bar\tau_{\min},g_{k,s},d_{k,s}) }{(\bar\tau_{\min} L + \Gamma ) \|\bar{d}_{k,s} \|_2^2}$. Similar to Case 1, it follows that 
\begin{align*}
     &\ \phi(x_{k,s} + \bar\alpha_{k,s}\bar{d}_{k,s} , \bar\tau_{k,s}) - \phi(x_{k,s} , \bar\tau_{k,s}) \\
      \leq&\ -\bar\alpha_{k,s}\Delta  l(x_{k,s} ,\bar\tau_{\min},g_{k,s},d_{k,s}) + \tfrac12 (\bar\tau_{\min} L + \Gamma) \bar\alpha_{k,s}^2 \|\bar{d}_{k,s} \|_2^2 \\
      & + \bar\alpha_{k,s}\bar\tau_{\min} g_{k,s}^T (\bar{d}_{k,s}  - d_{k,s} ) \\
      \le &\  -\bar\alpha_{k,s}\Delta  l(x_{k,s} ,\bar\tau_{\min},g_{k,s},d_{k,s}) + \tfrac12 \bar\alpha_{k,s}\beta \Delta  l(x_{k,s} ,\bar\tau_{\min},\bar g_{k,s},\bar d_{k,s})\\
      & + \bar\alpha_{k,s}\bar\tau_{\min} g_{k,s}^T (\bar{d}_{k,s}  - d_{k,s} ) 
\end{align*}

\textbf{Case 3:} Suppose in Algorithm \ref{alg.sqp_svrg} (Option II) that
$\bar{\widetilde\alpha}_{k,s}> 1$, meaning that $\bar\alpha_{k,s}\gets \bar{\widetilde\alpha}_{k,s} \le \tfrac{\beta \Delta  l(x_{k,s} ,\bar\tau_{\min},g_{k,s},d_{k,s})-4\|c_{k,s} \|_1 }{(\bar\tau_{k,s} L + \Gamma ) \|\bar{d}_{k,s} \|_2^2}$.  It follows from (\ref{eq.reduction_lb}) and Lemma~\ref{lemma:21'} that 
\begin{align*}
      &\ \phi(x_{k,s} + \bar\alpha_{k,s}\bar{d}_{k,s} , \bar\tau_{k,s}) - \phi(x_{k,s} , \bar\tau_{k,s}) \\
      \leq&\ \bar\alpha_{k,s}\bar\tau_{\min} g_{k,s} ^T \bar{d}_{k,s}  + (\bar\alpha_{k,s}- 1)\|c_{k,s} \|_1 - \|c_{k,s} \|_1 + \tfrac12 (\bar\tau_{\min} L + \Gamma) \bar\alpha_{k,s}^2 \|\bar{d}_{k,s} \|_2^2 \\
      =&\ \bar\alpha_{k,s}(\bar\tau_{\min} g_{k,s} ^T \bar{d}_{k,s}  - \|c_{k,s} \|_1) + 2(\bar\alpha_{k,s}- 1)\|c_{k,s} \|_1 + \tfrac12 (\bar\tau_{\min} L + \Gamma) \bar\alpha_{k,s}^2 \|\bar{d}_{k,s} \|_2^2 \\
      \leq&\ \bar\alpha_{k,s}(\bar\tau_{\min} g_{k,s} ^T d_{k,s}  - \|c_{k,s} \|_1) + 2 \bar\alpha_{k,s}\|c_{k,s} \|_1 + \tfrac12 (\bar\tau_{\min} L + \Gamma) \bar\alpha_{k,s}^2 \|\bar{d}_{k,s} \|_2^2 \\
      & \quad + \bar\alpha_{k,s}\bar\tau_{\min} g_{k,s} ^T (\bar{d}_{k,s}  - d_{k,s} ) \\
     \le &\  -\bar\alpha_{k,s}\Delta l(x_{k,s} ,\bar\tau_{\min},g_{k,s},d_{k,s}) + \tfrac12 \bar\alpha_{k,s}\beta \Delta  l(x_{k,s} ,\bar\tau_{\min},\bar g_{k,s},\bar d_{k,s}) \\
      & \quad + \bar\alpha_{k,s}\bar\tau_{\min} g_{k,s} ^T (\bar{d}_{k,s}  - d_{k,s} ) 
\end{align*}

The result follows by combining the three cases.
\end{proof}

While the upper bounds for the difference in merit function after a step for the two step size strategies are very similar (Lemmas~\ref{lemma:24'} and \ref{lemma:24'_adaptive}, respectively), a key difference pertains to the fact step sizes computed by the adaptive algorithm (\textbf{Option II}) are stochastic, and as such the last term in the bound in Lemma~\ref{lemma:24'_adaptive} is nonzero in expectation. Moreover, due to the adaptive and stochastic nature of the step size strategy, an additional user-defined parameter $\alpha_u\in\mathbb{R}_{>0}$ is required. Before we proceed, we make the following remark with regards to the selection of  $\alpha_u$.
\begin{remark}
Under Assumption~\ref{ass.small_tau} and by Lemma~\ref{lemma:21'}$(b)$, it follows that for all $k \ge \bar{k}_{\tau}$ and $s \in \left[\bar{S}\right]$
\begin{align*}
	\tfrac{ \Delta  l(x_{k,s} ,\bar\tau_{\min},\bar g_{k,s},\bar d_{k,s}) }{(\bar\tau_{\min} L + \Gamma ) \|\bar{d}_{k,s} \|_2^2} \geq \tfrac{\kappa_l \bar{\tau}_{\min}}{\bar{\tau}_{\min} L+\Gamma}\in\mathbb{R}_{>0}.
\end{align*}
When the user-defined parameters $\alpha_u\in\mathbb{R}_{>0}$ and $\beta\in (0,1]$ are chosen such that $\alpha_u \beta \leq \tfrac{\beta\kappa_l \bar{\tau}_{\min}}{\bar{\tau}_{\min} L+\Gamma} \in (0,1]$, it follows that \textbf{Option II} in Algorithm~\ref{alg.sqp_svrg} always selects a constant step size $\alpha_u\beta\in (0,1]$, whose analysis has already been discussed in Section~\ref{sec.constant}. Therefore, under Assumption~\ref{ass.small_tau}, for the rest of this subsection we only consider the case where $\alpha_u > \tfrac{\kappa_l \bar{\tau}_{\min}}{\bar{\tau}_{\min} L+\Gamma}$ and $\beta \in \mathbb{R}_{>0}$ is chosen such that $\tfrac{\beta\kappa_l \bar{\tau}_{\min}}{\bar{\tau}_{\min} L+\Gamma} \in (0,1]$.
\end{remark}

Next, we provide upper and lower bounds for the step sizes $\bar\alpha_{k,s} \in \mathbb{R}_{>0}$ chosen by \SVRSQP{}. 

\begin{lemma} 
\label{lemma:bound_for_step size}
Suppose that Assumption \ref{ass.small_tau} holds. 
Let $\bar{\alpha}_{k,s}$ be defined as in \eqref{eq:step size1}--\eqref{eq.step size}, and consider $\alpha_l:= \tfrac{\kappa_l \bar{\tau}_{\min}}{\bar{\tau}_{\min} L+\Gamma} \in \mathbb{R}_{>0}$ with $\alpha_l < \alpha_u$. 
Suppose $\beta \in (0,1]$ is chosen such that $ \tfrac{\beta \kappa_l \bar\tau_{\min}}{ \bar\tau_{\min} L + \Gamma} \in (0,1]$, then for all $k \ge \bar{k}_{\tau}$ and $s \in \left[\bar{S}\right]$, it  follows that $\bar\alpha_{k,s}\in [\alpha_l \beta,\alpha_u \beta ]$.
\end{lemma}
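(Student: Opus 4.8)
The plan is to establish the two bounds separately, in each case walking through the three branches of the step size rule \eqref{eq.step size}. Throughout I would work under Assumption~\ref{ass.small_tau}, so that $\bar\tau_{k,s}=\bar\tau_{\min}$ for all $k\ge\bar k_\tau$ and $s\in[\bar S]$, and I may assume $\bar d_{k,s}\neq 0$ (otherwise the step size rule is bypassed in Algorithm~\ref{alg.sqp_svrg}), so the ratios in \eqref{eq:step size1}--\eqref{eq:step size2} are well-defined; there I also identify $L_{k,s}$, $\Gamma_{k,s}$ with the constants $L$, $\Gamma$ as in the statement.

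For the \textbf{upper bound}, the two facts I would use are that $\bar{\widehat\alpha}_{k,s}=\min\{\,\cdot\,,\alpha_u\}\beta\le\alpha_u\beta$ by the definition \eqref{eq:step size1}, and that $\bar{\widetilde\alpha}_{k,s}\le\bar{\widehat\alpha}_{k,s}$ since \eqref{eq:step size2} subtracts a nonnegative quantity (recall $\|c_{k,s}\|_1\ge 0$). Then: in the first branch of \eqref{eq.step size}, $\bar\alpha_{k,s}=\bar{\widehat\alpha}_{k,s}\le\alpha_u\beta$; in the second branch, $\bar\alpha_{k,s}=1\le\bar{\widehat\alpha}_{k,s}\le\alpha_u\beta$, where the inequality $1\le\bar{\widehat\alpha}_{k,s}$ holds precisely because we are in that branch; in the third branch, $\bar\alpha_{k,s}=\bar{\widetilde\alpha}_{k,s}\le\bar{\widehat\alpha}_{k,s}\le\alpha_u\beta$. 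Hence $\bar\alpha_{k,s}\le\alpha_u\beta$ in all cases.

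For the \textbf{lower bound}, the key observation—already recorded in the remark preceding the lemma—is that Lemma~\ref{lemma:21'}$(b)$ gives $\Delta l(x_{k,s},\bar\tau_{\min},\bar g_{k,s},\bar d_{k,s})\ge\kappa_l\bar\tau_{\min}\|\bar d_{k,s}\|_2^2$, so that
\[
\frac{\Delta l(x_{k,s},\bar\tau_{\min},\bar g_{k,s},\bar d_{k,s})}{(\bar\tau_{\min}L+\Gamma)\|\bar d_{k,s}\|_2^2}\ \ge\ \frac{\kappa_l\bar\tau_{\min}}{\bar\tau_{\min}L+\Gamma}\ =\ \alpha_l .
\]
Combined with the standing assumption $\alpha_l<\alpha_u$, this yields $\min\{\,\cdot\,,\alpha_u\}\ge\alpha_l$, hence $\bar{\widehat\alpha}_{k,s}\ge\alpha_l\beta$. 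Since $\beta$ is chosen so that $\alpha_l\beta=\tfrac{\beta\kappa_l\bar\tau_{\min}}{\bar\tau_{\min}L+\Gamma}\in(0,1]$, the three branches of \eqref{eq.step size} then give: in the first, $\bar\alpha_{k,s}=\bar{\widehat\alpha}_{k,s}\ge\alpha_l\beta$; in the second, $\bar\alpha_{k,s}=1\ge\alpha_l\beta$; in the third, $\bar\alpha_{k,s}=\bar{\widetilde\alpha}_{k,s}>1\ge\alpha_l\beta$. Combining the two bounds gives $\bar\alpha_{k,s}\in[\alpha_l\beta,\alpha_u\beta]$.

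There is no genuine obstacle here: the statement is essentially a bookkeeping consequence of the step size rule together with the model-reduction lower bound of Lemma~\ref{lemma:21'}$(b)$. The only points requiring a moment's care are tracking which branch of \eqref{eq.step size} is active, using $\bar{\widetilde\alpha}_{k,s}\le\bar{\widehat\alpha}_{k,s}$ so the upper bound propagates through the $\bar{\widetilde\alpha}_{k,s}$ branch, and recalling that $\alpha_l\beta\le 1$ so the lower bound survives the branches where $\bar\alpha_{k,s}$ is set to $1$ or to $\bar{\widetilde\alpha}_{k,s}>1$.
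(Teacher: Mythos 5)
Your proposal is correct and follows essentially the same route as the paper's proof: both rest on Lemma~\ref{lemma:21'}$(b)$ to bound the ratio in \eqref{eq:step size1} below by $\alpha_l$, note that $\bar{\widetilde\alpha}_{k,s}\le\bar{\widehat\alpha}_{k,s}\le\alpha_u\beta$ and $\alpha_l\beta\le 1$, and then check the three branches of \eqref{eq.step size}. The only difference is organizational (you separate the upper and lower bounds, while the paper handles both within each case), which does not change the substance.
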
 
\begin{proof}
By Lemma \ref{lemma:21'}$(b)$, it follows that $ \tfrac{ \Delta  l(x_{k,s} ,\bar\tau_{\min},\bar g_{k,s},\bar d_{k,s}) }{(\bar\tau_{\min} L + \Gamma ) \|\bar{d}_{k,s} \|_2^2}  \ge \tfrac{\kappa_l \bar{\tau}_{\min}}{\bar{\tau}_{\min} L+\Gamma}$
for all $k \ge \bar{k}_{\tau}$ and $s \in \left[\bar{S}\right]$. By \eqref{eq:step size1}--\eqref{eq.step size}, the desired conclusion follows by considering the following three cases.

\textbf{Case 1}: Suppose that $\bar{\widehat\alpha}_{k,s} =  \min\left\{ \tfrac{ \Delta l(x_{k,s} ,\bar\tau_{k,s},\bar g_{k,s} ,\bar d_{k,s} ) }{(\bar\tau_{k,s} L_{k,s} + \Gamma_{k,s} ) \|\bar{d}_{k,s} \|_2^2}, \alpha_u \right\} \beta < 1$, in which case the
algorithm sets $\bar\alpha_{k,s} = \bar{\widehat\alpha}_{k,s}$. It follows that 
\begin{align*}
    \alpha_l \beta = \tfrac{\kappa_l \bar{\tau}_{\min}}{\bar{\tau}_{\min} L+\Gamma} \beta \le \min\left\{\tfrac{\kappa_l \bar{\tau}_{\min}}{\bar{\tau}_{\min} L+\Gamma},\alpha_u  \right\}\beta \le  \bar\alpha_{k,s} \le \alpha_u \beta .
\end{align*}

\textbf{Case 2}: Suppose that  $ \bar{\widetilde\alpha}_{k,s} = \bar{\widehat\alpha}_{k,s}- \tfrac{4\|c_{k,s} \|_1}{(\bar\tau_{k,s} L_{k,s} +  \Gamma_{k,s})\|\bar{d}_{k,s} \|_2^2} \le 1 \le \bar{\widehat\alpha}_{k,s}$, in which case the
algorithm sets $\bar\alpha_{k,s} = 1$. It follows that 
\begin{align*}
  \alpha_l \beta \le  1 = \bar\alpha_{k,s}  \le \bar{\widehat\alpha}_{k,s} \le \alpha_u \beta.
\end{align*}

\textbf{Case 3}: Suppose that $\bar{\widetilde\alpha}_{k,s} > 1 $, in which case the
algorithm sets $\bar\alpha_{k,s} = \bar{\widetilde\alpha}_{k,s}$. It follows that 
\begin{align*}
  \alpha_l \beta \le  1<  \bar\alpha_{k,s} = \bar{\widehat\alpha}_{k,s}- \tfrac{4\|c_{k,s} \|_1}{(\bar\tau_{k,s} L_{k,s} +  \Gamma_{k,s})\|\bar{d}_{k,s} \|_2^2} \le \bar{\widehat\alpha}_{k,s} \le \alpha_u \beta
\end{align*}
\end{proof}

As mentioned above, due to the adaptive (and stochastic) nature of the step size strategy, the third term on the right-hand-side of the bound in Lemma~\ref{lemma:24'_adaptive} is nonzero in expectation. We provide an upper bound for this quantity in the next lemma.

\begin{lemma} 
Suppose that  Assumption \ref{ass.small_tau} holds. 
For all $k \ge \bar{k}_{\tau}$ and $s \in \left[\bar{S}\right]$, it follows that
\begin{align*} 
   &\mathbb{E}_{k,s}\left[\bar\alpha_{k,s}\bar{\tau}_{k,s}   g_{k,s}^T (\bar{d}_{k,s} -{d}_{k,s} )\right] \\
   \leq \ &\tfrac{\alpha_u\kappa_H \kappa_d  \beta^2 }{2 \kappa_l}   \Delta l(x_{k,s} ,\bar\tau_{\min},g_{k,s},d_{k,s}) + \tfrac{\alpha_u \bar\tau_{\min}\kappa_H \kappa_d L^2 }{2b} \|x_{k,s} - x_{k,0}\|_2^2.
\end{align*}
\label{lemma:adaptive_phi_reduction3}
\end{lemma}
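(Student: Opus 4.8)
The plan is to bound the cross-term $\bar\alpha_{k,s}\bar\tau_{k,s} g_{k,s}^T(\bar d_{k,s}-d_{k,s})$ by separating the step-size factor from the inner product, then controlling each piece with the tools already in hand. First, I would use Lemma~\ref{lemma:bound_for_step size} to replace $\bar\alpha_{k,s}$ by its uniform upper bound $\alpha_u\beta$ (on the event that the true sign of the inner product would make the product positive; more carefully, one bounds $\bar\alpha_{k,s} g_{k,s}^T(\bar d_{k,s}-d_{k,s}) \le \alpha_u\beta\, |g_{k,s}^T(\bar d_{k,s}-d_{k,s})|$ or handles the sign via $\bar\alpha_{k,s}\le\alpha_u\beta$ directly), and also use $\bar\tau_{k,s}=\bar\tau_{\min}$ from Assumption~\ref{ass.small_tau}. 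This reduces the task to bounding $\alpha_u\beta\,\bar\tau_{\min}\,\mathbb{E}_{k,s}[|g_{k,s}^T(\bar d_{k,s}-d_{k,s})|]$.

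Next, I would apply Cauchy--Schwarz inside the expectation: $|g_{k,s}^T(\bar d_{k,s}-d_{k,s})| \le \|g_{k,s}\|_2\,\|\bar d_{k,s}-d_{k,s}\|_2 \le \kappa_g\,\|\bar d_{k,s}-d_{k,s}\|_2$. Then Lemma~\ref{lemma:25'} gives $\mathbb{E}_{k,s}[\|\bar d_{k,s}-d_{k,s}\|_2] \le \kappa_d\sqrt{M_{k,s}} = \kappa_d \tfrac{L}{\sqrt b}\|x_{k,s}-x_{k,0}\|_2$ by the definition of $M_{k,s}$ in Lemma~\ref{lemma_variance}. At this stage the bound is of the form $C\,\|x_{k,s}-x_{k,0}\|_2$ for an explicit constant $C$, but the target has a $\|x_{k,s}-x_{k,0}\|_2^2$ term and a $\Delta l(\cdot)$ term — so I need to split the linear term into a quadratic-in-distance piece and a piece involving $\|d_{k,s}\|_2$. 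The natural device is the Young/AM--GM inequality $2XY \le z X^2 + Y^2/z$ (already used in Section~\ref{sec.constant}); applying it with $X = \|x_{k,s}-x_{k,0}\|_2$ and $Y$ proportional to $\|d_{k,s}\|_2$ (the deterministic direction), and then invoking Lemma~\ref{lemma:21'}(a) to convert $\|d_{k,s}\|_2^2 \le \tfrac{1}{\kappa_l\bar\tau_{\min}}\Delta l(x_{k,s},\bar\tau_{\min},g_{k,s},d_{k,s})$, should produce exactly the two advertised terms.

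The main obstacle — and the step requiring care — is matching the precise constants in the statement, in particular getting $\tfrac{\alpha_u\kappa_H\kappa_d\beta^2}{2\kappa_l}$ on the $\Delta l$ term and $\tfrac{\alpha_u\bar\tau_{\min}\kappa_H\kappa_d L^2}{2b}$ on the distance-squared term. The appearance of $\kappa_H$ (rather than $\kappa_g$) and the $\beta^2$ (rather than $\beta$) suggests that the intended route does not bound $\|g_{k,s}\|_2$ by $\kappa_g$ directly, but instead writes $g_{k,s} = -H_{k,s}d_{k,s} - J_{k,s}^Ty_{k,s}$ from the deterministic system~\eqref{eq.system_det} and uses $\|g_{k,s}+J_{k,s}y_{k,s}\|_2 \le \kappa_H\|d_{k,s}\|_2$ together with $J_{k,s}(\bar d_{k,s}-d_{k,s})=0$ (so the $J_{k,s}^Ty_{k,s}$ part drops out of the inner product with $\bar d_{k,s}-d_{k,s}$), giving $|g_{k,s}^T(\bar d_{k,s}-d_{k,s})| = |(g_{k,s}+J_{k,s}y_{k,s})^T(\bar d_{k,s}-d_{k,s})| \le \kappa_H\|d_{k,s}\|_2\|\bar d_{k,s}-d_{k,s}\|_2$. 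Then one $\|d_{k,s}\|_2$ pairs with a $\|x_{k,s}-x_{k,0}\|_2$ via Young's inequality with a carefully chosen balancing parameter (which is where one factor of $\beta$ and the $\tfrac12$ enter, analogously to how $z$ was chosen in Section~\ref{sec.constant}), and the other $\|d_{k,s}\|_2$ (or the $\|d_{k,s}\|_2^2$ after squaring) is converted to $\Delta l$ via Lemma~\ref{lemma:21'}(a); tracking $\kappa_d$, $L/\sqrt b$, and the $1/(\bar\tau_{\min}L+\Gamma)$ hidden in $\alpha_l$ through this chain to land on exactly those constants is the delicate bookkeeping. I would carry it out by first writing the fully general bound with a free Young parameter, then choosing that parameter to make the $\Delta l$ coefficient come out as stated, and finally verifying the residual distance-squared coefficient.
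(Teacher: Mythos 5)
Your proposal is correct and, in its final form, is exactly the paper's argument: the paper likewise uses $\bar\tau_{k,s}=\bar\tau_{\min}$, the identity $g_{k,s}^T(\bar d_{k,s}-d_{k,s})=(g_{k,s}+J_{k,s}^Ty_{k,s})^T(\bar d_{k,s}-d_{k,s})=(-H_{k,s}d_{k,s})^T(\bar d_{k,s}-d_{k,s})$ from \eqref{eq.system_det} and $J_{k,s}(\bar d_{k,s}-d_{k,s})=0$, then $\bar\alpha_{k,s}\le\alpha_u\beta$, Cauchy--Schwarz with $\|H_{k,s}\|_2\le\kappa_H$, Lemma~\ref{lemma:25'}, Young's inequality with balancing parameter $\beta$, and Lemma~\ref{lemma:21'}(a); this choice of Young parameter yields precisely the stated coefficients. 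You were right to discard the first route via $\|g_{k,s}\|_2\le\kappa_g$, which cannot produce the $\kappa_H$ and $\beta^2$ factors.
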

\begin{proof}
For all $k \ge \bar{k}_{\tau}$ and $s \in \left[\bar{S}\right]$, by Lemmas~\ref{lemma:21'}, \ref{lemma:25'} and \ref{lemma:bound_for_step size}, Cauchy–Schwarz inequality and the fact of $2XY = 2 (\sqrt{\beta} X) (Y/\sqrt{\beta}) \le \beta X^2 + Y^2/\beta $ for any $\{X,Y\}\subset\mathbb{R}$, it follows that
\begin{align*}
    &\mathbb{E}_{k,s}\left[\bar\alpha_{k,s} \bar\tau_{k,s} g_{k,s}^T (\bar{d}_{k,s} - d_{k,s})\right] \\
    = \ &\mathbb{E}_{k,s}\left[  \bar\alpha_{k,s} \bar\tau_{\min} (g_{k,s} + J_{k,s}^T y_{k,s} )^T (d_{k,s} -  \bar{d}_{k,s})  \right]  \\
    = \ &\mathbb{E}_{k,s}\left[  \bar\alpha_{k,s} \bar\tau_{\min} (-H_{k,s} d_{k,s} )^T (d_{k,s}- \bar{d}_{k,s})  \right]  \\
    \leq \ &\alpha_u \beta \bar\tau_{\min}\kappa_H   \| {d}_{k,s} \|_2 \mathbb{E}_{k,s}[\|\bar{d}_{k,s} - {d}_{k,s} \|_2]\\
    \leq \ &\alpha_u \beta \bar\tau_{\min}\kappa_H \kappa_d  \| {d}_{k,s} \|_2 \sqrt{M_{k,s}} \\
    \leq \ &\alpha_u \beta \bar\tau_{\min}\kappa_H \kappa_d \left( \tfrac{\| {d}_{k,s} \|_2^2 \beta }{2} +\tfrac{M_{k,s}}{2\beta}\right) \\
    \leq \ &\tfrac{\alpha_u  \kappa_H \kappa_d \beta^2}{2 \kappa_l}   \Delta l(x_{k,s} ,\bar\tau_{\min},g_{k,s},d_{k,s}) + \tfrac{\alpha_u  \bar\tau_{\min}\kappa_H \kappa_d L^2 }{2b} \|x_{k,s} - x_{k,0}\|_2^2.
\end{align*}
\end{proof}

Lemma~\ref{lemma:core_adaptive} and Theorem~\ref{theorem:constant_adaptive_prescribed} (below) are the analogues of Lemma~\ref{lemma:1'} and Theorem~\ref{theorem:constant_step size_prescribed}, respectively, for the adaptive step size case.

\begin{lemma}
\label{lemma:core_adaptive}
Suppose that  Assumption \ref{ass.small_tau} holds. Let $\alpha_l$ and $\alpha_u$ be defined as Lemma \ref{lemma:bound_for_step size}, and $\beta \in (0,1]$ chosen such that $\tfrac{\beta \kappa_l \bar\tau_{\min} }{(\bar\tau_{\min} L + \Gamma)} \in (0,1]$. In addition, let $\lambda_S = 0$, and 
\begin{equation}
\begin{aligned}
    \lambda_s &= \lambda_{s+1}(1+z)(1+ \tfrac{ a_u^2 \beta ^2   L^2}{\kappa_l z b \zeta})  +  \tfrac{\alpha_u  \bar\tau_{\min} L^2}{2b} ( \kappa_H \kappa_d + \tfrac{\beta^2}{\zeta} )\\
   \Lambda_s  &=   \alpha_l \beta-\tfrac12 \alpha_u \beta^2    - \tfrac{\alpha_u\kappa_H \kappa_d  \beta^2}{2 \kappa_l} - \lambda_{s+1}(1+z) \tfrac{ \alpha_u^2 \beta ^2 }{\bar{\tau}_{\min}\kappa_lz}
\end{aligned}
\label{def.lambda_def_adaptive} 
\end{equation}
for $s \in \left[\bar{S}\right]$, where $\beta$, $z \in \mathbb{R}_{>0}$, $\lambda_s \in \mathbb{R}_{>0}$ are chosen such that $\Lambda_s \in  \mathbb{R}_{>0}$, and $\Lambda_{min} = \min_{s \in \left[\bar{S}\right]} \Lambda_s$. Then, for all $k \ge \bar{k}_{\tau}$ and $s \in \left[\bar{S}\right]$, the sequence of iterates $\{x_{k,s}\}$ generated by Algorithm \ref{alg.sqp_svrg} (\textbf{Option II}) satisfy
\begin{equation}
     \mathbb{E}_{\tau_{\min}} \left[ \tfrac{1}{(k - \bar{k}_{\tau} + 1)S}\sum_{j =  \bar{k}_{\tau}}^{k} \sum_{s=0}^{S-1}  \Delta l(x_{j,s} ,\bar\tau_{\min},g_{j,s},d_{j,s})\right]   
     \le \tfrac{\mathbb{E}_{\tau,small}\left[ \phi(x_{\bar{k}_{\tau},0},\bar\tau_{\min})\right] -  \phi_{\inf}}{(k - \bar{k}_{\tau} + 1)S\Lambda_{\min}}.
\label{thm2:complexity_adaptive}
\end{equation}
\end{lemma}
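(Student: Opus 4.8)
The plan is to mirror the proof of Lemma~\ref{lemma:1'}, replacing the constant-step ingredients by their adaptive analogues (Lemmas~\ref{lemma:24'_adaptive}, \ref{lemma:bound_for_step size}, \ref{lemma:adaptive_phi_reduction3}) and being careful that $\bar\alpha_{k,s}$ is now \emph{random} and correlated with $\bar g_{k,s},\bar d_{k,s}$. Fix $k\ge\bar k_\tau$ and $s\in\left[\bar{S}\right]$, so that $\bar\tau_{k,s}=\bar\tau_{\min}$ by Assumption~\ref{ass.small_tau}. Starting from Lemma~\ref{lemma:24'_adaptive} I take $\mathbb{E}_{k,s}$: for the $-\bar\alpha_{k,s}\Delta l(x_{k,s},\bar\tau_{\min},g_{k,s},d_{k,s})$ term I use the lower bound $\bar\alpha_{k,s}\ge\alpha_l\beta$ (Lemma~\ref{lemma:bound_for_step size}) together with $\Delta l(x_{k,s},\bar\tau_{\min},g_{k,s},d_{k,s})\ge0$ (Lemma~\ref{lemma:21'}(a), and it is deterministic given $x_{k,s}$); for the $\tfrac12\bar\alpha_{k,s}\beta\,\Delta l(x_{k,s},\bar\tau_{\min},\bar g_{k,s},\bar d_{k,s})$ term I use the upper bound $\bar\alpha_{k,s}\le\alpha_u\beta$, nonnegativity of $\Delta l(x_{k,s},\bar\tau_{\min},\bar g_{k,s},\bar d_{k,s})$ (Lemma~\ref{lemma:21'}(b)), and Lemma~\ref{lemma:29'}; and the cross term $\bar\alpha_{k,s}\bar\tau_{\min}g_{k,s}^T(\bar d_{k,s}-d_{k,s})$ is bounded directly by Lemma~\ref{lemma:adaptive_phi_reduction3}. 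Substituting $M_{k,s}=\tfrac{L^2}{b}\|x_{k,s}-x_{k,0}\|_2^2$ from Lemma~\ref{lemma_variance} and grouping gives the adaptive analogue of the first display in the proof of Lemma~\ref{lemma:1'},
\begin{align*}
\mathbb{E}_{k,s}[\phi(x_{k,s+1},\bar\tau_{\min})] &\le \phi(x_{k,s},\bar\tau_{\min}) - \left(\alpha_l\beta - \tfrac12\alpha_u\beta^2 - \tfrac{\alpha_u\kappa_H\kappa_d\beta^2}{2\kappa_l}\right)\Delta l(x_{k,s},\bar\tau_{\min},g_{k,s},d_{k,s}) \\
&\quad + \tfrac{\alpha_u\bar\tau_{\min}L^2}{2b}\left(\kappa_H\kappa_d + \tfrac{\beta^2}{\zeta}\right)\|x_{k,s}-x_{k,0}\|_2^2.
\end{align*}

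Next I derive the distance recursion. Writing $x_{k,s+1}-x_{k,0}=\bar\alpha_{k,s}\bar d_{k,s}+(x_{k,s}-x_{k,0})$, expanding the square, applying Cauchy--Schwarz together with $\bar\alpha_{k,s}\le\alpha_u\beta$ and Young's inequality $2ab\le z a^2+b^2/z$ on the cross term, I obtain the pathwise bound $\|x_{k,s+1}-x_{k,0}\|_2^2 \le (1+z)\|x_{k,s}-x_{k,0}\|_2^2 + (1+\tfrac1z)\alpha_u^2\beta^2\|\bar d_{k,s}\|_2^2$. The novelty relative to Section~\ref{sec.constant} is that the randomness of $\bar\alpha_{k,s}$ prevents replacing $\bar d_{k,s}$ by its mean $d_{k,s}$, so I instead bound $\|\bar d_{k,s}\|_2^2\le\tfrac{1}{\kappa_l\bar\tau_{\min}}\Delta l(x_{k,s},\bar\tau_{\min},\bar g_{k,s},\bar d_{k,s})$ (Lemma~\ref{lemma:21'}(b)), take $\mathbb{E}_{k,s}$, and apply Lemmas~\ref{lemma:29'} and \ref{lemma_variance}; using $1+\tfrac1z=\tfrac{1+z}{z}$ to reorganize yields
\begin{align*}
\mathbb{E}_{k,s}[\|x_{k,s+1}-x_{k,0}\|_2^2] &\le (1+z)\left(1 + \tfrac{\alpha_u^2\beta^2 L^2}{\kappa_l z b\zeta}\right)\|x_{k,s}-x_{k,0}\|_2^2 \\
&\quad + (1+z)\tfrac{\alpha_u^2\beta^2}{\bar\tau_{\min}\kappa_l z}\,\Delta l(x_{k,s},\bar\tau_{\min},g_{k,s},d_{k,s}).
\end{align*}

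Now I take total expectation conditioned on $E_{\tau_{\min}}$, multiply the distance bound by $\lambda_{s+1}$, and add it to the one-step merit bound. By the defining recursions \eqref{def.lambda_def_adaptive} the coefficient of $\|x_{k,s}-x_{k,0}\|_2^2$ collapses to $\lambda_s$, while the coefficient multiplying $\Delta l(x_{k,s},\bar\tau_{\min},g_{k,s},d_{k,s})$ becomes exactly $-\Lambda_s$, so with $R_{k,s}$ as in \eqref{eq.lyapunov_1} I get $\mathbb{E}_{\tau_{\min}}[R_{k,s+1}] \le \mathbb{E}_{\tau_{\min}}[R_{k,s}] - \Lambda_{\min}\,\mathbb{E}_{\tau_{\min}}[\Delta l(x_{k,s},\bar\tau_{\min},g_{k,s},d_{k,s})]$. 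Summing over $s\in\left[\bar{S}\right]$ and using $\lambda_S=0$ (so $R_{k,0}=\mathbb{E}_{\tau_{\min}}[\phi(x_{k,0},\bar\tau_{\min})]$ and $R_{k,S}=\mathbb{E}_{\tau_{\min}}[\phi(x_{k,S},\bar\tau_{\min})]$) together with $x_{k,S}=x_{k+1,0}$ telescopes the merit terms; summing over the outer index $j\in\{\bar k_\tau,\dots,k\}$ telescopes again, and bounding the terminal merit value below by $\phi_{\inf}$ and dividing by $(k-\bar k_\tau+1)S$ yields \eqref{thm2:complexity_adaptive}.

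The genuinely new difficulty, and the main obstacle, is controlling the interaction between the random step size $\bar\alpha_{k,s}$ and the stochastic quantities $\bar g_{k,s},\bar d_{k,s}$: one must deploy the two-sided bounds of Lemma~\ref{lemma:bound_for_step size} in exactly the right places (the lower bound $\alpha_l\beta$ against the negative $\Delta l(g,d)$ term, the upper bound $\alpha_u\beta$ against the positive $\Delta l(\bar g,\bar d)$ and cross terms), and one must accept the loss of the variance-reduction improvement in the distance recursion that was available in the constant-step case, which is what makes the $\lambda_s$ and $\Lambda_s$ recursions in \eqref{def.lambda_def_adaptive} bulkier than in Lemma~\ref{lemma:1'}. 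Matching all constants so that the two assembled bounds telescope cleanly into $R_{k,s+1}\le R_{k,s}-\Lambda_s\Delta l$ is the principal bookkeeping hurdle; positivity of $\Lambda_s$ (hence $\Lambda_{\min}>0$) is assumed at this stage and is verified by an explicit parameter choice in Theorem~\ref{theorem:constant_adaptive_prescribed}.
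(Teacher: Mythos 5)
Your proposal is correct and follows essentially the same route as the paper's proof: the same one-step merit bound assembled from Lemmas~\ref{lemma:24'_adaptive}, \ref{lemma:bound_for_step size}, \ref{lemma:29'} and \ref{lemma:adaptive_phi_reduction3} (with the step-size lower bound applied to the negative $\Delta l(g,d)$ term and the upper bound to the positive stochastic terms), the same Young-inequality distance recursion passing through Lemma~\ref{lemma:21'}(b) rather than the mean direction, and the same Lyapunov telescoping via \eqref{def.lambda_def_adaptive}. No gaps.
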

\begin{proof}
Consider arbitrary $k \ge \bar{k}_{\tau}$ and $s \in \left[\bar{S}\right]$. By Lemmas~\ref{lemma_variance},   \ref{lemma:29'}, \ref{lemma:adaptive_phi_reduction3}, we have
\begin{align*}
 &\mathbb{E}_{k,s}[ \phi(x_{k,s+1},\bar{\tau}_{k,s}) ] \\
 \le \ &\mathbb{E}_{k,s}[ \phi(x_{k,s},\bar{\tau}_{k,s}) ]  
 - \alpha_l \beta   \Delta  l(x_{k,s} ,\bar\tau_{\min},g_{k,s},d_{k,s})\\ 
 &+ \tfrac12 \alpha_u \beta^2    \mathbb{E}_{k,s}[\Delta  l(x_{k,s} ,\bar\tau_{\min},\bar g_{k,s},\bar d_{k,s})] + \mathbb{E}_k[\bar\alpha_{k,s}\bar{\tau}_{k,s}   g_{k,s}^T (\bar{d}_{k,s} -{d}_{k,s} )] \\
 \le \ &\mathbb{E}_{k,s}[ \phi(x_{k,s},\bar{\tau}_{k,s}) ] - \beta \left(\alpha_l - \tfrac12\alpha_u \beta - \tfrac{\alpha_u  \kappa_H \kappa_d \beta}{2 \kappa_l}  \right) \Delta  l(x_{k,s} ,\bar\tau_{\min},g_{k,s},d_{k,s})\\ 
 &+ \tfrac{\alpha_u  \bar\tau_{\min} L^2}{2b} \left( \kappa_H \kappa_d + \tfrac{\beta^2}{\zeta} \right) \|x_{k,s} - x_{k,0}\|^2.
\end{align*} 
Moreover, similar to the proof of lemma \ref{lemma:1'}, we have
\begin{align*}
    &\mathbb{E}_{k,s}[\|x_{k,s+1} - x_{k,0}\|_2^2 ] \\
    = \ &\mathbb{E}_{k,s}[\|x_{k,s+1} -  x_{k,s} + x_{k,s} - x_{k,0}\|_2^2 ] \\ 
    = \ &\mathbb{E}_{k,s}[ \| \bar\alpha_{k,s}  \bar{d}_{k,s}  \|_2^2 +  \|x_{k,s} - x_{k,0}\|_2^2 + 2 \bar\alpha_{k,s} \bar{d}_{k,s}^T (x_{k,s} - x_{k,0}) ] \\
    \le \ &\mathbb{E}_{k,s}\left[ \| (1+\tfrac1z) \bar\alpha_{k,s}  \bar{d}_{k,s}  \|^2 +  (1+z) \|x_{k,s} - x_{k,0}\|^2 \right]  \\ 
    \le \ &(1+z) \tfrac{ a_u^2 \beta ^2 }{\bar{\tau}_{\min}\kappa_lz}   \mathbb{E}_{k,s}\left[   \Delta l(x_{k,s} ,\bar\tau_{\min},\bar{g}_{k,s} ,\bar{d}_{k,s} )\right] + (1+z) \|x_{k,s} - x_{k,0}\|^2 \\ 
     \le \ &(1+z) \tfrac{ a_u^2 \beta ^2 }{\bar{\tau}_{\min}\kappa_lz}   \Delta l(x_{k,s} ,\bar\tau_{\min},g_{k,s},d_{k,s}) + \left[1+z+ (1+z) \tfrac{ a_u^2 \beta ^2  L^2}{\kappa_l z b \zeta} \right] \|x_{k,s} - x_{k,0}\|^2.
\end{align*}

Taking total expectation conditioned on $E_{\tau_{\min}}$, for all $k \ge \bar{k}_{\tau}$ and $s \in \left[\bar{S}\right]$, combining the results above and the definitions of $\lambda_s,  \Lambda_s$, it follows that
\begin{align*}
    \mathbb{E}_{\tau_{\min}}[{R}_{k,s+1}] &= \mathbb{E}_{\tau_{\min}} [\phi(x_{k,s+1},\bar{\tau}_{k,s}) + \lambda_{s+1} \|x_{k,s+1} - x_{k,0}\|^2 ] \\
    & \le \mathbb{E}_{\tau_{\min}}[ \phi(x_{k,s},\bar{\tau}_{k,s}) ] + 
 \left[ \lambda_{s+1}(1+z)(1+ \tfrac{ a_u^2 \beta ^2   L^2}{\kappa_l z b \zeta}) \right. \\ 
    &\quad  + \left. \tfrac{\alpha_u \bar\tau_{\min} L^2}{2b} ( \kappa_H \kappa_d + \tfrac{\beta^2}{\zeta} )  \right] \mathbb{E}_{\tau_{\min}}[\|x_{k,s} - x_{k,0}\|^2] \\
 &\quad  -  \left( a_l \beta -\tfrac12 a_u \beta^2   -  \tfrac{\alpha_u  \kappa_H \kappa_d  \beta^2}{2 \kappa_l}- \lambda_{s+1}(1+z) \tfrac{ a_u^2 \beta^2 }{\bar{\tau}_{\min}\kappa_lz}\right) \\ 
 &\qquad \mathbb{E}_{\tau_{\min}}[  \Delta  l(x_{k,s} ,\bar\tau_{\min},g_{k,s},d_{k,s})] \\
 & \le \mathbb{E}_{\tau_{\min}}[{R}_{k,s}] - \Lambda_s \mathbb{E}_{\tau_{\min}}[\Delta  l(x_{k,s} ,\bar\tau_{\min},g_{k,s},d_{k,s})] \\
   & \le \mathbb{E}_{\tau_{\min}}[{R}_{k,s}] - \Lambda_{\min} \mathbb{E}_{\tau_{\min}}[\Delta  l(x_{k,s} ,\bar\tau_{\min},g_{k,s},d_{k,s})].
\end{align*}
Summing over all inner iterations  ($s \in \left[\bar{S}\right]$), we have 
\begin{align*}
    \sum_{s=0}^{S-1}  \mathbb{E}_{\tau_{\min}}[ \Delta l(x_{k,s} ,\bar\tau_{\min},g_{k,s},d_{k,s})] &\le \tfrac{\mathbb{E}_{\tau_{\min}}[R_{k,0}-R_{k,S} ] }{\Lambda_{\min}}  \\
    &= \tfrac{\mathbb{E}_{\tau_{\min}}[\phi(x_{k,0},\bar\tau_{\min})-\phi(x_{k+1,0},\bar\tau_{\min})] }{\Lambda_{\min}}.
\end{align*} 
The equality follows from the fact that $\lambda_S = 0$ and $x_{k,S} = x_{k+1,0}$. The desired conclusion \eqref{thm2:complexity_adaptive} then follows by
summing this inequality for $j \in \{  \bar{k}_{\tau},\bar{k}_{\tau}+1, \dots, k \}$.
\end{proof}


As a consequence of Lemma~\ref{lemma:core_adaptive}, in Theorem~\ref{theorem:constant_adaptive_prescribed} we present the main convergence result of this subsection.

\begin{theorem} Suppose Assumption \ref{ass.small_tau} holds. Let $\lambda_s$, $\Lambda_s$ and $\Lambda_{\min}$ be defined as in Lemma~\ref{lemma:core_adaptive}. 
Suppose $\beta = \tfrac{\mu_1 b}{(\bar\tau_{\min}L + \Gamma)N^{\gamma}} \in (0,1] $ with $\mu_1 \in (0,1]$,  $z = \tfrac{\bar\tau_{\min}L + \Gamma}{N^{\gamma/2}}$, and $S \le \left\lfloor \tfrac{N^{\gamma/2}}{(\bar\tau_{\min}L + \Gamma ) + \tfrac{\mu_1^2 a_u^2   L^2 b}{ (\bar\tau_{\min}L + \Gamma)^3 \kappa_l \zeta } + \tfrac{\mu_1^2 a_u^2   L^2 b}{ (\bar\tau_{\min}L + \Gamma)^2 \kappa_l \zeta }} \right\rfloor$. Define the quantity $\Lambda_{\min} = \min_s \Lambda_s$. Then for $b < N^{\gamma}$, there exists universal constants $\mu_1$, $\nu_1$ such that:  $\Lambda_{\min} \ge \tfrac{\nu_1 b}{ (\bar{\tau}_{\min} L +\Gamma ) N^{\gamma}}$ and
\begin{align*}
    \mathbb{E}_{\tau_{\min}}& \left[\tfrac{1}{(k - \bar{k}_{\tau} + 1)S} \sum_{j =  \bar{k}_{\tau}}^{k} \sum_{s=0}^{S-1}  \Delta l(x_{j,s}, \bar\tau_{\min}, g_{j,s}, d_{j,s})\right] 
    \\  
    &\qquad \le\tfrac{(\bar{\tau}_{\min} L +\Gamma ) N^{\gamma}(\mathbb{E}_{\tau_{\min}}[ \phi(x_{\bar{k}_{\tau},0},\bar\tau_{\min}) ] -  \phi_{\inf})}{(k - \bar{k}_{\tau} + 1)S\nu_1 b }.
\end{align*}
\label{theorem:constant_adaptive_prescribed}
\end{theorem}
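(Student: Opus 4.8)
The plan is to mirror the proof of Theorem~\ref{theorem:constant_step size_prescribed}, now using the sequences $\{\lambda_s\}$ and $\{\Lambda_s\}$ supplied by Lemma~\ref{lemma:core_adaptive}. First I would solve the linear recursion \eqref{def.lambda_def_adaptive}: writing $\rho := (1+z)\left(1+\tfrac{\alpha_u^2\beta^2 L^2}{\kappa_l z b\zeta}\right)-1>0$ and $C := \tfrac{\alpha_u\bar\tau_{\min}L^2}{2b}\left(\kappa_H\kappa_d+\tfrac{\beta^2}{\zeta}\right)>0$, the recursion reads $\lambda_s=(1+\rho)\lambda_{s+1}+C$ with terminal value $\lambda_S=0$, whence $\lambda_0=C\,\tfrac{(1+\rho)^S-1}{\rho}$, and, since $\rho>0$, the sequence $\{\lambda_s\}$ is nonincreasing in $s$, so $\lambda_{s+1}\le\lambda_0$ for every $s\in\left[\bar{S}\right]$.

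The next step is to bound $\lambda_0$ uniformly in $N$. Substituting $\beta=\tfrac{\mu_1 b}{(\bar\tau_{\min}L+\Gamma)N^\gamma}$ and $z=\tfrac{\bar\tau_{\min}L+\Gamma}{N^{\gamma/2}}$ and expanding gives $\rho = z + \tfrac{\alpha_u^2 L^2\mu_1^2 b}{\kappa_l\zeta(\bar\tau_{\min}L+\Gamma)^3 N^{3\gamma/2}} + \tfrac{\alpha_u^2 L^2\mu_1^2 b}{\kappa_l\zeta(\bar\tau_{\min}L+\Gamma)^2 N^{2\gamma}}$; since $N\ge1$, each of these denominators is at least $N^{\gamma/2}$, so $\rho\le N^{-\gamma/2}\left[(\bar\tau_{\min}L+\Gamma) + \tfrac{\mu_1^2\alpha_u^2 L^2 b}{(\bar\tau_{\min}L+\Gamma)^3\kappa_l\zeta} + \tfrac{\mu_1^2\alpha_u^2 L^2 b}{(\bar\tau_{\min}L+\Gamma)^2\kappa_l\zeta}\right]$, and hence the prescribed upper bound on $S$ forces $S\rho\le1$. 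Consequently $(1+\rho)^S\le(1+\rho)^{1/\rho}\le e$ (since $S\le1/\rho$ and $(1+1/l)^l$ increases to $e$), and combining this with $\rho\ge z$ and $\beta\le1$ produces $\lambda_0\le\tfrac{C(e-1)}{z}\le\tfrac{\alpha_u\bar\tau_{\min}L^2(\kappa_H\kappa_d+\zeta^{-1})(e-1)}{2(\bar\tau_{\min}L+\Gamma)}\cdot\tfrac{N^{\gamma/2}}{b}$.

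Finally I would lower-bound $\Lambda_{\min}$. Factoring $\beta$ out of the definition of $\Lambda_s$ in \eqref{def.lambda_def_adaptive} and using $\lambda_{s+1}\le\lambda_0$ gives, uniformly in $s$, $\Lambda_{\min}\ge\beta\left[\alpha_l-\tfrac12\alpha_u\beta-\tfrac{\alpha_u\kappa_H\kappa_d\beta}{2\kappa_l}-\lambda_0(1+z)\tfrac{\alpha_u^2\beta}{\bar\tau_{\min}\kappa_l z}\right]$. Since $b<N^\gamma$ we have $\beta<\mu_1/(\bar\tau_{\min}L+\Gamma)$, so the second and third terms inside the bracket are $\mathcal{O}(\mu_1)$; for the last term, $\lambda_0\beta$ is at most a constant multiple of $\mu_1\big/\big((\bar\tau_{\min}L+\Gamma)N^{\gamma/2}\big)$ while $\tfrac{1+z}{z}=1+\tfrac{N^{\gamma/2}}{\bar\tau_{\min}L+\Gamma}$, so their product is again $\mathcal{O}(\mu_1)$ because the factor $N^{\gamma/2}$ cancels and the residual $N^{-\gamma/2}\le1$. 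Thus the bracket equals $\alpha_l-\mathcal{O}(\mu_1)$ with an implicit constant independent of $N$; choosing $\mu_1\in(0,1]$ small enough keeps the bracket above $\alpha_l/2$, so with $\nu_1:=\mu_1\alpha_l/2$ one obtains $\Lambda_{\min}\ge\tfrac{\nu_1 b}{(\bar\tau_{\min}L+\Gamma)N^\gamma}$, and substituting this into \eqref{thm2:complexity_adaptive} of Lemma~\ref{lemma:core_adaptive} yields the claim. I expect the main obstacle to be the bookkeeping in the last bracket term $\lambda_0(1+z)\tfrac{\alpha_u^2\beta}{\bar\tau_{\min}\kappa_l z}$: unlike in Section~\ref{sec.constant}, the recursion here carries the extra $(1+z)$ factor, and it is exactly the interplay between the choices $z=(\bar\tau_{\min}L+\Gamma)N^{-\gamma/2}$, the ceiling on $S$, and $\beta\propto bN^{-\gamma}$ that prevents this term from growing with $N$; verifying that all powers of $N$ either cancel or are suppressed by $N\ge1$ is the delicate part, the rest being a routine adaptation of the constant-step-size argument.
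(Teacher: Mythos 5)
Your proposal is correct and follows essentially the same route as the paper: unroll the recursion for $\lambda_0$, bound $\rho$ by the bracketed constant times $N^{-\gamma/2}$ so that the cap on $S$ gives $(1+\rho)^S\le e$, then factor $\beta$ out of $\Lambda_s$, use the monotonicity $\lambda_{s+1}\le\lambda_0$, and shrink $\mu_1$ so the bracket stays positive before substituting into \eqref{thm2:complexity_adaptive}. The only differences are cosmetic — you bound $C$ via $\beta\le 1$ and track the residual terms as $\mathcal{O}(\mu_1)$ rather than writing out the explicit constants the paper collects into its definition of $\nu_1$.
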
 

\begin{proof}
By the recursive definition of  $\lambda_s$ and the fact that $\lambda_S= 0$, we have that
  \begin{align}\label{eq.svrg_eta0_1}
    \lambda_0 = ( \kappa_H \kappa_d + \tfrac{\beta^2}{\zeta} )\tfrac{\alpha_u  \bar\tau_{\min} L^2}{2b}  \tfrac{(1+\rho)^S - 1}{\rho}, 
 \end{align}
with 
\begin{align*}
        \rho &= z + (1+z) \tfrac{ a_u^2 \beta ^2   L^2}{\kappa_l z b \zeta} \\ &=  (\bar\tau_{\min}L + \Gamma) N^{-\gamma/2} + (\tfrac{ N^{\gamma/2}}{\bar\tau_{\min}L + \Gamma}  + 1 ) \tfrac{\mu_1^2 a_u^2   L^2 b}{ (\bar\tau_{\min}L + \Gamma)^2 \kappa_l  N^{2\gamma} \zeta} \\
        & \le \left((\bar\tau_{\min}L + \Gamma ) + \tfrac{\mu_1^2 a_u^2   L^2 b}{ (\bar\tau_{\min}L + \Gamma)^3 \kappa_l \zeta } + \tfrac{\mu_1^2 a_u^2   L^2 b}{ (\bar\tau_{\min}L + \Gamma)^2 \kappa_l \zeta }\right) N^{-\gamma/2}
\end{align*} 
It follows that 
    \begin{align*}
        \lambda_0 &\le \tfrac{\alpha_u  \bar\tau_{\min} L^2}{2b} ( \kappa_H \kappa_d + \tfrac{\mu_1^2 b^2}{(\bar\tau_{\min} L + \Gamma)^2 N^{2\gamma}\zeta}) \tfrac{e - 1}{(\bar\tau_{\min}L + \Gamma) N^{-\gamma/2}}
    \end{align*}
    where the inequality is obtained by noticing that for $l > 0$,  $(1 + \tfrac{1}{l})^l$ is an increasing function and $(1 + \tfrac{1}{l})^l \to e$ as $l \to \infty$. Hence, $(1+\rho)^S \le e$ by the definition of $S$. Now, with the upper bound of $\lambda_0$,  the fact that $\lambda_s$ is decreasing as $s$ increases from 0 to $S$, and $\mu_0 \in (0,1]$ and $N \ge 1$, we can lower bound $\Lambda_{\min}$ as 
    \begin{align*}
        \Lambda_{\min} &= \min_{0\leq s \leq S-1}\left\{ \alpha_l \beta-\tfrac12 \alpha_u \beta^2    - \tfrac{\alpha_u\kappa_H \kappa_d  \beta^2}{2 \kappa_l} - \lambda_{s+1}(1+z) \tfrac{ \alpha_u^2 \beta ^2 }{\bar{\tau}_{\min}\kappa_lz} \right\}\\
        &>  \alpha_l \beta-\tfrac12 \alpha_u \beta^2    - \tfrac{\alpha_u\kappa_H \kappa_d  \beta^2}{2 \kappa_l} - \lambda_{0}(1+z) \tfrac{ \alpha_u^2 \beta ^2 }{\bar{\tau}_{\min}\kappa_lz}\\
        &\geq \beta \left[\alpha_l - \tfrac{\alpha_u  b \mu_1}{2(\bar\tau_{\min}L + \Gamma)N^{\gamma}} - \tfrac{\alpha_u\kappa_H \kappa_d  \mu_1}{2 \kappa_l (\bar\tau_{\min}L + \Gamma)N^{\gamma}} - \tfrac{\alpha_u^3  L^2 \kappa_H \kappa_d (e - 1)\mu_1}{2\kappa_l(\bar\tau_{\min}L + \Gamma)^3}\right. \\ &\left. \quad - \tfrac{\alpha_u^3  L^2 \kappa_H \kappa_d (e - 1)\mu_1}{2\kappa_l(\bar\tau_{\min}L + \Gamma)^2 N^{\gamma/2}}  - \tfrac{\alpha_u^3  \bar\tau_{\min} L^2  b^2 (e - 1)\mu_1}{2(\bar\tau_{\min} L + \Gamma)^5 N^{2\gamma}\zeta \kappa_l}- \tfrac{\alpha_u^3  \bar\tau_{\min} L^2 b^2 (e - 1)\mu_1 }{2(\bar\tau_{\min} L + \Gamma)^4 N^{5\gamma/2}\zeta \kappa_l} \right]
    \end{align*}
   Let $\nu_1 = \alpha_l - \tfrac{\alpha_u  b \mu_1}{2(\bar\tau_{\min}L + \Gamma)N^{\gamma}} - \tfrac{\alpha_u\kappa_H \kappa_d  \mu_1}{2 \kappa_l (\bar\tau_{\min}L + \Gamma)N^{\gamma}} - \tfrac{\alpha_u^3  L^2 \kappa_H \kappa_d (e - 1)\mu_1}{2\kappa_l(\bar\tau_{\min}L + \Gamma)^3} - \tfrac{\alpha_u^3  L^2 \kappa_H \kappa_d (e - 1)\mu_1}{2\kappa_l(\bar\tau_{\min}L + \Gamma)^2 N^{\gamma/2}}$ \\
   $- \tfrac{\alpha_u^3  \bar\tau_{\min} L^2  b^2 (e - 1)\mu_1}{2(\bar\tau_{\min} L + \Gamma)^5 N^{2\gamma}\zeta \kappa_l}- \tfrac{\alpha_u^3  \bar\tau_{\min} L^2 b^2 (e - 1)\mu_1 }{2(\bar\tau_{\min} L + \Gamma)^4 N^{5\gamma/2}\zeta \kappa_l}$. 
 By  choosing $\mu_1$ (independent of $N$) such that  $\nu_1>0$, it follows that 
   $\Lambda_{\min} \geq \tfrac{b\nu_1}{(\bar\tau_{\min}L + \Gamma) N^\gamma}$. Combining this lower bound with Lemma \ref{lemma:1'} yields the desired result. 
\end{proof}

We conclude this section by noting that an analogue of Corollary~\ref{cor.constant} can be proven for the case in which adaptive step sizes are utilized. For brevity we omit this corollary since it is identical to Corollary~\ref{cor.constant} up to constants.

\section{Numerical Results}\label{sec:experiments}


In this section, we demonstrate the empirical performance of a Matlab implementation of Algorithm~\ref{alg.sqp_svrg}, with both \textbf{Options I} and \textbf{II}, for solving equality constrained binary classification machine learning problems. Specifically, we consider constrained logistic regression problems
(datasets from the LIBSVM collection \cite{chang2011libsvm}) with linear equality constraints or an $\ell_2$   norm squared constraint. All experiments were run in Matlab R2021b on macOS 12.2 with an Apple M1 Pro chip and 16GB memory.

In order to illustrate the merits of our proposed algorithm, we compared two variants of the \SVRSQP{} method (constant step sizes \SVRSQPCONST{} and adaptive steps sizes  \SVRSQPADAPT{}) with  the stochastic SQP method from \cite{berahas2021sequential} (\StoSQP) and a Stochastic Subgradient method that utilizes SVRG-type variance reduced gradient approximations (\StoSubVR). The goals of this section can be summarized as follows: $(1)$ illustrate the power and robustness of the adaptive step size variant of the \SVRSQP{} method; $(2)$ show the advantages of utilizing variance reduced gradient approximations; $(3)$ demonstrate the advantage of the SQP paradigm over a simple stochastic subgradient method; and, $(4)$ show the robustness of the \SVRSQP{} method to user-defined parameters such as the inner iteration length ($S$) and the adaptive step size parameter ($\beta$).

\subsection{Problem Specification, Implementation Details and Evaluation Metrics}\label{sec.problem_evaluation}

Throughout this section we consider the following two constrained binary classification problems:
\begin{align}
  &\min_{x\in\mathbb{R}^n}\ f(x) = \frac{1}{N} \sum_{i=1}^N \log \left( 1 + e^{-y_i(X_i^Tx)}\right)\ \text{ s.t. }\ Ax=a_1 \label{eq.log_lin}\\
  &\min_{x\in\mathbb{R}^n}\ f(x) = \frac{1}{N} \sum_{i=1}^N \log \left( 1 + e^{-y_i(X_i^Tx)}\right)\ \text{ s.t. }\ \|x\|_2^2=a_2 \label{eq.log_el2} 
\end{align}
where $X \in \mathbb{R}^{N \times n}$ is the data matrix (containing feature data for N data points; $X_i$ representing the $i$th column of X) and $y \in \{-1,1\}^N$ are the labels (for each data point), and $A \in \mathbb{R}^{m \times n}$, $a_1 \in \mathbb{R}^m$ and $a_2 = 1 \in \mathbb{R}_{>0} 
$ define the constraints. We consider 10 datasets, listed in Table~\ref{tab:data}, from the LIBSVM collection \cite{chang2011libsvm}. For the linear constraints in \eqref{eq.log_lin}, we generated normal random 
$A$ and $a_1$ for each problem with $m=10$. 

\begin{table}[ht]
\caption{\label{tab:data}Binary classification data set details. For more information see \cite{chang2011libsvm}. }
  \centering
  {\footnotesize
\begin{tabular}{lcc}\toprule
\textbf{dataset}    & \textbf{dimension ($\pmb{n}$)} & \textbf{ datapoints ($\pmb{N}$)} \\ \midrule
\texttt{a9a}             & $123$                          & $32,561$                                                                             \\ \hdashline
\texttt{australian}      & $14$                           & $621$                                                                                \\ \hdashline
\texttt{heart}           & $13$                           & $270$                                                                                \\ \hdashline
\texttt{ijcnn1}          & $22$                           & $35,000$                                                                             \\ \hdashline
\texttt{ionosphere}      & $34$                           & $351$                                                                                \\ \hdashline
\texttt{mushroom}       & $112$                          & $5,500$                                                                              \\ \hdashline
\texttt{phising}         & $68$                           & $11,055$                                                                             \\ \hdashline
\texttt{sonar}           & $60$                           & $208$                                                                                \\ \hdashline
\texttt{splice}          & $60$                           & $3,175$                                                                              \\ \hdashline
\texttt{w8a}             & $300$                          & $49,749$       \\
\bottomrule                                                                     
\end{tabular}}
\end{table}

A budget of 30 epochs  (i.e., number of effective passes over the dataset; equivalent to the number of   gradient evaluations of the objective function) was used for all methods. For all problems and algorithms, the initial primal iterate ($x_0$) was set to a normal random vector scaled to have norm $0.1$, and the multipliers were initialized as $y_0 = \arg\min_{y\in\mathbb{R}^m}\ \|g_0 + J_0^Ty\|^2$. For each method, we considered two batch sizes $b=16$ (small batch) and $b=128$ (large batch). For each problem, dataset, algorithm and batch size, we ran 10 instances with different random seeds. With regards to the constraint Lipschitz constant estimates, we used the true constants $\Gamma_{k,s} = 0$ (for \eqref{eq.log_lin}) and $\Gamma_{k,s} = 2$ (for \eqref{eq.log_el2}) for all $k\in\mathbb{N}$ and $s\in\left[\bar{S}\right]$ for all algorithms. We set $L_{k,s} = L$ for all $k\in\mathbb{N}$ and $s\in\left[\bar{S}\right]$ for all algorithms, where $L$ was estimated by differences of gradients around the initial iterate. The details of all parameter settings are given below.
\begin{itemize}
	\item \SVRSQPCONST{} and \SVRSQPADAPT{}: $\sigma = 0.5$, $\theta = 10^4$, $\bar{\tau}_{-1, 0} = 0.1$, and $\epsilon_\tau = 10^{-6}$ 
	\begin{itemize}
		\item[--] \SVRSQPCONST{} [Algorithm~\ref{alg.sqp_svrg}; constant step sizes]: \\ $\bar\alpha_{k,s} = \alpha\in\{10^{-3},10^{-2},10^{-1},10^{0},10^{1}\}$ for all $(k,s) \in \mathbb{N} \times \left[\bar{S}\right]$;
		\item[--] \SVRSQPADAPT{} [Algorithm~\ref{alg.sqp_svrg}; adaptive step sizes]:  $\bar\alpha_{k,s}$ computed via \eqref{eq:step size1}--\eqref{eq.step size} for all $(k,s) \in \mathbb{N} \times \left[\bar{S}\right]$, $\alpha_u = 10^{6}$, $\beta = 1$.
	\end{itemize}
	\item \StoSQP{} \cite[Algorithm 3.1]{berahas2021sequential}: $\theta = 10^4$, $\bar\tau_{-1} = 0.1$, $\epsilon_{\tau} = 10^{-6}$, $\bar\xi_{-1} = 0.1$, $\epsilon_{\xi} = 10^{-2}$, $\sigma = 0.5$, and $\beta_k = \beta \in \{ 10^{-3},10^{-2},10^{-1},10^{0},10^{1}\}$ for all $k\in\mathbb{N}$.
	\item \StoSubVR{}: $\bar\alpha_{k,s} = \tfrac{\alpha}{\tau L + \Gamma}$  with  
	$\alpha\in\{10^{-3},10^{-2},10^{-1},10^{0},10^{1}\}$ and $\tau_{k,s} = \tau \in \{10^{-10},10^{-9},\dots,10^{0}\}$ for all $(k,s) \in \mathbb{N} \times \left[\bar{S}\right]$. We should note that \StoSubVR{} applies the SVRG algorithm \cite{johnson2013accelerating} to directly minimize the nonsmooth merit function \eqref{def.merit}; SVRG directly applied to the smooth part of merit function with the subgradient of the nonsmooth part added.
\end{itemize}
For all algorithms with inner outer iterations, the inner itereation length was set as $S= \left\lfloor \tfrac{N}{2b} \right\rfloor$, unless otherwise specified.

In all of our experiments, results are given in terms of feasibility and stationarity errors discussed below. We present the evolution of these measures with respect to epochs in Figures~\ref{fig.svrsqra_svrsqpa}, \ref{fig.sensitivity}, \ref{fig.sensitivity2} and \ref{fig.best}. Moreover, in Figure~\ref{fig.svrsqra_svrsqpa_summary} and Tables~\ref{tab.best_linear} and \ref{tab.best_el2}, we report the error metrics at the best iterate found within the budget defined as follows. Given a fixed epoch budget, assume we have $k\in\{0,\ldots,K\}$ for some $K\in\mathbb{N}$. If 
\begin{equation*}
    \min \{\|c_k\|_{\infty}:k = 0,\ldots,K\} > 10^{-6},\ \ \text{then}\ \ \xbest \leftarrow \argmin_{x_k\in\{x_0,\ldots,x_K\}}\ \|c_k\|_{\infty}.
\end{equation*}
Otherwise, if $\min \{\|c_k\|_{\infty}:k = 0,\ldots,K\} \leq 10^{-6}$, then we set
\begin{equation*}
    \xbest \leftarrow \argmin_{x_k\in\{x_0,\ldots,x_K\}}\ \|\nabla f_k + J_k^Ty_{k,\texttt{ls}}\|_{\infty} \quad\text{s.t.}\ \ \|c_k\|_{\infty} \leq 10^{-6},
\end{equation*}
where $y_{k,\texttt{ls}}$ is the least-squares multiplier at $x_k$. Given $\xbest$ and the corresponding dual variables $y_{\texttt{best,ls}}$, we report 
feasibility error ($\|c(\xbest)\|_{\infty}$) and stationarity error ($\|\nabla f(\xbest) + \nabla c(\xbest)y_{\texttt{best,ls}}\|_{\infty}$).

\subsection{Comparison: \SVRSQPCONST{} and \SVRSQPADAPT{}}\label{sec.numerical_adaptiveconstant}

In this section, we compare the performance of \SVRSQPCONST{} and \SVRSQPADAPT{} on \eqref{eq.log_lin} and \eqref{eq.log_el2}. We ran all methods for 30 epochs with two different batch sizes. For   \SVRSQPCONST{} we tuned the step size $\bar\alpha_{k,s} = \alpha\in\{10^{-3},10^{-2},10^{-1},10^{0},10^{1}\}$ for all $(k,s) \in \mathbb{N} \times \left[\bar{S}\right]$. For the \SVRSQPADAPT{} we set $\beta=1$. For both methods we used $S= \left\lfloor \tfrac{N}{2b} \right\rfloor$. 

\begin{figure}[ht]
   \centering
     \begin{subfigure}[b]{1\textwidth}

  \includegraphics[width=0.24\textwidth,clip=true,trim=30 180 50 200]{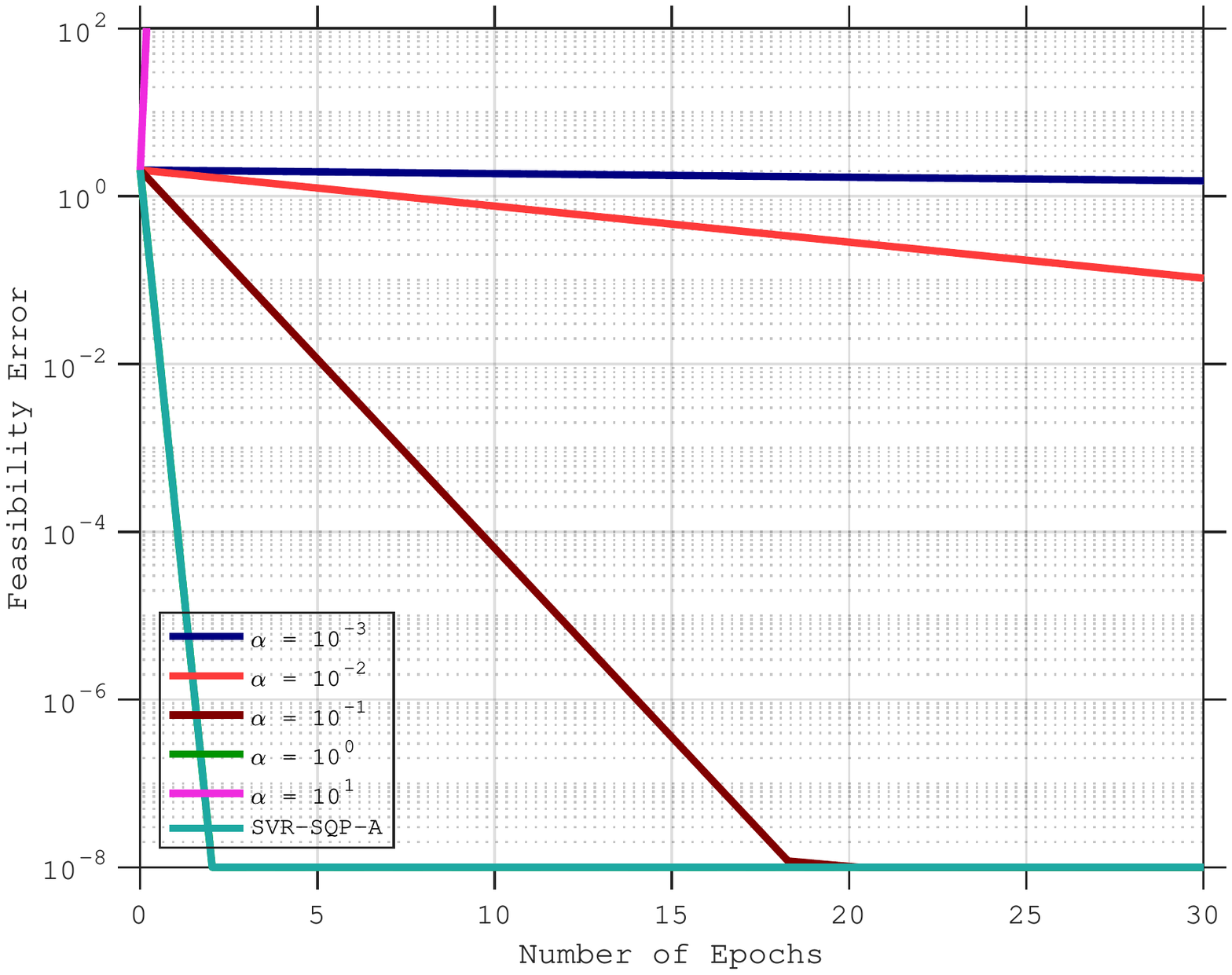}
  \includegraphics[width=0.24\textwidth,clip=true,trim=30 180 50 200]{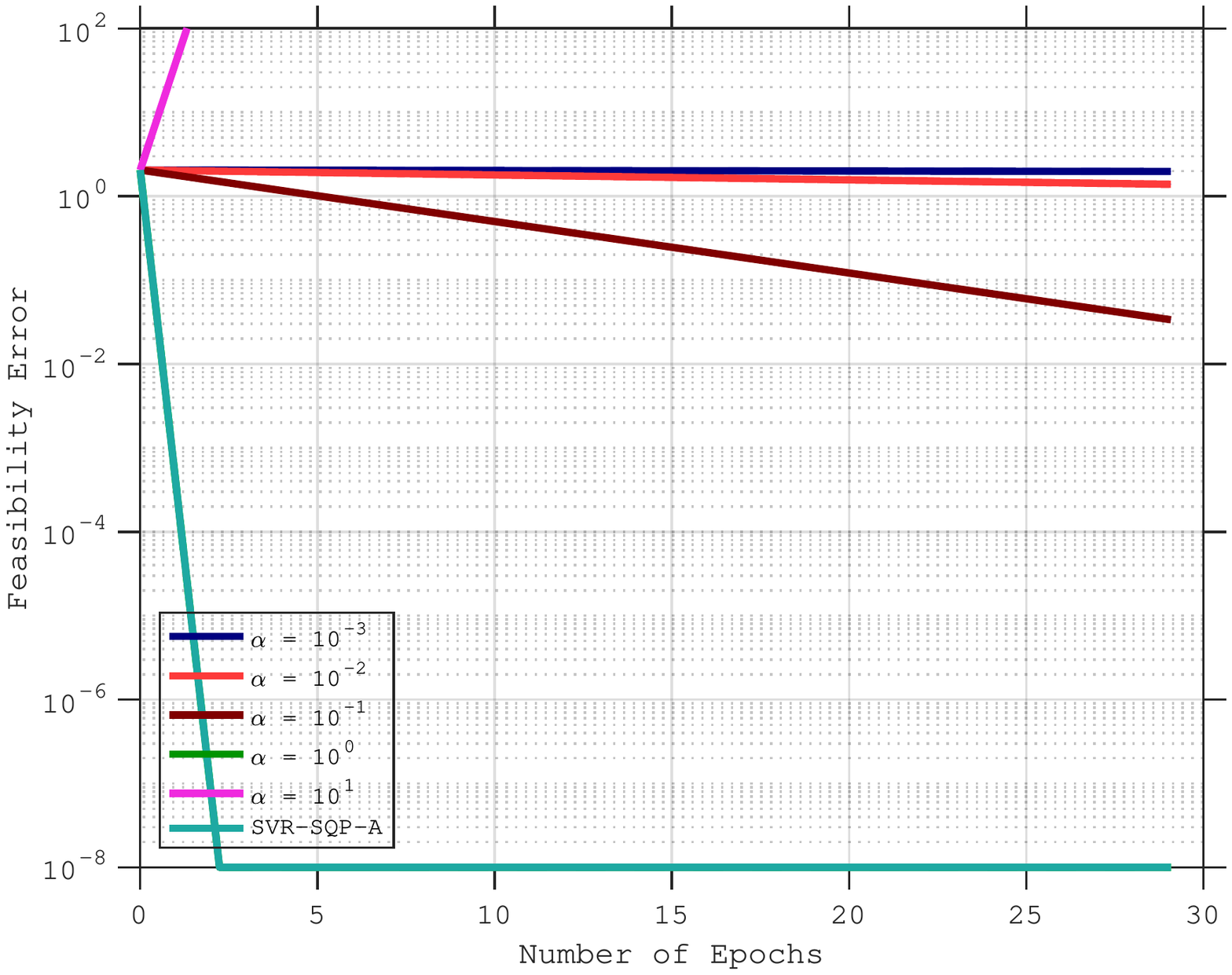}
  \includegraphics[width=0.24\textwidth,clip=true,trim=30 180 50 200]{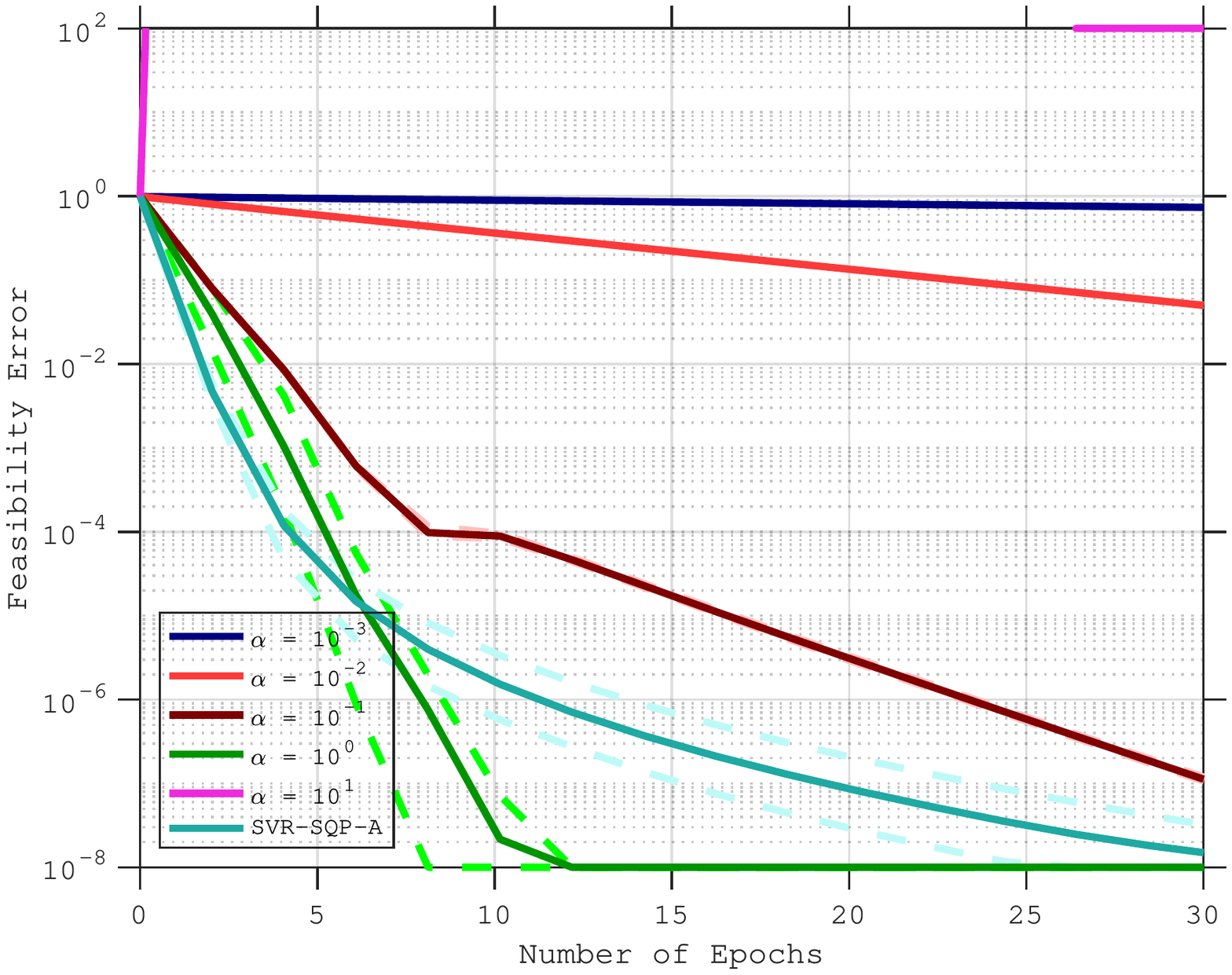}
  \includegraphics[width=0.24\textwidth,clip=true,trim=30 180 50 200]{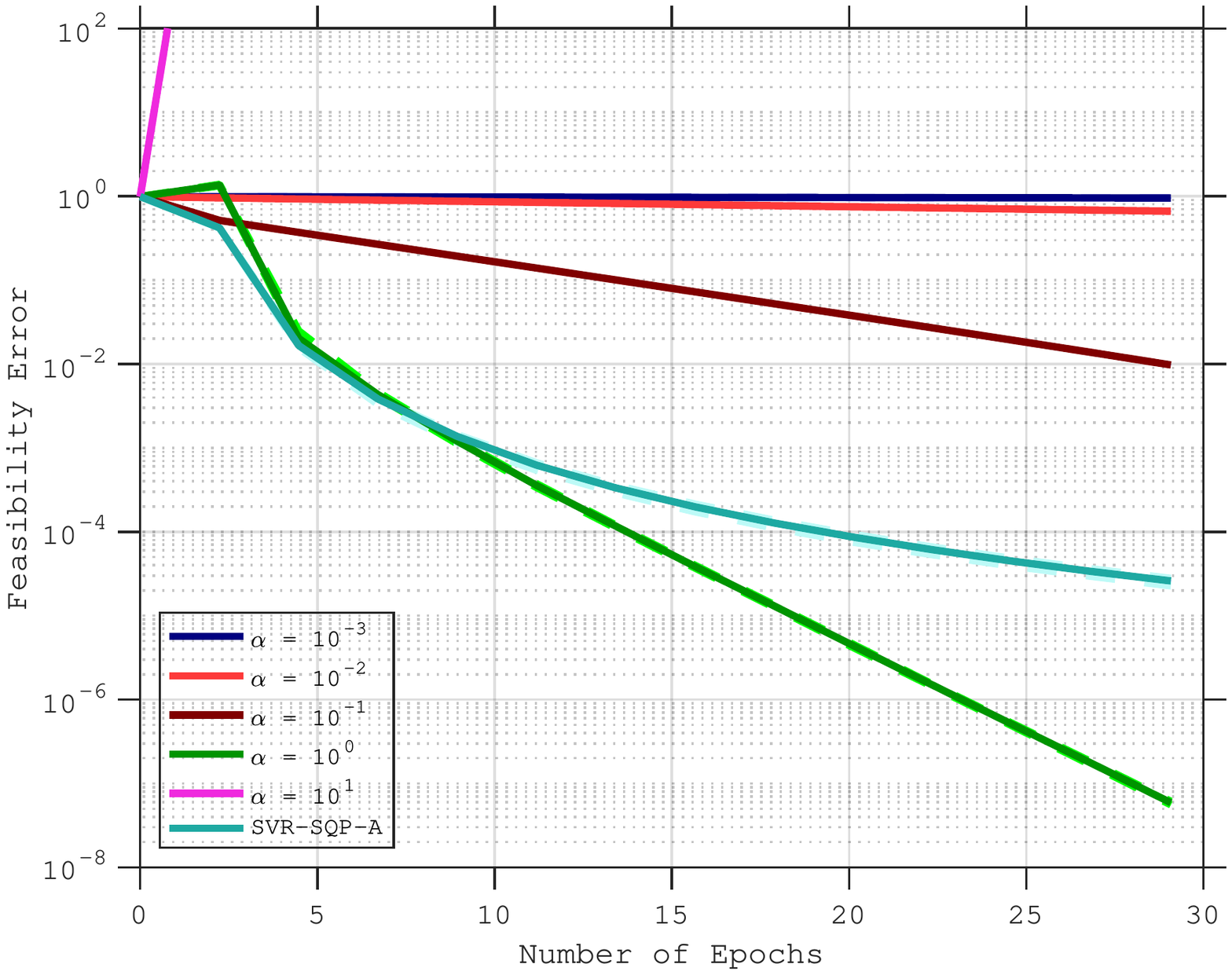}
  
  \includegraphics[width=0.24\textwidth,clip=true,trim=30 180 50 200]{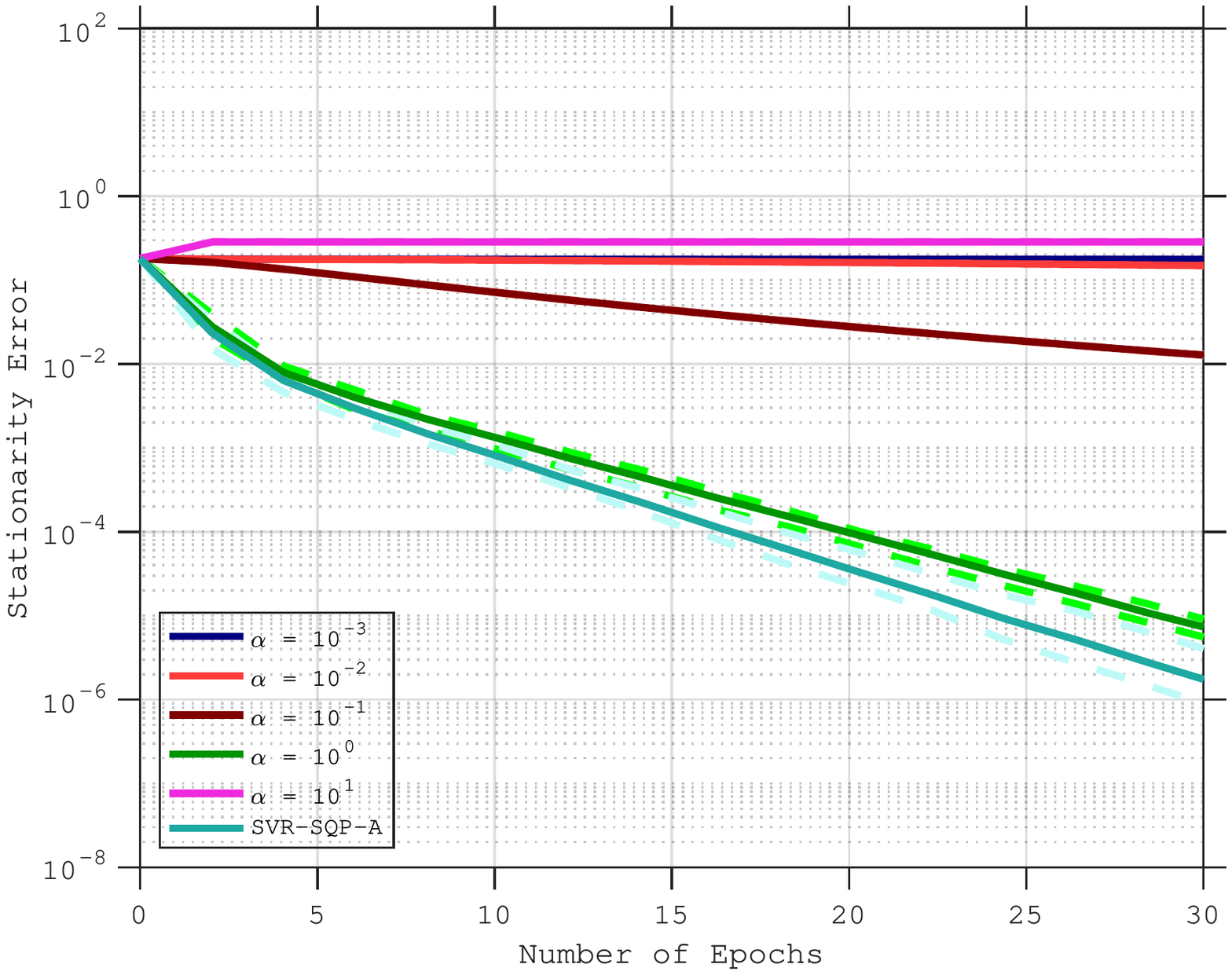}
  \includegraphics[width=0.24\textwidth,clip=true,trim=30 180 50 200]{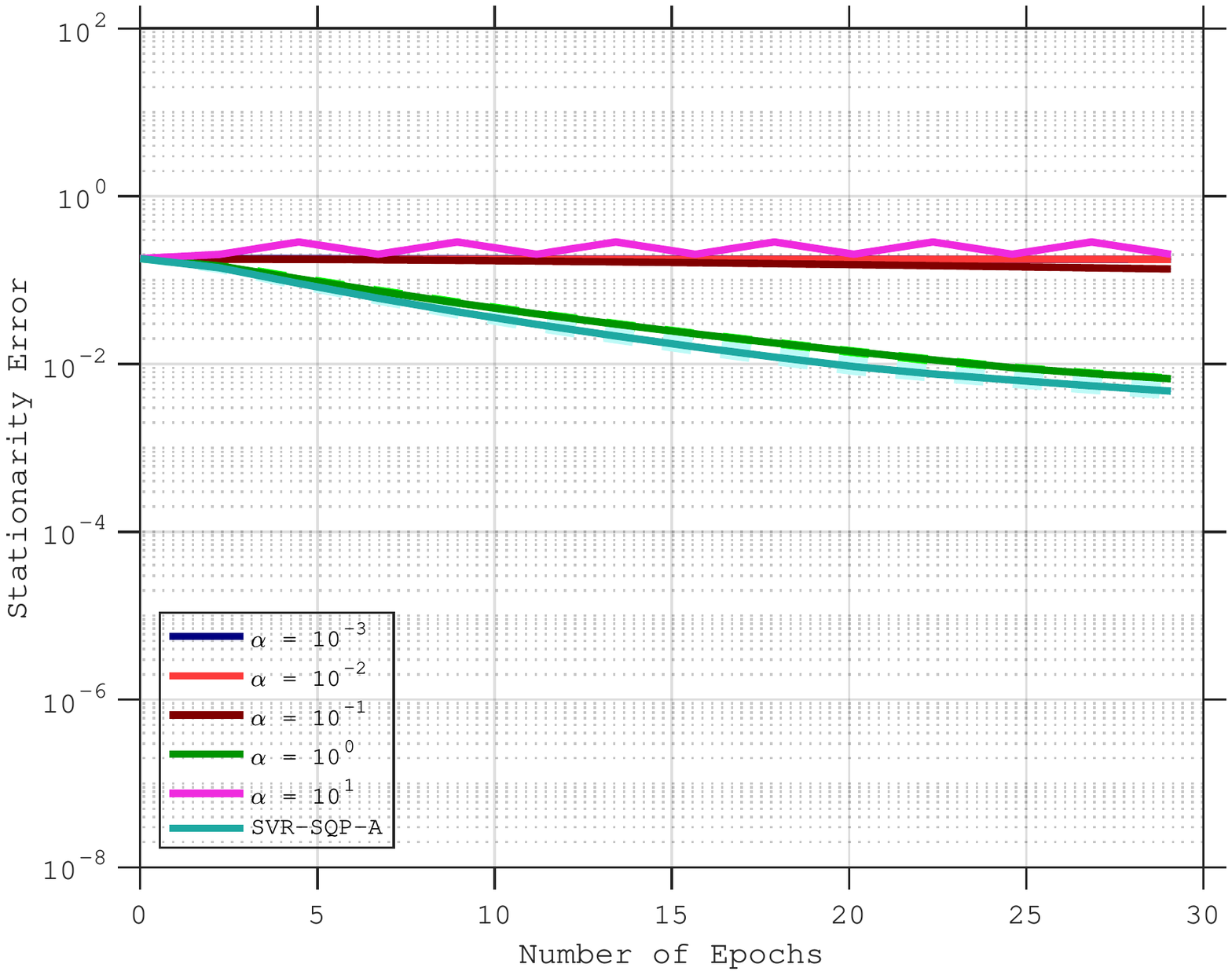}
  \includegraphics[width=0.24\textwidth,clip=true,trim=30 180 50 200]{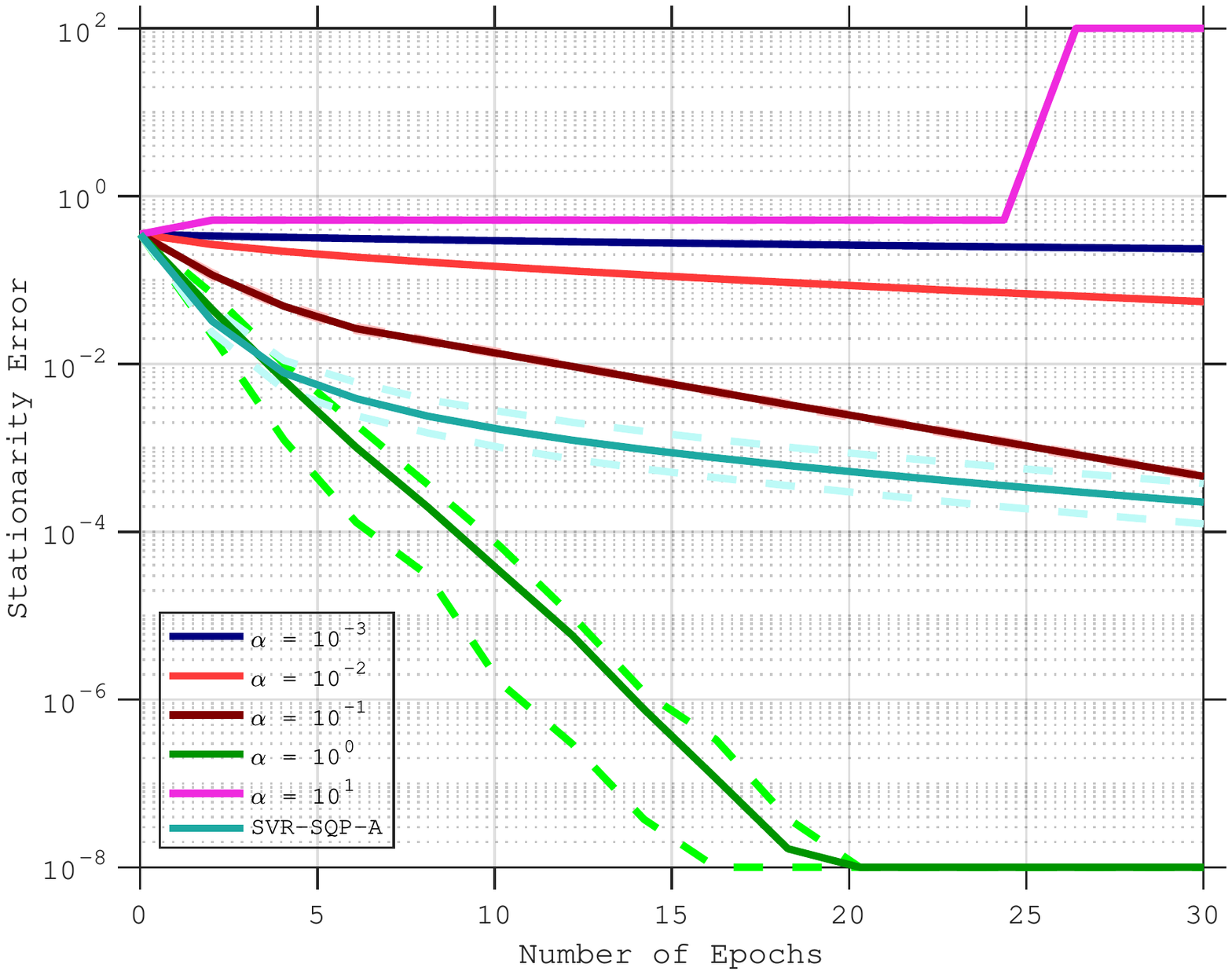}
  \includegraphics[width=0.24\textwidth,clip=true,trim=30 180 50 200]{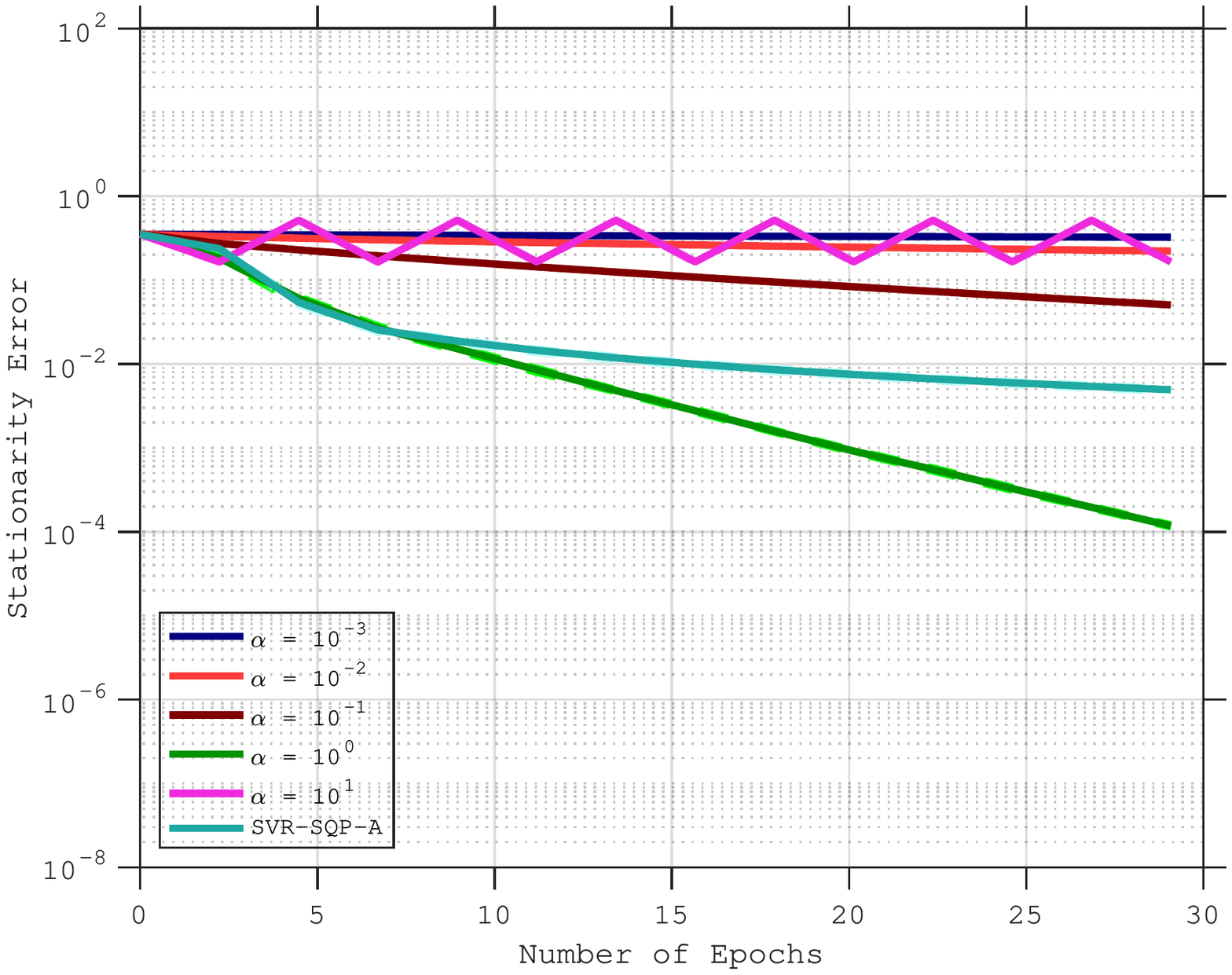}
  \caption{\texttt{australian} dataset. Top row: feasibility error; Bottom row: stationarity
  error.}
  \label{fig.svrsqra_svrsqpa_1}
    \end{subfigure}

  \begin{subfigure}[b]{\textwidth}
  \includegraphics[width=0.24\textwidth,clip=true,trim=30 180 50 200]{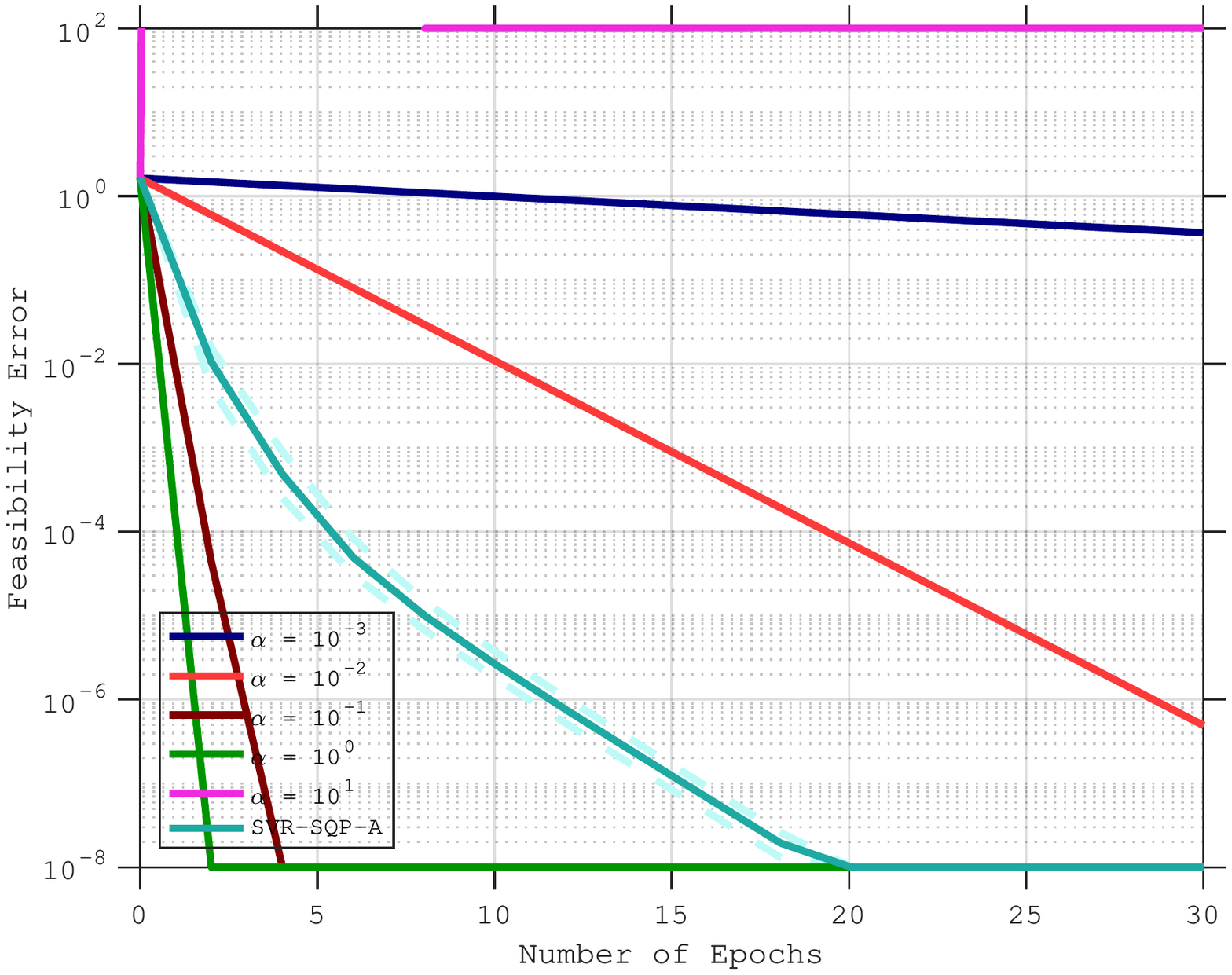}
  \includegraphics[width=0.24\textwidth,clip=true,trim=30 180 50 200]{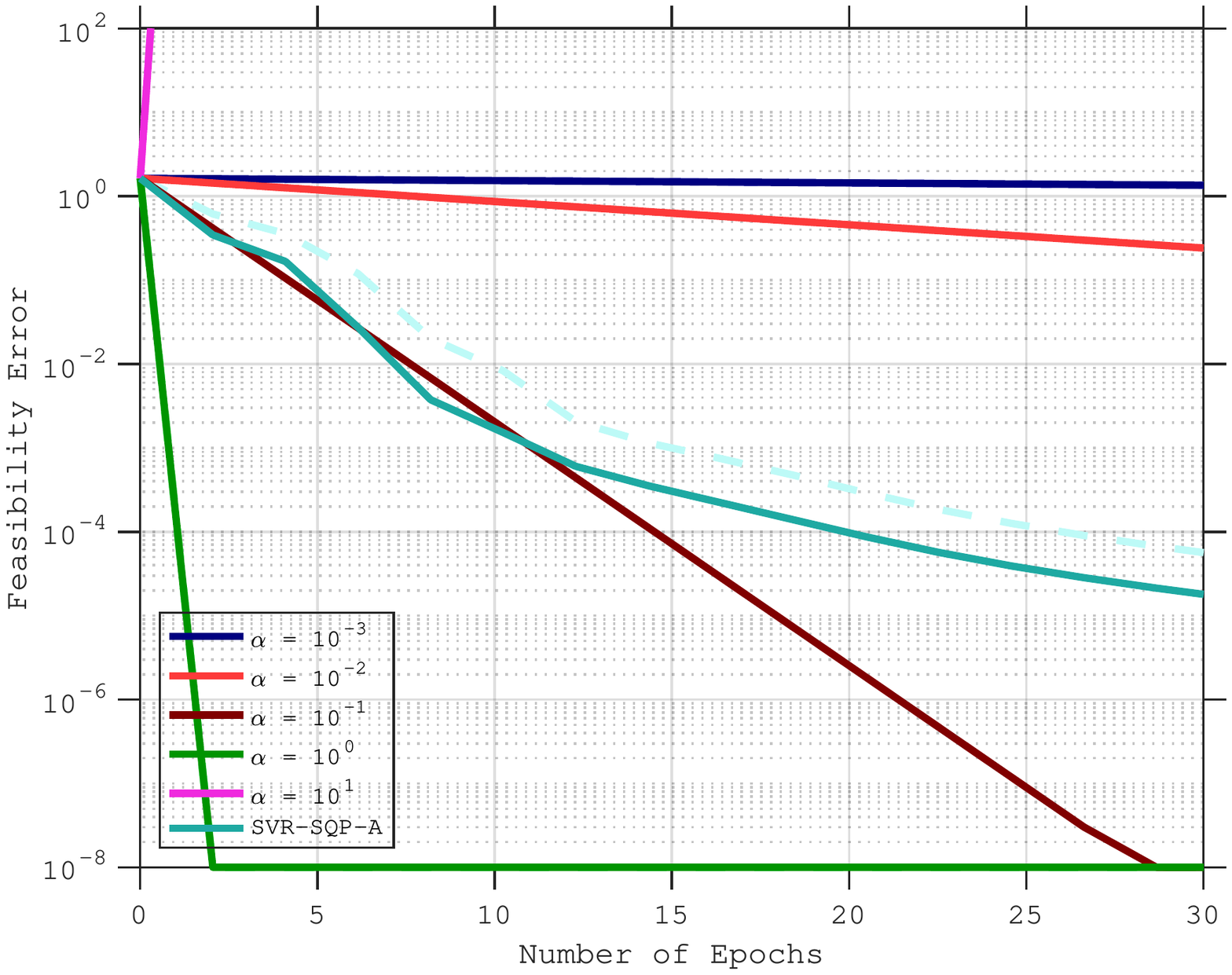}
  \includegraphics[width=0.24\textwidth,clip=true,trim=30 180 50 200]{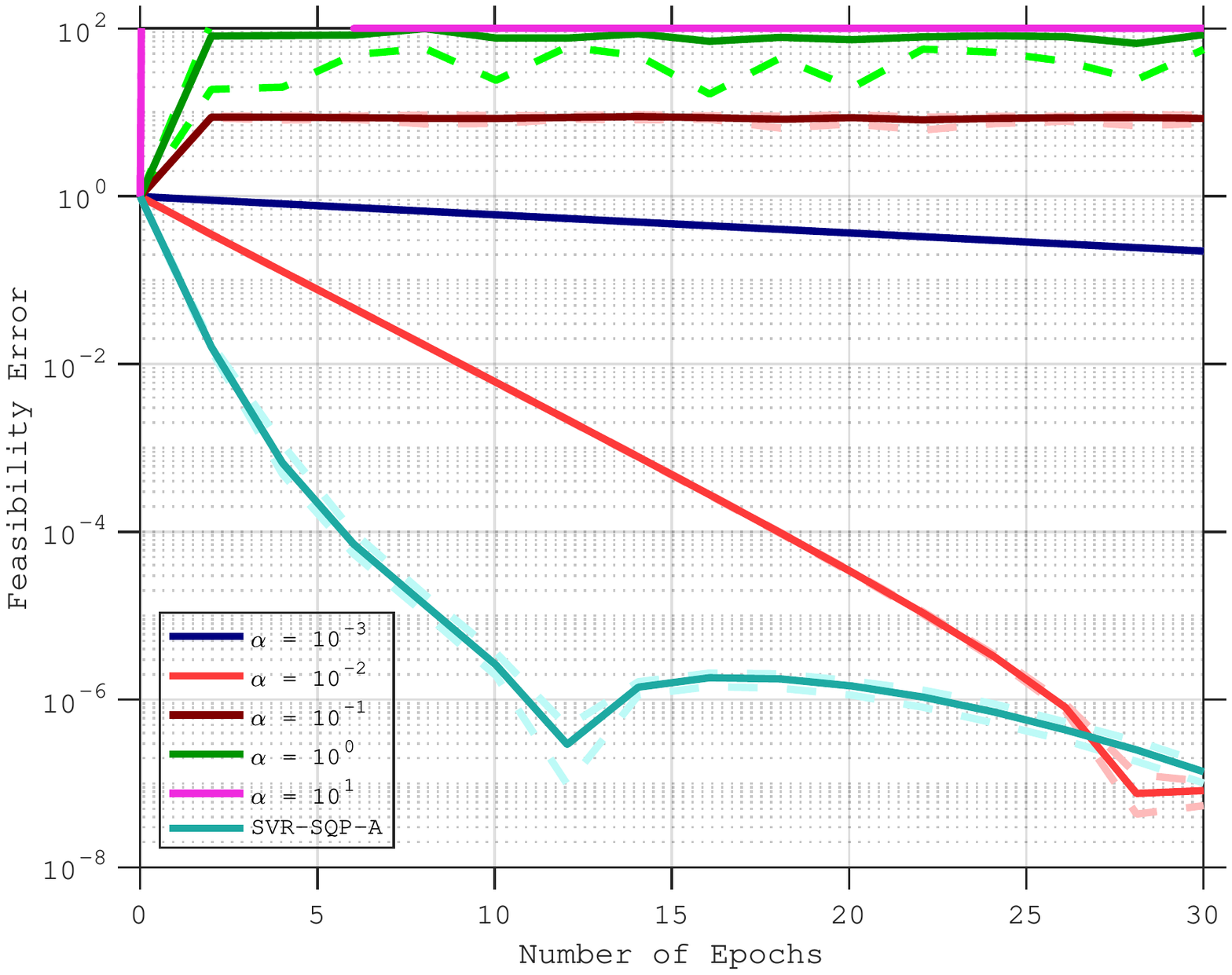}
  \includegraphics[width=0.24\textwidth,clip=true,trim=30 180 50 200]{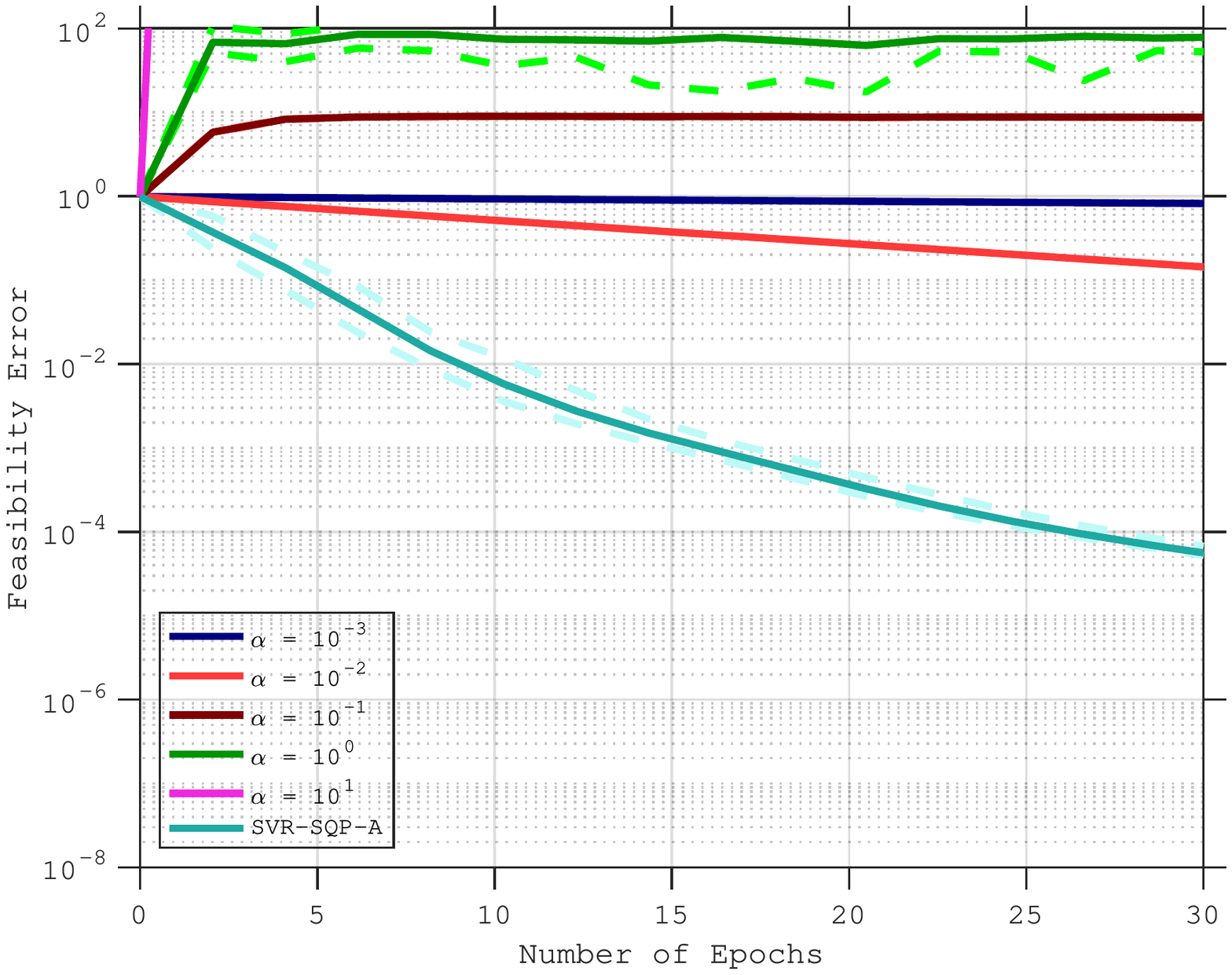}
  
  \includegraphics[width=0.24\textwidth,clip=true,trim=30 180 50 200]{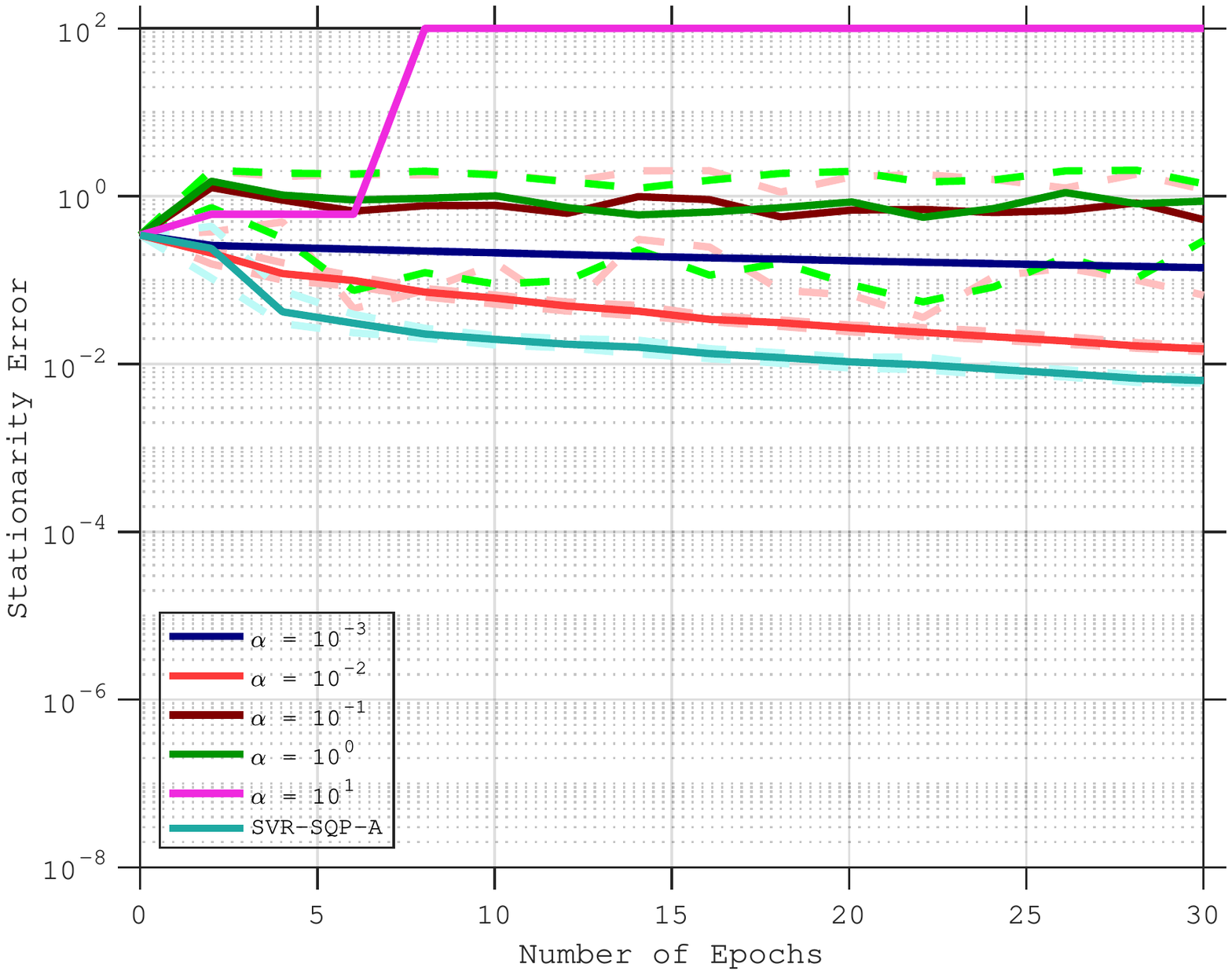}
  \includegraphics[width=0.24\textwidth,clip=true,trim=30 180 50 200]{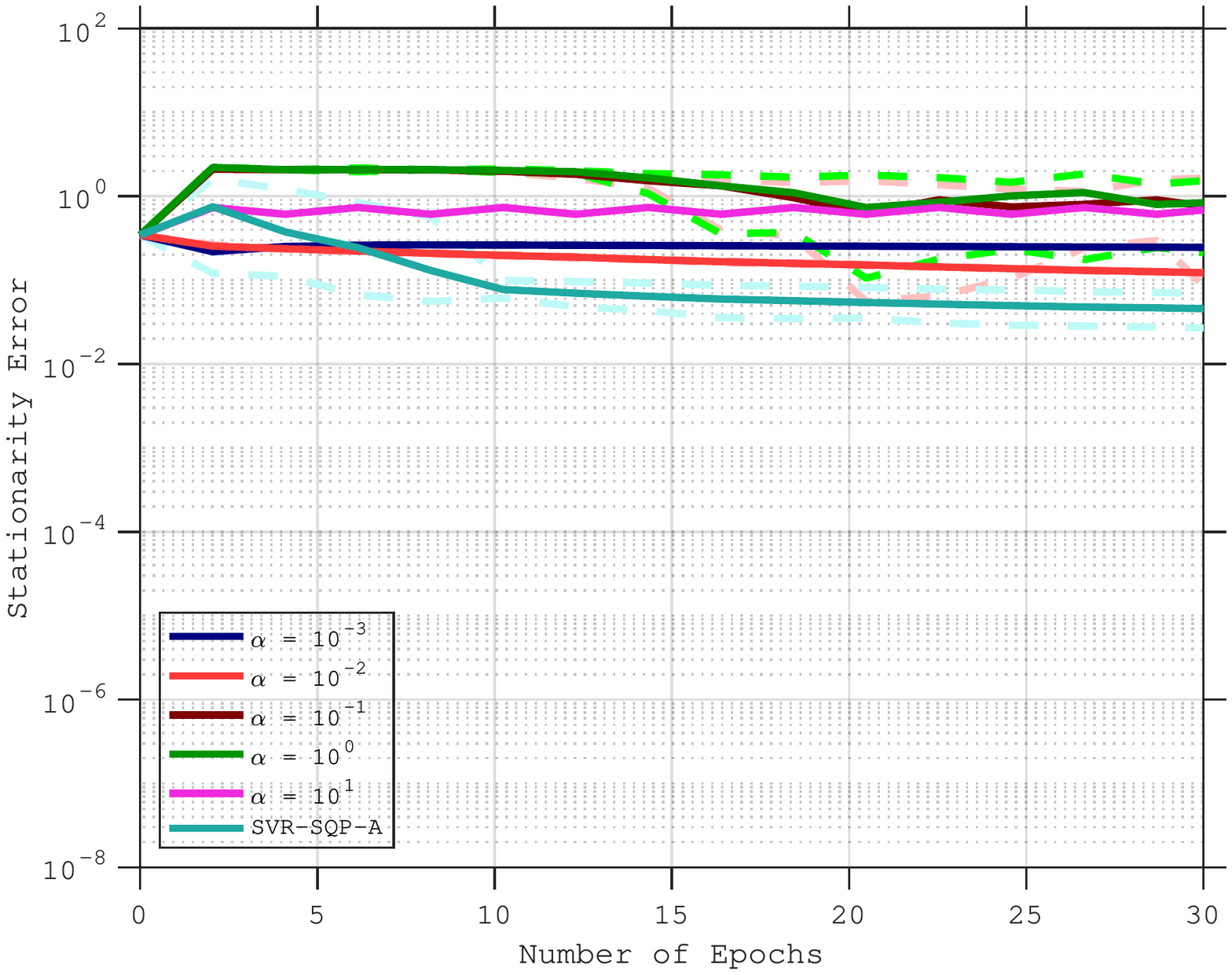}
  \includegraphics[width=0.24\textwidth,clip=true,trim=30 180 50 200]{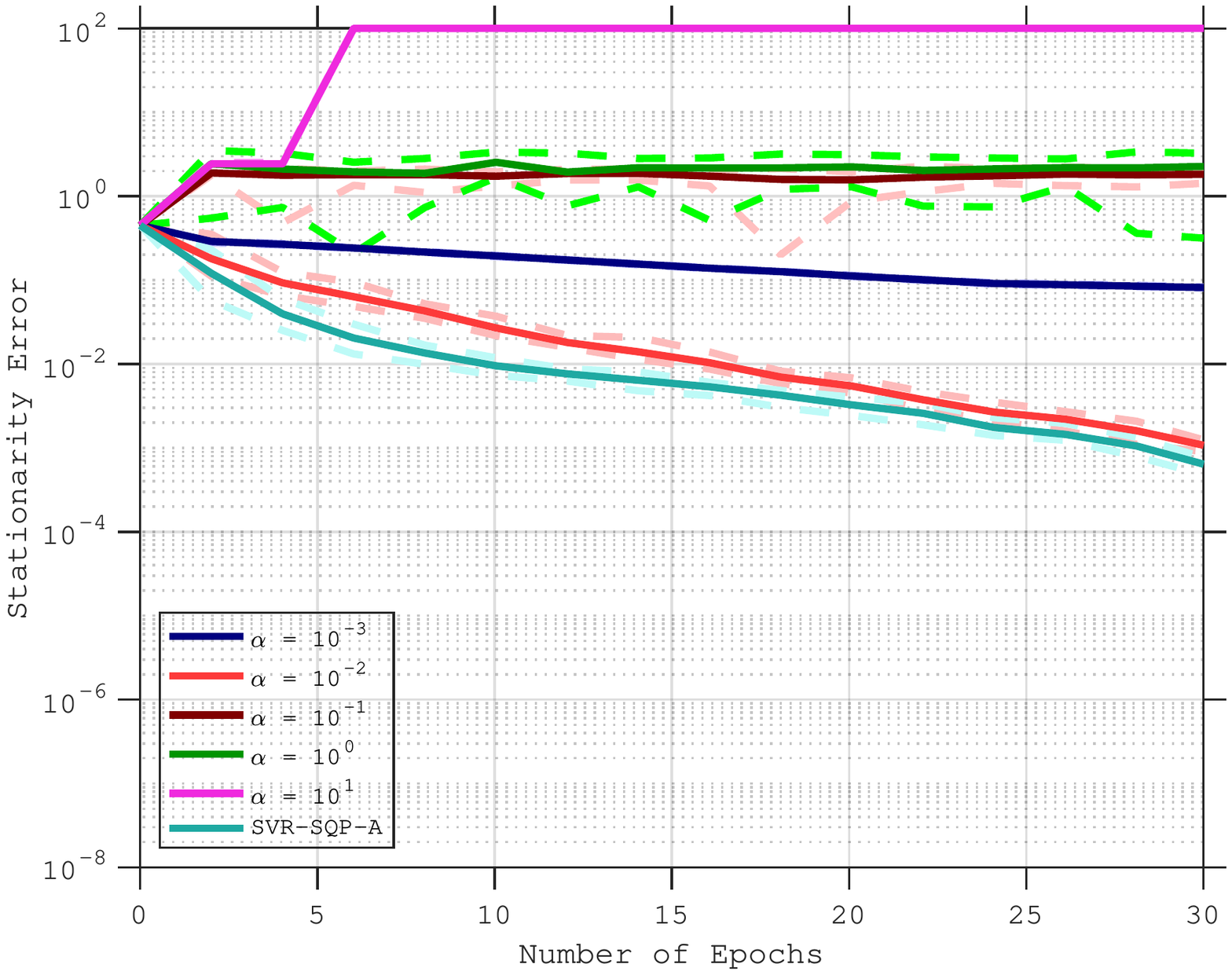}
  \includegraphics[width=0.24\textwidth,clip=true,trim=30 180 50 200]{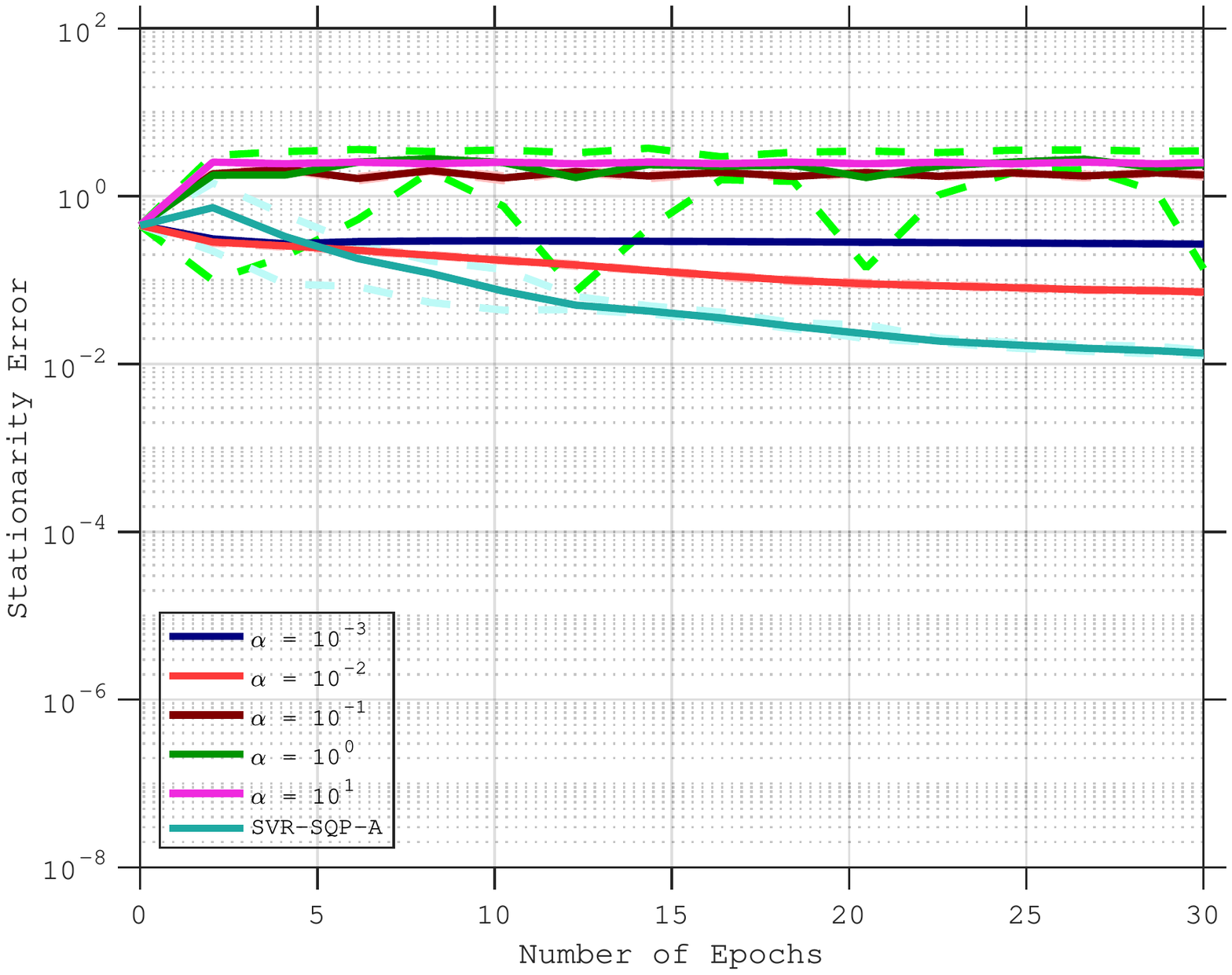}
  \caption{ \texttt{splice} dataset. Top row: feasibility error; Bottom row: stationarity
  error.} 
  \label{fig.svrsqra_svrsqpa_2}
  \end{subfigure}
  \caption{Performance of \SVRSQPCONST{} with different step sizes $\alpha \in \{ 10^{-3},10^{-2},10^{-1},10^{0},10^{1}\}$ and \SVRSQPCONST{} on logistic regression problems with linear (columns 1 and 2) and $\ell_2$ norm (columns 3 and 4) constraints. First and third columns: batch size 16; Second and fourth columns: batch size 128. \label{fig.svrsqra_svrsqpa}}
\end{figure}

Figs.~\ref{fig.svrsqra_svrsqpa_1} and \ref{fig.svrsqra_svrsqpa_2} show the stationarity and feasibility errors versus epochs for two datasets (\texttt{australian} and \texttt{splice}) for the \SVRSQPCONST{} (different values of $\alpha$) and \SVRSQPADAPT{} ($\beta=1$)  methods with different batch sizes. For each method, the figure shows the average trajectory (solid line) over the $10$ random seeds of the measures with respect to epochs, and the $95\%$ confidence interval (dashed lines). As is clear, \SVRSQPADAPT{} appears to be competitive with the best tuned version of the \SVRSQPCONST{} method. 

Similar behavior was observed on other datasets. Fig.~\ref{fig.svrsqra_svrsqpa_summary} presents feasibility and stationarity errors for all datasets in Table~\ref{tab:data} for the best iterates found by four variants of \SVRSQPCONST{} and \SVRSQPADAPT{}. For each problem, we report as boxplots the feasibility and stationarity errors for the best iterate found by each method for two different batch sizes (4 box plots per problem per method). From Fig. \ref{fig.svrsqra_svrsqpa_summary}, we observe that for both batch size options and for both constraints types, \SVRSQPADAPT{} performs as good as (if not better than) \SVRSQPCONST{} with the best tuned step size in terms of stationarity and feasibility.

\begin{figure}
     \centering
     \begin{subfigure}[b]{0.48\textwidth}
         \centering \includegraphics[width=0.48\textwidth,clip=true,trim=30 180 70 200]{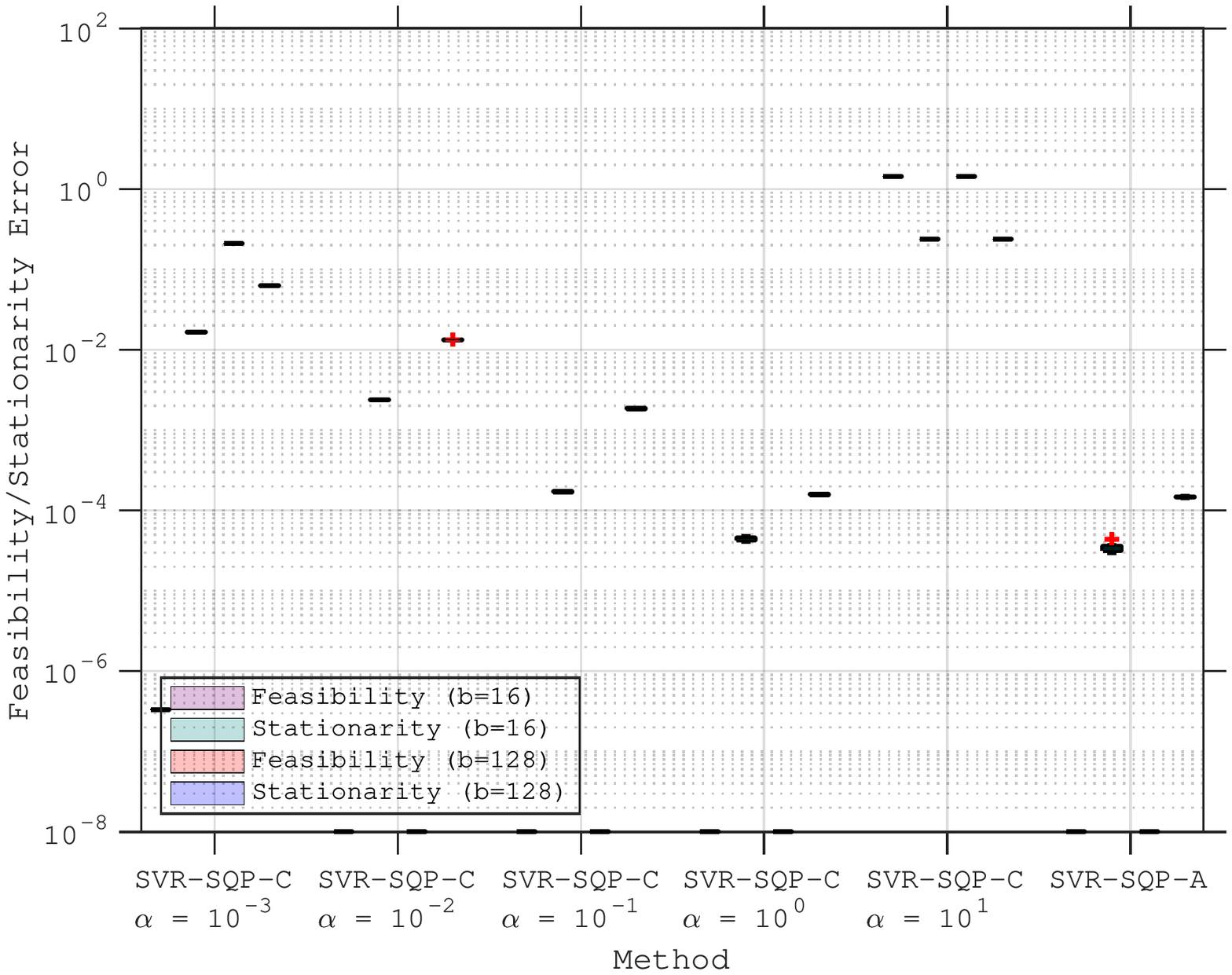}
         \includegraphics[width=0.48\textwidth,clip=true,trim=30 180 70 200]{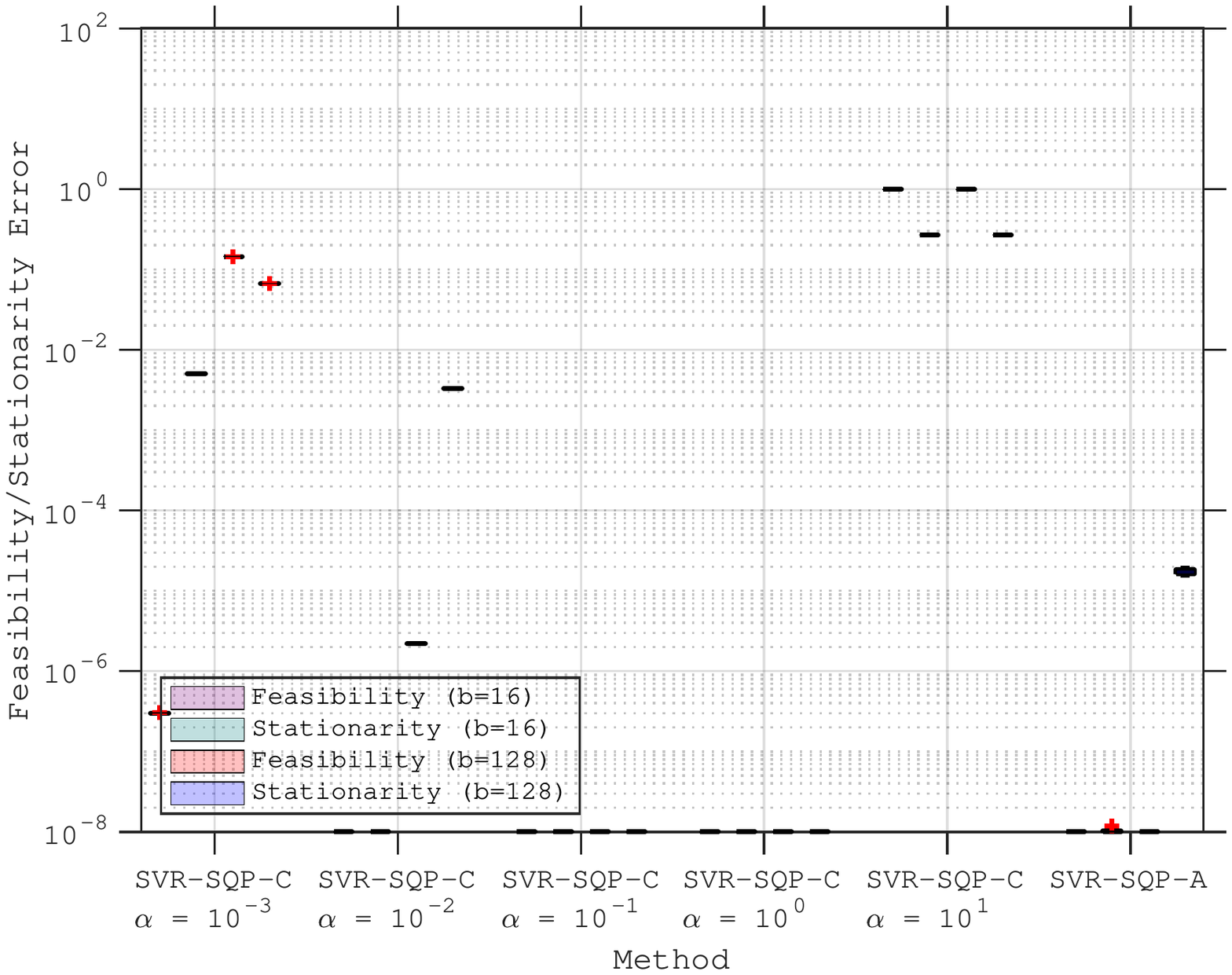}
         \caption{\texttt{a9a}; left \eqref{eq.log_lin}, right \eqref{eq.log_el2}}
         \label{fig:1}
     \end{subfigure}
     \hfill
     \begin{subfigure}[b]{0.48\textwidth}
         \centering \includegraphics[width=0.48\textwidth,clip=true,trim=30 180 70 200]{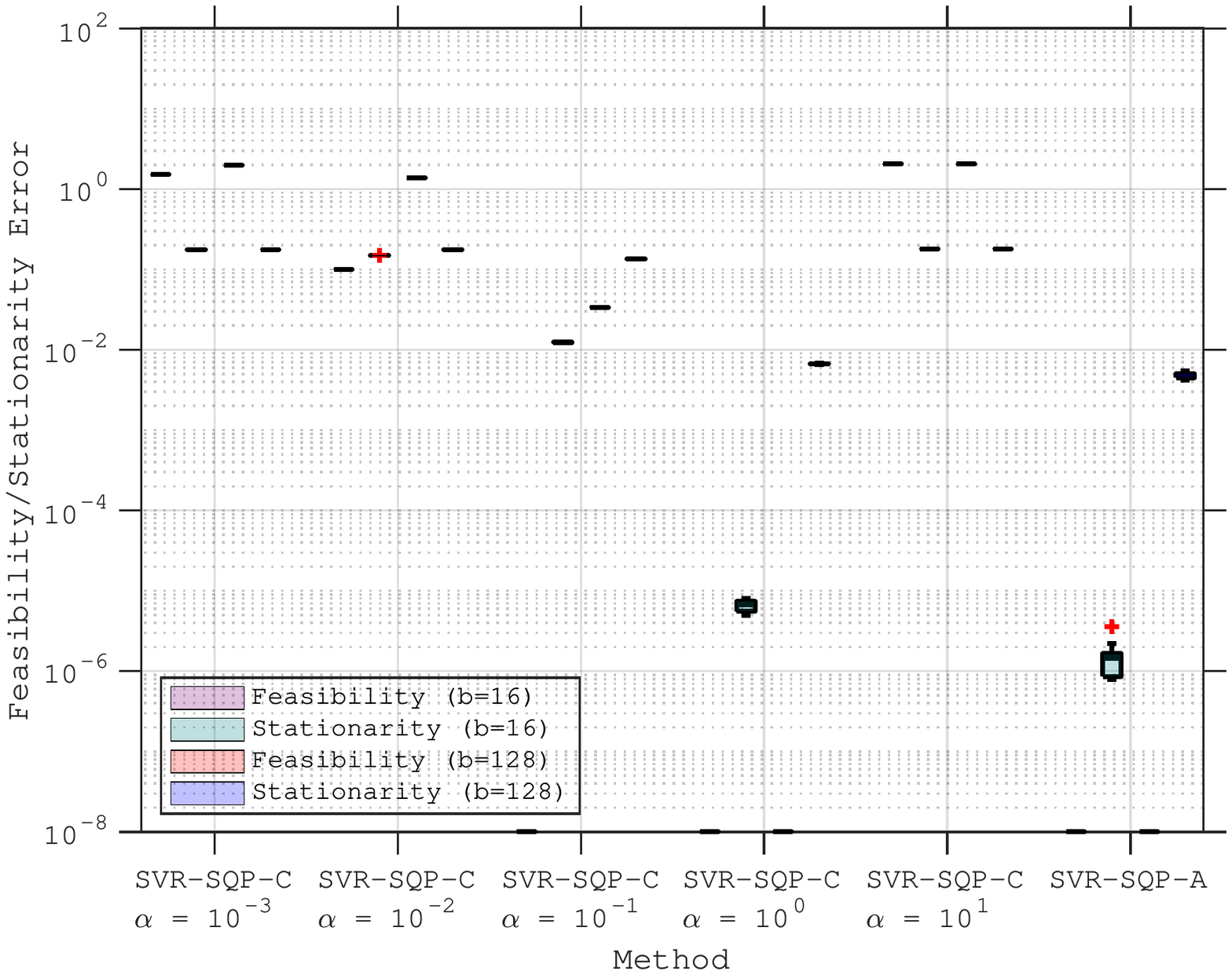}
         \includegraphics[width=0.48\textwidth,clip=true,trim=30 180 70 200]{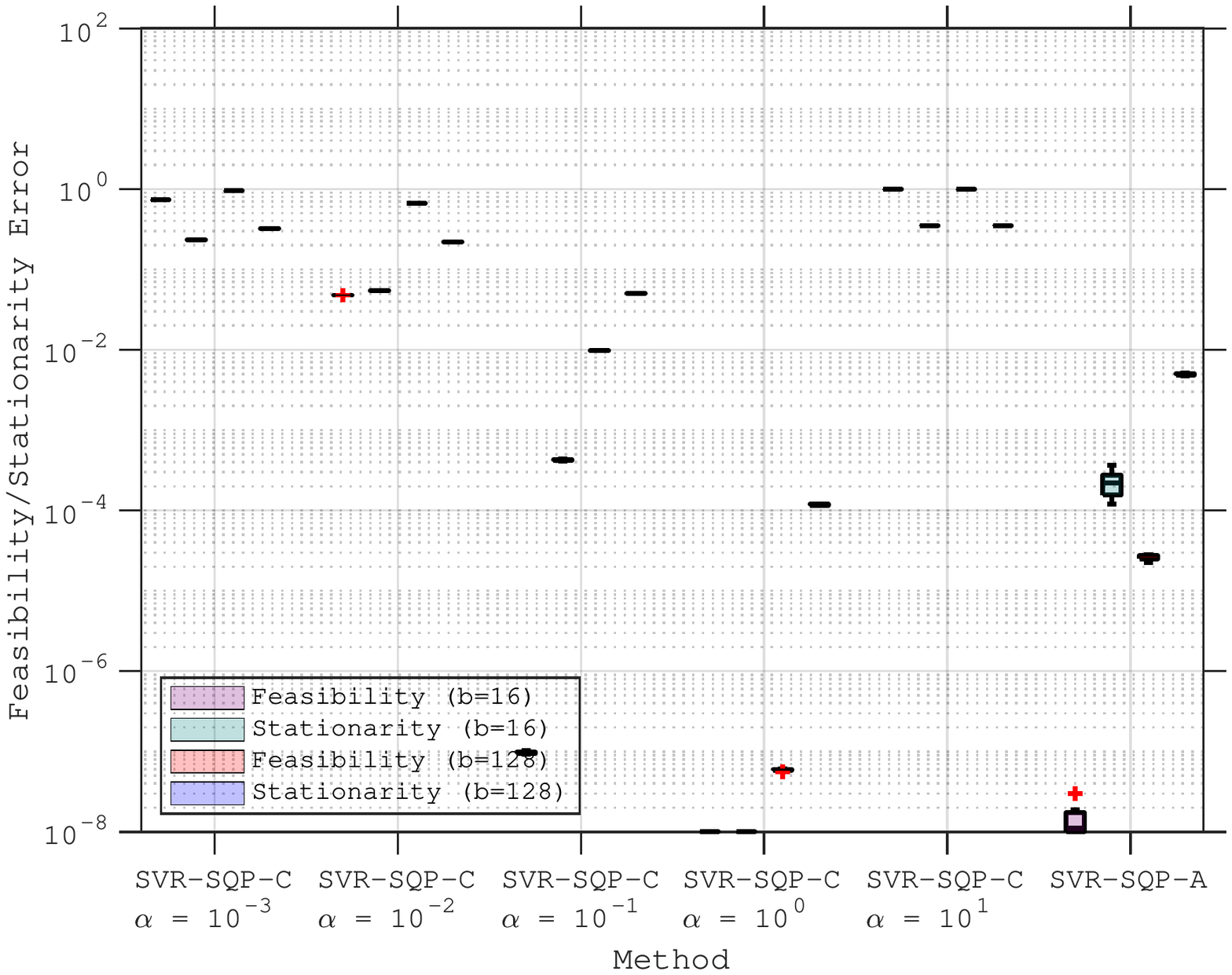} \caption{\texttt{australian}; left \eqref{eq.log_lin}, right \eqref{eq.log_el2}}
         \label{fig:2}
     \end{subfigure}
     
     \begin{subfigure}[b]{0.48\textwidth}
         \centering \includegraphics[width=0.48\textwidth,clip=true,trim=30 180 70 200]{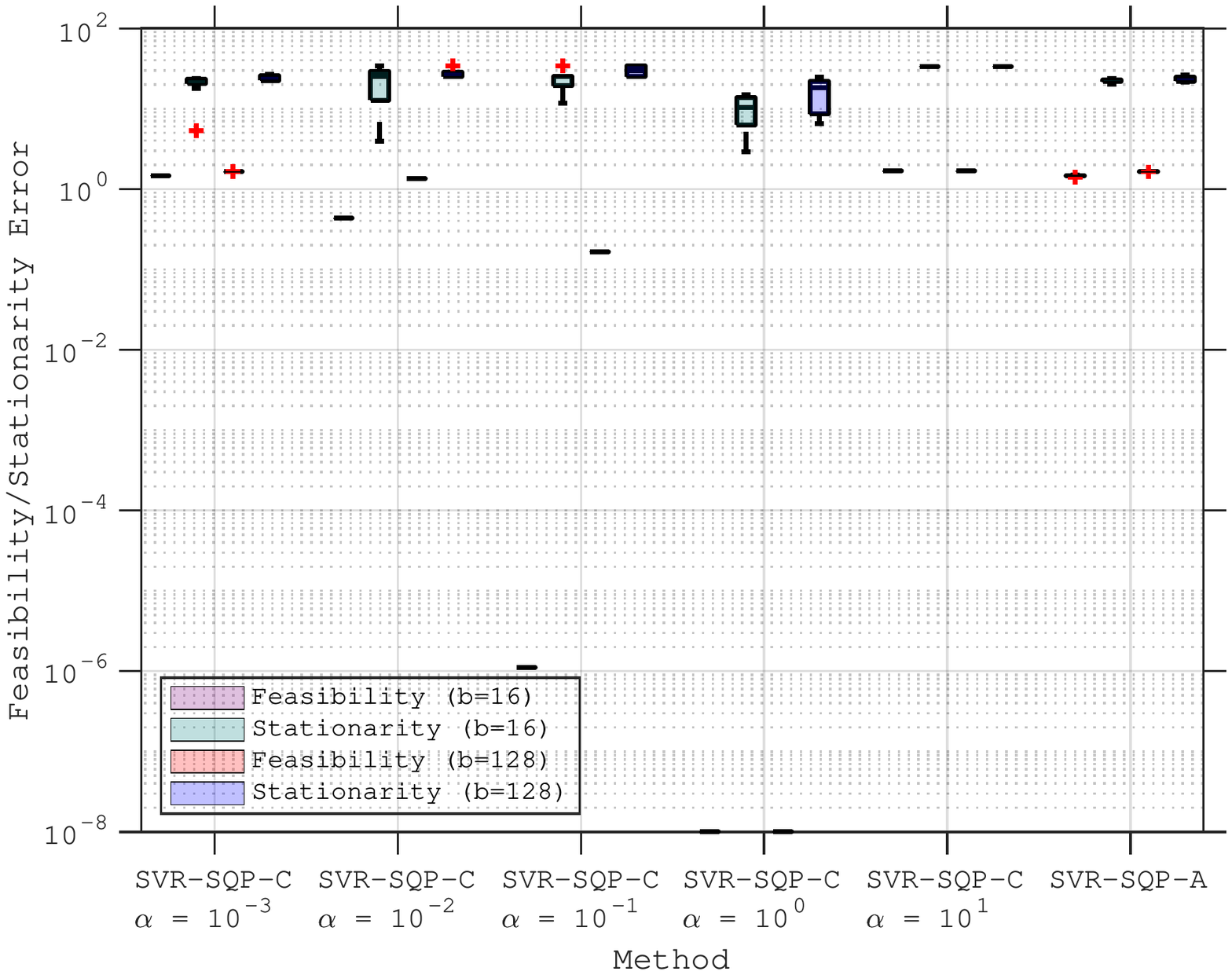}
         \includegraphics[width=0.48\textwidth,clip=true,trim=30 180 70 200]{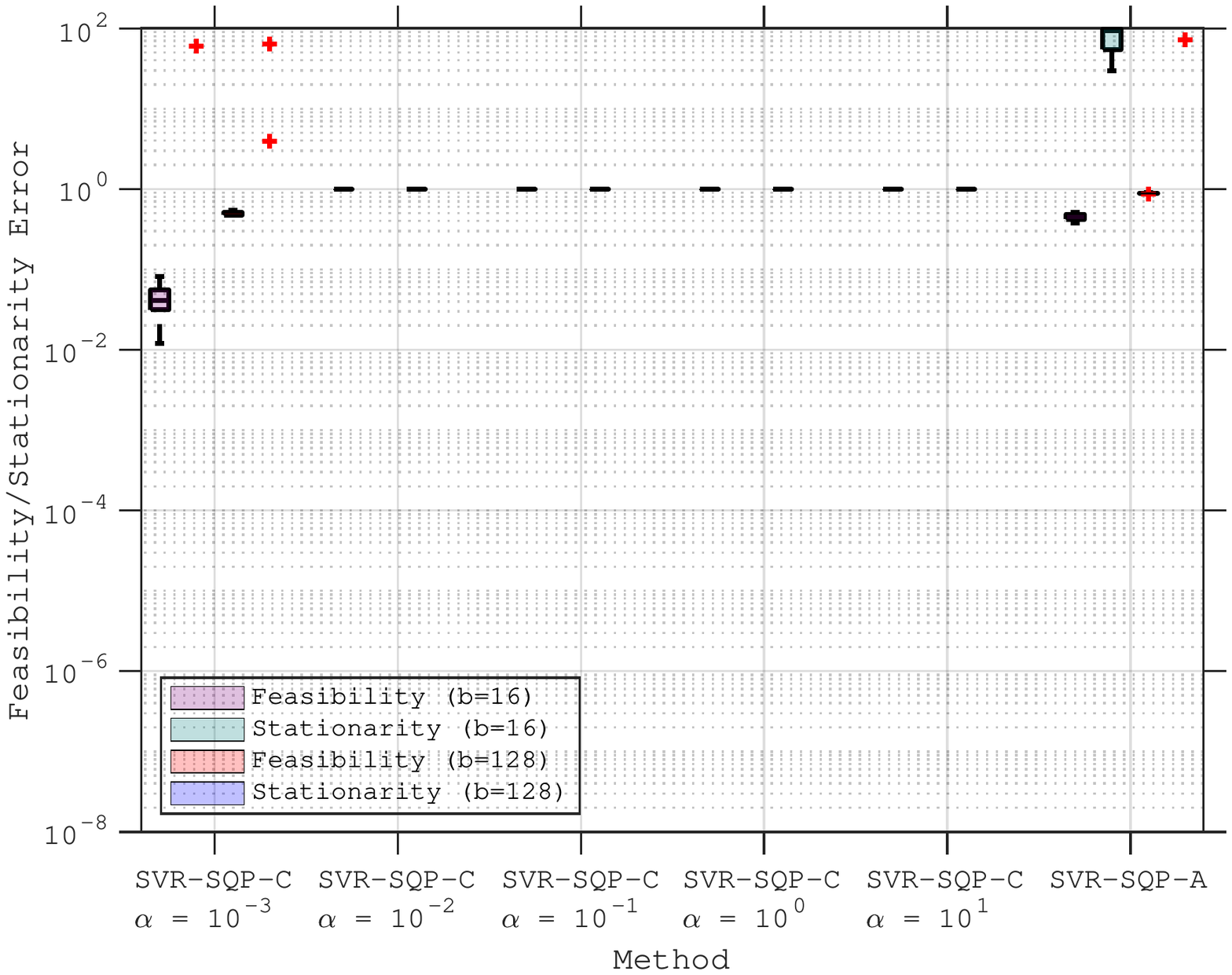}
         \caption{\texttt{heart}; left \eqref{eq.log_lin}, right \eqref{eq.log_el2}}
         \label{fig:3}
     \end{subfigure}
     \hfill
     \begin{subfigure}[b]{0.48\textwidth}
         \centering \includegraphics[width=0.48\textwidth,clip=true,trim=30 180 70 200]{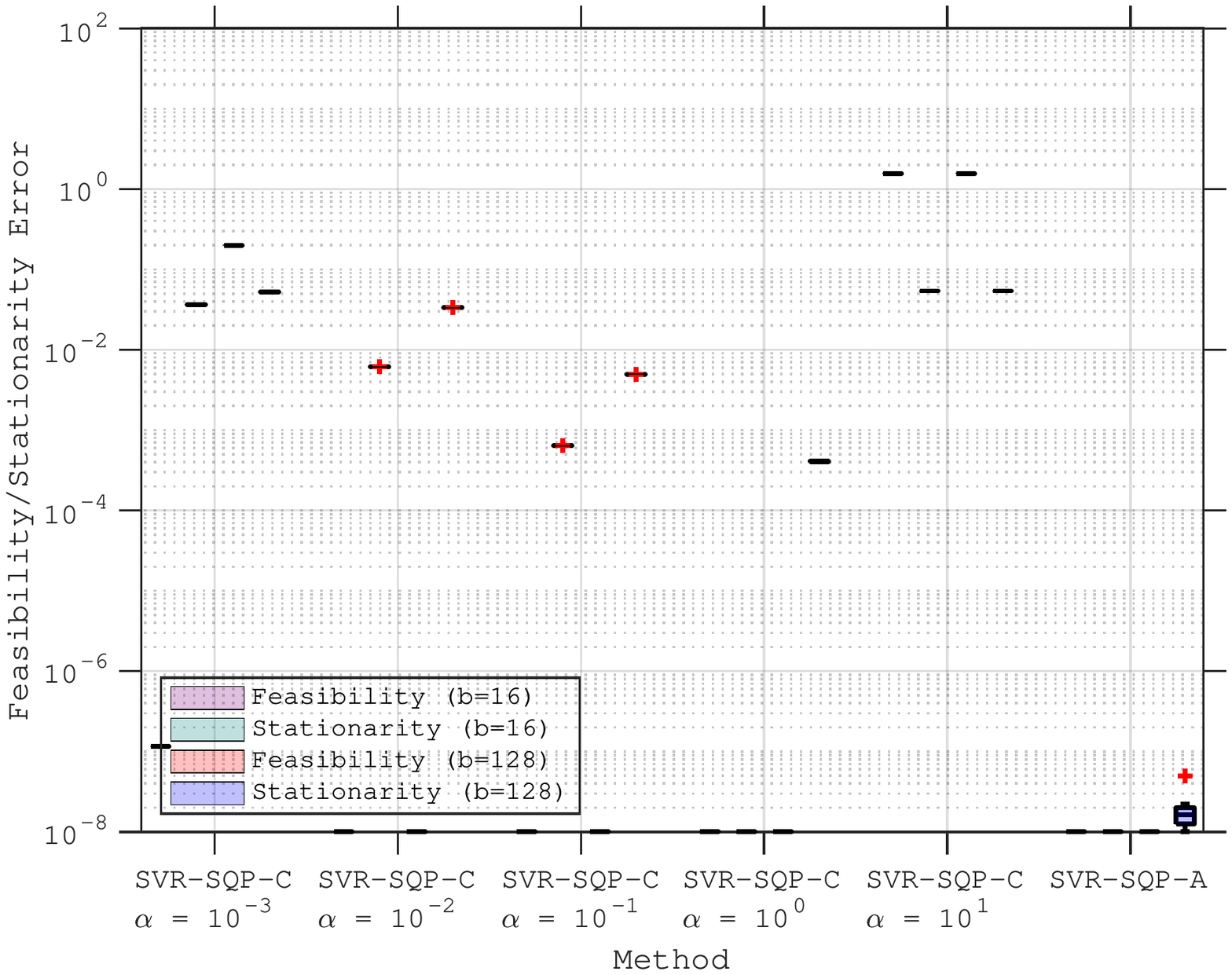}
         \includegraphics[width=0.48\textwidth,clip=true,trim=30 180 70 200]{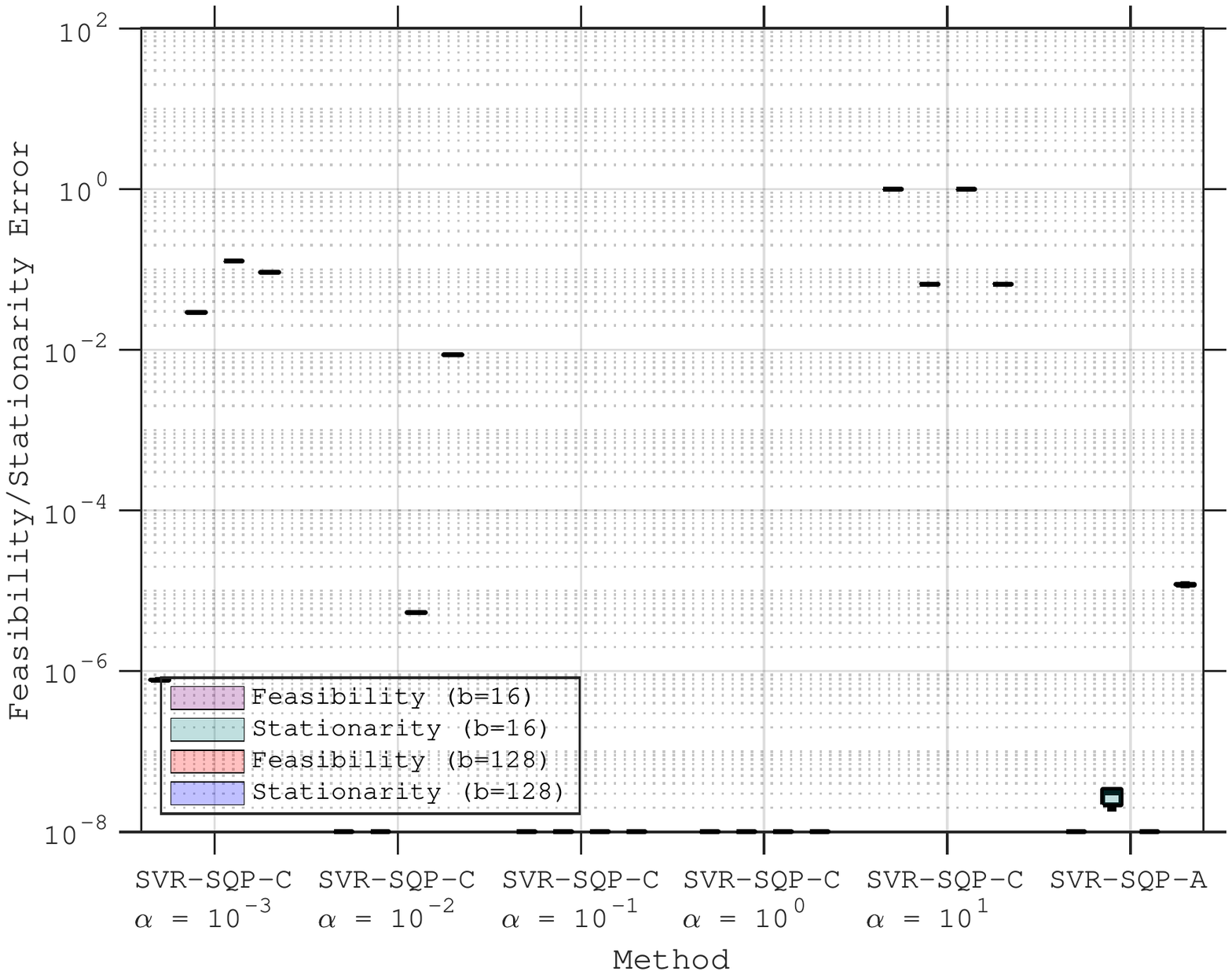} \caption{\texttt{ijcnn1}; left \eqref{eq.log_lin}, right \eqref{eq.log_el2}}
         \label{fig:4}
     \end{subfigure}
     
     \begin{subfigure}[b]{0.48\textwidth}
         \centering \includegraphics[width=0.48\textwidth,clip=true,trim=30 180 70 200]{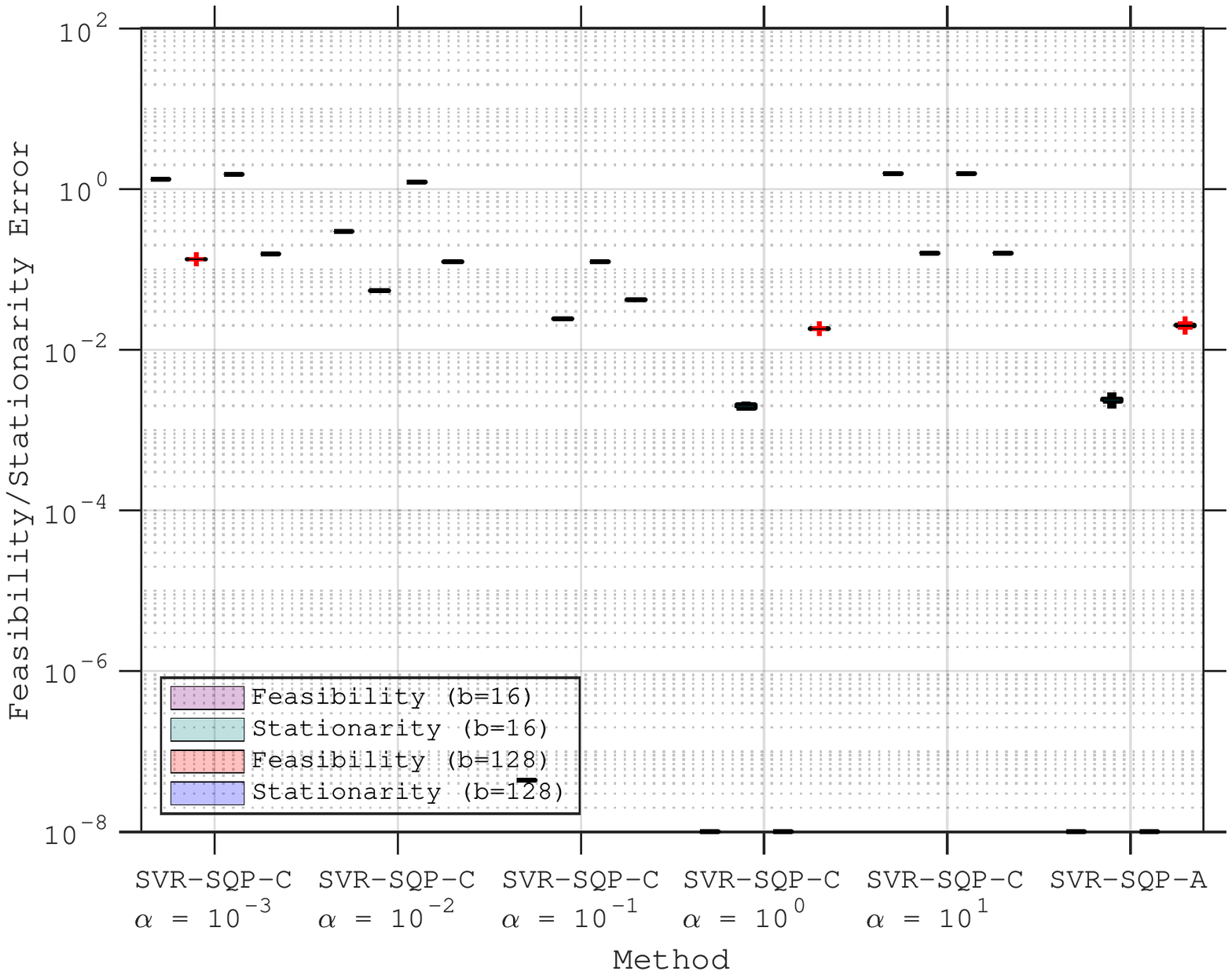}
         \includegraphics[width=0.48\textwidth,clip=true,trim=30 180 70 200]{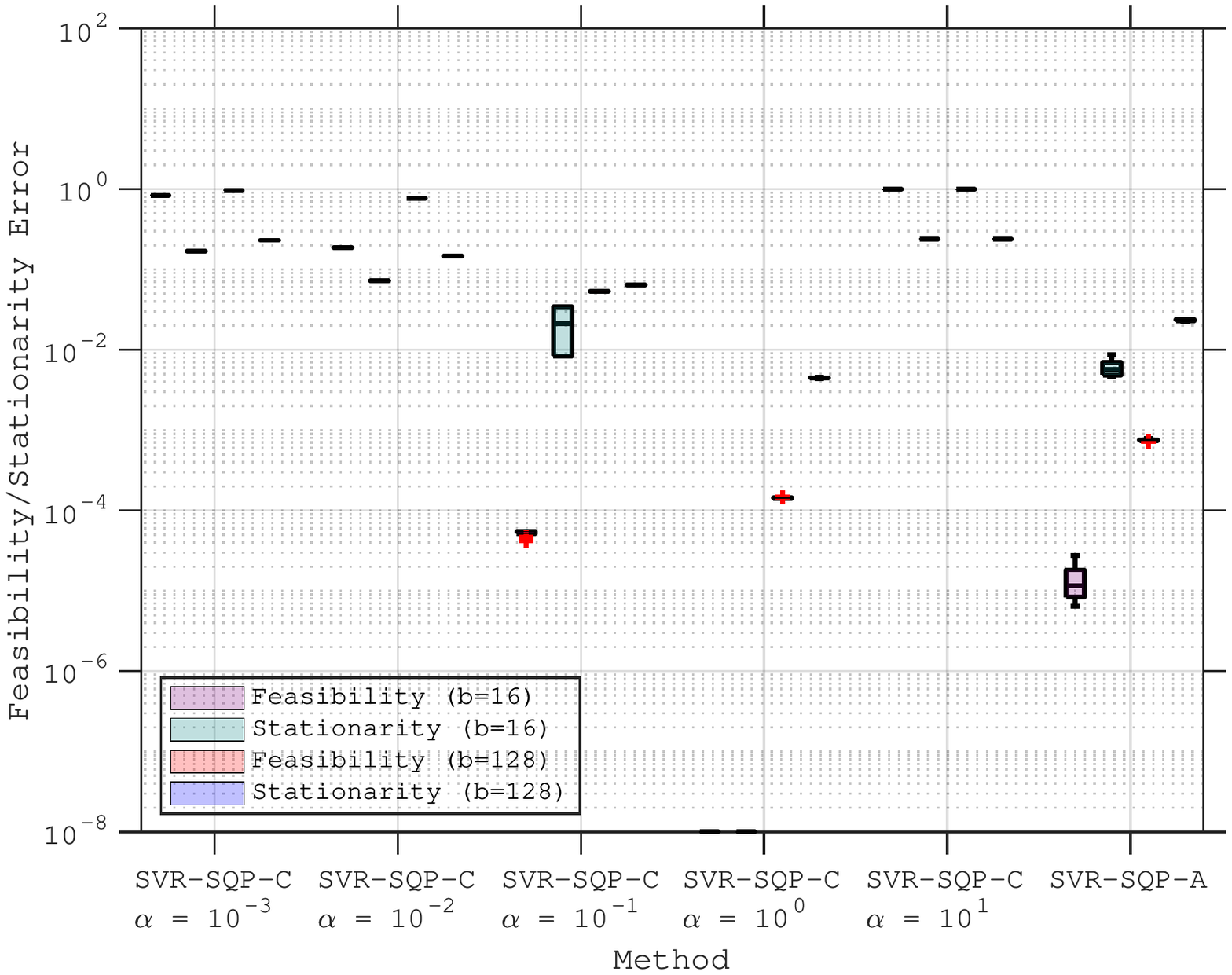}
         \caption{\texttt{ionosphere}; left \eqref{eq.log_lin}, right \eqref{eq.log_el2}}
         \label{fig:5}
     \end{subfigure}
     \hfill
     \begin{subfigure}[b]{0.48\textwidth}
         \centering \includegraphics[width=0.48\textwidth,clip=true,trim=30 180 70 200]{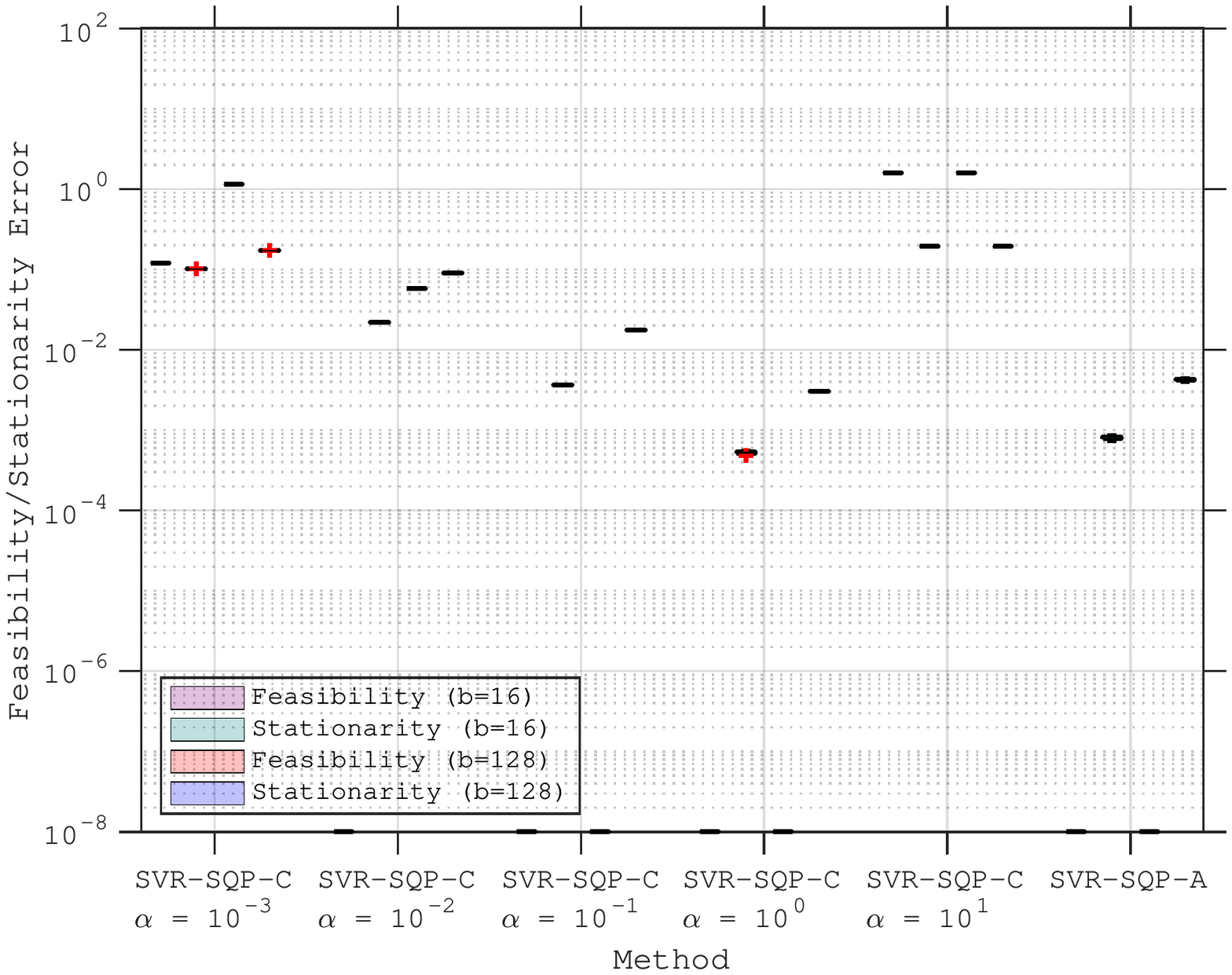}
         \includegraphics[width=0.48\textwidth,clip=true,trim=30 180 70 200]{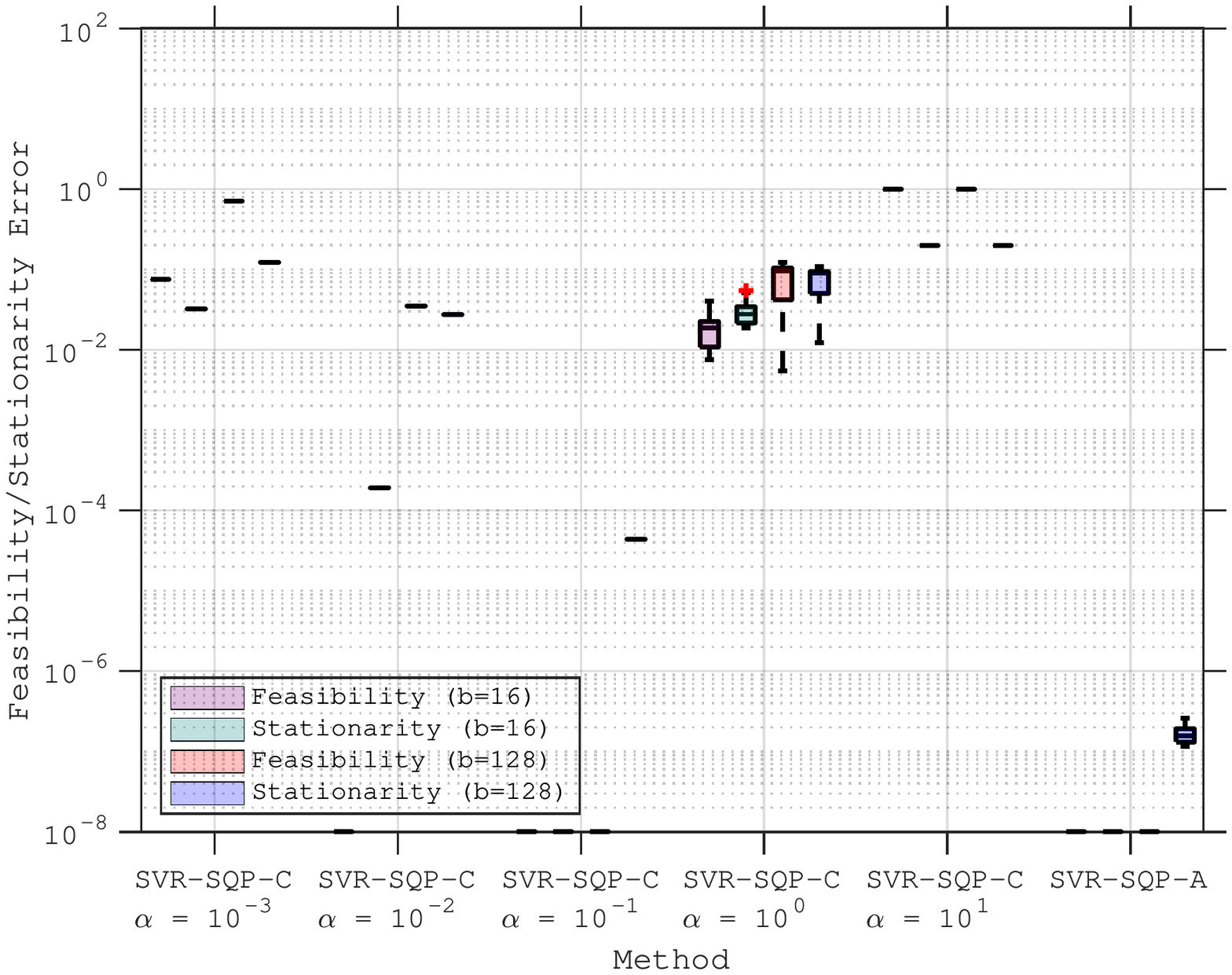} \caption{\texttt{mushrooms}; left \eqref{eq.log_lin}, right \eqref{eq.log_el2}}
         \label{fig:6}
     \end{subfigure}
     
     \begin{subfigure}[b]{0.48\textwidth}
         \centering \includegraphics[width=0.48\textwidth,clip=true,trim=30 180 70 200]{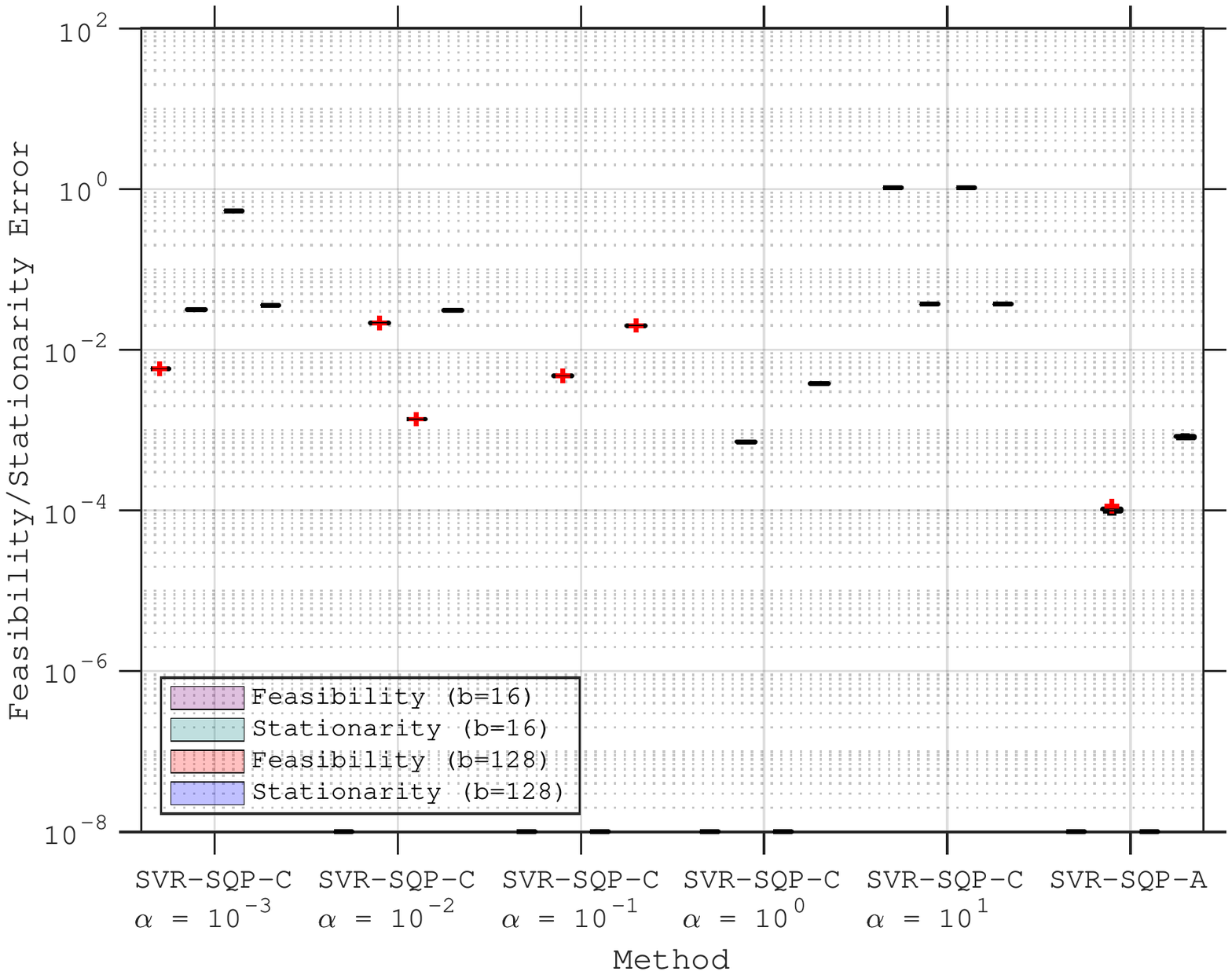}
         \includegraphics[width=0.48\textwidth,clip=true,trim=30 180 70 200]{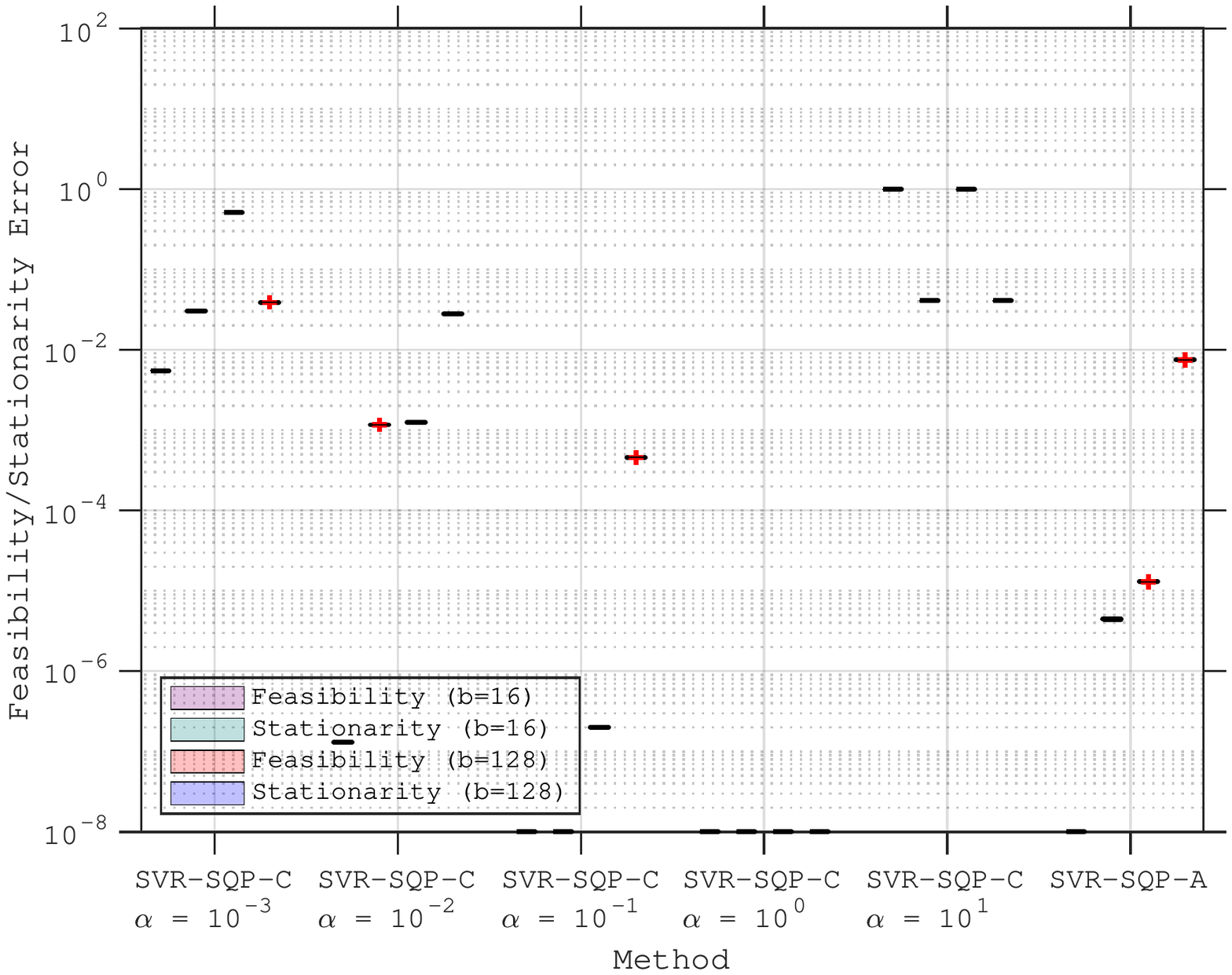}
         \caption{\texttt{phishing}; left \eqref{eq.log_lin}, right \eqref{eq.log_el2}}
         \label{fig:7}
     \end{subfigure}
     \hfill
     \begin{subfigure}[b]{0.48\textwidth}
         \centering \includegraphics[width=0.48\textwidth,clip=true,trim=30 180 70 200]{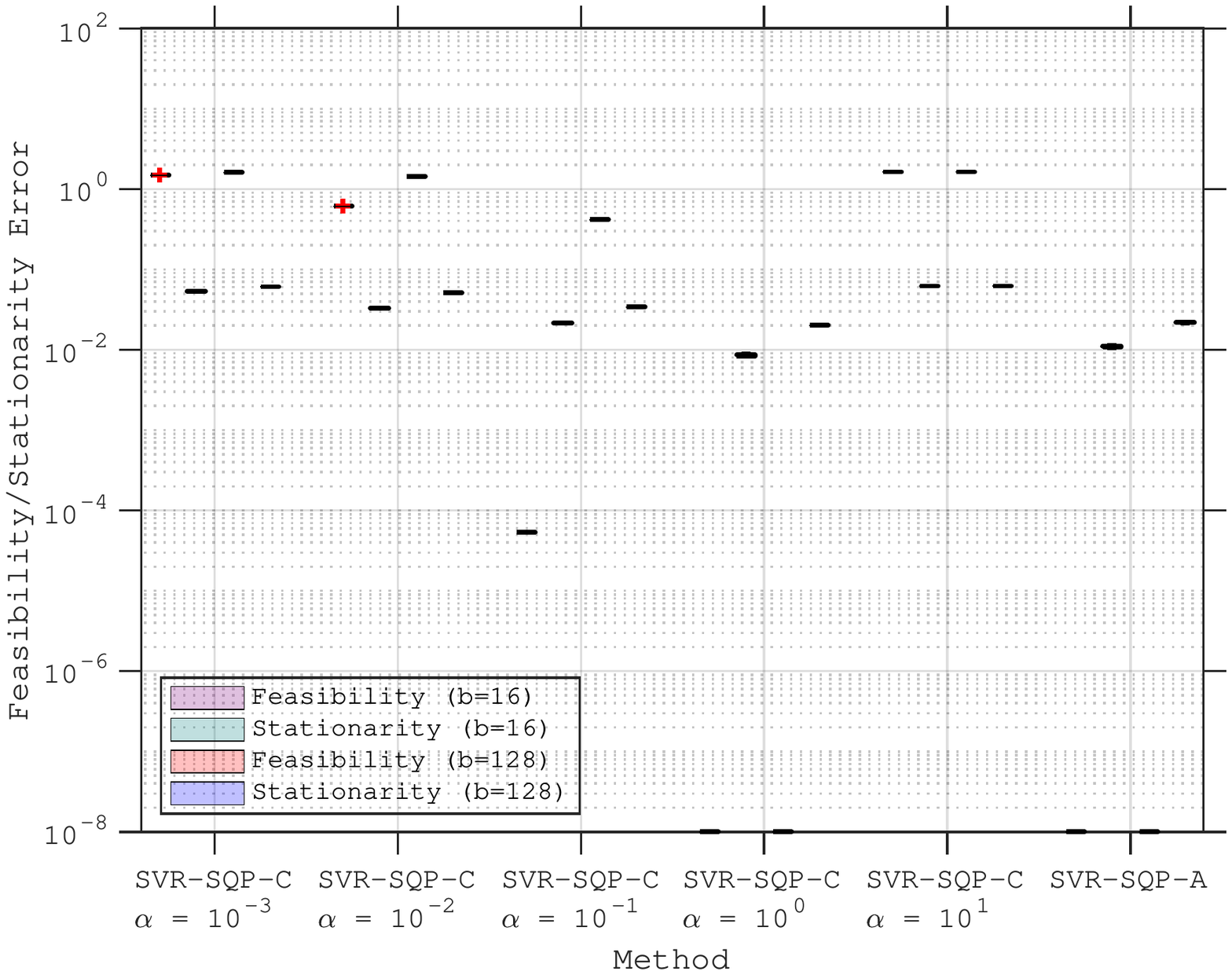}
         \includegraphics[width=0.48\textwidth,clip=true,trim=30 180 70 200]{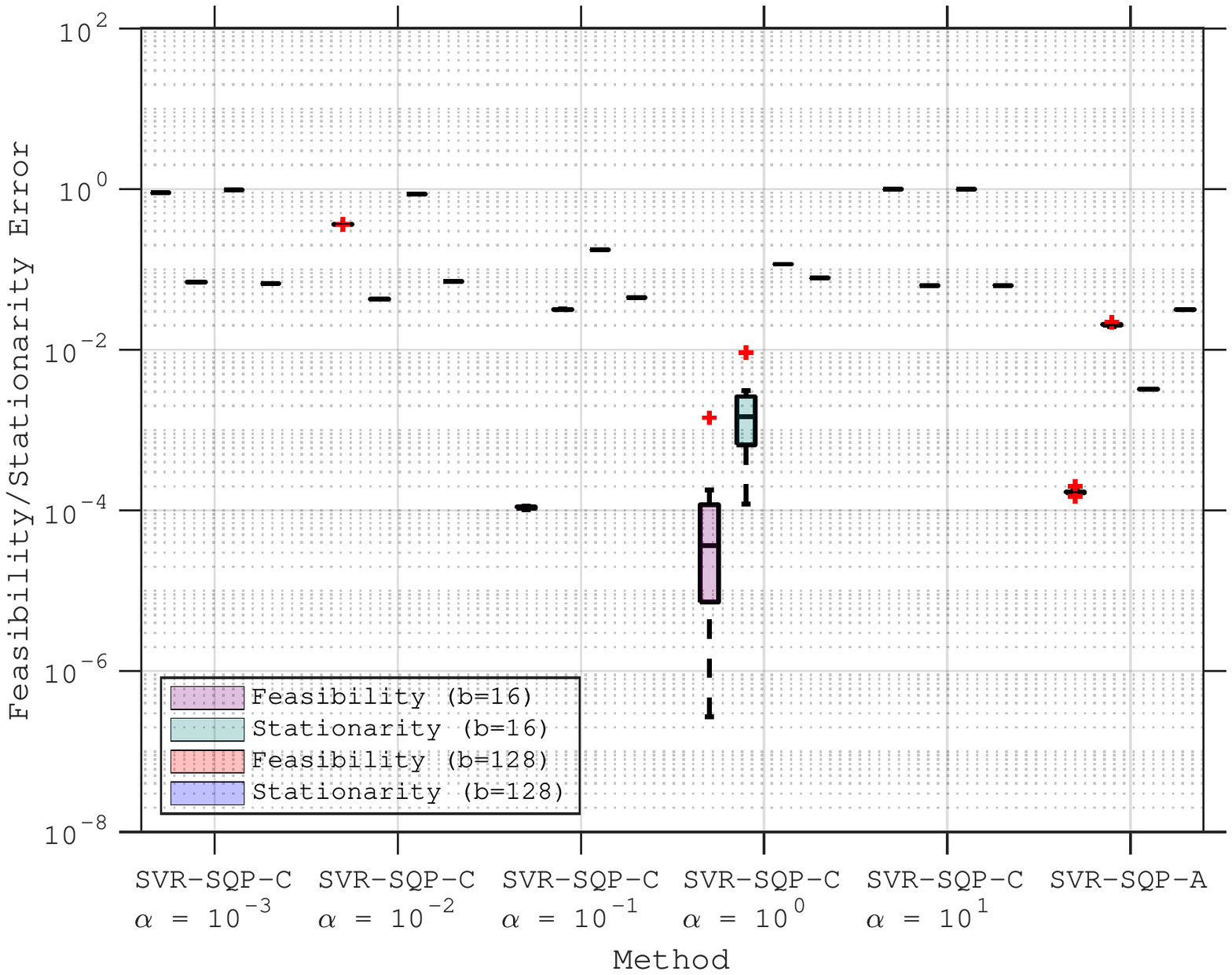} \caption{\texttt{sonar}; left \eqref{eq.log_lin}, right \eqref{eq.log_el2}}
         \label{fig:8}
     \end{subfigure}
     
     \begin{subfigure}[b]{0.48\textwidth}
         \centering \includegraphics[width=0.48\textwidth,clip=true,trim=30 180 70 200]{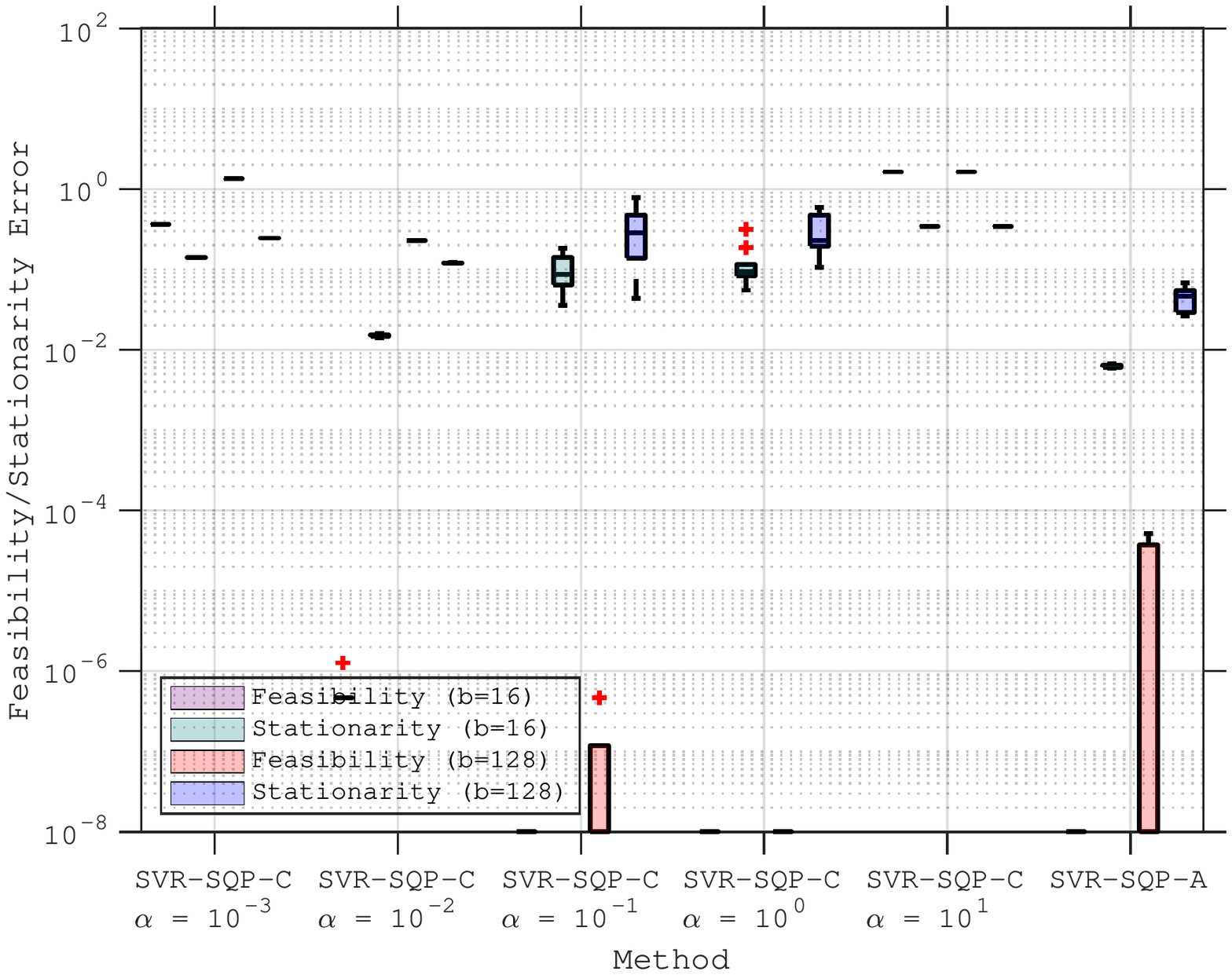}
         \includegraphics[width=0.48\textwidth,clip=true,trim=30 180 70 200]{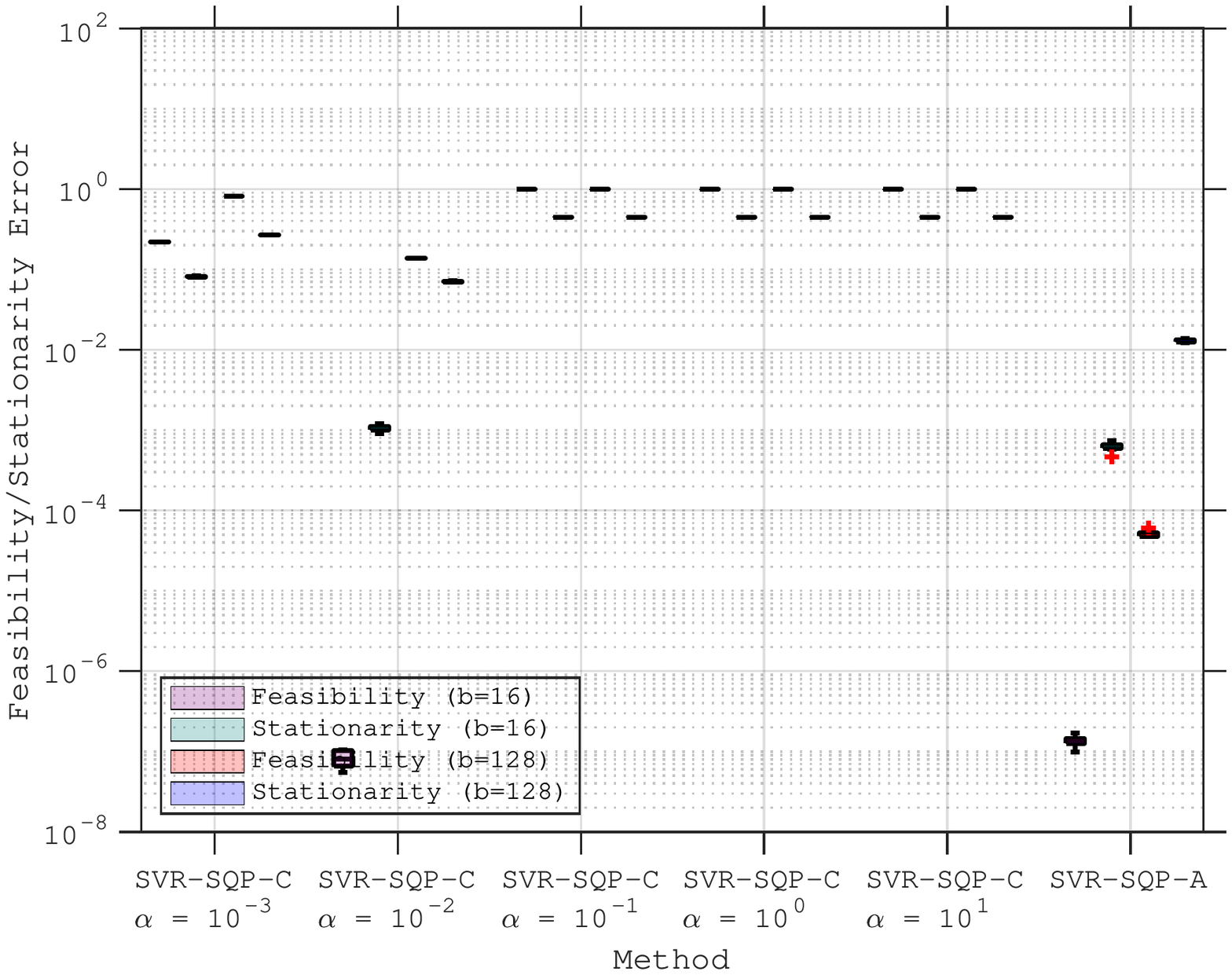}
         \caption{\texttt{splice}; left \eqref{eq.log_lin}, right \eqref{eq.log_el2}}
         \label{fig:9}
     \end{subfigure}
     \hfill
     \begin{subfigure}[b]{0.48\textwidth}
         \centering \includegraphics[width=0.48\textwidth,clip=true,trim=30 180 70 200]{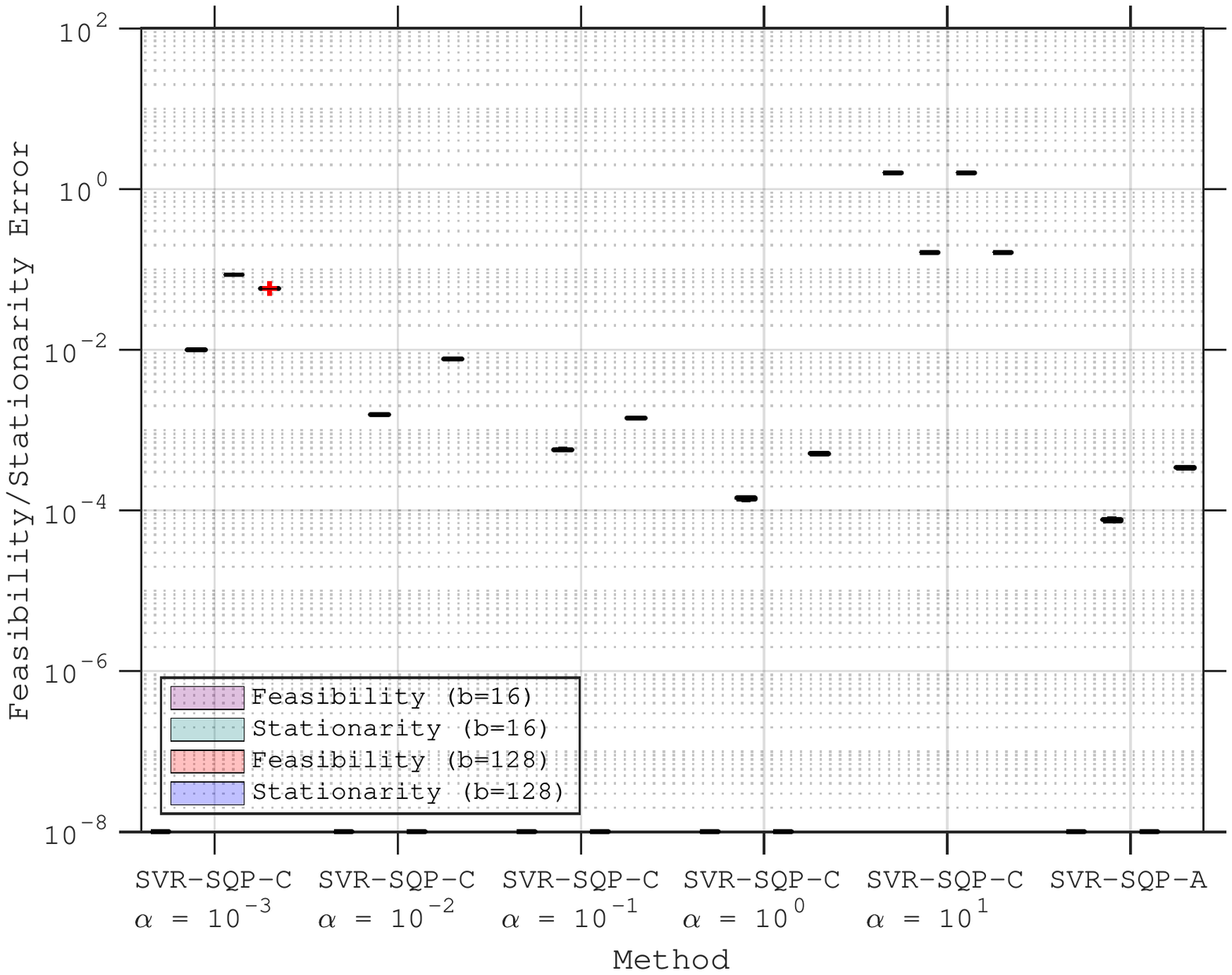}
         \includegraphics[width=0.48\textwidth,clip=true,trim=30 180 70 200]{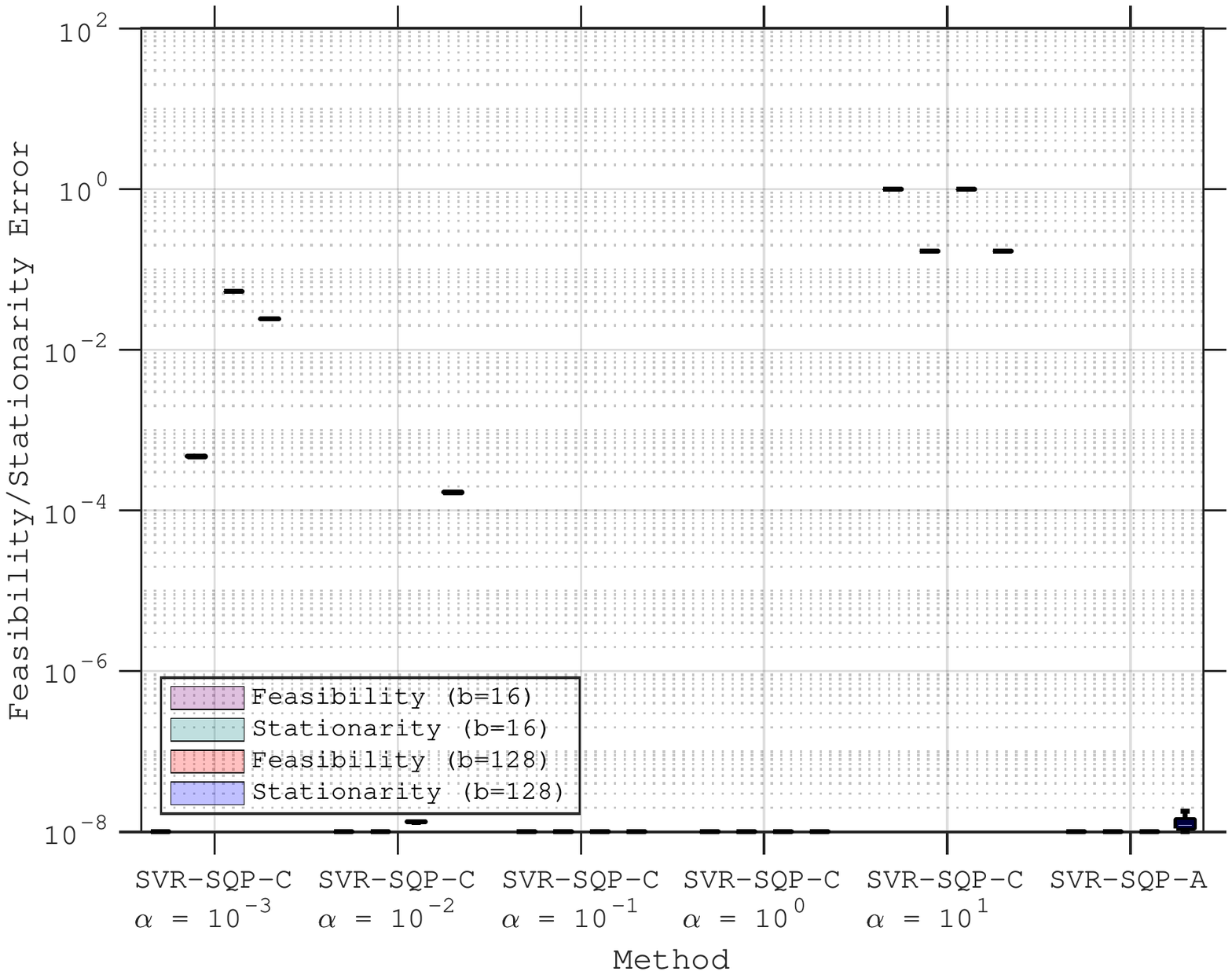} \caption{\texttt{w8a}; left \eqref{eq.log_lin}, right \eqref{eq.log_el2}}
         \label{fig:10}
     \end{subfigure}
        \caption{Best feasibility and stationarity errors,  for \SVRSQPCONST{} and \SVRSQPADAPT{} on \eqref{eq.log_lin} and \eqref{eq.log_el2}.}
        \label{fig.svrsqra_svrsqpa_summary}
\end{figure}

\clearpage

\newpage

\subsection{Sensitivity to user-defined parameters}

Given the encouraging numerical results for \SVRSQPADAPT{} (Section~\ref{sec.numerical_adaptiveconstant}), in this subsection we investigate the robustness of \SVRSQPADAPT{} to two user-defined parameters: (1) the step size parameter $\beta \in \{ 10^{-3},10^{-2},10^{-1},10^{0},10^{1}\}$ (Fig.~\ref{fig.sensitivity}), and (2) the number of inner iterations $S \in \left\{ \left\lfloor \tfrac{N}{b} \right\rfloor,\left\lfloor \tfrac{N}{2b} \right\rfloor,\left\lfloor \tfrac{N}{4b} \right\rfloor\right\}$ (Fig.~\ref{fig.sensitivity2}) for two datasets (\texttt{australian} and \texttt{splice}). Overall, the results on these two datasets suggest that $\beta = 1$ is often the best choice.  Moreover, our results in Fig.~\ref{fig.sensitivity2} illustrate the robustness of \SVRSQPADAPT{} to the choice of the number of inner iterations.

\begin{figure}[ht]
   \centering
     \begin{subfigure}[b]{1\textwidth}
     \includegraphics[width=0.24\textwidth,clip=true,trim=30 180 50 200]{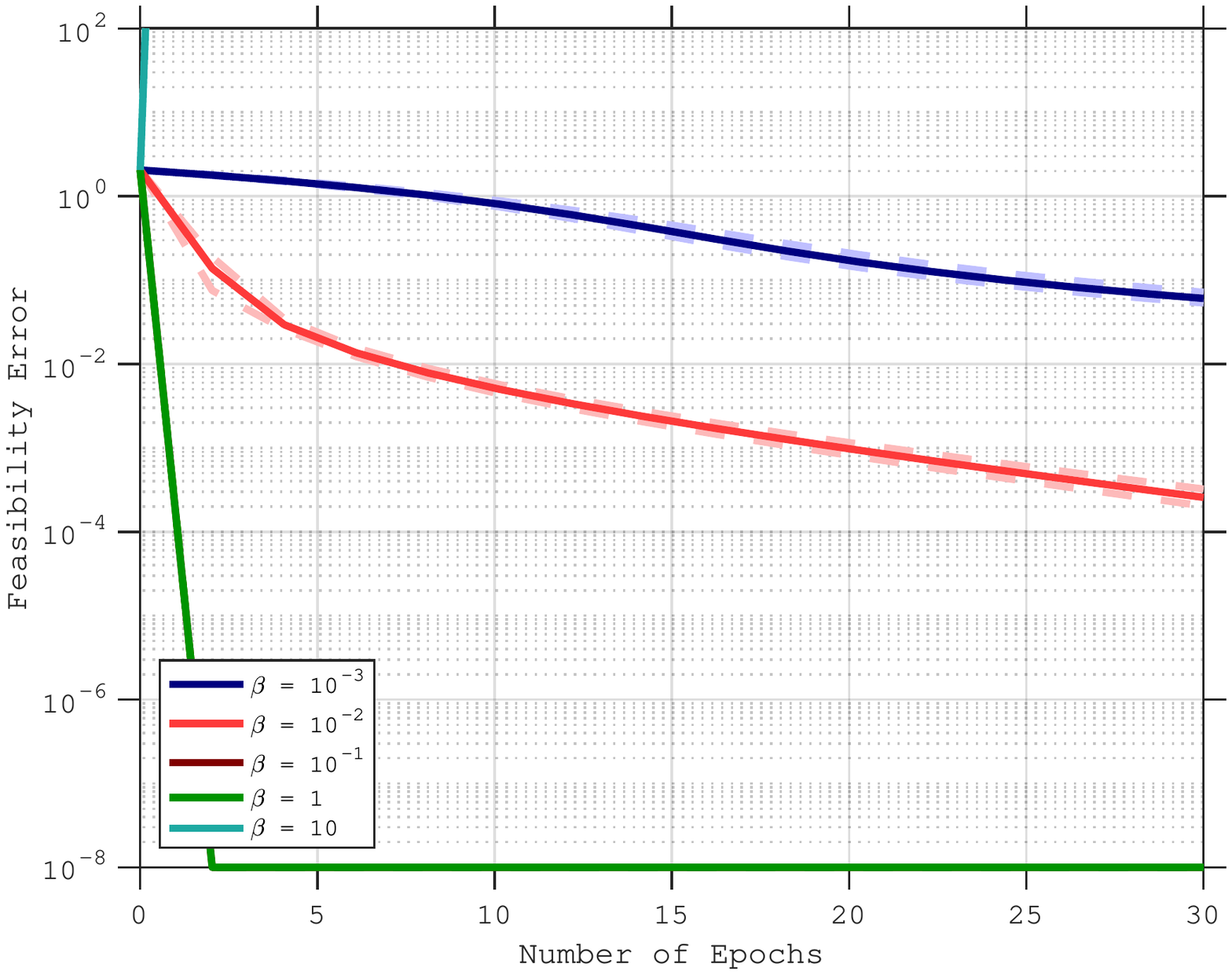}
  \includegraphics[width=0.24\textwidth,clip=true,trim=30 180 50 200]{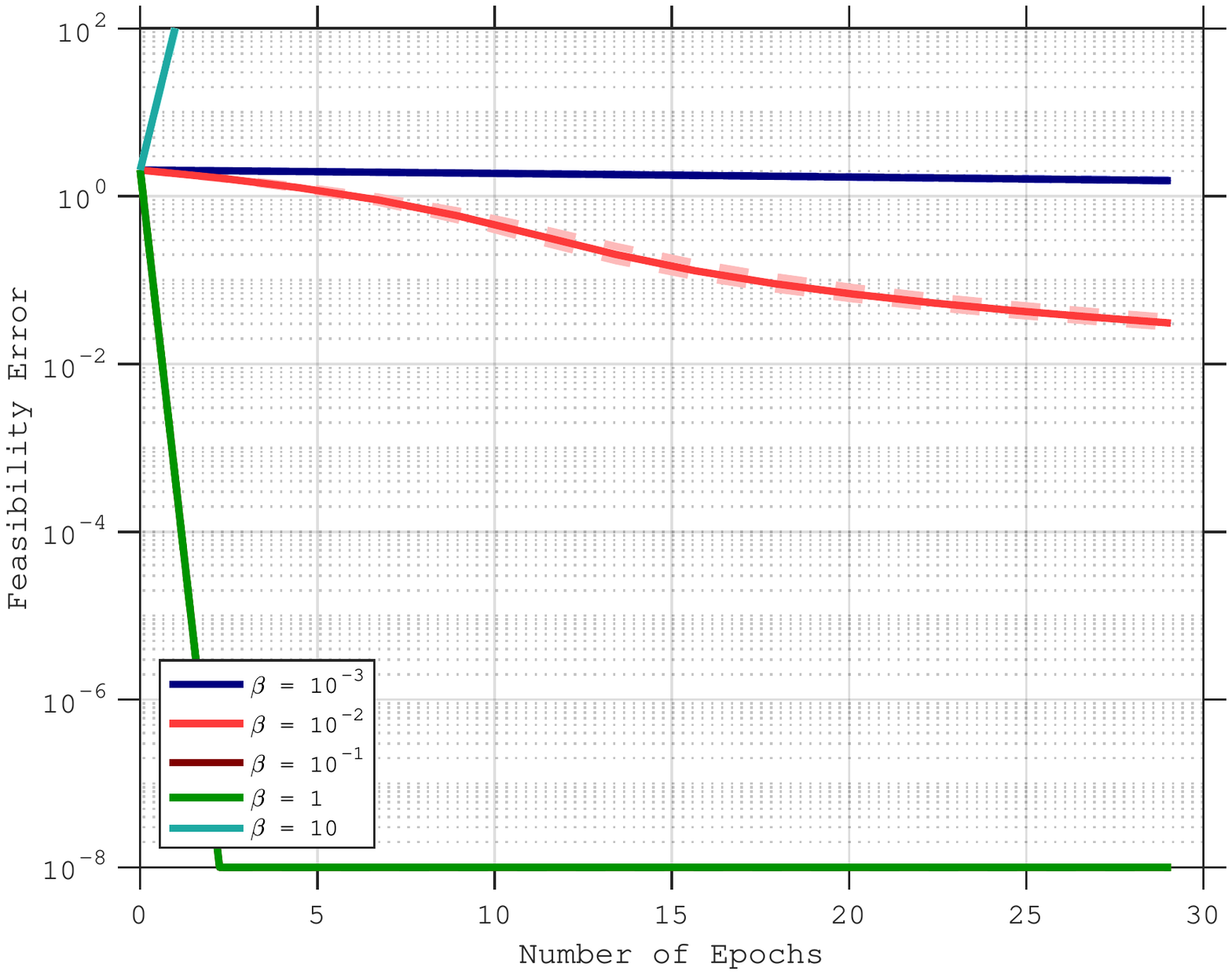}
  \includegraphics[width=0.24\textwidth,clip=true,trim=30 180 50 200]{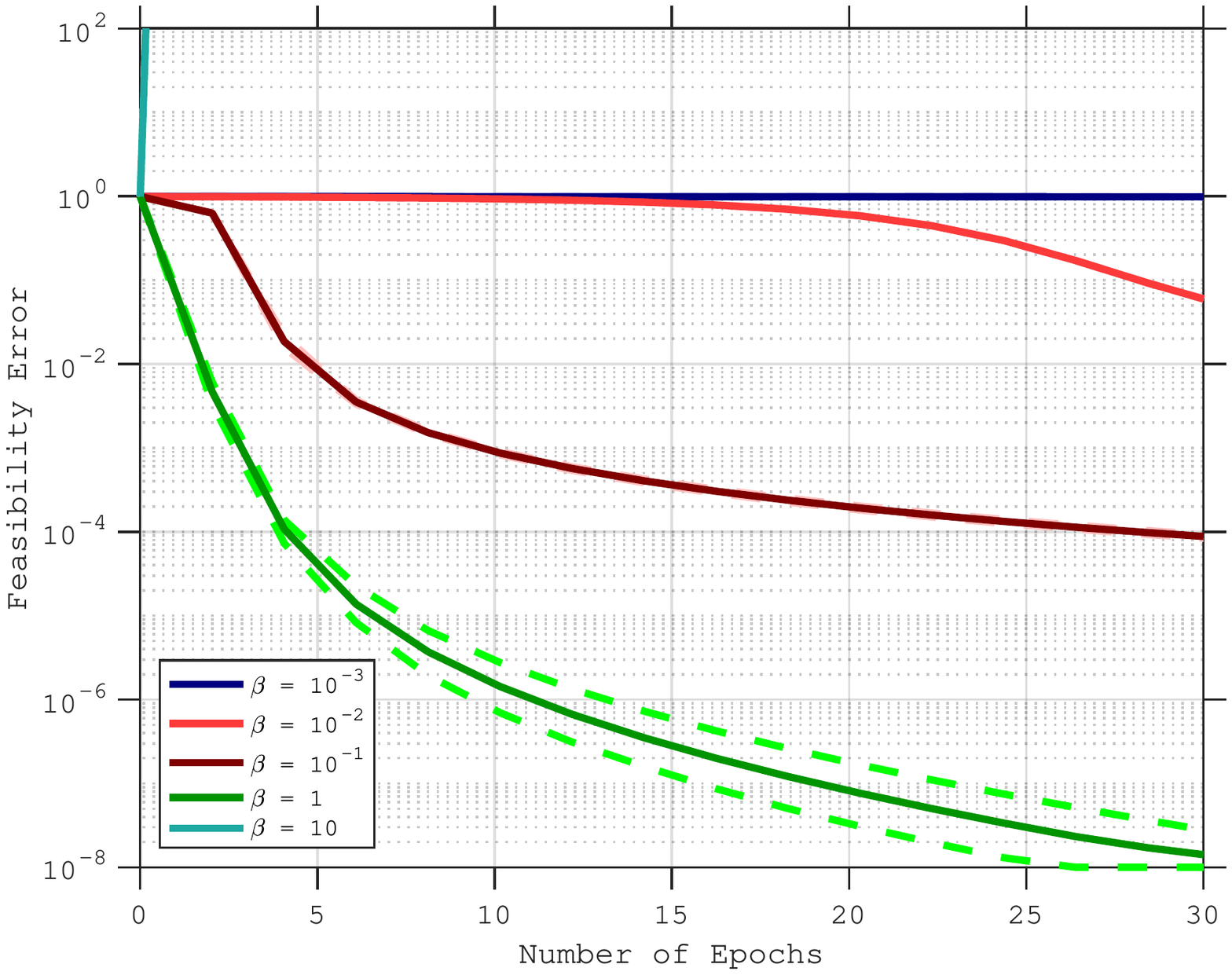}
  \includegraphics[width=0.24\textwidth,clip=true,trim=30 180 50 200]{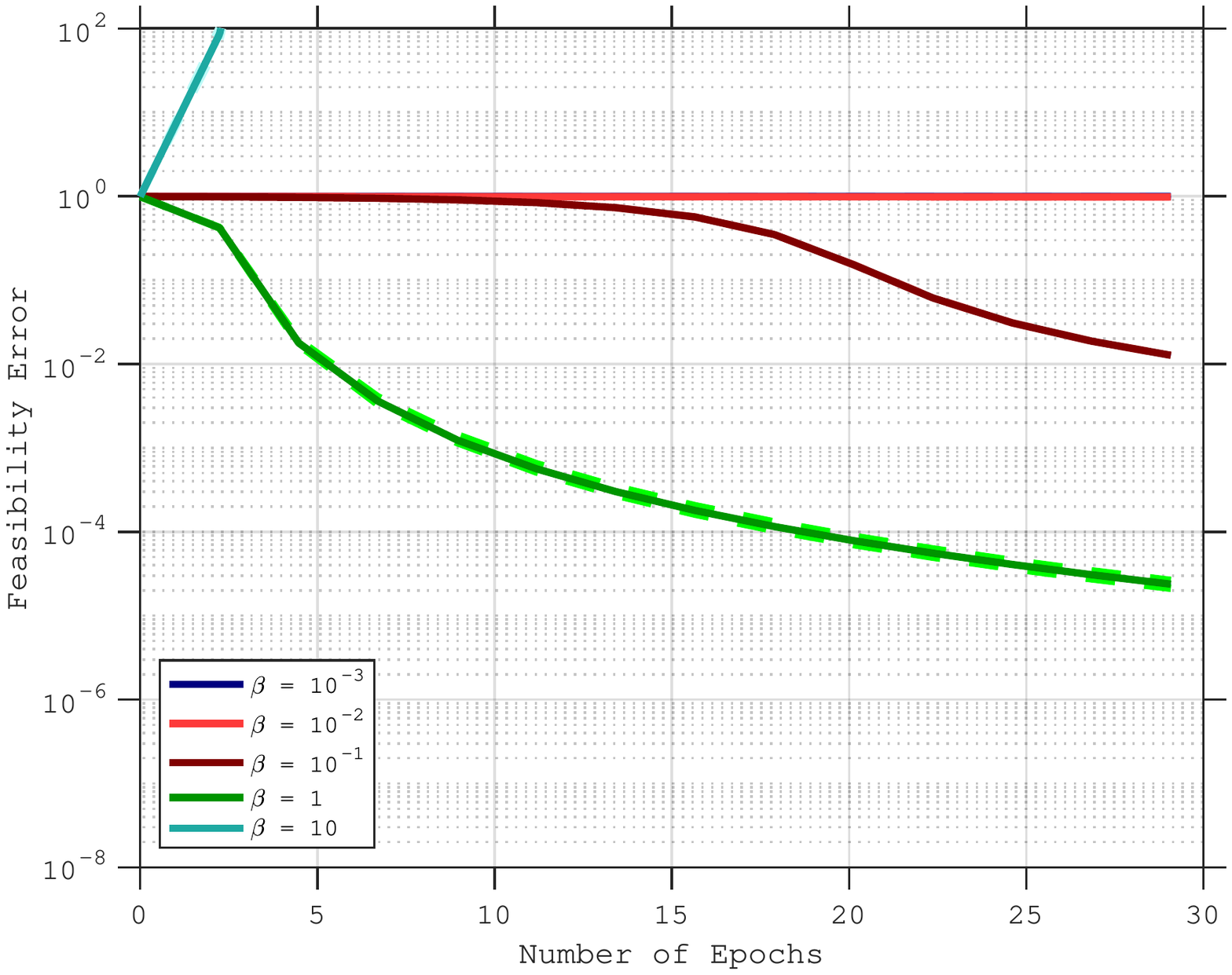}
  
  \includegraphics[width=0.24\textwidth,clip=true,trim=30 180 50 200]{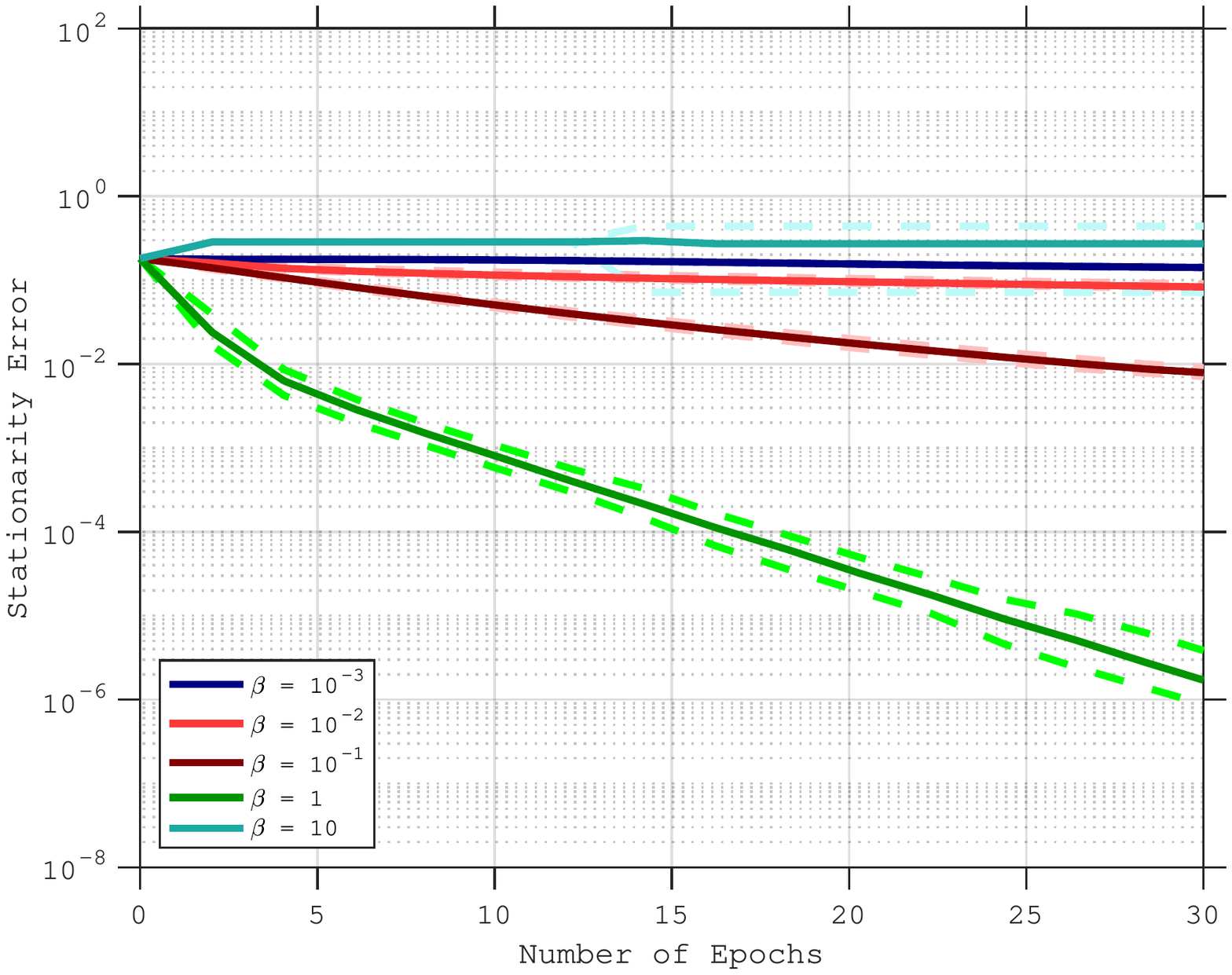}
  \includegraphics[width=0.24\textwidth,clip=true,trim=30 180 50 200]{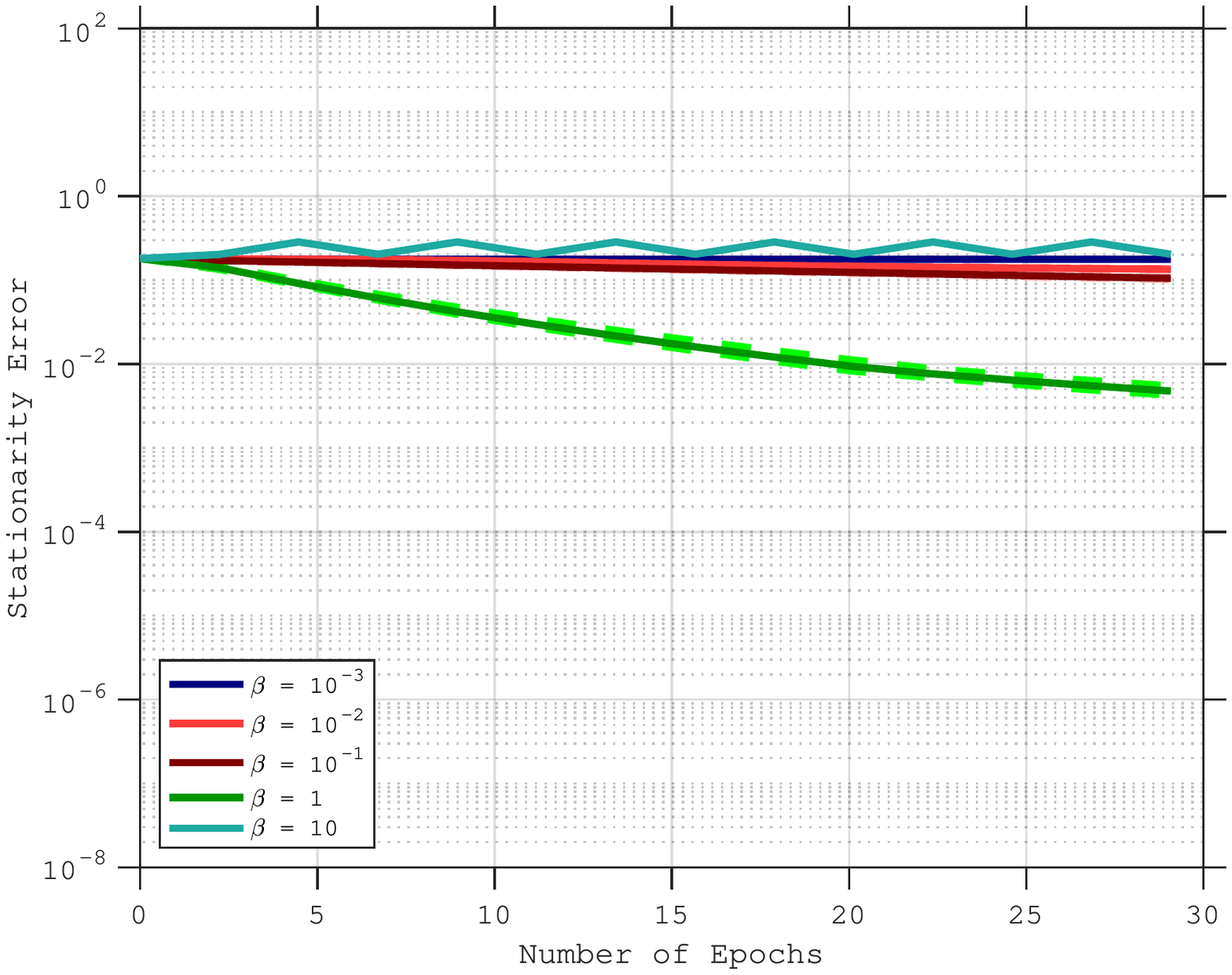}
  \includegraphics[width=0.24\textwidth,clip=true,trim=30 180 50 200]{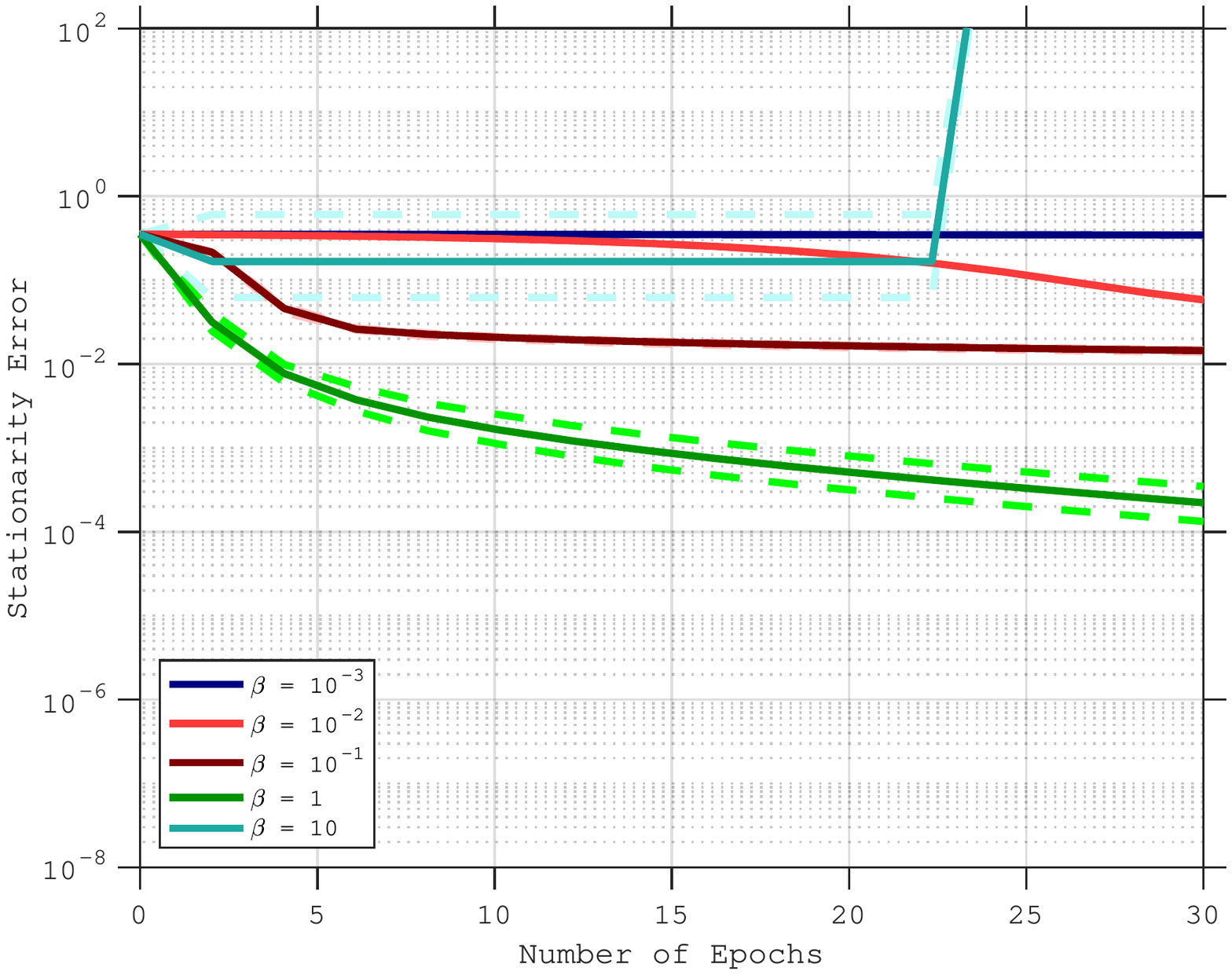}
  \includegraphics[width=0.24\textwidth,clip=true,trim=30 180 50 200]{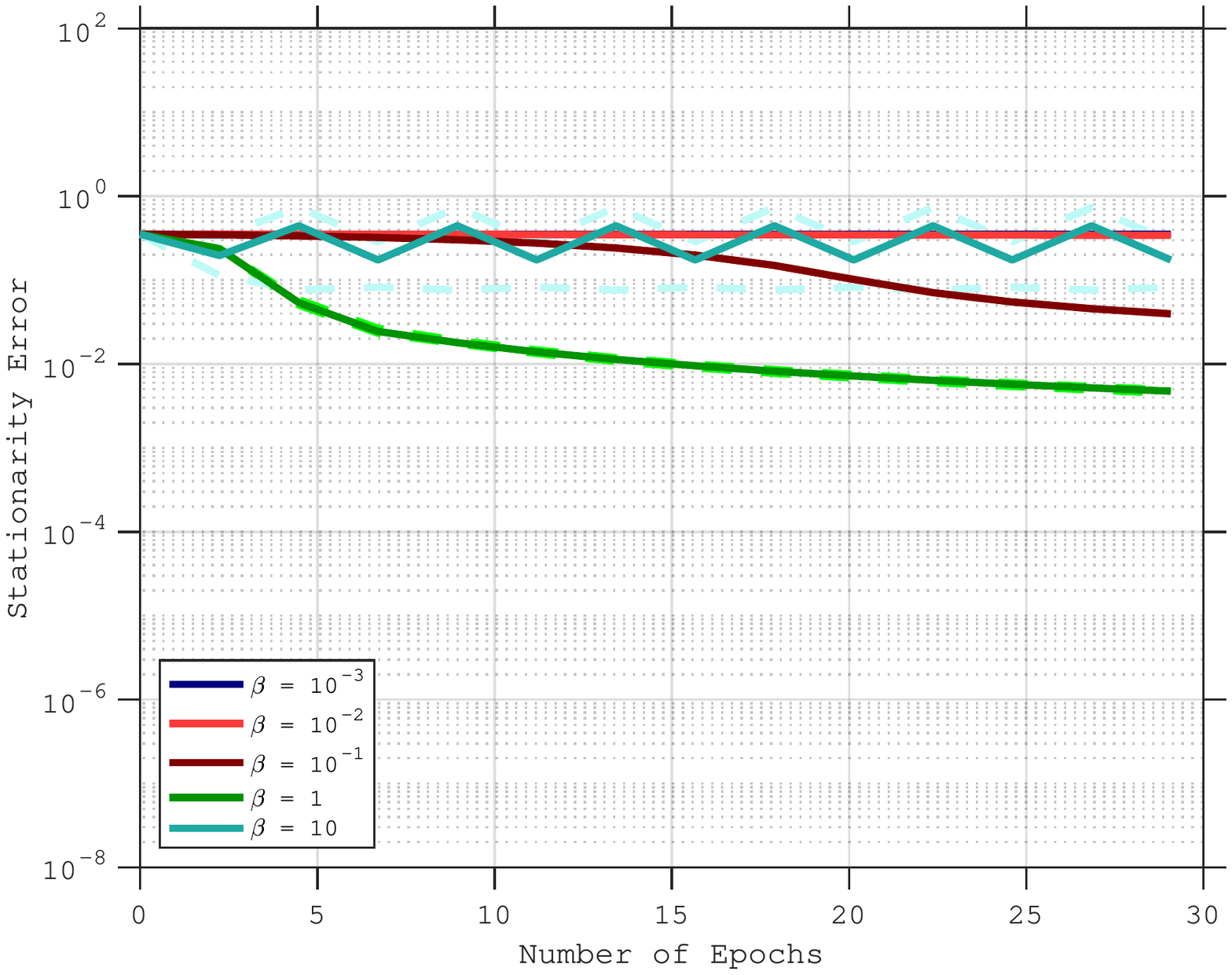}
  \caption{\texttt{australian} dataset. Top row: feasibility error; Bottom row: stationarity
  error.}
  \label{fig.sensitivity1a}
    \end{subfigure}

  \begin{subfigure}[b]{\textwidth}
  \includegraphics[width=0.24\textwidth,clip=true,trim=30 180 50 200]{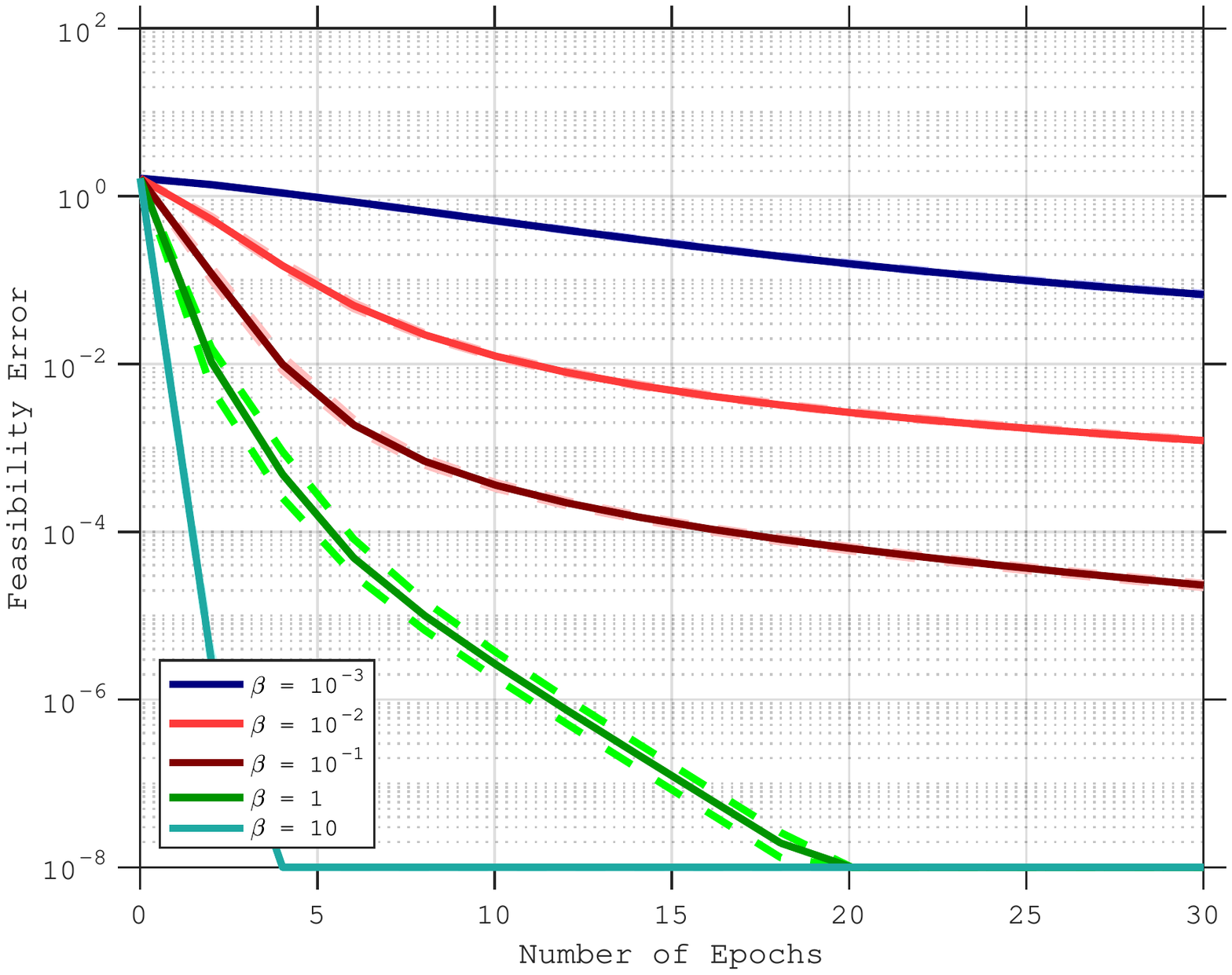}
  \includegraphics[width=0.24\textwidth,clip=true,trim=30 180 50 200]{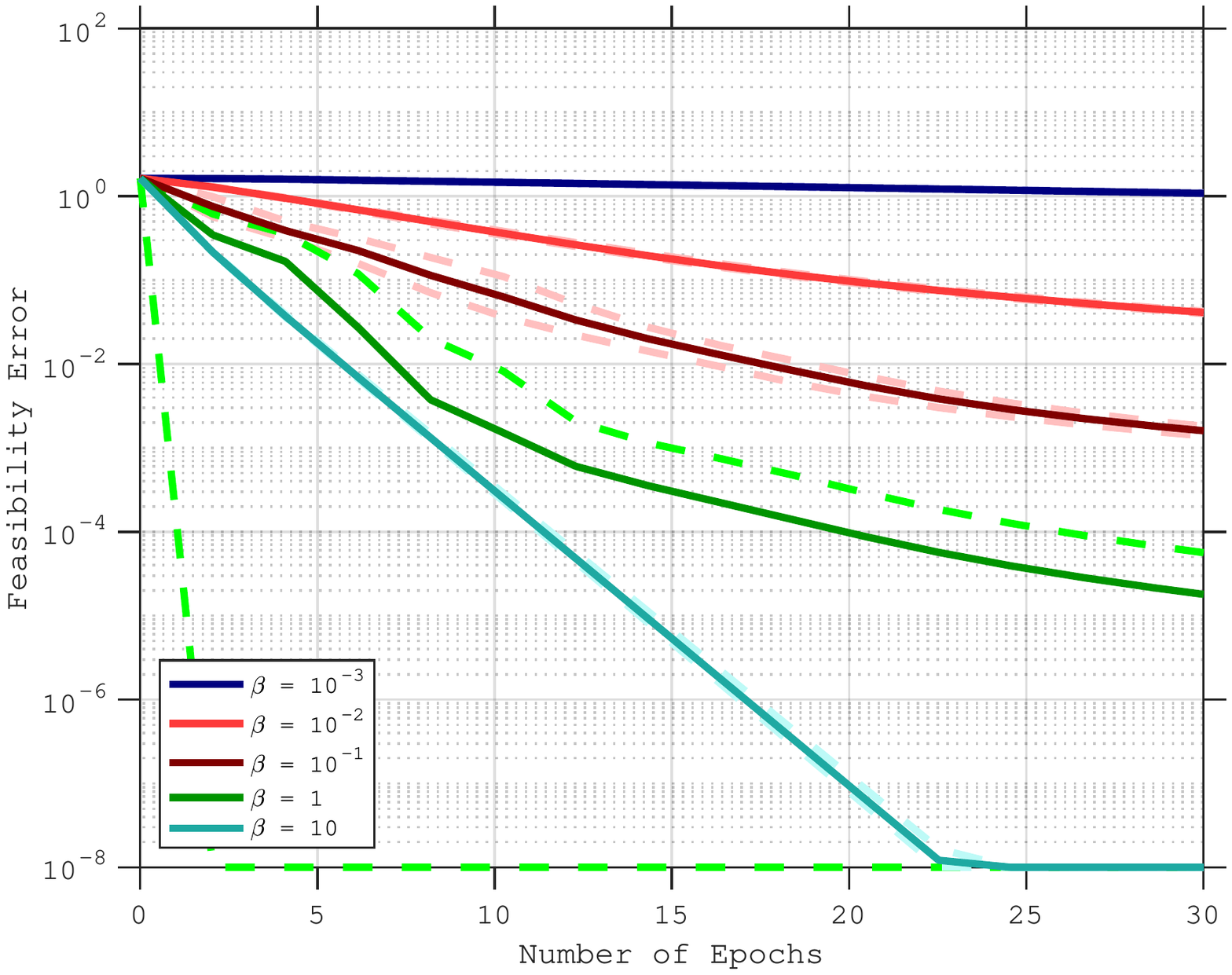}
  \includegraphics[width=0.24\textwidth,clip=true,trim=30 180 50 200]{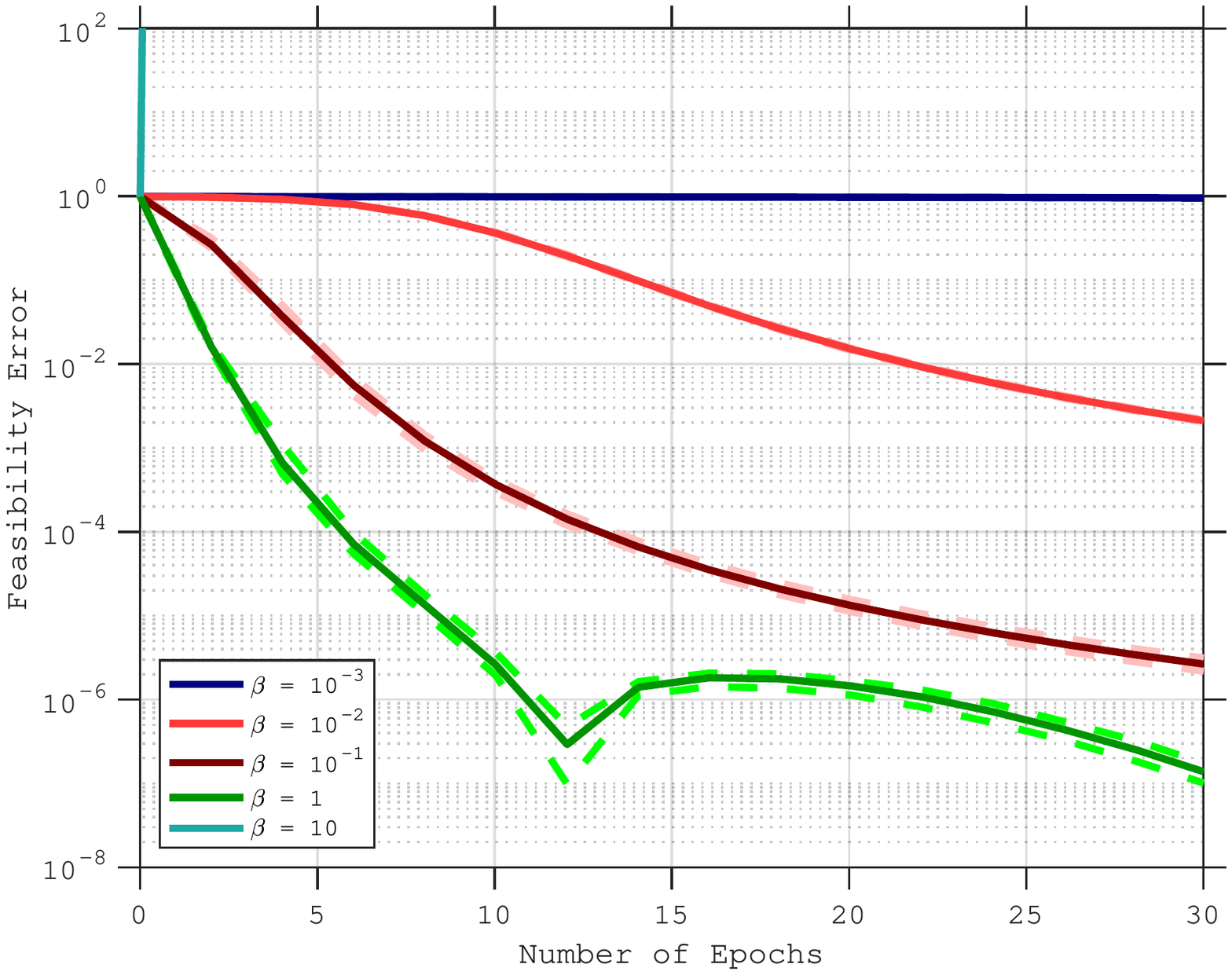}
  \includegraphics[width=0.24\textwidth,clip=true,trim=30 180 50 200]{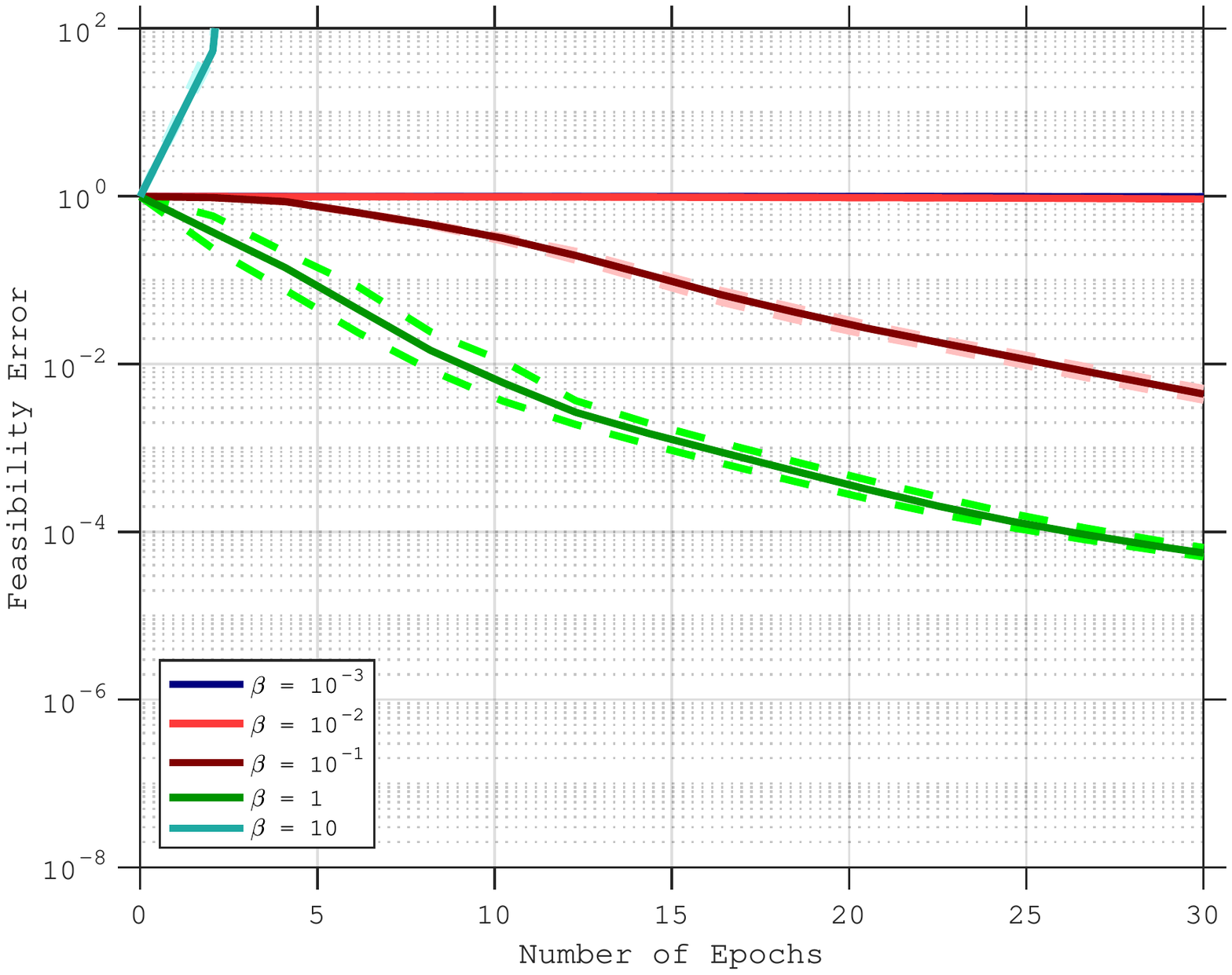}
  
  \includegraphics[width=0.24\textwidth,clip=true,trim=30 180 50 200]{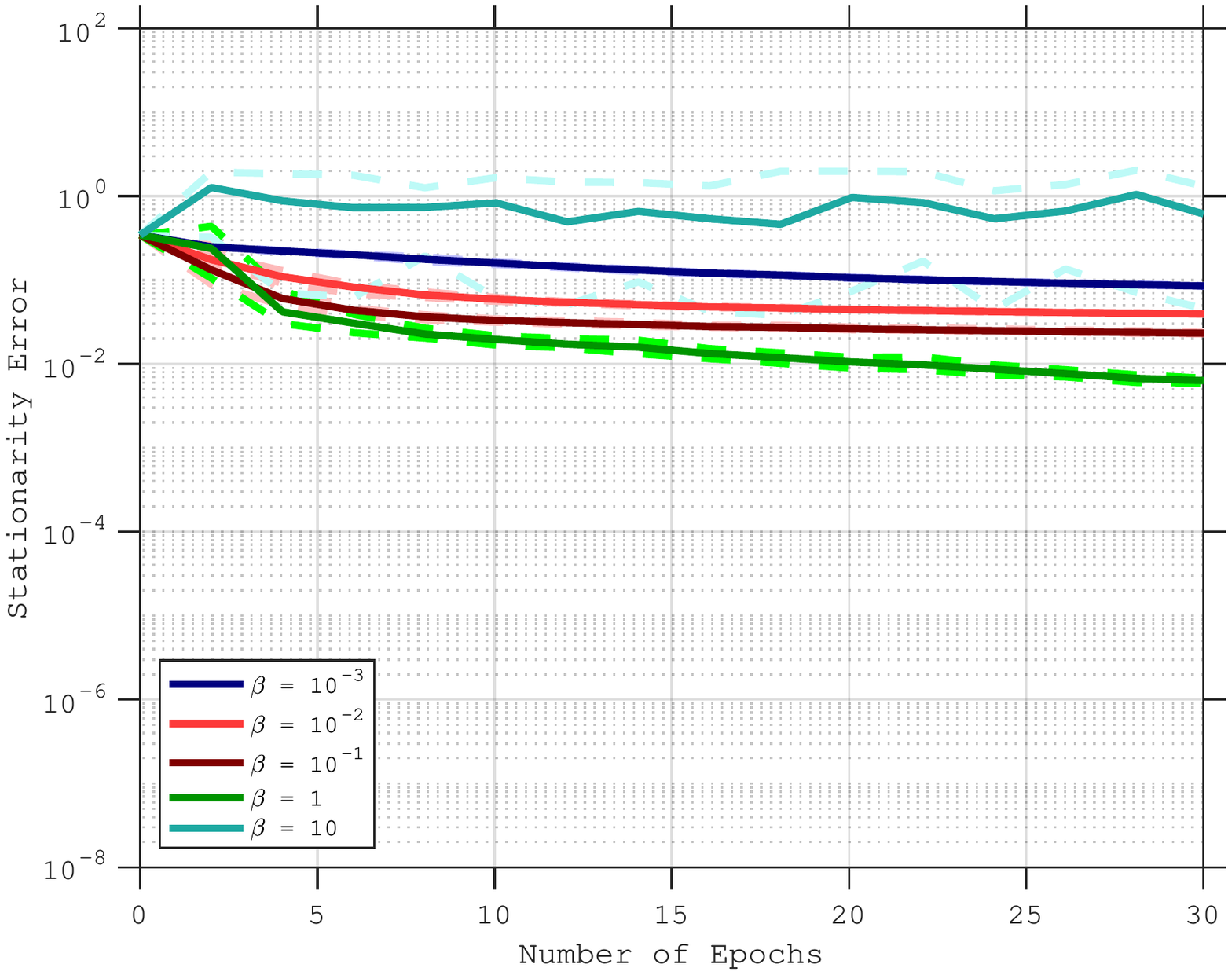}
  \includegraphics[width=0.24\textwidth,clip=true,trim=30 180 50 200]{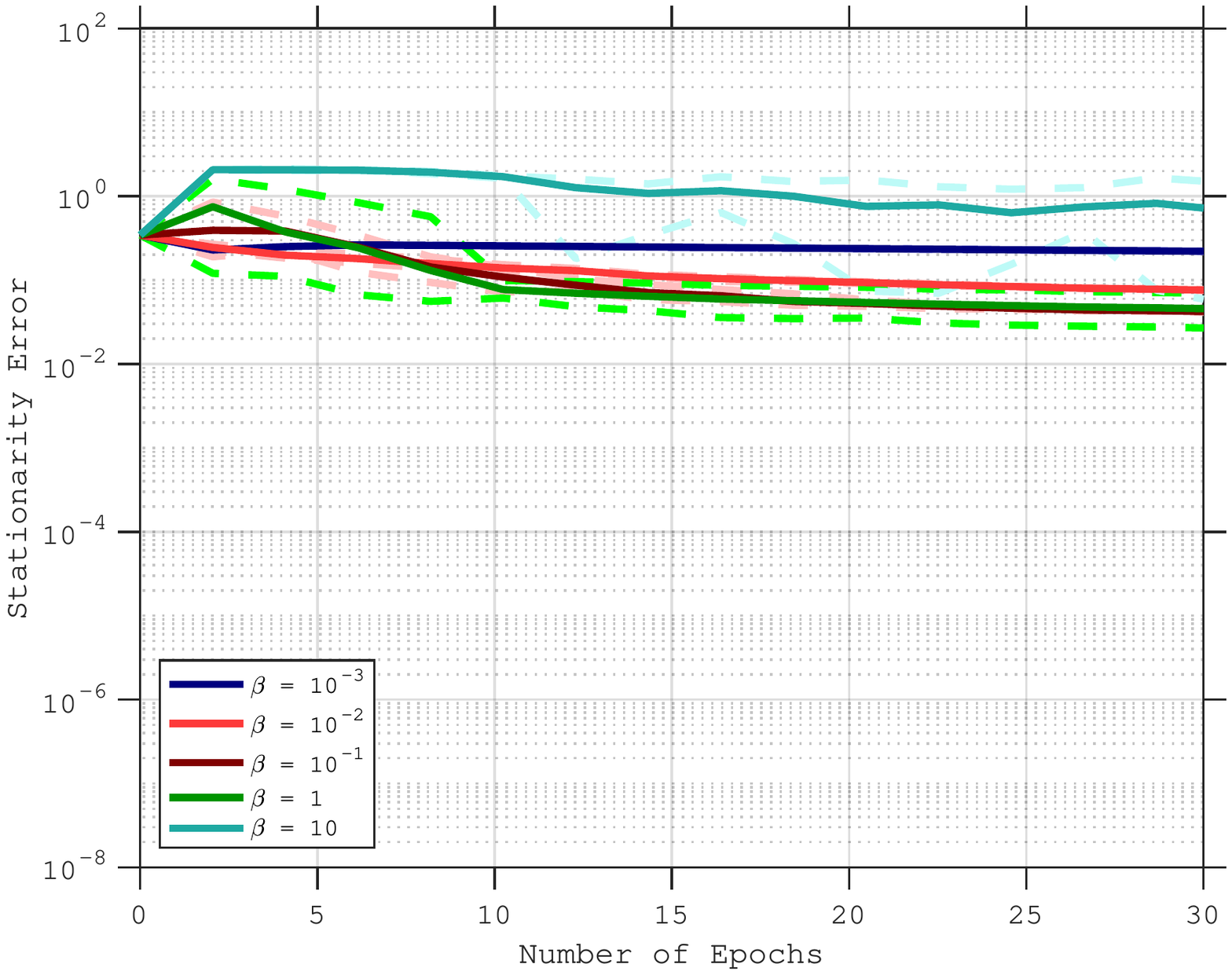}
  \includegraphics[width=0.24\textwidth,clip=true,trim=30 180 50 200]{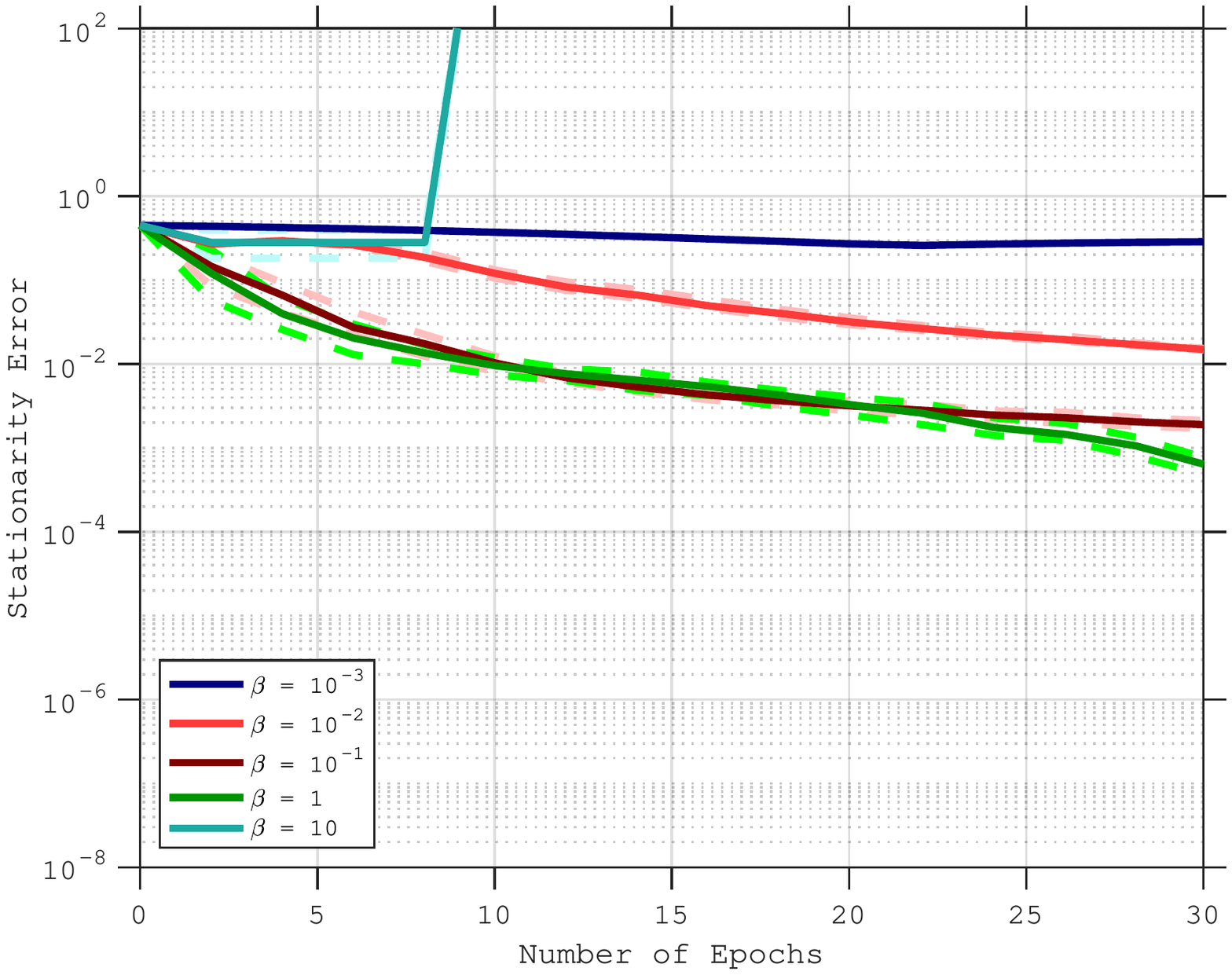}
  \includegraphics[width=0.24\textwidth,clip=true,trim=30 180 50 200]{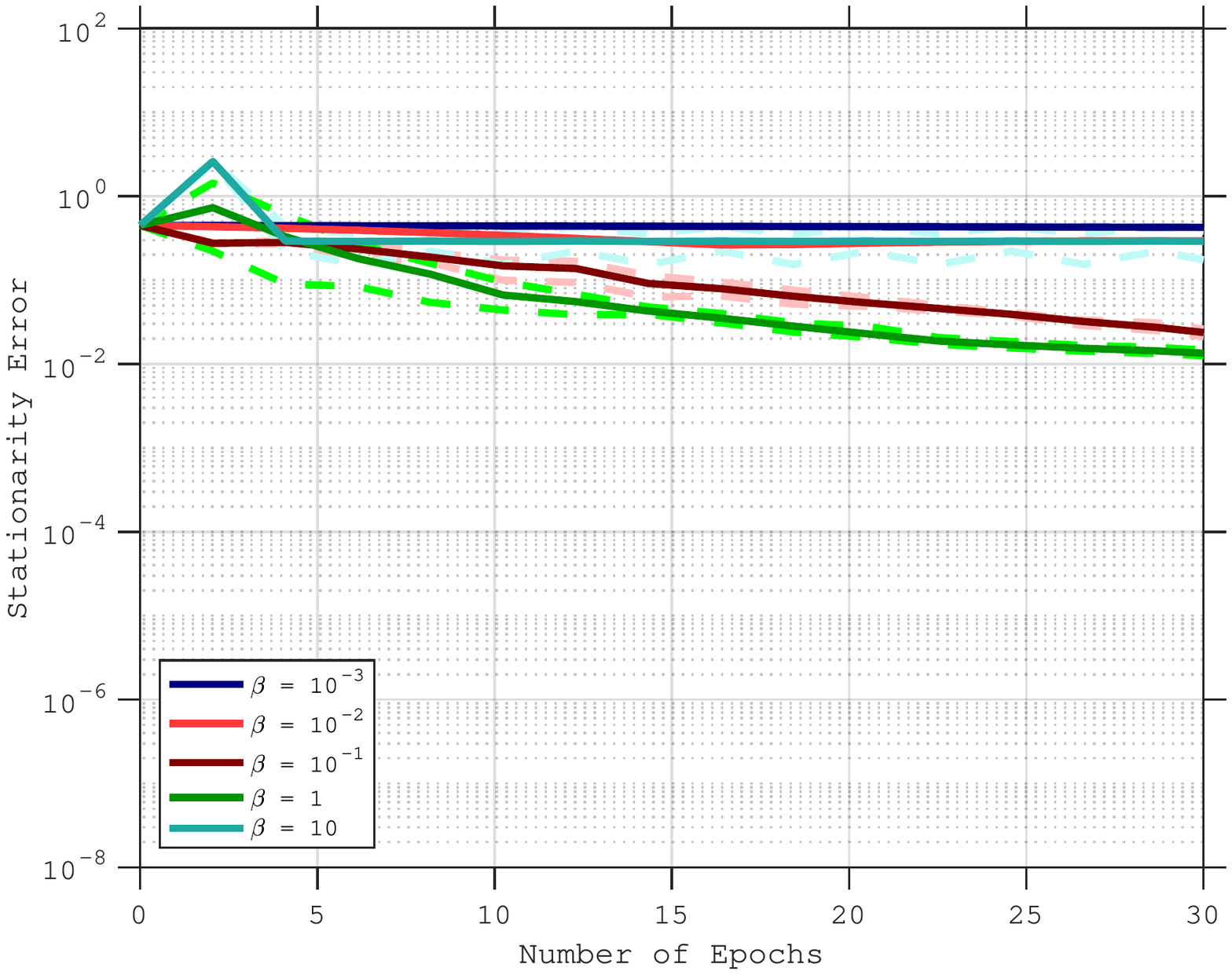}
  \caption{ \texttt{splice} dataset. Top row: feasibility error; Bottom row: stationarity
  error.}
  \label{fig.sensitivity1b}
  \end{subfigure}
  \caption{Performance of \SVRSQPADAPT{} with different step sizes parameter values $\beta \in \{ 10^{-3},10^{-2},10^{-1},10^{0},10^{1}\}$ on logistic regression problems with linear (columns 1 and 2) and $\ell_2$ norm (columns 3 and 4) constraints. First and third columns: batch size 16; Second and fourth columns: batch size 128. \label{fig.sensitivity}}
\end{figure}

\begin{figure}[ht]
   \centering
     \begin{subfigure}[b]{1\textwidth}
     \includegraphics[width=0.24\textwidth,clip=true,trim=30 180 50 200]{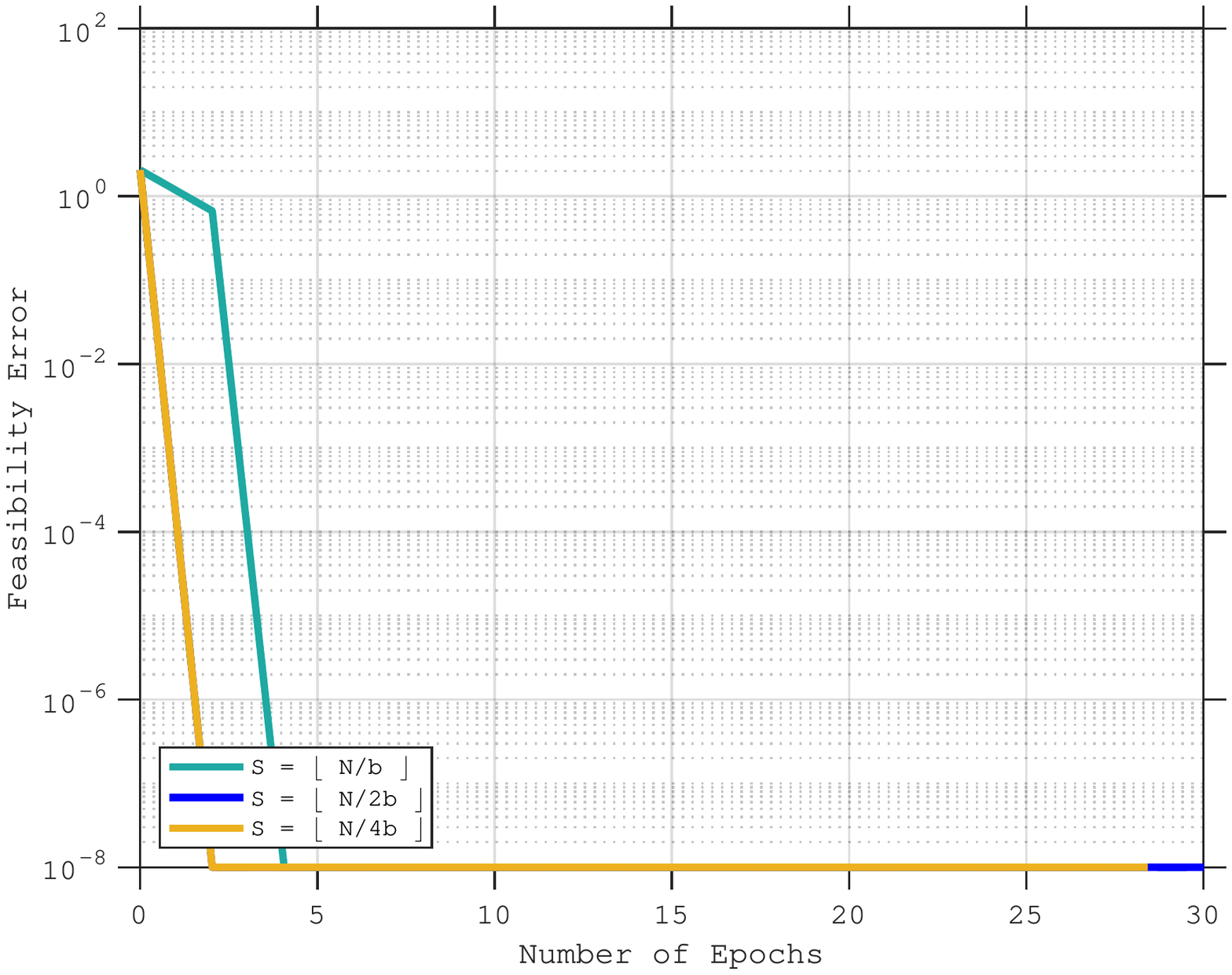}
  \includegraphics[width=0.24\textwidth,clip=true,trim=30 180 50 200]{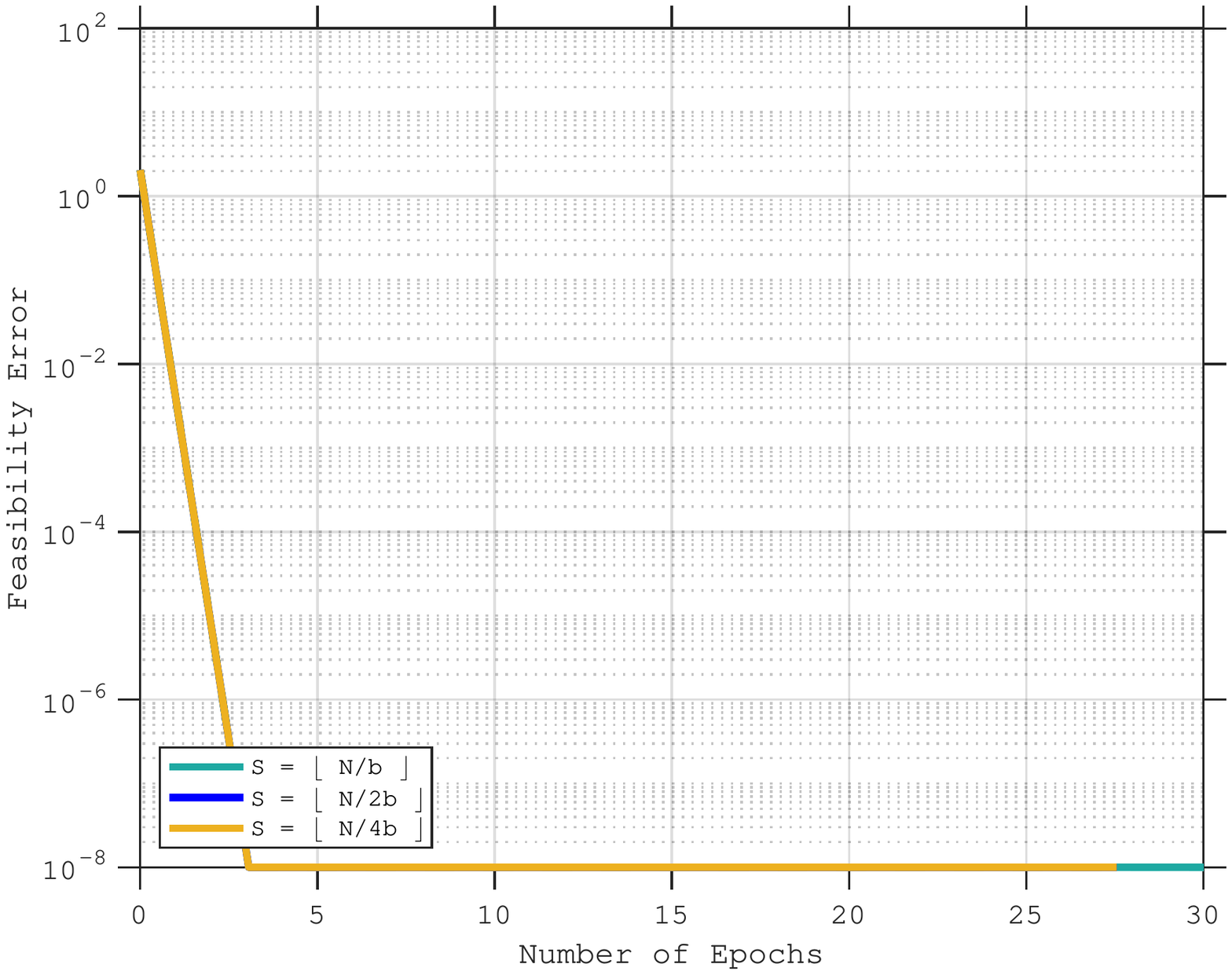}
  \includegraphics[width=0.24\textwidth,clip=true,trim=30 180 50 200]{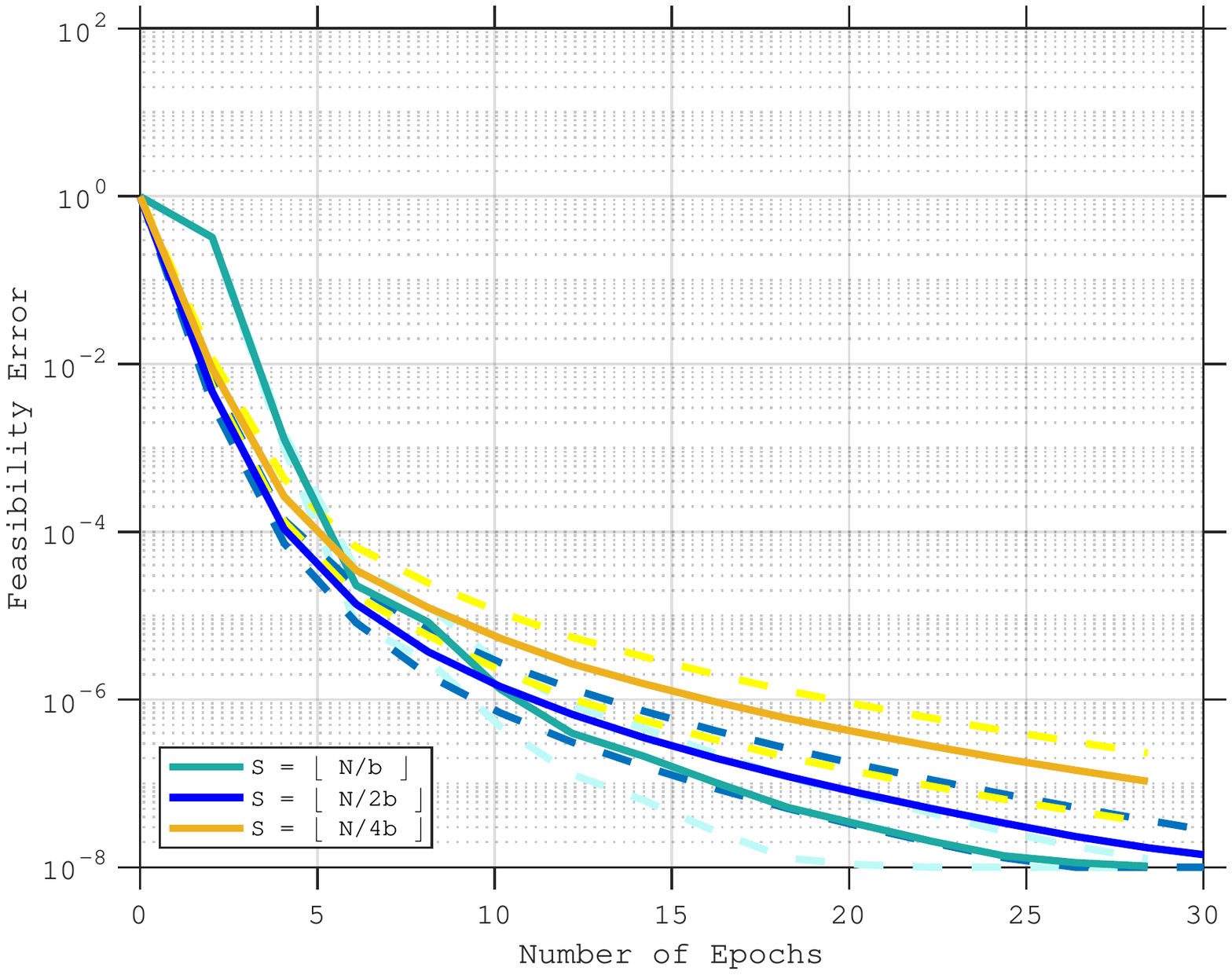}
  \includegraphics[width=0.24\textwidth,clip=true,trim=30 180 50 200]{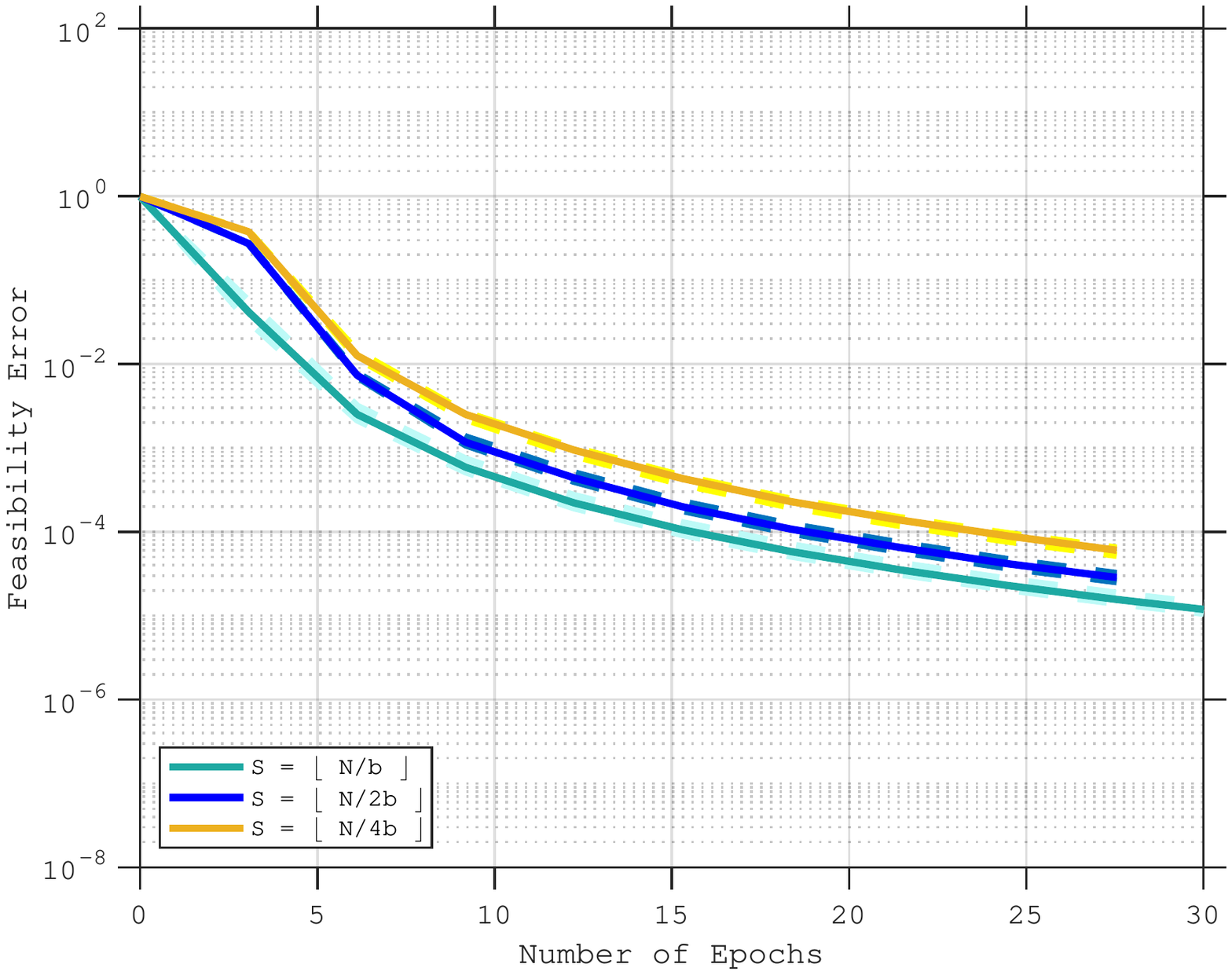}
  
  \includegraphics[width=0.24\textwidth,clip=true,trim=30 180 50 200]{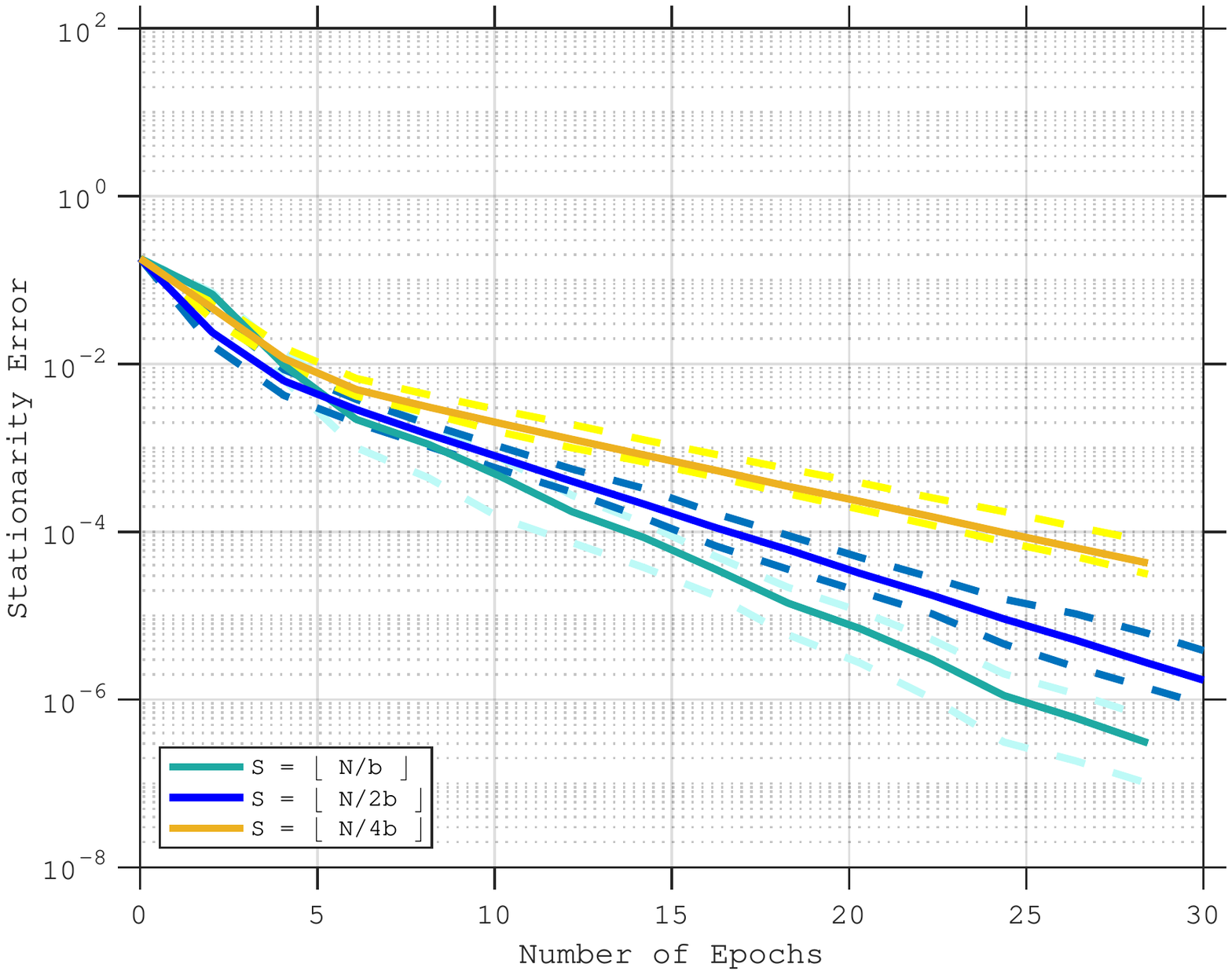}
  \includegraphics[width=0.24\textwidth,clip=true,trim=30 180 50 200]{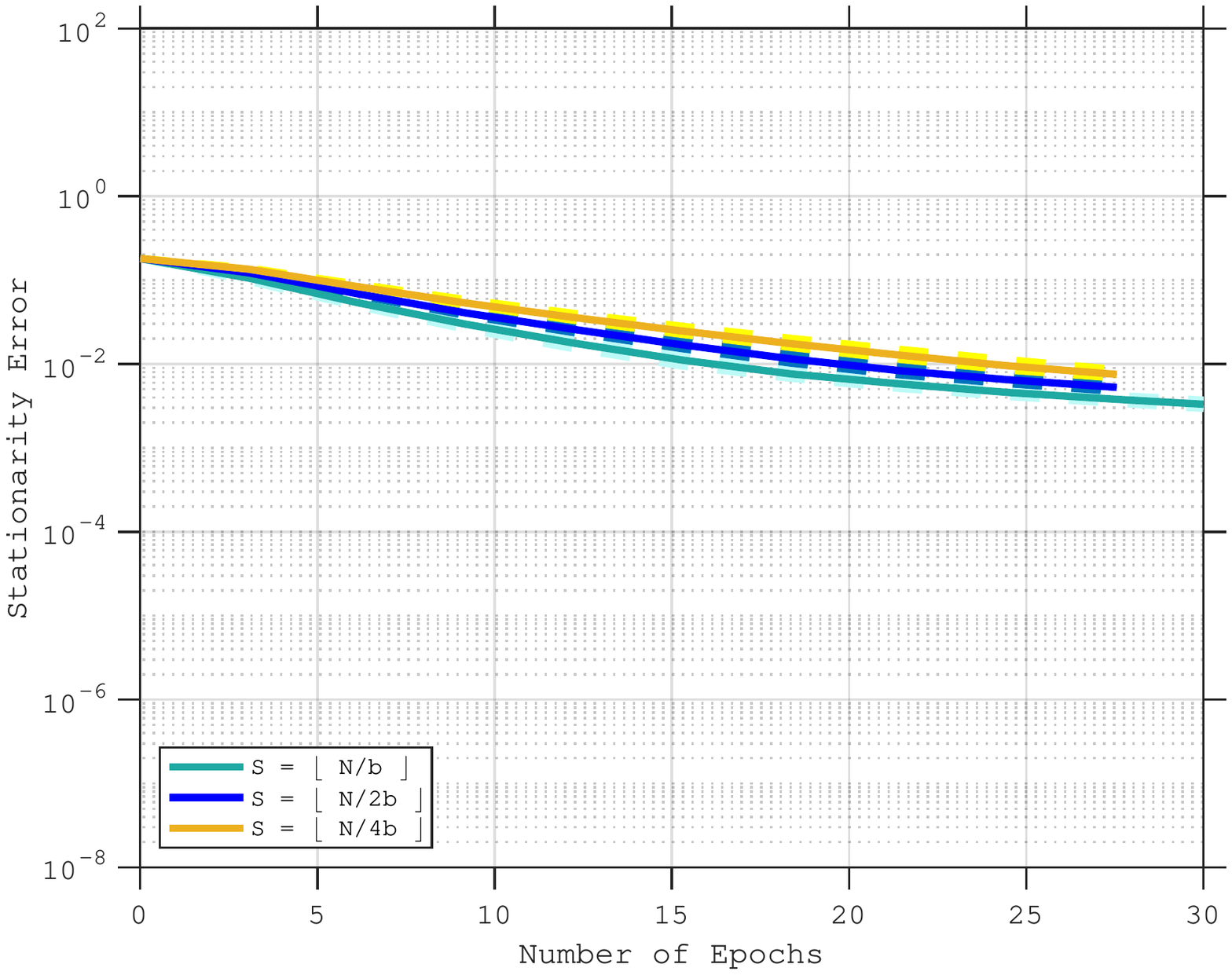}
  \includegraphics[width=0.24\textwidth,clip=true,trim=30 180 50 200]{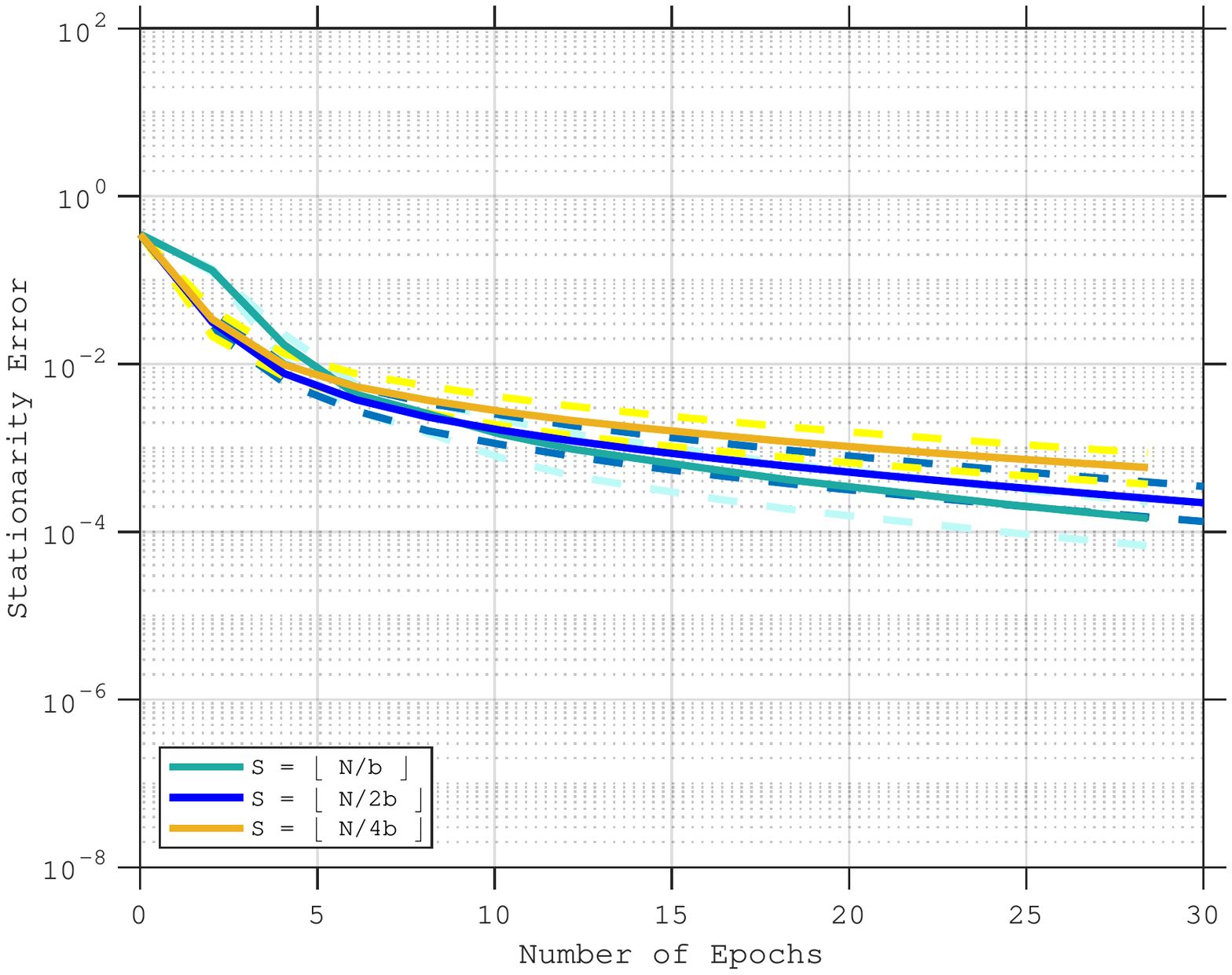}
  \includegraphics[width=0.24\textwidth,clip=true,trim=30 180 50 200]{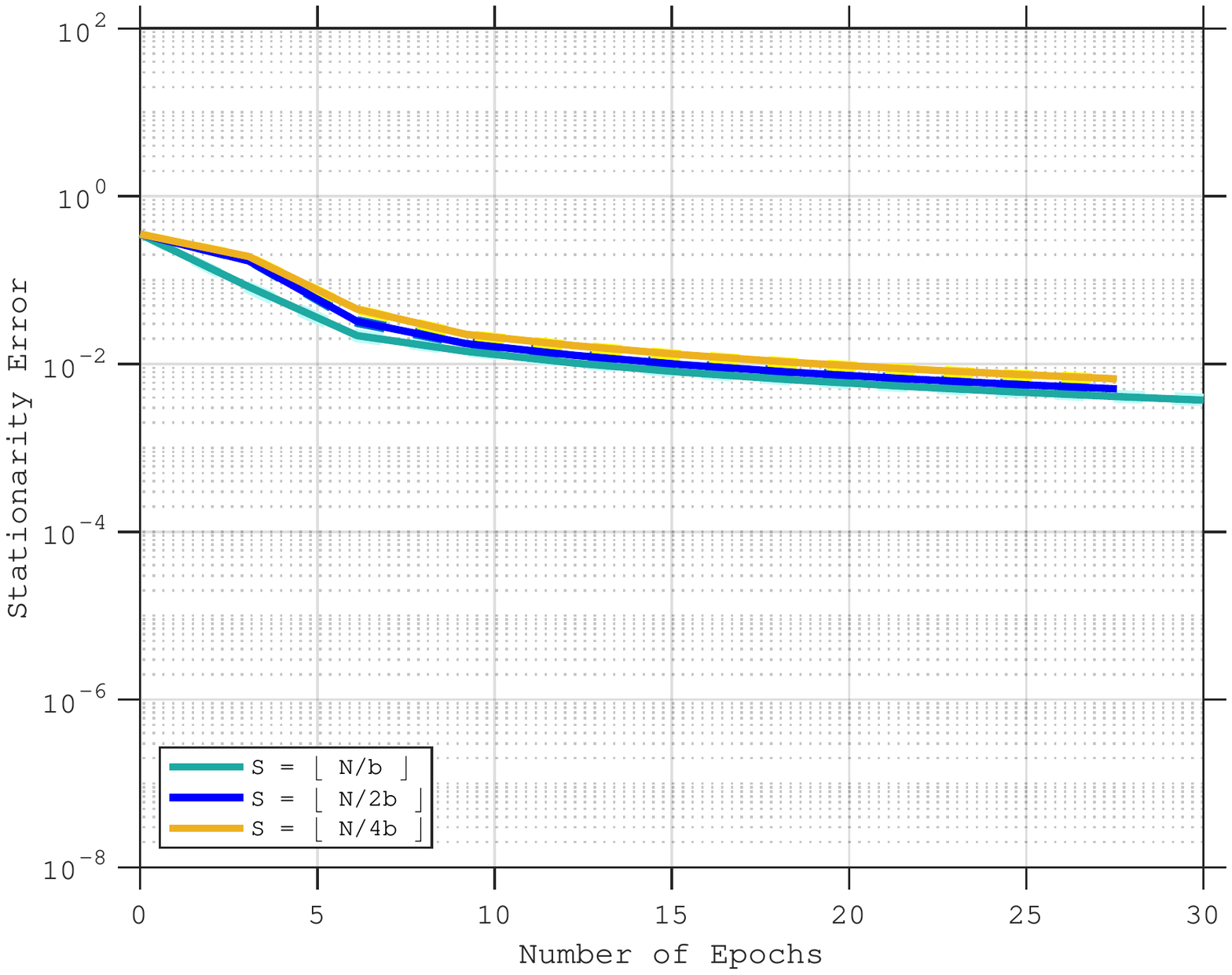}
  \caption{\texttt{australian} dataset. Top row: feasibility error; Bottom row: stationarity
  error.}
  \label{fig.sensitivity2a}
    \end{subfigure}

  \begin{subfigure}[b]{\textwidth}
  \includegraphics[width=0.24\textwidth,clip=true,trim=30 180 50 200]{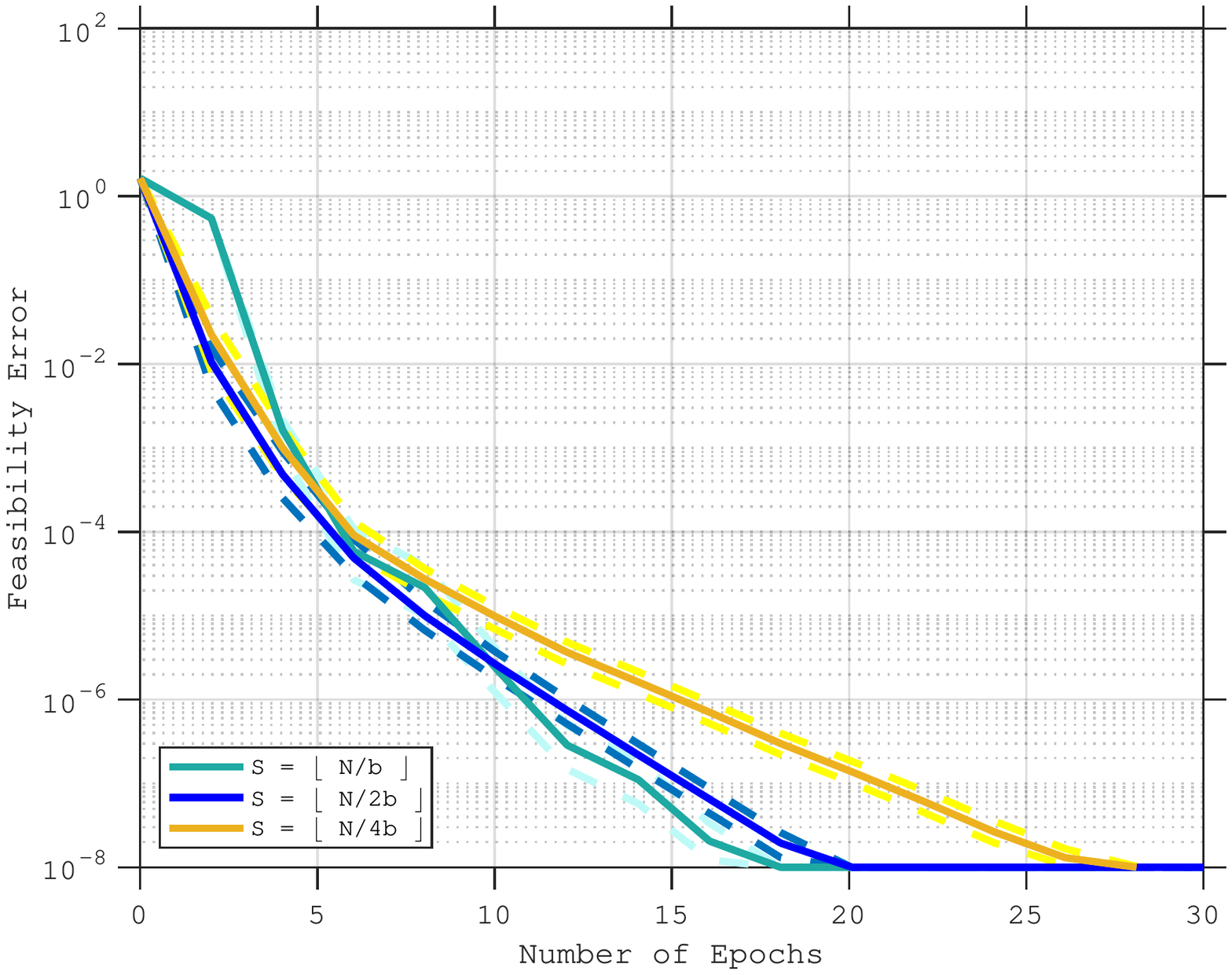}
  \includegraphics[width=0.24\textwidth,clip=true,trim=30 180 50 200]{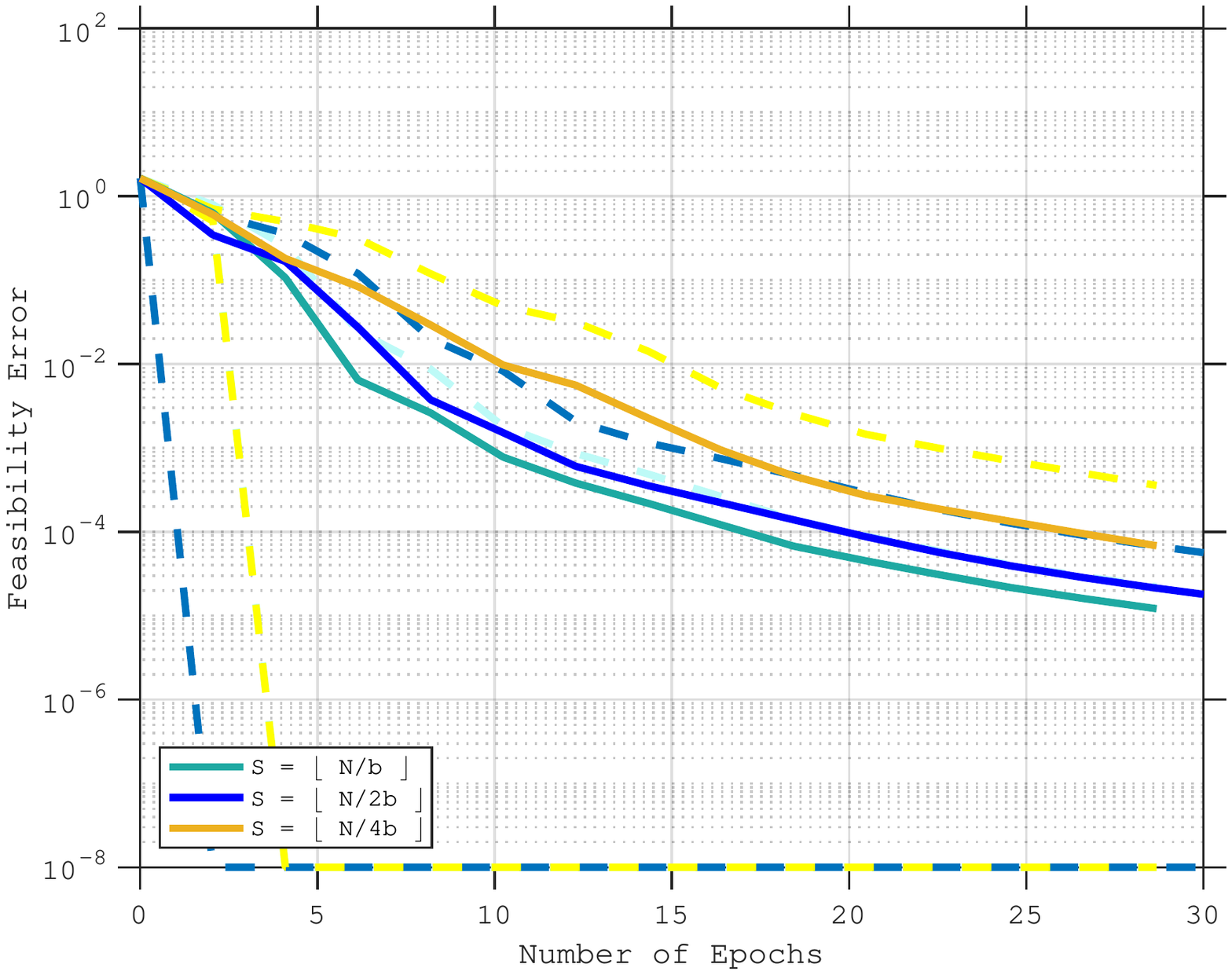}
  \includegraphics[width=0.24\textwidth,clip=true,trim=30 180 50 200]{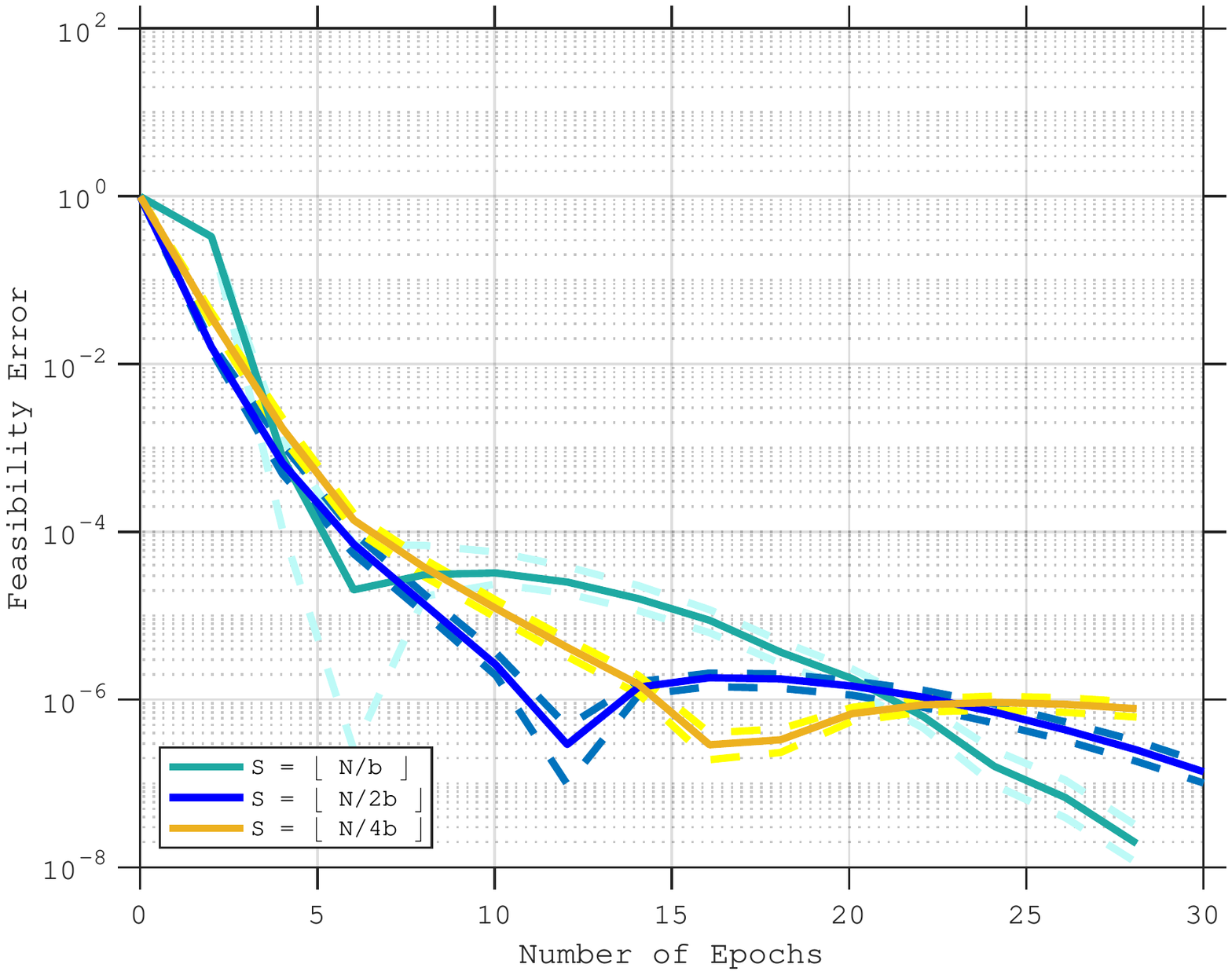}
  \includegraphics[width=0.24\textwidth,clip=true,trim=30 180 50 200]{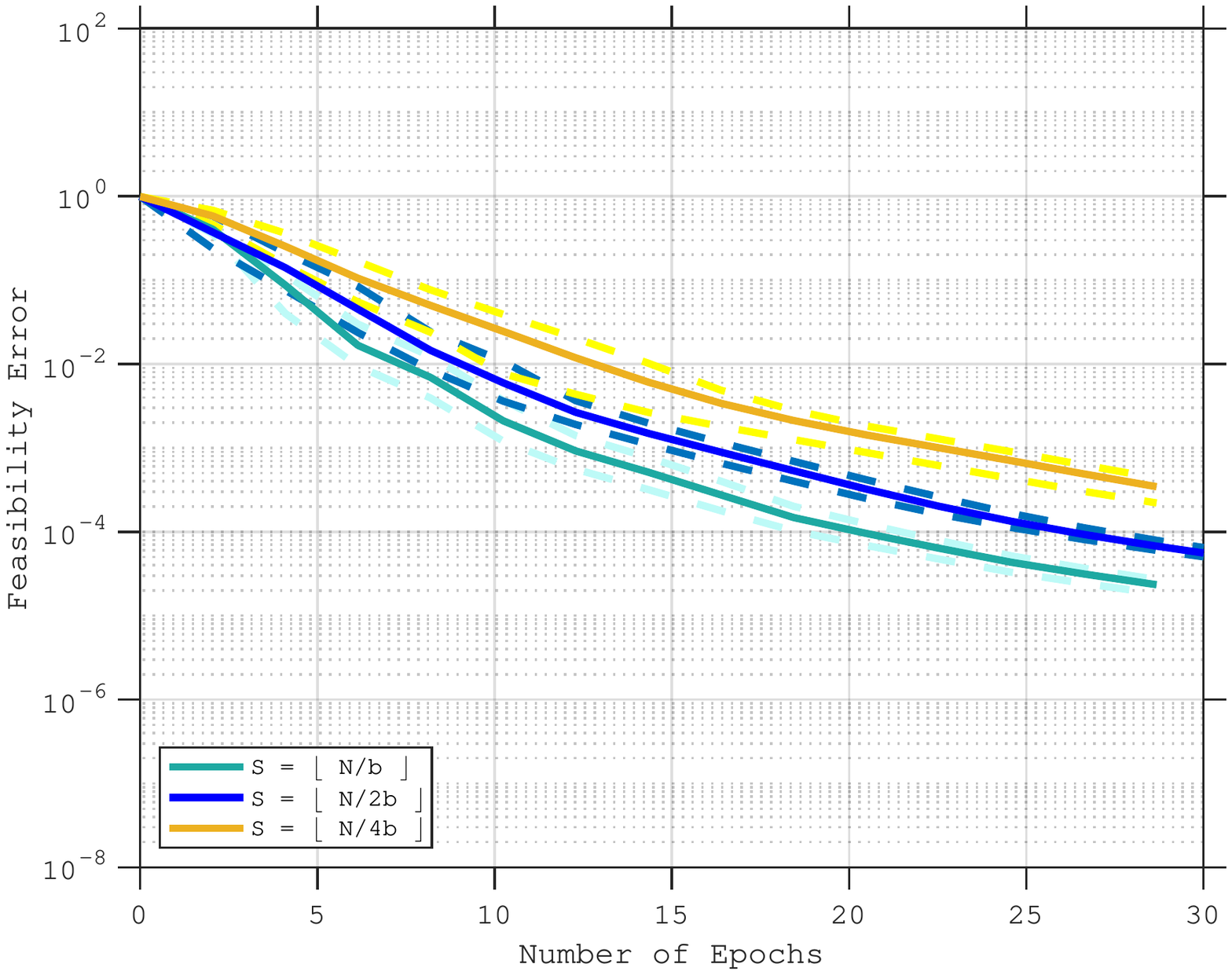}
  
  \includegraphics[width=0.24\textwidth,clip=true,trim=30 180 50 200]{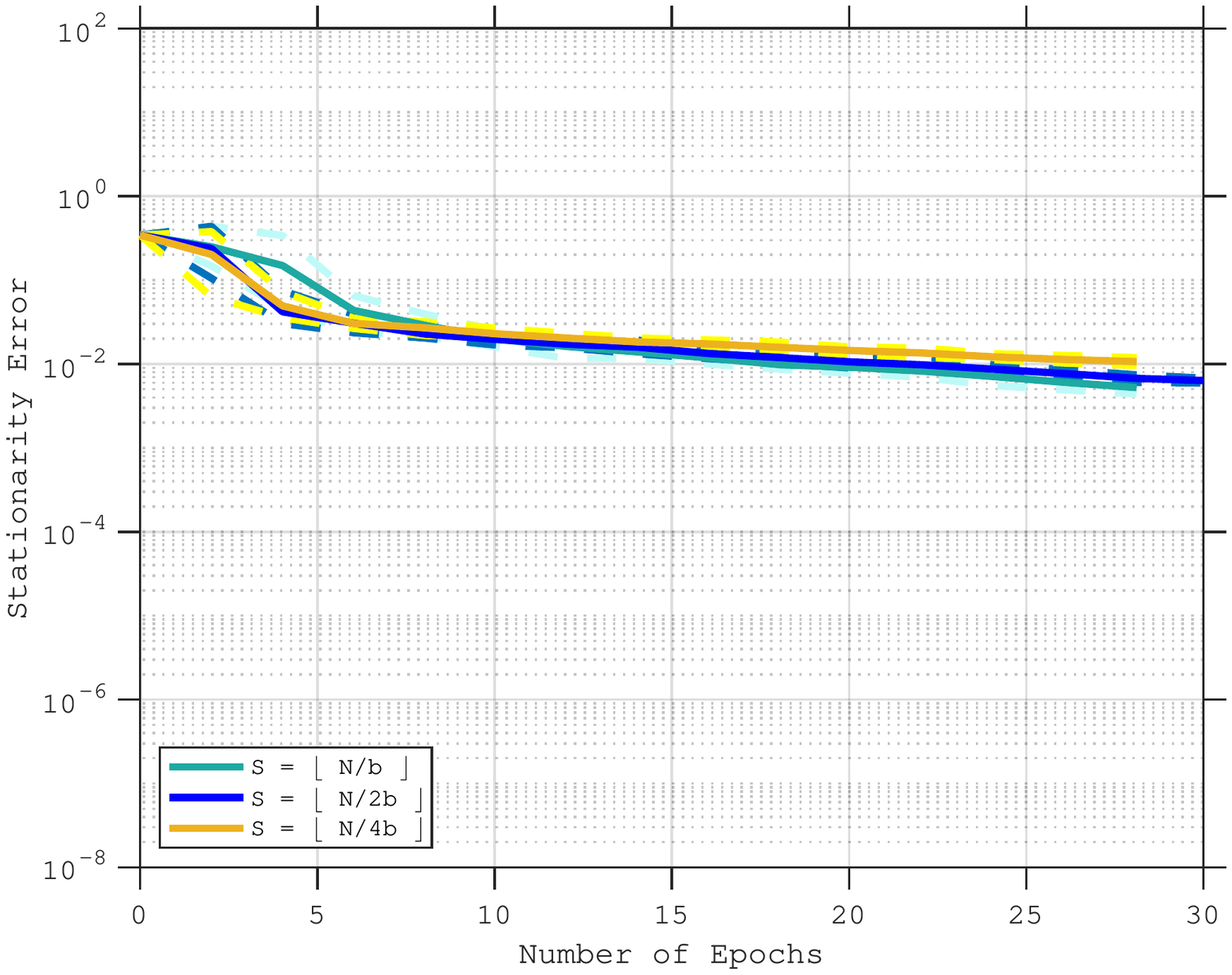}
  \includegraphics[width=0.24\textwidth,clip=true,trim=30 180 50 200]{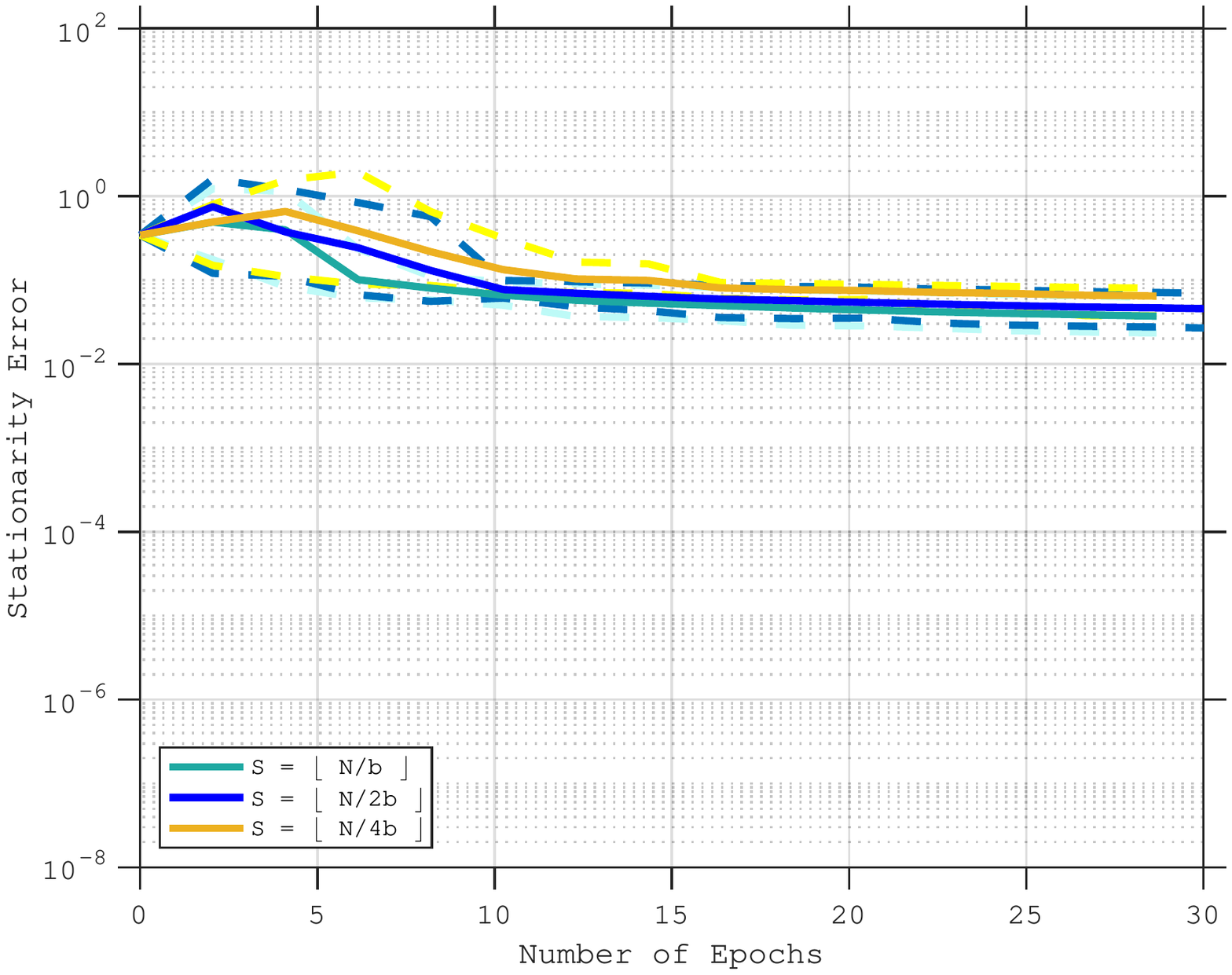}
  \includegraphics[width=0.24\textwidth,clip=true,trim=30 180 50 200]{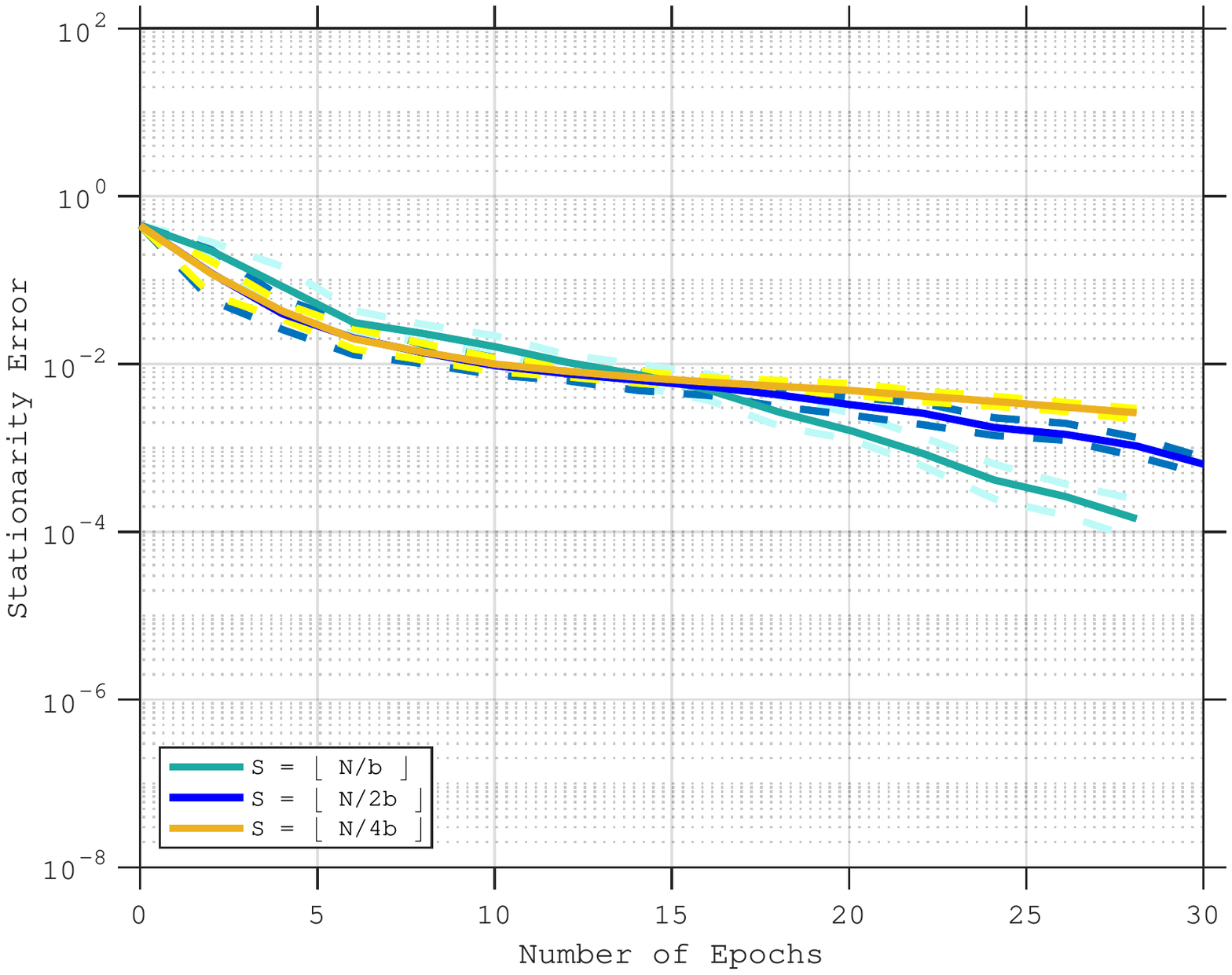}
  \includegraphics[width=0.24\textwidth,clip=true,trim=30 180 50 200]{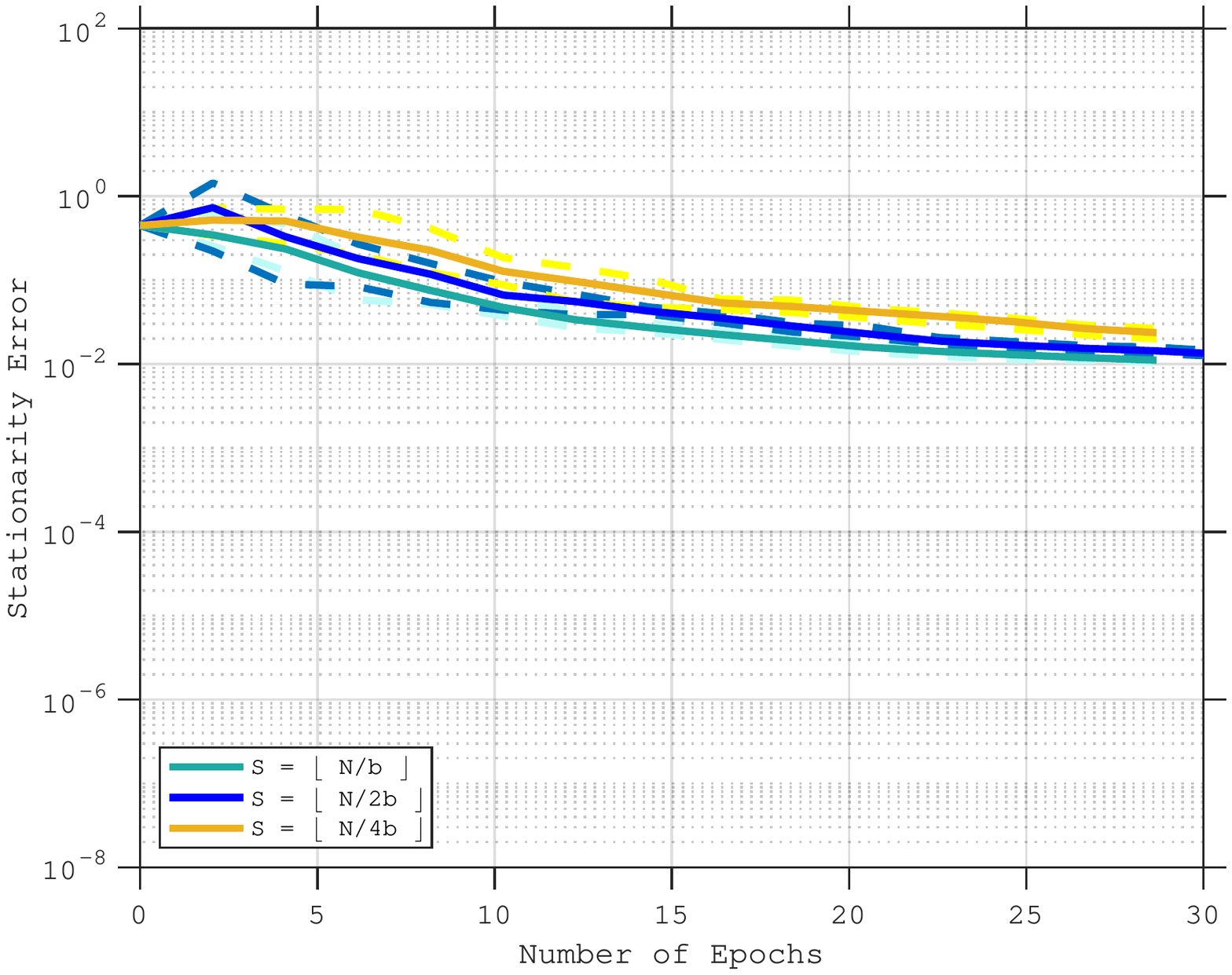}
  \caption{ \texttt{splice} dataset. Top row: feasibility error; Bottom row: stationarity
  error.}
  \label{fig.sensitivity2b}
  \end{subfigure}
  \caption{Performance of \SVRSQPADAPT{} with different step sizes parameter values $S \in \left\{ \left\lfloor \tfrac{N}{b} \right\rfloor,\left\lfloor \tfrac{N}{2b} \right\rfloor,\left\lfloor \tfrac{N}{4b} \right\rfloor\right\}$ on logistic regression problems with linear (columns 1 and 2) and $\ell_2$ norm (columns 3 and 4) constraints. First and third columns: batch size 16; Second and fourth columns: batch size 128. \label{fig.sensitivity2}}
\end{figure}

\subsection{Comparison:  \SVRSQPADAPT{}, \StoSQP{} and \StoSubVR{}}

In this final subsection, we compare the performance of \SVRSQPADAPT{}  to that of \StoSQP{} \cite[Algorithm 2]{berahas2021sequential} and \StoSubVR{}. A budget of 30 epochs was used for all methods. For all methods, the inner iteration length was set to $S = \left\lfloor \tfrac{N}{2b} \right\rfloor$. For the \StoSQP{} method the step size parameter was tuned $\beta \in \{ 10^{-3},10^{-2},10^{-1},10^{0},10^{1}\}$ for all $k\in\mathbb{N}$, and for the \StoSubVR{} method the step size parameter and the merit parameter were tuned $\bar\alpha_{k,s} = \tfrac{\alpha}{\tau L + \Gamma}$ for all $(k,s) \in \mathbb{N} \times \left[\bar{S}\right]$ where  $\alpha\in\{10^{-3},10^{-2},10^{-1},10^{0},10^{1}\}$, $\tau_{k,s} = \tau \in \{10^{-10},10^{-9},\dots,10^{0}\}$. For the \SVRSQPADAPT{}, we set $\beta=1$. Overall, this meant that the \StoSQP{}
and \StoSubVR{} methods were effectively run for 5 and 55 times the number of epochs,
respectively, than were allowed for our method.

The results of these experiments are reported in Figs.~\ref{fig.best_1} and \ref{fig.best_2} and in Tables~\ref{tab.best_linear} and \ref{tab.best_el2}. For each batch size and dataset, we report the average feasibility and stationarity errors for the best iterates generated (defined in Section~\ref{sec.problem_evaluation}) for the best hyper-parameter settings for each method in Tables~\ref{tab.best_linear} and \ref{tab.best_el2}. The results suggest that, when small batch sizes are employed (i.e., $b = 16$), \SVRSQPADAPT{} consistently outperforms the other methods for both sets of constraints. When large batch sizes are used (i.e., $b=128$), \SVRSQPADAPT{} is competitive with \StoSQP{}, even though the adaptive step size parameter $\beta$ is well-tuned for \StoSQP{} whereas for \SVRSQPADAPT{} we simply set $\beta = 1$. We should note again that $5$ and $55$ times the tuning effort was allocated to \StoSQP{} and \StoSubVR{}, respectively, as compared to \SVRSQPADAPT{}.

\begin{figure}[ht]
   \centering
     \begin{subfigure}[b]{1\textwidth}
  \includegraphics[width=0.24\textwidth,clip=true,trim=30 180 50 200]{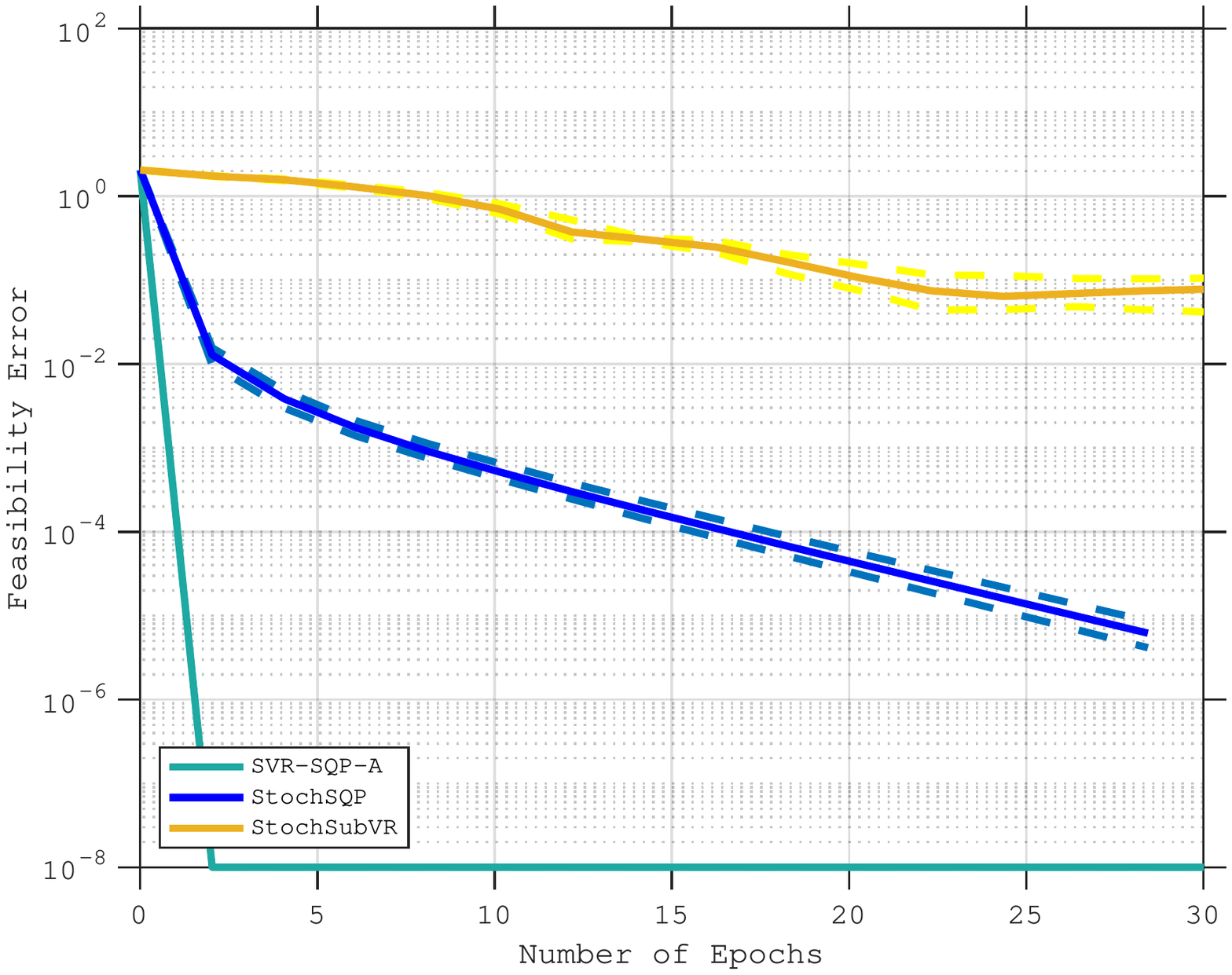}
  \includegraphics[width=0.24\textwidth,clip=true,trim=30 180 50 200]{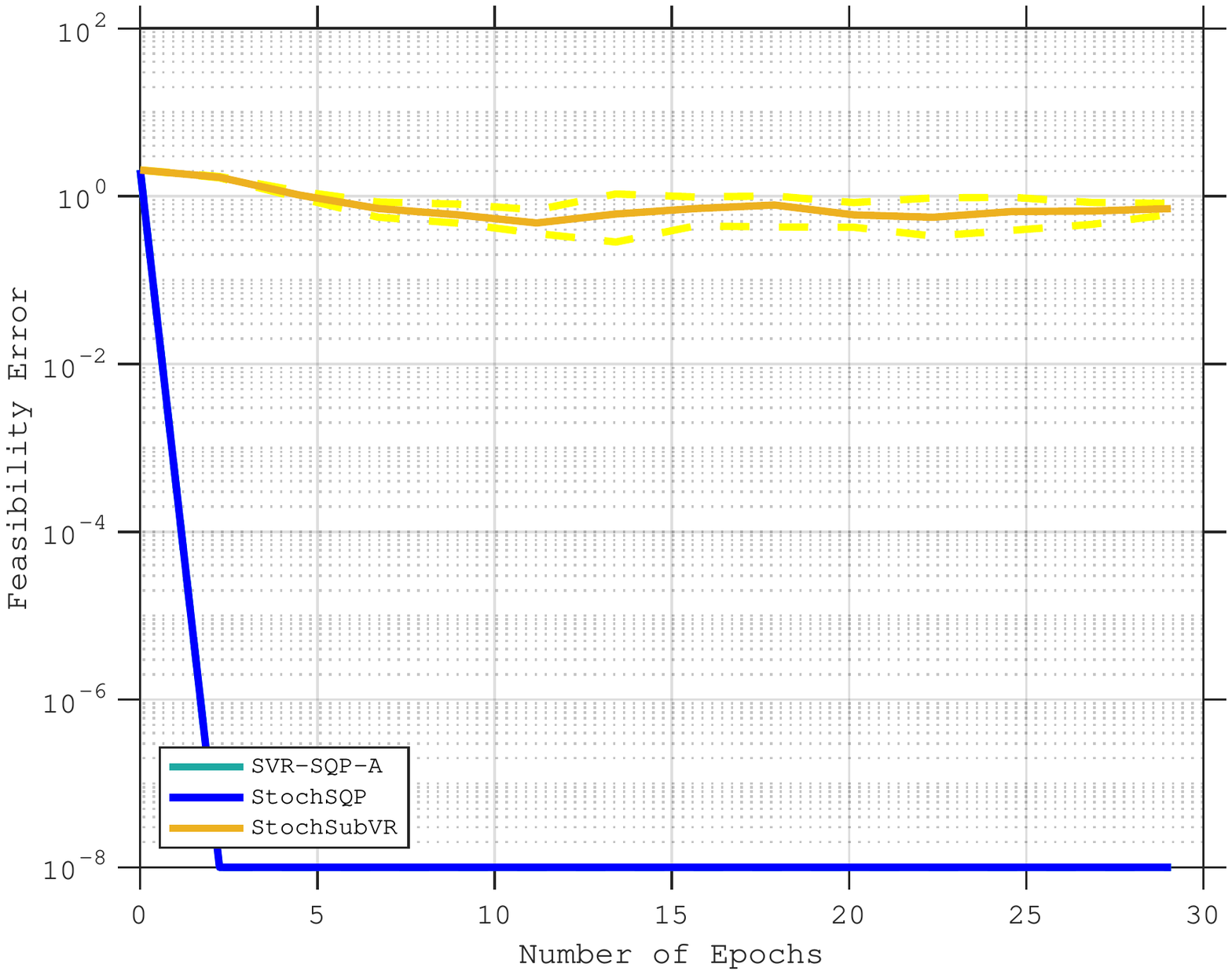}
  \includegraphics[width=0.24\textwidth,clip=true,trim=30 180 50 200]{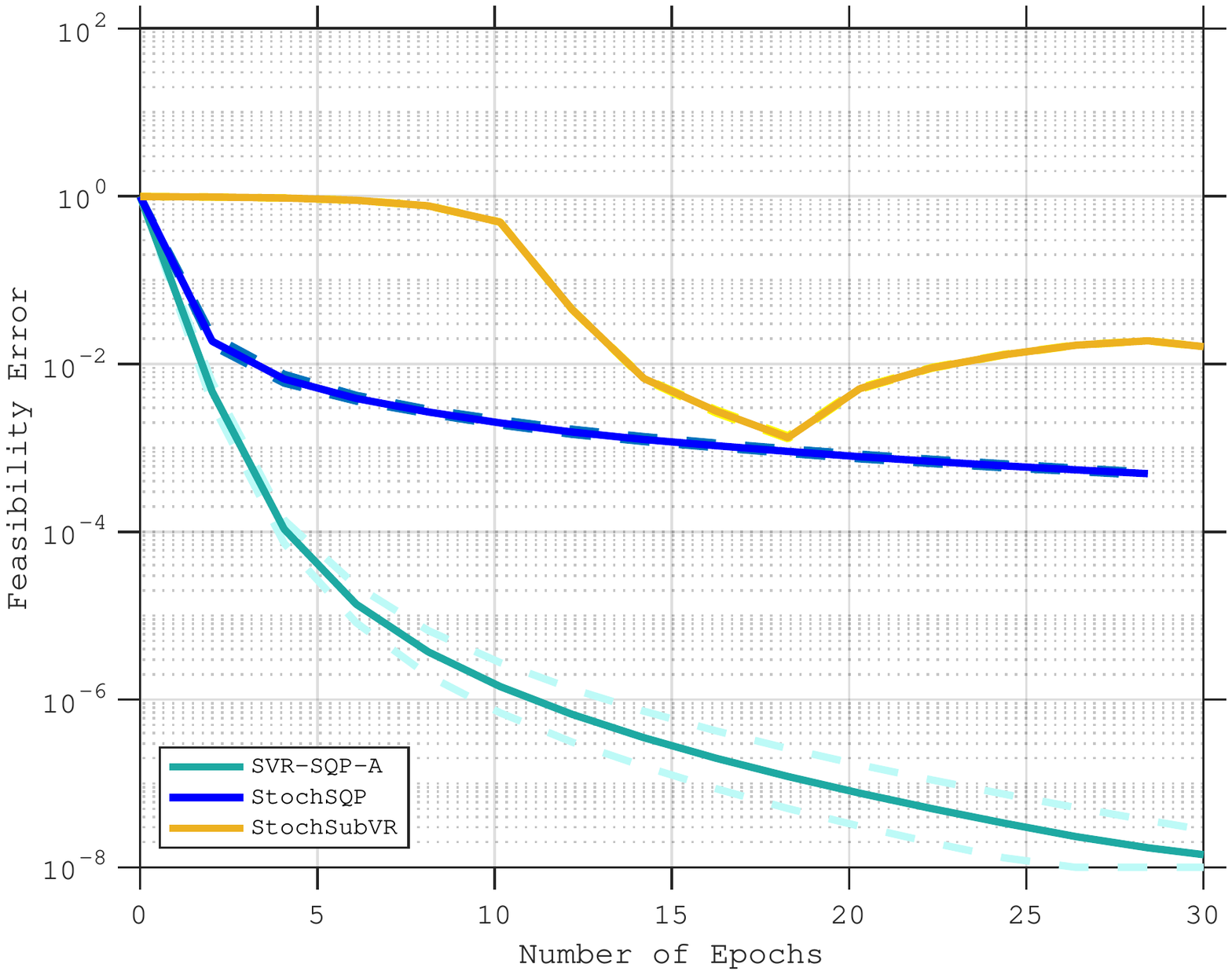}
  \includegraphics[width=0.24\textwidth,clip=true,trim=30 180 50 200]{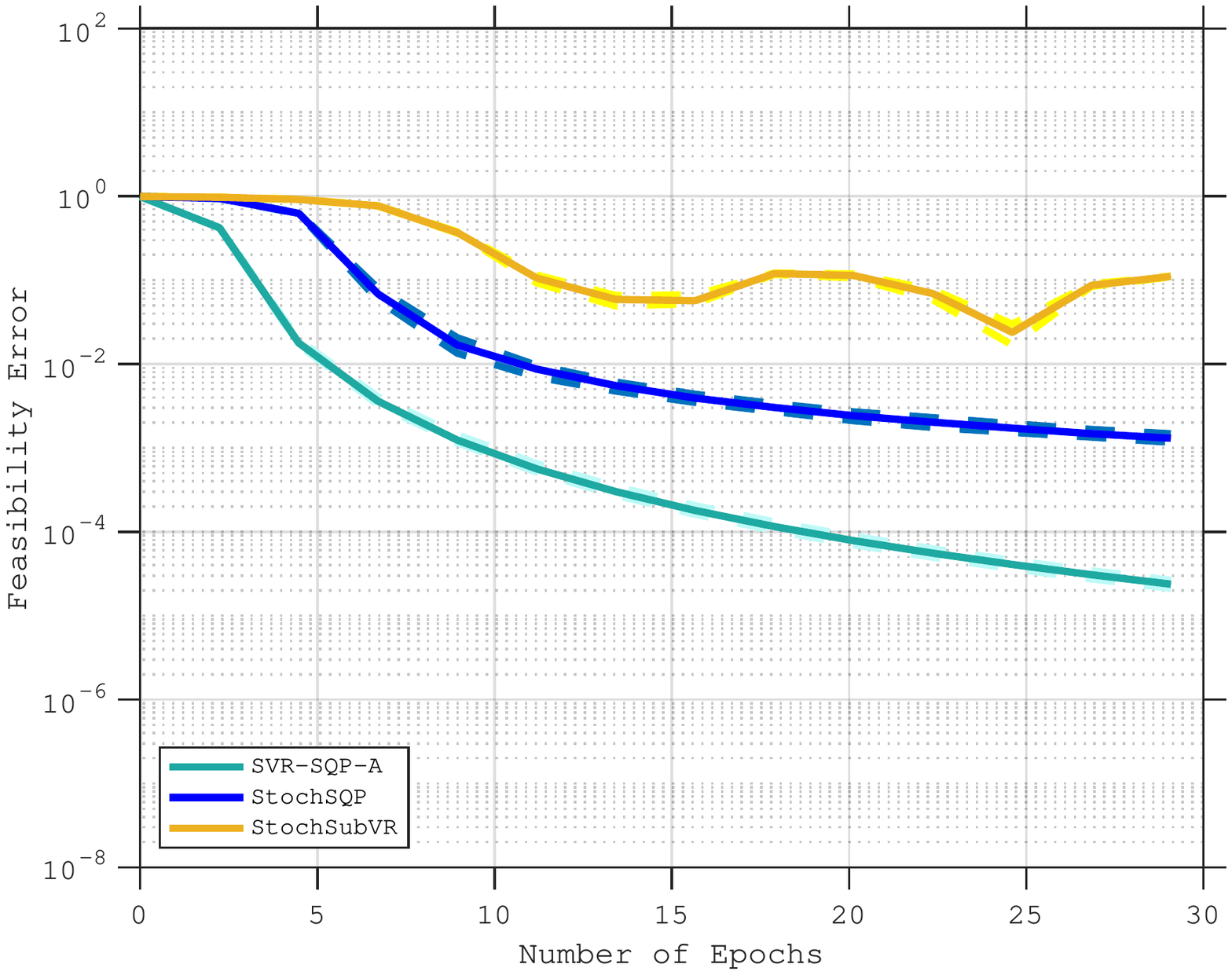}
  
  \includegraphics[width=0.24\textwidth,clip=true,trim=30 180 50 200]{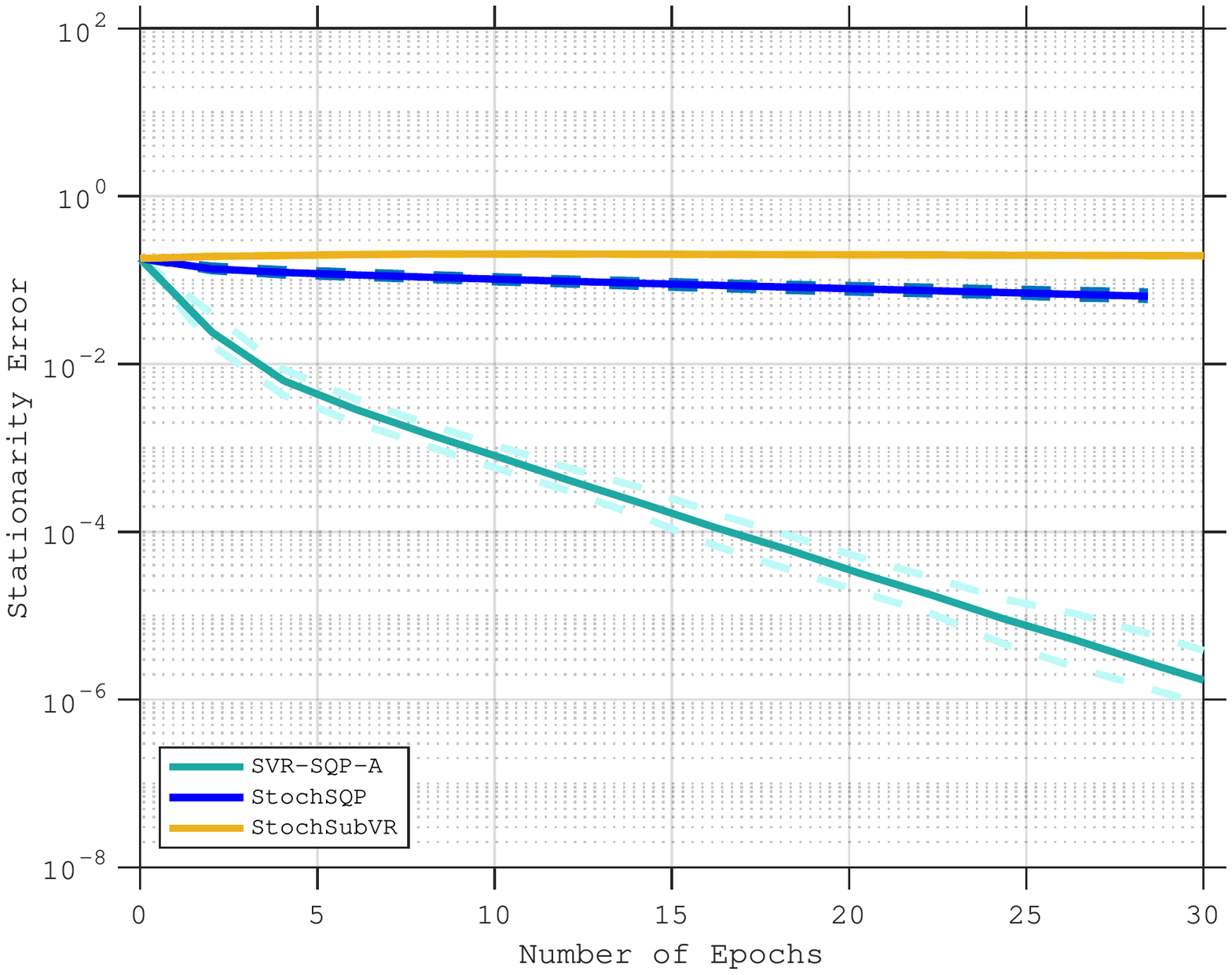}
  \includegraphics[width=0.24\textwidth,clip=true,trim=30 180 50 200]{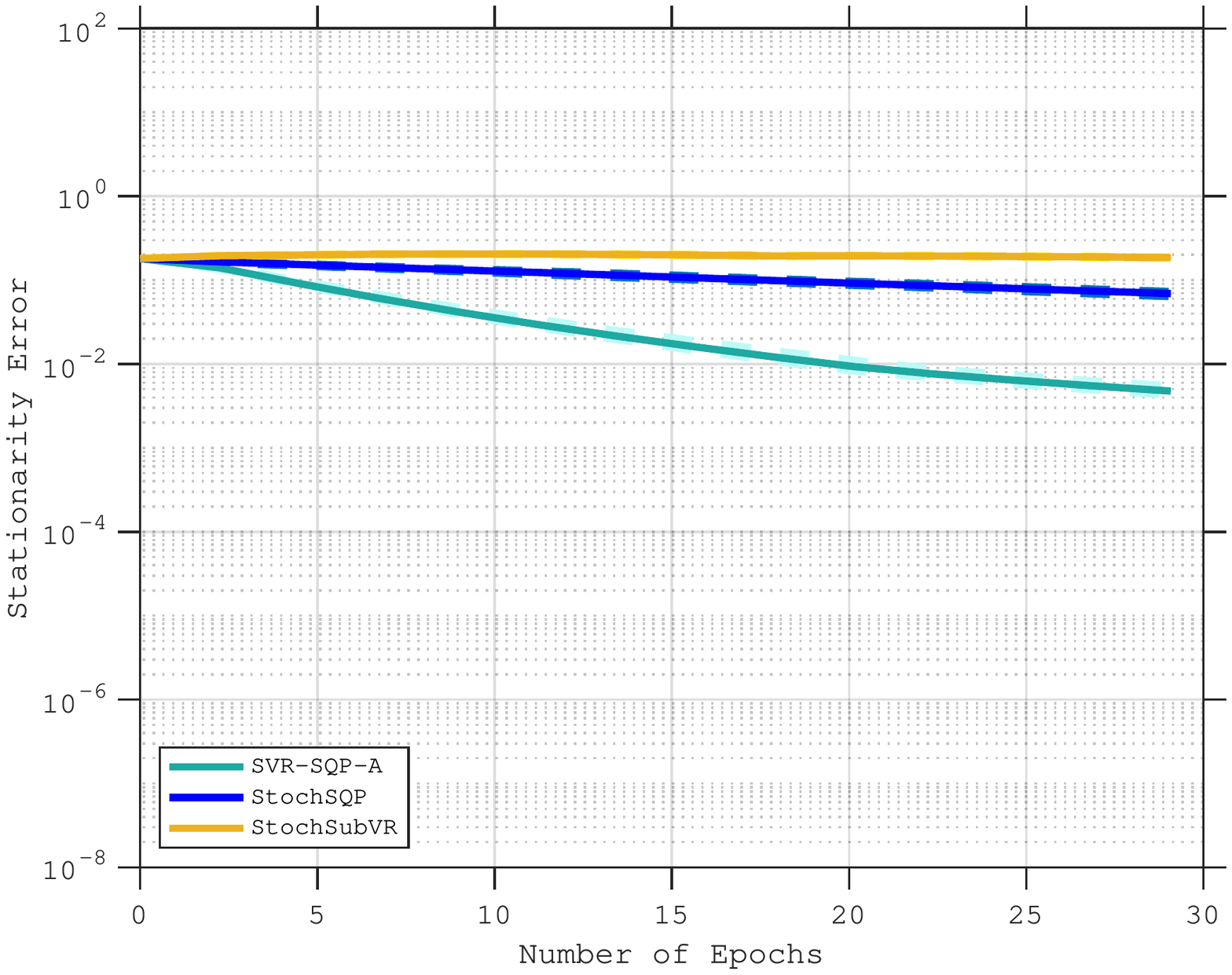}
  \includegraphics[width=0.24\textwidth,clip=true,trim=30 180 50 200]{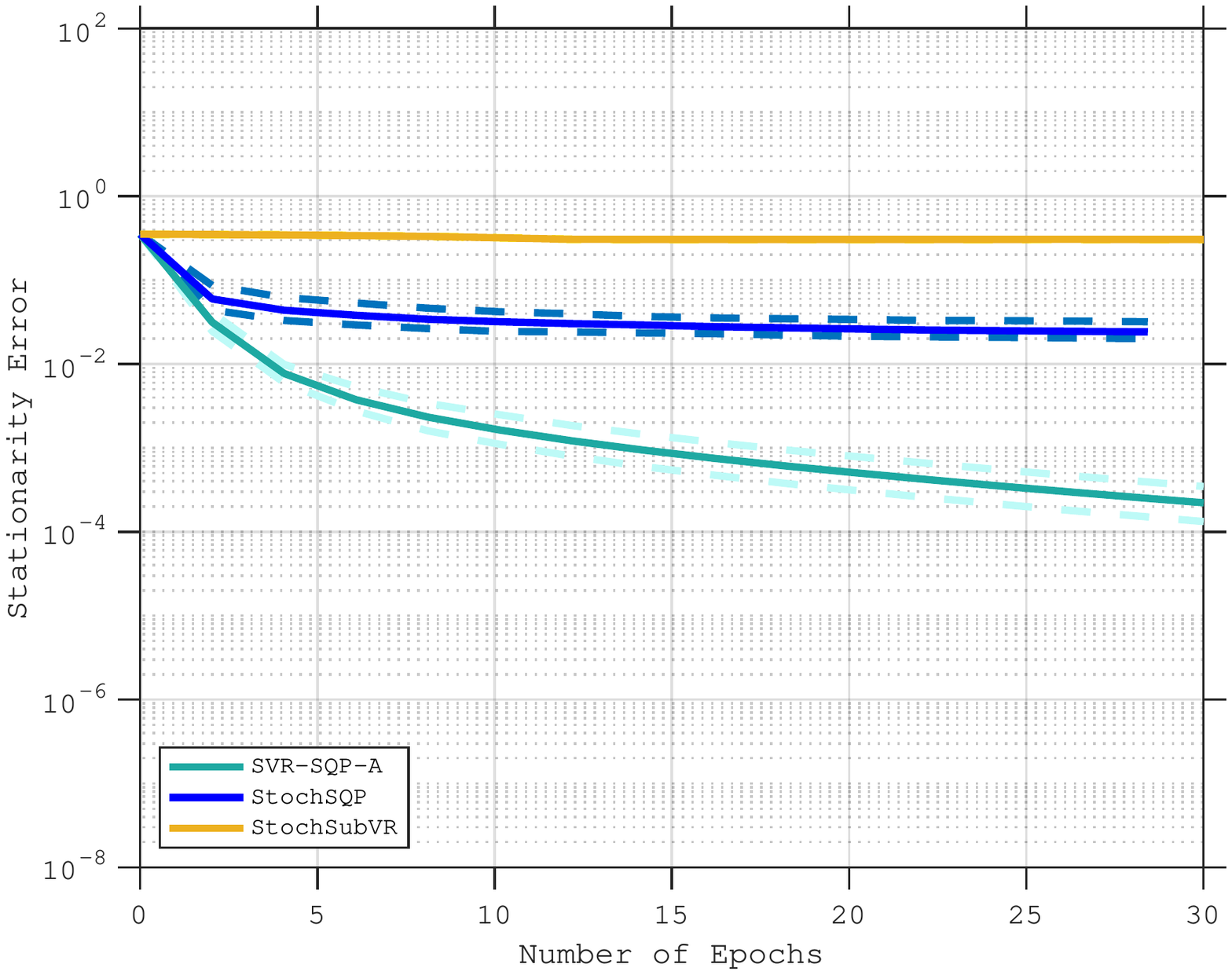}
  \includegraphics[width=0.24\textwidth,clip=true,trim=30 180 50 200]{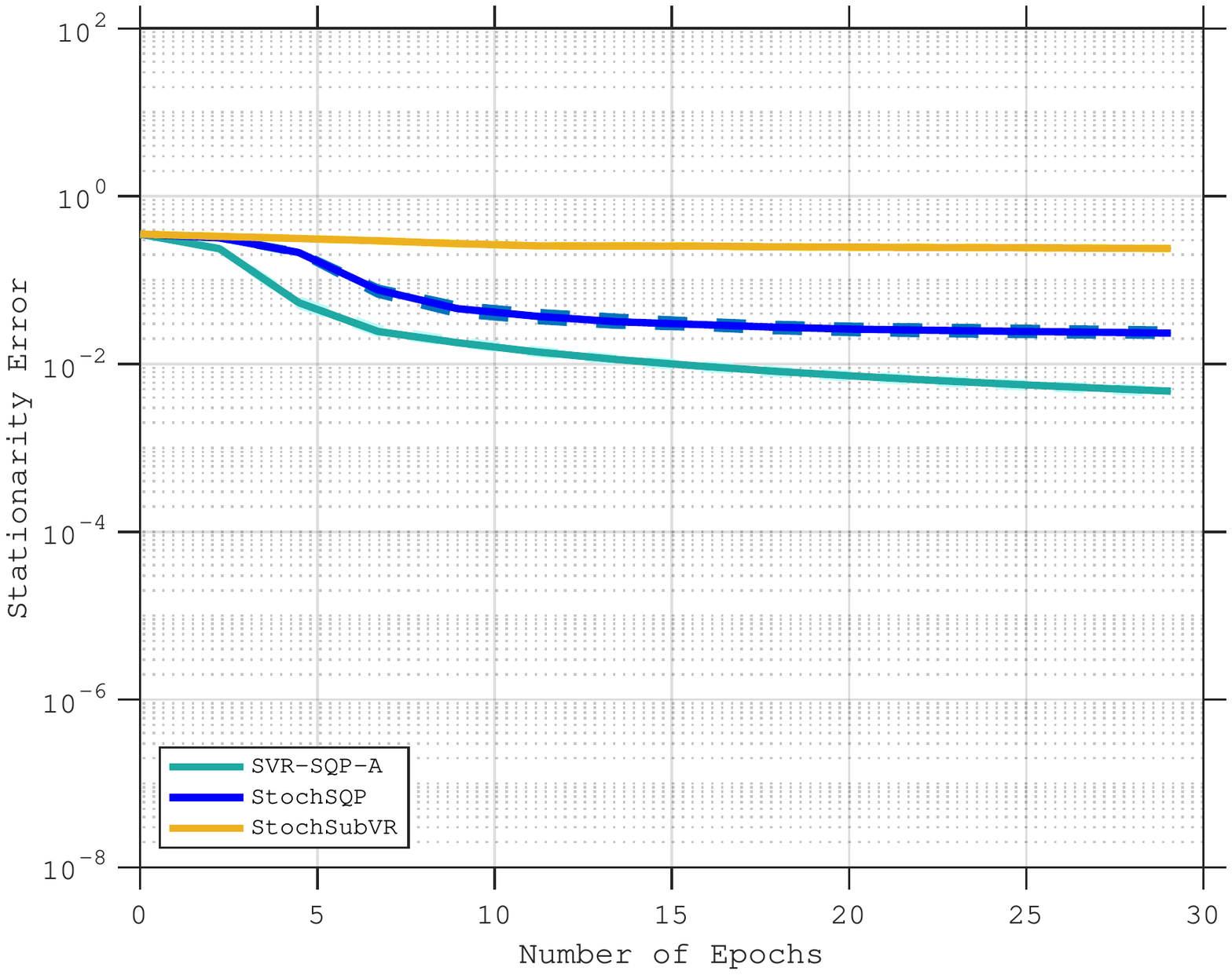}
  \caption{\texttt{australian} dataset. Top row: feasibility error; Bottom row: stationarity
  error.}
  \label{fig.best_1}
    \end{subfigure}

  \begin{subfigure}[b]{\textwidth}
  \includegraphics[width=0.24\textwidth,clip=true,trim=30 180 50 200]{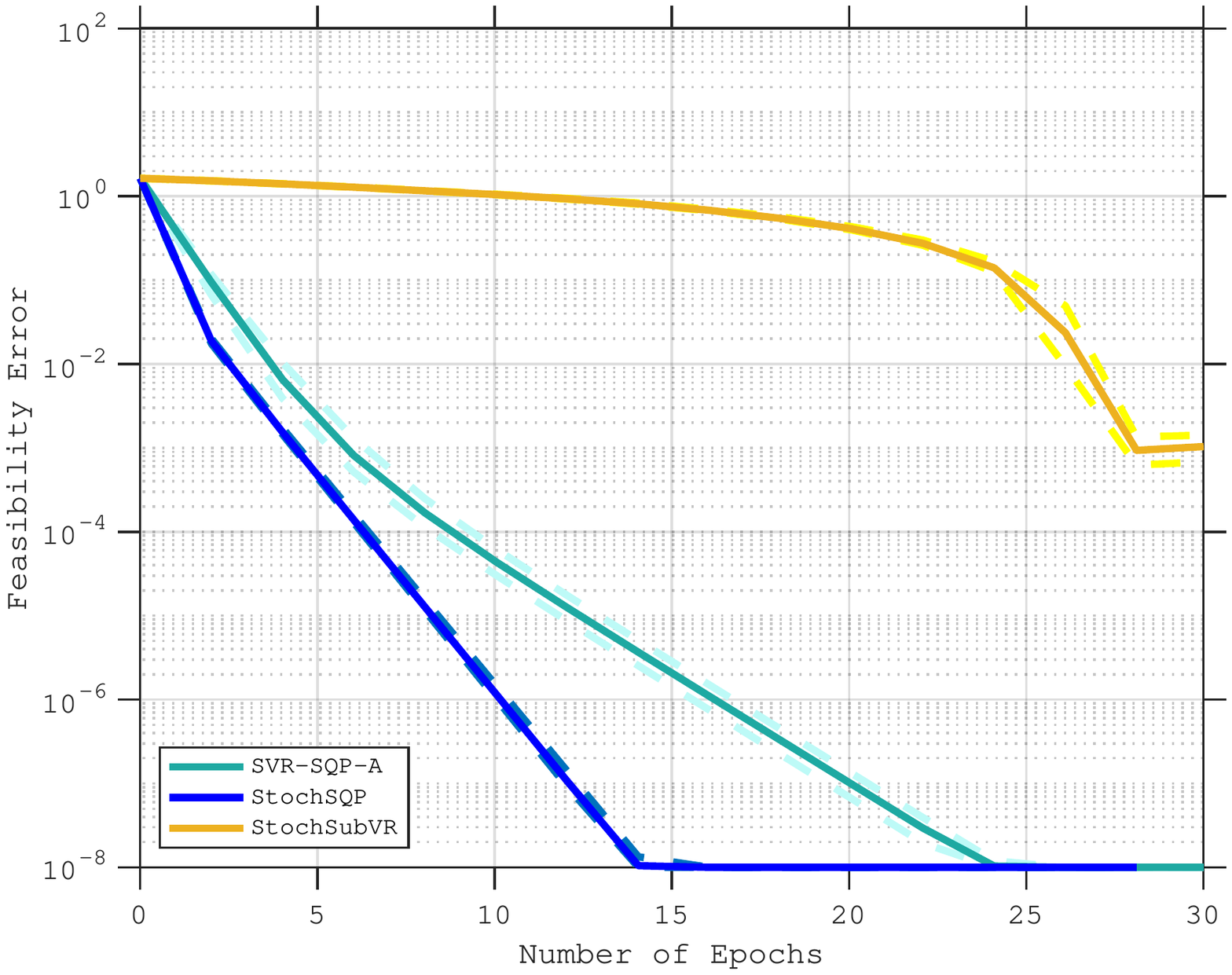}
  \includegraphics[width=0.24\textwidth,clip=true,trim=30 180 50 200]{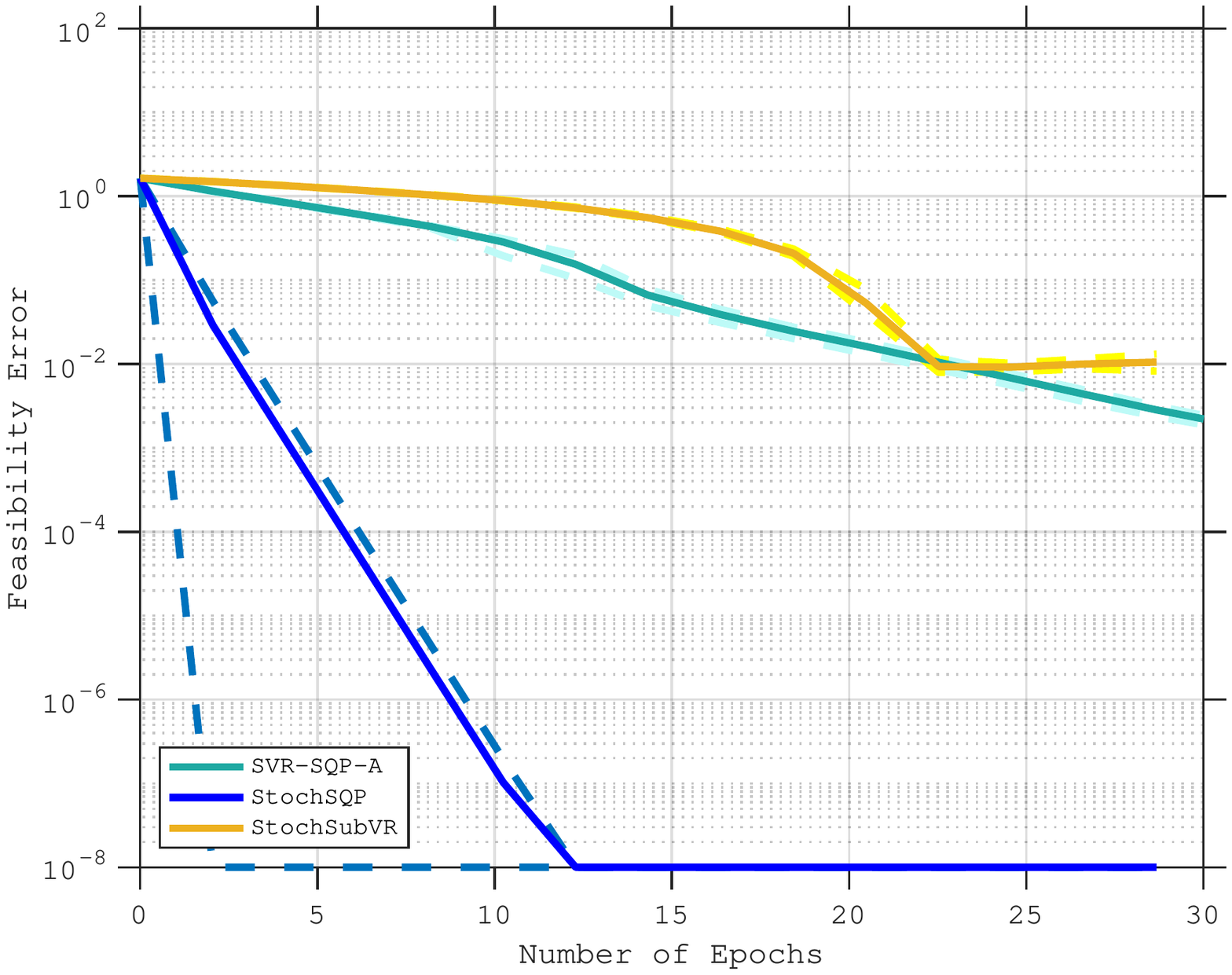}
  \includegraphics[width=0.24\textwidth,clip=true,trim=30 180 50 200]{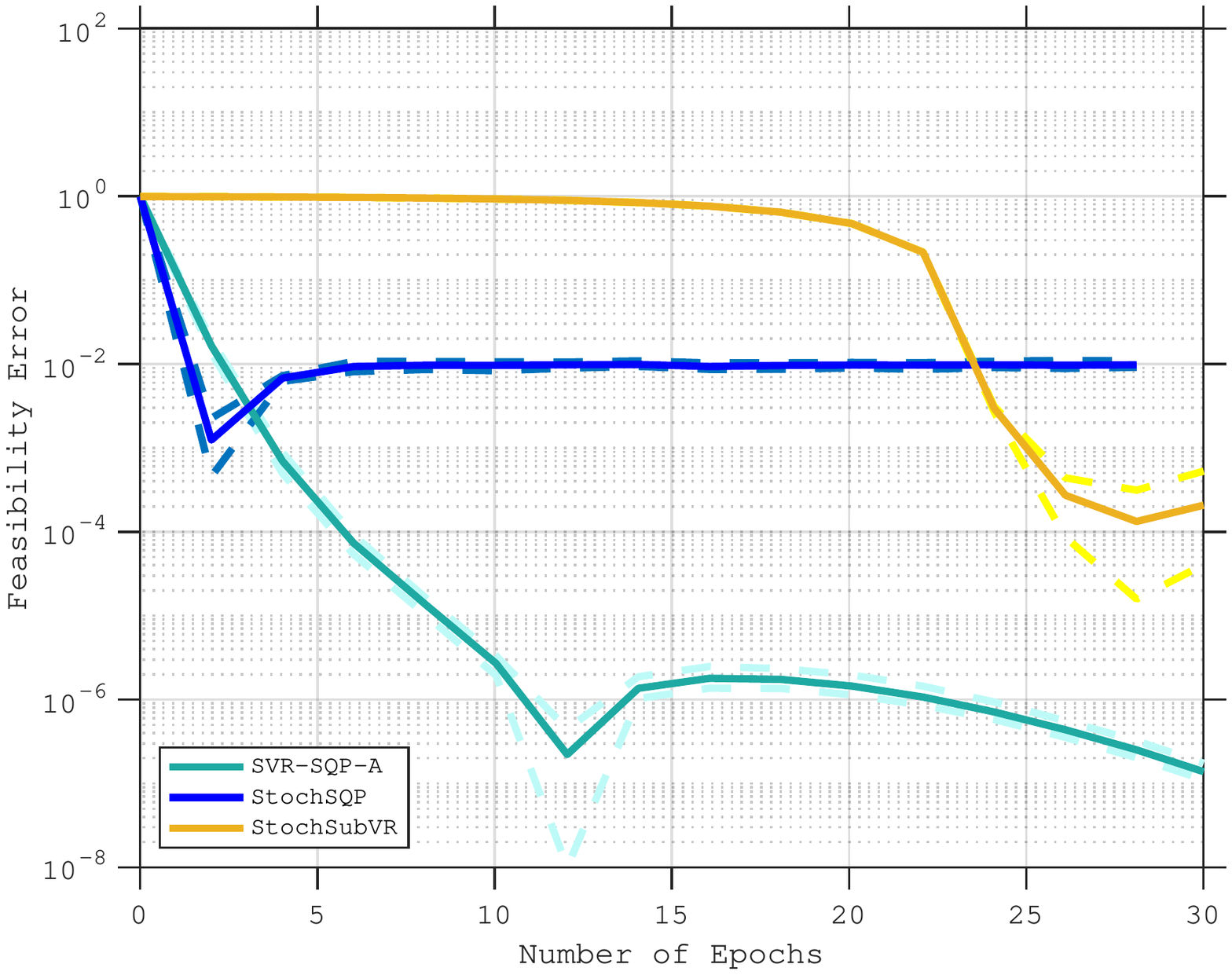}
  \includegraphics[width=0.24\textwidth,clip=true,trim=30 180 50 200]{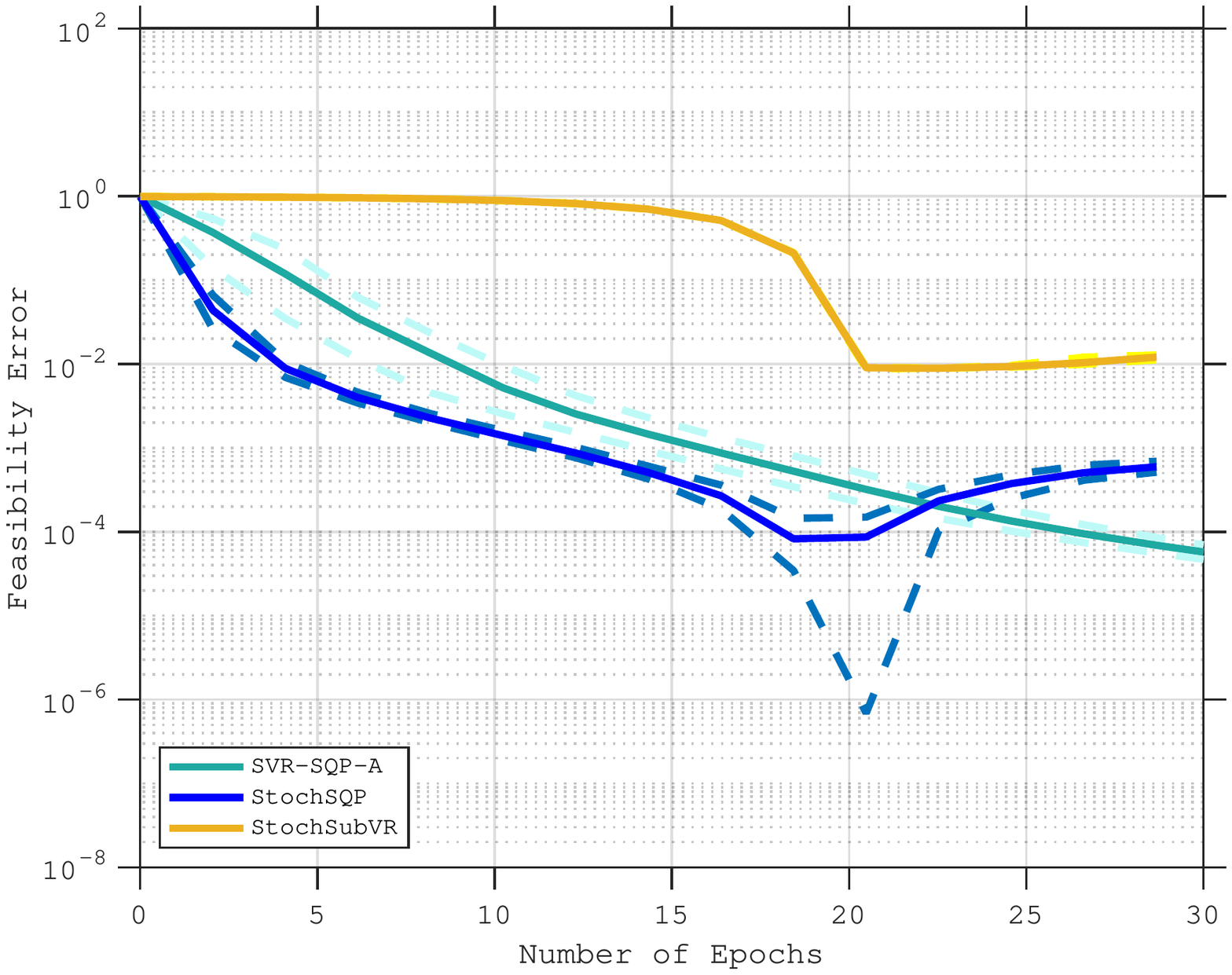}
  
  \includegraphics[width=0.24\textwidth,clip=true,trim=30 180 50 200]{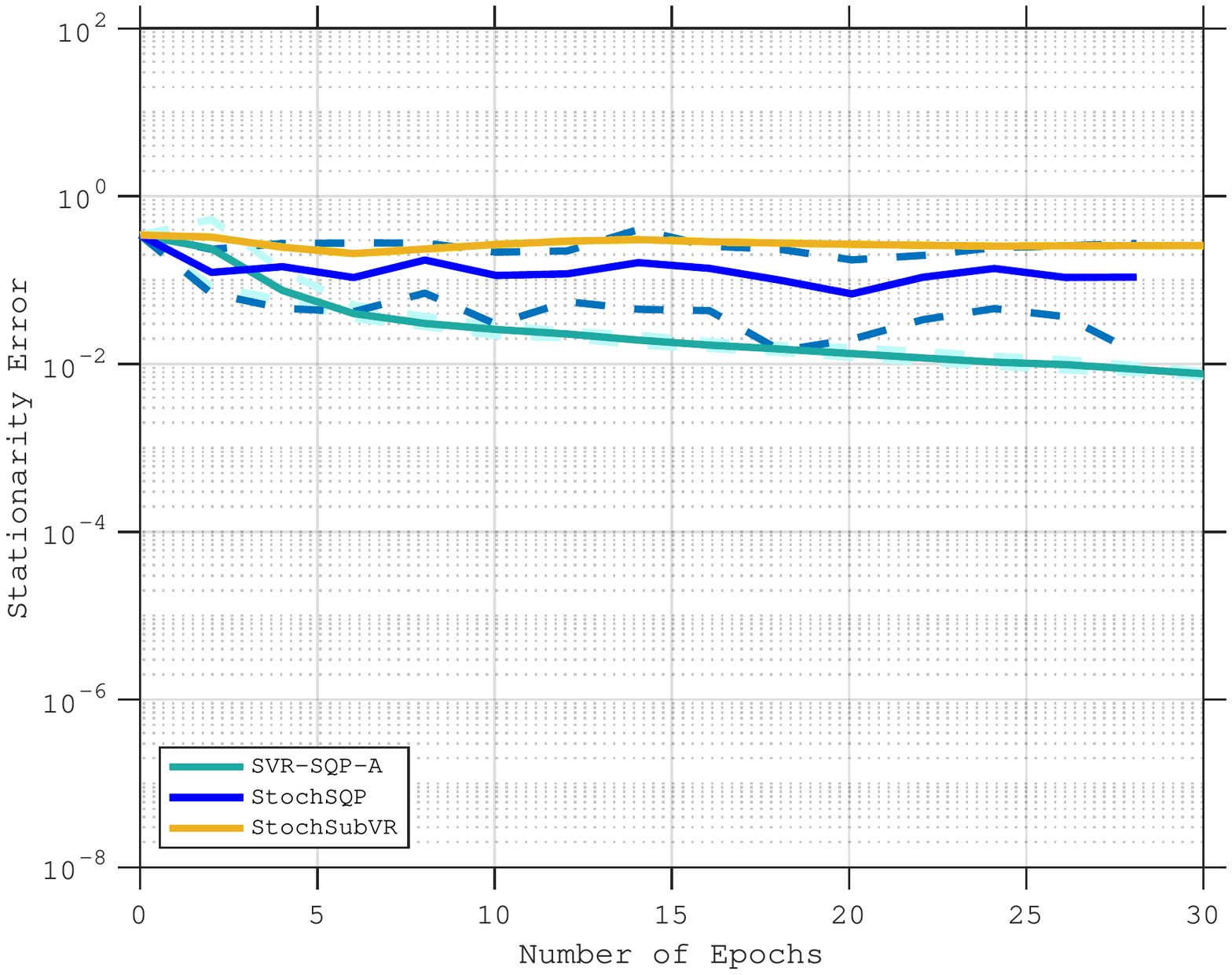}
  \includegraphics[width=0.24\textwidth,clip=true,trim=30 180 50 200]{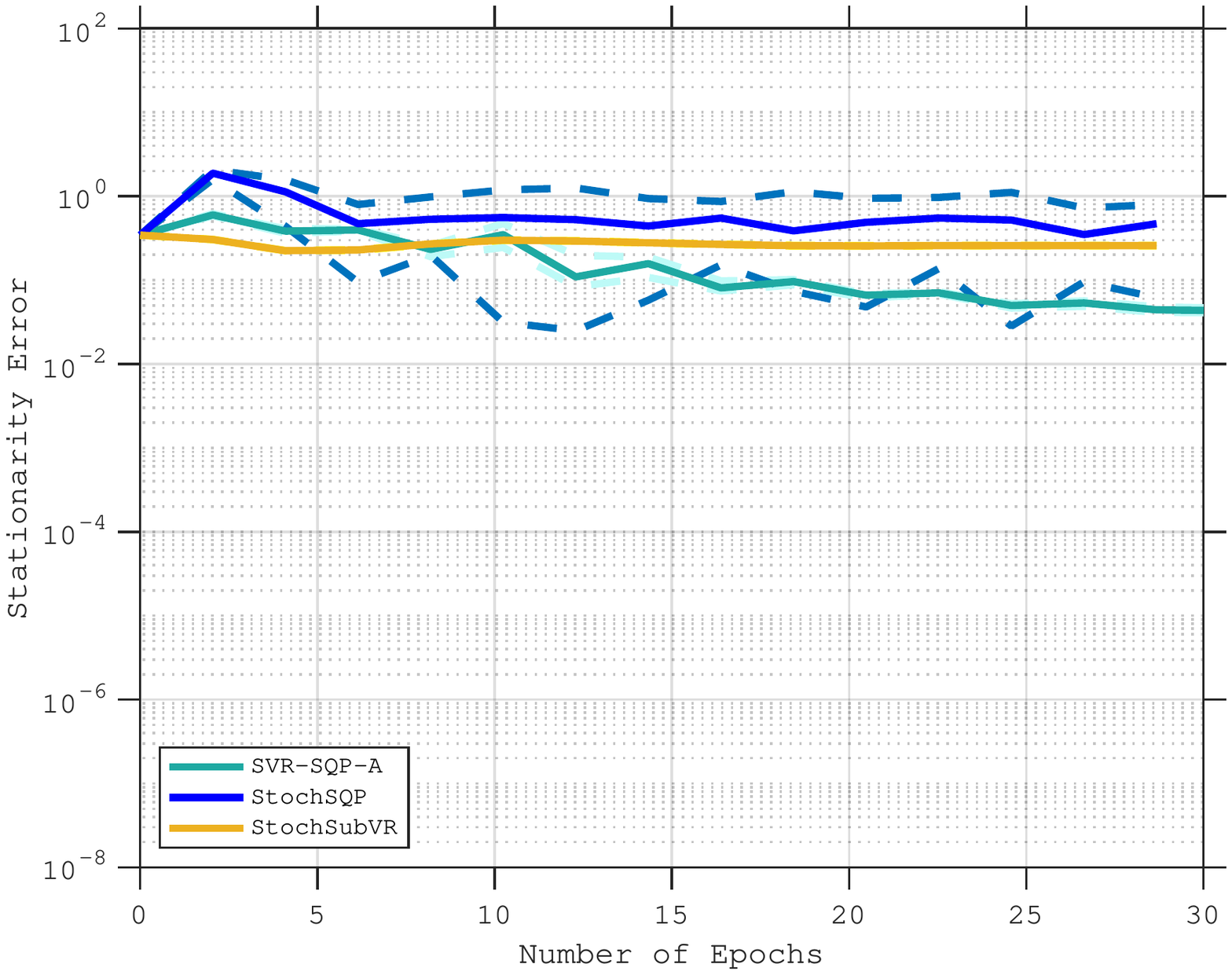}
  \includegraphics[width=0.24\textwidth,clip=true,trim=30 180 50 200]{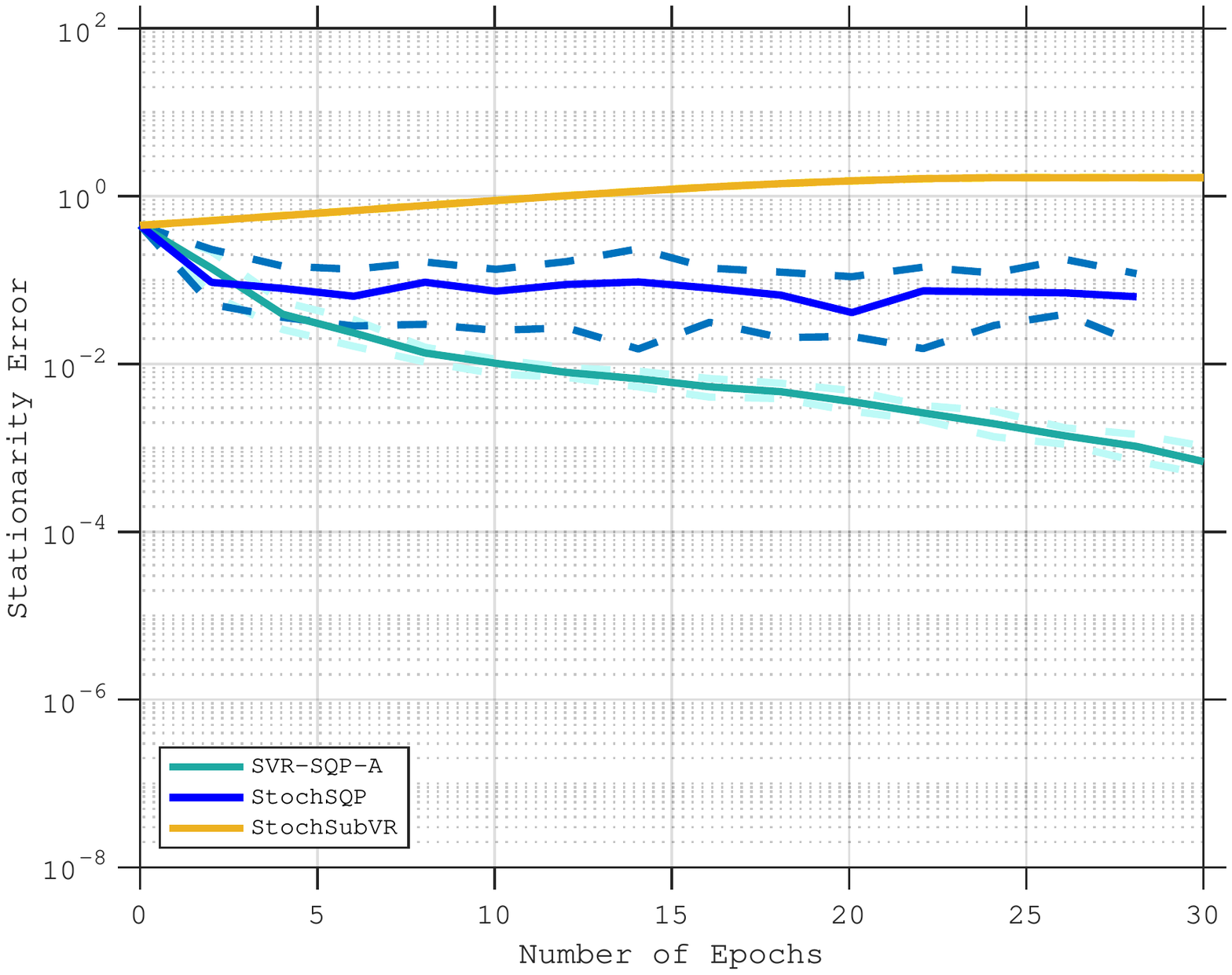}
  \includegraphics[width=0.24\textwidth,clip=true,trim=30 180 50 200]{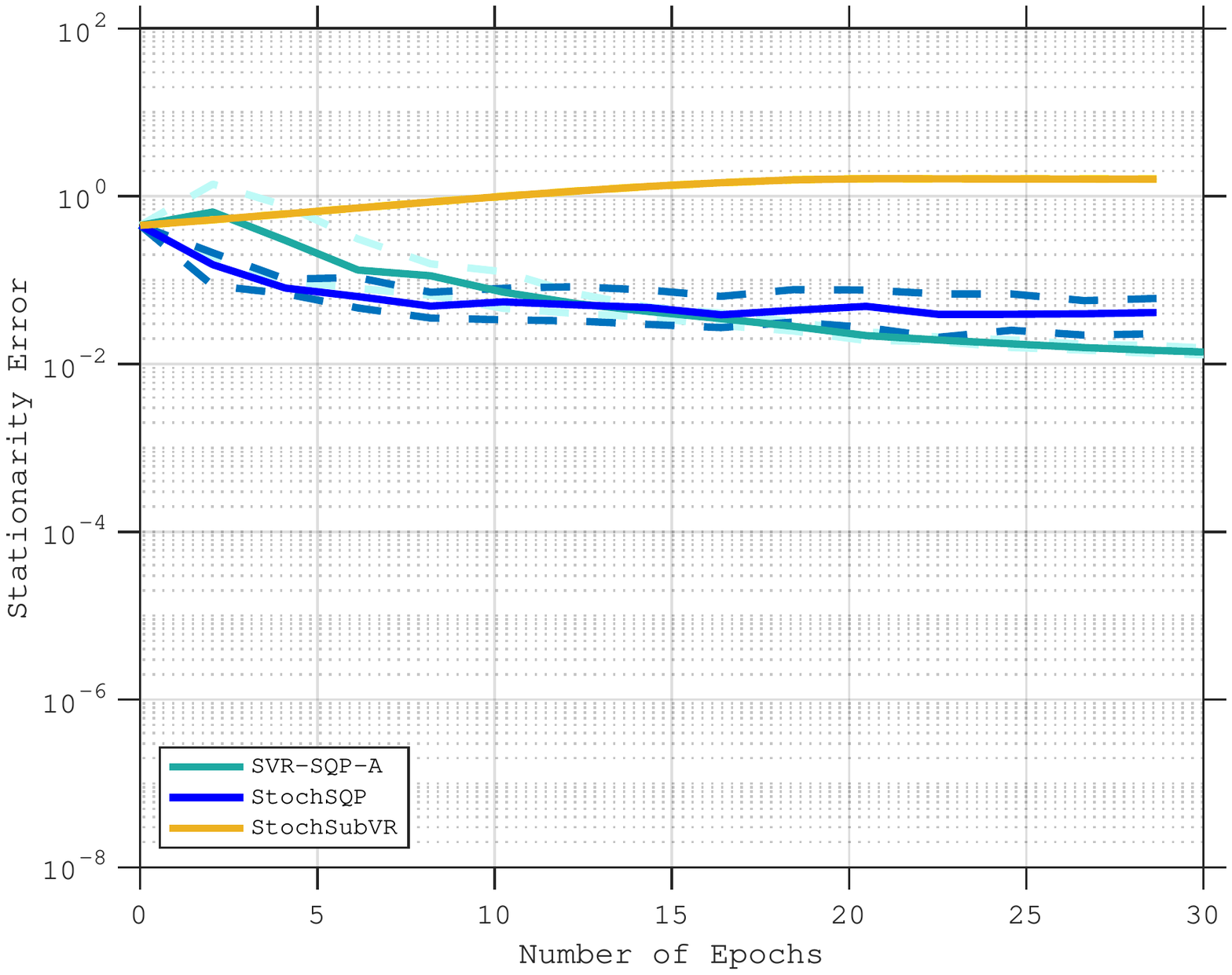}
  \caption{ \texttt{splice} dataset. Top row: feasibility error; Bottom row: stationarity
  error.}
  \label{fig.best_2}
  \end{subfigure}
  \caption{Performance of \textbf{best tuned} variants of \StoSubVR{} and \StoSQP{}, and \SVRSQPADAPT{} with $\beta=1$ and $S = \left\lfloor \tfrac{N}{2b} \right\rfloor$  on logistic regression problems with linear (columns 1 and 2) and $\ell_2$ norm (columns 3 and 4) constraints. First and third columns: batch size 16; Second and fourth columns: batch size 128. \label{fig.best}}
\end{figure}

\renewcommand{\arraystretch}{0.5}
\setlength{\tabcolsep}{0.25pt}

\begin{table}[ht]
\caption{Average feasibility and stationarity errors over 10 independent runs for each experiment, along with $95\%$ confidence intervals represented by `$\pm$', of \textbf{best tuned} variants of \StoSubVR{} and \StoSQP{}, and \SVRSQPADAPT{} with $\beta=1$ and $S = \left\lfloor \tfrac{N}{2b} \right\rfloor$ on  logistic regression problems with linear constraints \eqref{eq.log_lin}. The results for the best-performing
algorithm for each batch size are shown in \textbf{bold}. The symbol $\star$ indicates that all the runs for a given method converged to $\min \{\|c_k\|_{\infty}:k = 0,\ldots,K\} \leq 10^{-6}$.}
\label{tab.best_linear}
{\footnotesize
\resizebox{\columnwidth}{!}{
\begin{tabular}{lccccccc}
\toprule
& & \multicolumn{2}{c}{\StoSubVR{}} & \multicolumn{2}{c}{\StoSQP{}} & \multicolumn{2}{c}{\SVRSQPADAPT{}} \\  
       \cmidrule(lr){3-4}
       \cmidrule(lr){5-6}
       \cmidrule(lr){7-8}
Dataset             & Batch & Feasibility    & Stationarity   & Feasibility   & Stationarity  & Feasibility    & Stationarity   \\ \midrule
\texttt{a9a}        & 16                                                    &     \scalebox{1}[1]{\textbf{${1.2\times10^{-1}\pm7.2\times10^{-3}}$}}         &  \scalebox{1}[1]{\textbf{${2.6\times10^{-2}\pm1.1\times10^{-2}}$}}  &  \scalebox{1}[1]{  \textbf{   $\pmb{\star }$}}  &    \scalebox{1}[1]{\textbf{${8.4\times10^{-4}\pm2.9\times10^{-5}}$}}         & \scalebox{1}[1]{  \textbf{   $\pmb{\star }$}}  &  \scalebox{1}[1]{\textbf{$\pmb{3.5\times10^{-5}\pm4.5\times10^{-6}}$}}  \\ \hdashline
\texttt{a9a}        & 128                                                   &     \scalebox{1}[1]{\textbf{${1.1\times10^{-1}\pm8.4\times10^{-3}}$}}         &   \scalebox{1}[1]{\textbf{${8.7\times10^{-2}\pm1.6\times10^{-2}}$}}            &  \scalebox{1}[1]{  \textbf{   $\pmb{\star }$}}             &           \scalebox{1}[1]{\textbf{${7.0\times10^{-4}\pm2.8\times10^{-5}}$}}    &          \scalebox{1}[1]{  \textbf{   $\pmb{\star }$}}       &         \scalebox{1}[1]{\textbf{$\pmb{1.5\times10^{-4}\pm1.1\times10^{-6}}$}}        \\ \midrule
\texttt{australian} & 16                            &   \scalebox{1}[1]{    $4.1\times10^{-2}\pm1.1\times10^{-2}$    }     &           \scalebox{1}[1]{$2.0\times10^{-1}\pm1.5\times10^{-3}$}     &        \scalebox{1}[1]{  \textbf{   $\pmb{\star }$}}    &   \scalebox{1}[1]{   $1.2\times10^{-3}\pm2.2\times10^{-4}$    }     &   \scalebox{1}[1]{  \textbf{   $\pmb{\star }$}}     &  \scalebox{1}[1]{\textbf{ $\pmb{1.5\times10^{-6}\pm5.6\times10^{-7}}$    } }        \\ \hdashline
\texttt{australian} & 128                                                   &    \scalebox{1}[1]{    $3.1\times10^{-1}\pm3.5\times10^{-2}$    }     &           \scalebox{1}[1]{$2.0\times10^{-1}\pm4.7\times10^{-3}$}     &        \scalebox{1}[1]{  \textbf{   $\pmb{\star }$}}    &   \scalebox{1}[1]{   $\pmb{2.3\times10^{-3}\pm7.3\times10^{-4}}$    }     &   \scalebox{1}[1]{  \textbf{   $\pmb{\star }$}}    &  \scalebox{1}[1]{\textbf{ $4.8\times10^{-3}\pm2.7\times10^{-4}$    } }  \\ \midrule
\texttt{heart} & 16                                                    &  \scalebox{1}[1]{\textbf{${2.3\times10^{-1}\pm4.8\times10^{-2}}$}}       &      \scalebox{1}[1]{\textbf{$\pmb{1.2\times10^{1}\pm3.9\times10^{0}}$}}         &    \scalebox{1}[1]{\textbf{$\pmb{7.9\times10^{-2}\pm1.2\times10^{-2}}$}}         &     \scalebox{1}[1]{\textbf{${2.3\times10^{1}\pm5.1\times10^{0}}$}}         &   \scalebox{1}[1]{\textbf{${1.5\times10^{0}\pm1.6\times10^{-2}}$}}           &           \scalebox{1}[1]{\textbf{${2.2\times10^{1}\pm1.3\times10^{0}}$}}     \\ \hdashline
\texttt{heart} & 128                                                   &     \scalebox{1}[1]{\textbf{$\pmb{8.7\times10^{-1}\pm1.6\times10^{-2}}$}}        &      \scalebox{1}[1]{\textbf{$\pmb{2.3\times10^{1}\pm3.0\times10^{0}}$}}         & \scalebox{1}[1]{\textbf{${1.2\times10^{0}\pm4.1\times10^{-2}}$}}             &   \scalebox{1}[1]{\textbf{${2.4\times10^{1}\pm2.0\times10^{0}}$}}     &     \scalebox{1}[1]{\textbf{${1.6\times10^{0}\pm2.3\times10^{-3}}$}}           &    \scalebox{1}[1]{\textbf{${2.6\times10^{1}\pm1.8\times10^{0}}$}}   \\ \midrule
\texttt{ijcnn1} & 16                                                    &      \scalebox{1}[1]{\textbf{${9.9\times10^{-1}\pm1.1\times10^{-1}}$}}            &   \scalebox{1}[1]{\textbf{${6.6\times10^{-3}\pm5.5\times10^{-3}}$}}            &          \scalebox{1}[1]{  \textbf{   $\pmb{\star }$}}     &   \scalebox{1}[1]{\textbf{${1.8\times10^{-4}\pm1.9\times10^{-5}}$}}            &   \scalebox{1}[1]{  \textbf{   $\pmb{\star }$}}             &          \scalebox{1}[1]{\textbf{$\pmb{1.3\times10^{-8}\pm7.8\times10^{-9}}$}}       \\ \hdashline
\texttt{ijcnn1} & 128                                                   &  \scalebox{1}[1]{\textbf{${9.7\times10^{-1}\pm1.8\times10^{-1}}$}}            &         \scalebox{1}[1]{\textbf{${1.6\times10^{-2}\pm6.5\times10^{-3}}$}}      &    \scalebox{1}[1]{  \textbf{   $\pmb{\star }$}}           &    \scalebox{1}[1]{\textbf{${2.1\times10^{-4}\pm2.2\times10^{-5}}$}}           &   \scalebox{1}[1]{  \textbf{   $\pmb{\star }$}}             &  \scalebox{1}[1]{\textbf{$\pmb{1.0\times10^{-8}\pm1.8\times10^{-10}}$}}     \\ \midrule
\texttt{ionosphere} & 16                                                    &     \scalebox{1}[1]{\textbf{${3.2\times10^{-2}\pm4.9\times10^{-3}}$}}        &  \scalebox{1}[1]{\textbf{${1.4\times10^{-1}\pm5.3\times10^{-3}}$}}            &    \scalebox{1}[1]{  \textbf{   $\pmb{\star }$}}           &   \scalebox{1}[1]{\textbf{${4.2\times10^{-3}\pm4.3\times10^{-4}}$}}            &  \scalebox{1}[1]{  \textbf{   $\pmb{\star }$}}                &         \scalebox{1}[1]{\textbf{$\pmb{2.4\times10^{-3}\pm1.8\times10^{-4}}$}}       \\ \hdashline
\texttt{ionosphere} & 128                                                   &     \scalebox{1}[1]{\textbf{${4.4\times10^{-1}\pm5.4\times10^{-2}}$}}           &  \scalebox{1}[1]{\textbf{${1.4\times10^{-1}\pm1.1\times10^{-2}}$}}              &       \scalebox{1}[1]{  \textbf{   $\pmb{\star }$}}        &  \scalebox{1}[1]{\textbf{$\pmb{1.2\times10^{-2}\pm9.8\times10^{-4}}$}}             &   \scalebox{1}[1]{  \textbf{   $\pmb{\star }$}}             &      \scalebox{1}[1]{\textbf{${2.0\times10^{-2}\pm4.0\times10^{-4}}$}}   \\ \midrule
\texttt{mushrooms} & 16                                                    &       \scalebox{1}[1]{\textbf{${4.8\times10^{-2}\pm6.0\times10^{-3}}$}}       &      
\scalebox{1}[1]{\textbf{${1.3\times10^{-1}\pm2.0\times10^{-2}}$}}          &  \scalebox{1}[1]{  \textbf{   $\pmb{\star }$}}             &         \scalebox{1}[1]{\textbf{$\pmb{4.2\times10^{-4}\pm9.0\times10^{-6}}$}}      &          \scalebox{1}[1]{  \textbf{   $\pmb{\star }$}}        &   \scalebox{1}[1]{\textbf{${8.0\times10^{-4}\pm2.4\times10^{-5}}$}}             \\ \hdashline
\texttt{mushrooms} & 128                                                   &     \scalebox{1}[1]{\textbf{${5.7\times10^{-2}\pm5.5\times10^{-3}}$}}          &    \scalebox{1}[1]{\textbf{${1.8\times10^{-1}\pm4.5\times10^{-3}}$}}           &    \scalebox{1}[1]{  \textbf{   $\pmb{\star }$}}           &   \scalebox{1}[1]{\textbf{$\pmb{2.5\times10^{-3}\pm9.2\times10^{-5}}$}}            &   \scalebox{1}[1]{  \textbf{   $\pmb{\star }$}}             &    \scalebox{1}[1]{\textbf{${4.2\times10^{-3}\pm7.7\times10^{-5}}$}}   \\ \midrule
\texttt{phising} & 16                                                    &    \scalebox{1}[1]{\textbf{${3.7\times10^{-1}\pm5.4\times10^{-2}}$}}         & \scalebox{1}[1]{\textbf{${2.6\times10^{-2}\pm2.5\times10^{-4}}$}}             &  \scalebox{1}[1]{  \textbf{   $\pmb{\star }$}}             &   \scalebox{1}[1]{\textbf{${4.6\times10^{-4}\pm1.2\times10^{-5}}$}}            &    \scalebox{1}[1]{  \textbf{   $\pmb{\star }$}}            &    \scalebox{1}[1]{\textbf{$\pmb{9.9\times10^{-5}\pm3.2\times10^{-6}}$}}            \\ \hdashline
\texttt{phising} & 128                                                   &  \scalebox{1}[1]{\textbf{${6.0\times10^{-1}\pm2.2\times10^{-2}}$}}            &  \scalebox{1}[1]{\textbf{${3.6\times10^{-2}\pm1.4\times10^{-4}}$}}            &      \scalebox{1}[1]{  \textbf{   $\pmb{\star }$}}         &     \scalebox{1}[1]{\textbf{${2.7\times10^{-3}\pm3.6\times10^{-5}}$}}          &    \scalebox{1}[1]{  \textbf{   $\pmb{\star }$}}            & \scalebox{1}[1]{\textbf{$\pmb{8.1\times10^{-4}\pm1.3\times10^{-5}}$}}      \\ \midrule
\texttt{sonar} & 16                                                    &    \scalebox{1}[1]{\textbf{${4.1\times10^{-2}\pm2.8\times10^{-3}}$}}           &   \scalebox{1}[1]{\textbf{${3.9\times10^{-2}\pm1.2\times10^{-3}}$}}             &  \scalebox{1}[1]{  \textbf{   $\pmb{\star }$}}             &  \scalebox{1}[1]{\textbf{$\pmb{7.5\times10^{-3}\pm4.4\times10^{-4}}$}}             &   \scalebox{1}[1]{  \textbf{   $\pmb{\star }$}}             &  \scalebox{1}[1]{\textbf{${1.1\times10^{-2}\pm3.1\times10^{-4}}$}}              \\ \hdashline
\texttt{sonar} & 128                                                   &    \scalebox{1}[1]{\textbf{${4.1\times10^{-1}\pm3.2\times10^{-2}}$}}            &   \scalebox{1}[1]{\textbf{${4.1\times10^{-2}\pm3.0\times10^{-3}}$}}             &   \scalebox{1}[1]{  \textbf{   $\pmb{\star }$}}            & \scalebox{1}[1]{\textbf{$\pmb{1.9\times10^{-2}\pm2.9\times10^{-4}}$}}              &  \scalebox{1}[1]{  \textbf{   $\pmb{\star }$}}              &  \scalebox{1}[1]{\textbf{${2.2\times10^{-2}\pm7.8\times10^{-5}}$}}     \\ \midrule
\texttt{splice} & 16                                                    &  \scalebox{1}[1]{    $6.8\times10^{-4}\pm6.2\times10^{-5}$    }              &    \scalebox{1}[1]{    $2.6\times10^{-1}\pm2.2\times10^{-4}$    }            &      \scalebox{1}[1]{  \textbf{   $\pmb{\star }$}}        &       \scalebox{1}[1]{    $\pmb{4.1\times10^{-3}\pm3.7\times10^{-4}}$    }        &  \scalebox{1}[1]{  \textbf{   $\pmb{\star }$}}             &       \scalebox{1}[1]{    $7.6\times10^{-3}\pm2.0\times10^{-4}$    }         \\ \hdashline
\texttt{splice} & 128                                                   &     \scalebox{1}[1]{    $7.0\times10^{-3}\pm3.3\times10^{-4}$    }            &    \scalebox{1}[1]{    $2.6\times10^{-1}\pm7.6\times10^{-4}$    }             &     \scalebox{1}[1]{  \textbf{   $\pmb{\star }$}}            &     \scalebox{1}[1]{    $\pmb{1.8\times10^{-2}\pm8.0\times10^{-4}}$    }           &              \scalebox{1}[1]{    $1.9\times10^{-3}\pm9.4\times10^{-5}$    }   &  \scalebox{1}[1]{    $4.3\times10^{-2}\pm1.2\times10^{-3}$    }      \\\midrule
\texttt{w8a} & 16                                                    &  \scalebox{1}[1]{\textbf{${5.1\times10^{-1}\pm8.6\times10^{-3}}$}}              &  \scalebox{1}[1]{\textbf{${4.8\times10^{-3}\pm1.1\times10^{-4}}$}}              & \scalebox{1}[1]{  \textbf{   $\pmb{\star }$}}              &  \scalebox{1}[1]{\textbf{${2.6\times10^{-4}\pm1.9\times10^{-5}}$}}             &   \scalebox{1}[1]{  \textbf{   $\pmb{\star }$}}             &    \scalebox{1}[1]{\textbf{$\pmb{7.5\times10^{-5}\pm7.9\times10^{-7}}$}}       \\ \hdashline
\texttt{w8a} & 128                                                   &   \scalebox{1}[1]{\textbf{${6.9\times10^{-1}\pm2.5\times10^{-3}}$}}           &  \scalebox{1}[1]{\textbf{${2.6\times10^{-2}\pm1.8\times10^{-4}}$}}              &  \scalebox{1}[1]{  \textbf{   $\pmb{\star }$}}             &   \scalebox{1}[1]{\textbf{$\pmb{1.8\times10^{-4}\pm5.0\times10^{-6}}$}}            & \scalebox{1}[1]{  \textbf{   $\pmb{\star }$}}              & \scalebox{1}[1]{\textbf{${3.4\times10^{-4}\pm2.1\times10^{-6}}$}}      \\\bottomrule
\end{tabular}}}
\end{table}

\begin{table}[ht]
\caption{Average feasibility and stationarity errors over 10 independent runs for each experiment, along with $95\%$ confidence intervals represented by `$\pm$', of \textbf{best tuned} variants of \StoSubVR{} and \StoSQP{}, and \SVRSQPADAPT{} with $\beta=1$ and $S = \left\lfloor \tfrac{N}{2b} \right\rfloor$ on  logistic regression problems with $\ell_2$  constraint \eqref{eq.log_el2}. The results for the best-performing
algorithm for each batch size are shown in bold. The symbol $\star$ indicates that all the runs for a given method converged to $\min \{\|c_k\|_{\infty}:k = 0,\ldots,K\} \leq 10^{-6}$.}
\label{tab.best_el2}
{\footnotesize
\resizebox{\columnwidth}{!}{
\begin{tabular}{lccccccc}
\toprule
& & \multicolumn{2}{c}{\StoSubVR{}} & \multicolumn{2}{c}{\StoSQP{}} & \multicolumn{2}{c}{\SVRSQPADAPT{}} \\  
       \cmidrule(lr){3-4}
       \cmidrule(lr){5-6}
       \cmidrule(lr){7-8}
Dataset             & Batch  & Feasibility    & Stationarity   & Feasibility   & Stationarity  & Feasibility    & Stationarity   \\ \midrule
\scalebox{1}[1]{\texttt{a9a}}        & 16                                                    &  \scalebox{1}[1]{\textbf{${3.0\times10^{-6}\pm9.7\times10^{-7}}$}}             &  \scalebox{1}[1]{\textbf{${2.5\times10^{-1}\pm7.4\times10^{-7}}$}}  &  \scalebox{1}[1]{  \textbf{   $\pmb{\star }$}} &     \scalebox{1}[1]{\textbf{${1.1\times10^{-2}\pm7.8\times10^{-4}}$}}        & \scalebox{1}[1]{  \textbf{   $\pmb{\star }$}} &   \scalebox{1}[1]{\textbf{$\pmb{1.5\times10^{-8}\pm1.9\times10^{-9}}$}} \\ \hdashline
\scalebox{1}[1]{\texttt{a9a}}        & 128                                                   &    \scalebox{1}[1]{\textbf{${2.9\times10^{-5}\pm3.8\times10^{-6}}$}}            &   \scalebox{1}[1]{\textbf{${2.5\times10^{-1}\pm3.6\times10^{-8}}$}}             &   \scalebox{1}[1]{  \textbf{   $\pmb{\star }$}}           &  \scalebox{1}[1]{\textbf{${8.1\times10^{-3}\pm2.2\times10^{-4}}$}}             &    \scalebox{1}[1]{  \textbf{   $\pmb{\star }$}}            &      \scalebox{1}[1]{\textbf{$\pmb{1.7\times10^{-5}\pm1.2\times10^{-6}}$}}          \\ \midrule
\scalebox{1}[1]{\texttt{australian}} & 16                            &   \scalebox{1}[1]{    $2.9\times10^{-4}\pm5.8\times10^{-5}$    }     &           \scalebox{1}[1]{$3.1\times10^{-1}\pm1.3\times10^{-6}$}     &        \scalebox{1}[1]{$2.9\times10^{-4}\pm1.4\times10^{-4}$  }     &   \scalebox{1}[1]{   $2.3\times10^{-2}\pm1.9\times10^{-2}$    }     &   \scalebox{1}[1]{  \textbf{   $\pmb{\star }$}}     &  \scalebox{1}[1]{\textbf{ $\pmb{2.1\times10^{-4}\pm4.9\times10^{-5}}$    } }        \\ \hdashline
\scalebox{1}[1]{\texttt{australian}} & 128                                                   &    \scalebox{1}[1]{    $2.6\times10^{-3}\pm1.1\times10^{-3}$    }     &           \scalebox{1}[1]{$2.5\times10^{-1}\pm3.2\times10^{-3}$}     &        \scalebox{1}[1]{$1.3\times10^{-4}\pm4.1\times10^{-6}$  }     &   \scalebox{1}[1]{   $9.1\times10^{-3}\pm1.1\times10^{-3}$    }     &   \scalebox{1}[1]{  \textbf{   $\pmb{2.4\times10^{-5}\pm9.6\times10^{-7}}$  } }     &  \scalebox{1}[1]{\textbf{ $\pmb{4.7\times10^{-3}\pm9.2\times10^{-5}}$    } }  \\ \midrule
\scalebox{1}[1]{\texttt{heart}} & 16                                                    &     \scalebox{1}[1]{\textbf{$\pmb{1.2\times10^{-3}\pm5.2\times10^{-4}}$}}           &  \scalebox{1}[1]{\textbf{$\pmb{2.5\times10^{0}\pm2.3\times10^{-1}}$}}              &    \scalebox{1}[1]{\textbf{${3.3\times10^{-1}\pm6.1\times10^{-2}}$}}           &     \scalebox{1}[1]{\textbf{${9.1\times10^{1}\pm2.2\times10^{1}}$}}          &   \scalebox{1}[1]{\textbf{${4.7\times10^{-1}\pm3.8\times10^{-2}}$}}             &    \scalebox{1}[1]{\textbf{${8.8\times10^{1}\pm7.6\times10^{0}}$}}            \\ \hdashline
\scalebox{1}[1]{\texttt{heart}} & 128                                                   &    \scalebox{1}[1]{\textbf{$\pmb{2.9\times10^{-2}\pm1.5\times10^{-2}}$}}            &    \scalebox{1}[1]{\textbf{${1.1\times10^{2}\pm2.5\times10^{1}}$}}            &    \scalebox{1}[1]{\textbf{${9.9\times10^{-1}\pm5.7\times10^{-5}}$}}           &   \scalebox{1}[1]{\textbf{$\pmb{5.9\times10^{0}\pm1.1\times10^{0}}$}}            &     \scalebox{1}[1]{\textbf{${8.8\times10^{-1}\pm8.3\times10^{-3}}$}}           &  \scalebox{1}[1]{\textbf{${1.1\times10^{2}\pm1.4\times10^{1}}$}}     \\ \midrule
\scalebox{1}[1]{\texttt{ijcnn1}} & 16                                            &     \scalebox{1}[1]{\textbf{${1.2\times10^{-5}\pm8.1\times10^{-6}}$}}           &      \scalebox{1}[1]{\textbf{${6.8\times10^{-2}\pm8.3\times10^{-5}}$}}          &     \scalebox{1}[1]{\textbf{${1.0\times10^{-6}\pm2.6\times10^{-9}}$}}          &   \scalebox{1}[1]{\textbf{${3.8\times10^{-2}\pm7.9\times10^{-4}}$}}            &   \scalebox{1}[1]{  \textbf{   $\pmb{\star }$}}             &      \scalebox{1}[1]{\textbf{$\pmb{3.1\times10^{-8}\pm3.8\times10^{-9}}$}}          \\ \hdashline
\scalebox{1}[1]{\texttt{ijcnn1}} & 128                                                   &   \scalebox{1}[1]{\textbf{${4.4\times10^{-6}\pm1.8\times10^{-7}}$}}             &    \scalebox{1}[1]{\textbf{${6.8\times10^{-2}\pm2.1\times10^{-10}}$}}            &    \scalebox{1}[1]{\textbf{${1.0\times10^{-6}\pm3.2\times10^{-10}}$}}           &  \scalebox{1}[1]{\textbf{${3.1\times10^{-2}\pm5.8\times10^{-4}}$}}             &    \scalebox{1}[1]{  \textbf{   $\pmb{\star }$}}            &   \scalebox{1}[1]{\textbf{$\pmb{1.2\times10^{-5}\pm1.1\times10^{-7}}$}}    \\ \midrule
\scalebox{1}[1]{\texttt{ionosphere}} & 16                                                    &   \scalebox{1}[1]{\textbf{${1.4\times10^{-3}\pm5.4\times10^{-4}}$}}             &     \scalebox{1}[1]{\textbf{${1.1\times10^{-1}\pm6.4\times10^{-4}}$}}           &    \scalebox{1}[1]{\textbf{${4.3\times10^{-4}\pm4.0\times10^{-4}}$}}           &  \scalebox{1}[1]{\textbf{${5.2\times10^{-2}\pm1.7\times10^{-2}}$}}             &     \scalebox{1}[1]{\textbf{$\pmb{1.4\times10^{-5}\pm3.2\times10^{-6}}$}}           &     \scalebox{1}[1]{\textbf{$\pmb{6.1\times10^{-3}\pm7.0\times10^{-4}}$}}           \\ \hdashline
\scalebox{1}[1]{\texttt{ionosphere}} & 128                                                   &    \scalebox{1}[1]{\textbf{${8.0\times10^{-3}\pm7.1\times10^{-3}}$}}            &       \scalebox{1}[1]{\textbf{${1.1\times10^{-1}\pm1.5\times10^{-3}}$}}         &    \scalebox{1}[1]{\textbf{$\pmb{5.8\times10^{-4}\pm1.9\times10^{-5}}$}}           &      \scalebox{1}[1]{\textbf{$\pmb{2.0\times10^{-2}\pm1.2\times10^{-3}}$}}         &     \scalebox{1}[1]{\textbf{${7.6\times10^{-4}\pm1.4\times10^{-5}}$}}           & \scalebox{1}[1]{\textbf{${2.3\times10^{-2}\pm3.9\times10^{-4}}$}}      \\ \midrule
\scalebox{1}[1]{\texttt{mushrooms}} & 16                                                    &   \scalebox{1}[1]{\textbf{${3.4\times10^{-5}\pm9.1\times10^{-6}}$}}             &   \scalebox{1}[1]{\textbf{${1.8\times10^{-1}\pm6.4\times10^{-8}}$}}             &     \scalebox{1}[1]{  \textbf{   $\pmb{\star }$}}         &     \scalebox{1}[1]{\textbf{${7.1\times10^{-3}\pm1.2\times10^{-4}}$}}          &   \scalebox{1}[1]{  \textbf{   $\pmb{\star }$}}             &      \scalebox{1}[1]{\textbf{$\pmb{1.0\times10^{-8}\pm2.1\times10^{-11}}$}}          \\ \hdashline
\scalebox{1}[1]{\texttt{mushrooms}} & 128                                                   &    \scalebox{1}[1]{\textbf{${4.0\times10^{-4}\pm2.4\times10^{-4}}$}}            &     \scalebox{1}[1]{\textbf{${2.9\times10^{-2}\pm1.6\times10^{-3}}$}}           &    \scalebox{1}[1]{  \textbf{   $\pmb{\star }$}}           &        \scalebox{1}[1]{\textbf{${2.2\times10^{-2}\pm5.8\times10^{-4}}$}}       &    \scalebox{1}[1]{  \textbf{   $\pmb{\star }$}}            &   \scalebox{1}[1]{\textbf{$\pmb{1.6\times10^{-7}\pm2.7\times10^{-8}}$}}    \\ \midrule
\scalebox{1}[1]{\texttt{phising}} & 16                                                    &   \scalebox{1}[1]{\textbf{${3.9\times10^{-5}\pm2.1\times10^{-5}}$}}             &     \scalebox{1}[1]{\textbf{${4.0\times10^{-2}\pm4.1\times10^{-6}}$}}           &    \scalebox{1}[1]{\textbf{${8.4\times10^{-5}\pm1.2\times10^{-6}}$}}             &       \scalebox{1}[1]{\textbf{${1.0\times10^{-3}\pm4.0\times10^{-4}}$}}           &      \scalebox{1}[1]{  \textbf{   $\pmb{\star }$}}          &    \scalebox{1}[1]{\textbf{$\pmb{4.4\times10^{-6}\pm5.2\times10^{-8}}$}}            \\ \hdashline
\scalebox{1}[1]{\texttt{phising}} & 128                                                   &     \scalebox{1}[1]{\textbf{${7.1\times10^{-4}\pm3.5\times10^{-4}}$}}           &    \scalebox{1}[1]{\textbf{${2.9\times10^{-2}\pm1.7\times10^{-3}}$}}            &    \scalebox{1}[1]{\textbf{$\pmb{1.2\times10^{-5}\pm3.4\times10^{-7}}$}}         &    \scalebox{1}[1]{\textbf{$\pmb{3.0\times10^{-3}\pm3.4\times10^{-4}}$}}        &     \scalebox{1}[1]{\textbf{${1.3\times10^{-5}\pm6.2\times10^{-8}}$}}           &   \scalebox{1}[1]{\textbf{${7.5\times10^{-3}\pm2.0\times10^{-5}}$}}    \\ \midrule
\scalebox{1}[1]{\texttt{sonar}} & 16                                                    &     \scalebox{1}[1]{\textbf{${2.5\times10^{-3}\pm6.5\times10^{-4}}$}}           &  \scalebox{1}[1]{\textbf{${1.4\times10^{-1}\pm2.0\times10^{-5}}$}}              &    \scalebox{1}[1]{\textbf{${7.4\times10^{-4}\pm1.9\times10^{-4}}$}}           &  \scalebox{1}[1]{\textbf{${2.3\times10^{-2}\pm3.9\times10^{-3}}$}}             &    \scalebox{1}[1]{\textbf{$\pmb{1.7\times10^{-4}\pm1.1\times10^{-5}}$}}            &      \scalebox{1}[1]{\textbf{$\pmb{2.0\times10^{-2}\pm5.6\times10^{-4}}$}}          \\ \hdashline
\scalebox{1}[1]{\texttt{sonar}} & 128                                                   &    \scalebox{1}[1]{\textbf{${3.3\times10^{-2}\pm2.1\times10^{-2}}$}}            &   \scalebox{1}[1]{\textbf{${2.9\times10^{-2}\pm2.5\times10^{-3}}$}}             &   \scalebox{1}[1]{\textbf{$\pmb{8.9\times10^{-4}\pm6.0\times10^{-5}}$}}            &     \scalebox{1}[1]{\textbf{$\pmb{2.7\times10^{-2}\pm1.8\times10^{-3}}$}}          &   \scalebox{1}[1]{\textbf{${3.2\times10^{-3}\pm3.9\times10^{-7}}$}}             &   \scalebox{1}[1]{\textbf{${3.2\times10^{-2}\pm3.5\times10^{-6}}$}}    \\ \midrule
\scalebox{1}[1]{\texttt{splice}} & 16                                                    &  \scalebox{1}[1]{    $3.0\times10^{-5}\pm3.9\times10^{-4}$    }              &    \scalebox{1}[1]{    $1.7\times10^{0}\pm1.6\times10^{-4}$    }            &      \scalebox{1}[1]{    $1.3\times10^{-5}\pm8.4\times10^{-6}$    }         &       \scalebox{1}[1]{    $1.0\times10^{-1}\pm3.8\times10^{-2}$    }        &  \scalebox{1}[1]{  \textbf{   $\pmb{\star }$}}           &       \scalebox{1}[1]{    $\pmb{6.5\times10^{-4}\pm9.0\times10^{-5}}$    }         \\ \hdashline
\scalebox{1}[1]{\texttt{splice}} & 128                                                   &     \scalebox{1}[1]{    $7.1\times10^{-4}\pm2.8\times10^{-4}$    }            &    \scalebox{1}[1]{    $1.6\times10^{0}\pm2.1\times10^{-3}$    }             &     \scalebox{1}[1]{    $\pmb{1.0\times10^{-6}\pm5.4\times10^{-7}}$    }           &     \scalebox{1}[1]{    $4.5\times10^{-2}\pm2.2\times10^{-2}$    }           &              \scalebox{1}[1]{    $5.2\times10^{-5}\pm4.1\times10^{-6}$    }   &  \scalebox{1}[1]{    $\pmb{1.4\times10^{-2}\pm5.2\times10^{-4}}$    }      \\\midrule
\scalebox{1}[1]{\texttt{w8a}} & 16                                                    &      \scalebox{1}[1]{\textbf{${3.5\times10^{-5}\pm2.2\times10^{-5}}$}}          &     \scalebox{1}[1]{\textbf{${1.6\times10^{-1}\pm8.4\times10^{-5}}$}}           &    \scalebox{1}[1]{\textbf{${1.3\times10^{-6}\pm5.4\times10^{-8}}$}}           &    \scalebox{1}[1]{\textbf{${3.6\times10^{-3}\pm4.6\times10^{-5}}$}}           &   \scalebox{1}[1]{  \textbf{   $\pmb{\star }$}}             &   \scalebox{1}[1]{\textbf{$\pmb{1.4\times10^{-8}\pm2.1\times10^{-9}}$}}             \\ \hdashline
\scalebox{1}[1]{\texttt{w8a}} & 128                                                   &      \scalebox{1}[1]{\textbf{${1.5\times10^{-5}\pm1.0\times10^{-6}}$}}          &     \scalebox{1}[1]{\textbf{${1.6\times10^{-1}\pm2.2\times10^{-9}}$}}           &   \scalebox{1}[1]{\textbf{${1.0\times10^{-6}\pm3.6\times10^{-10}}$}}            &    \scalebox{1}[1]{\textbf{${3.4\times10^{-3}\pm8.9\times10^{-5}}$}}           &   \scalebox{1}[1]{  \textbf{   $\pmb{\star }$}}             &     \scalebox{1}[1]{\textbf{$\pmb{2.4\times10^{-8}\pm2.4\times10^{-9}}$}}  \\\bottomrule
\end{tabular}}}
\end{table}

\clearpage

\newpage

\section{Final Remarks}\label{sec:final_remarks}

We have designed and analyzed an adaptive variance reduced SQP method for minimizing general smooth finite-sum optimization problems
with deterministic nonlinear equality constraints.  Under common assumptions, with constant or adaptive (non-diminishing) step sizes, we presented comprehensive convergence guarantees for our proposed method. Specifically, we proved that the \SVRSQP{} method generates a sequence of iterates whose first-order stationarity measure converges to zero in expectation. Our theoretical results can be viewed as analogues of those of the SVRG method on general unconstrained nonconvex finite-sum optimization problems \cite{reddi2016stochastic}. The numerical experiments presented on classification problems from the LIBSVM collection \cite{chang2011libsvm} demonstrated the efficiency, efficacy and robustness of the proposed method.

\bibliographystyle{plain}
\bibliography{references}


\end{document}